\renewcommand{\theenumi}{(\roman{enumi})}
\newcommand{\ol}[1]{\overline{#1}}
\newcommand{\br}[1]{\left(#1\right)}
\newcommand{\torus}[1]{\mathbb{S}_{#1}}
\newcommand{\Sg}{\mathbb{S}_g}
\newcommand{\Z}{\mathbb{Z}}
\newcommand{\N}{\mathbb{N}}
\newcommand{\R}{\mathbb{R}}
\newcommand{\C}{\mathbb{C}}
\newcommand{\opa}[1]{\delta_#1}
\newcommand{\opb}[2]{\left(\delta_#1+\delta_#2\right)}
\DeclareMathOperator{\fw}{fw}
\DeclareMathOperator{\ew}{ew}
\newcommand{\class}[1]{\mathcal{#1}}
\newcommand{\vertex}[1]{#1}
\newcommand{\deriv}[1]{\frac{\partial}{\partial #1}}
\newcommand{\derivf}[2]{\frac{\partial #1}{\partial #2}}
\newcommand{\derivn}[2]{\frac{\partial^{#2}}{\partial #1^{#2}}}
\newcommand{\triangt}{\hat{S}}
\def\triangs{S}
\def\triangr{R}
\def\triangn{N}
\def\triangm{M}
\def\three{D}
\def\two{B}
\def\one{C}
\def\general{G}
\def\network{N}
\def\quasis{P}
\def\main{M}
\def\error{E}
\def\dd#1{\,\mathrm{d}{#1}}
\def\oy{\overline{y}}
\def\oz{\overline{z}}
\def\Gimenez{Gim{\'e}nez\;}
\definecolor{gray}{gray}{0.9}
\newtheorem{thm}{Theorem}[section]
\newtheorem{prop}[thm]{Proposition}
\newtheorem{coro}[thm]{Corollary}
\newtheorem{lem}[thm]{Lemma}
\newtheorem{definition}[thm]{Definition}
\crefname{thm}{theorem}{theorems}
\crefname{prop}{proposition}{propositions}
\crefname{coro}{corollary}{corollaries}
\crefname{lem}{lemma}{lemmas}
\crefname{definition}{definition}{definitions}
\newtheoremstyle{claim}
{}
{}
{\itshape}
{}
{\bf}
{.}
{.5em}
{}
\theoremstyle{claim}
\newtheorem{claim}{Claim}
\crefname{claim}{claim}{claims}
\begin{document}
\title[Cubic graphs and related triangulations on orientable surfaces]{Cubic graphs and related triangulations on orientable surfaces\textsuperscript{1}}
\thanks{\textsuperscript{1}An extended abstract of this paper has been published in  the Proceedings of the European Conference on Combinatorics, Graph Theory and Applications (EuroComb15), Electronic Notes in Discrete Mathematics (2015), 603--610.}
\author[W. Fang]{W. Fang\textsuperscript{2}}
\thanks{\textsuperscript{2}LIAFA, UMR CNRS 7089, Universit{\'e} Paris-Diderot, Paris Cedex 13, France. email: {\tt wfang@liafa.univ-paris-diderot.fr}. Partially funded by \textit{Agence Nationale de la Recherche}, grant ANR 12-JS02-001-01 "Cartaplus"}
\author{M. Kang\textsuperscript{3}}
\author{M. Mo{\ss}hammer\textsuperscript{3}}
\author{P. Spr{\" u}ssel\textsuperscript{3}}
\thanks{\textsuperscript{3}Graz University of Technology, Institute of Discrete Mathematics,
8010 Graz, Austria. email: {\tt \{kang,mosshammer,spruessel\}@math.tugraz.at}. Supported by Austrian Science Fund (FWF):  P27290 and W1230 II}

\begin{abstract}
	Let $\Sg$ be the orientable surface of genus $g$.
	We show that the number of vertex-labelled cubic multigraphs embeddable on $\Sg$ with $2n$ vertices is 
	asymptotically $c_g n^{5(g-1)/2-1}\gamma^{2n}(2n)!$, where $\gamma$ is an algebraic constant and $c_g$ 
	is a constant depending only on the genus $g$. We also derive an analogous result for simple cubic graphs and 
	weighted cubic multigraphs. Additionally we prove that a typical cubic multigraph embeddable 
	on $\Sg$, $g\ge 1$, has exactly one non-planar component.
\end{abstract}
\maketitle

	\section{Introduction}

	Determining the numbers of maps and graphs \emph{embeddable} on surfaces have been one of the main  
	objectives of enumerative combinatorics for the last 50 years.
	Starting from the enumeration of \emph{planar} maps by Tutte \cite{tutte1963-planar-maps} various types
	of maps on the sphere were counted, e.g.\ planar \emph{cubic} maps by Gao and Wormald \cite{MR1980342}.
	Furthermore, Tutte's methods were generalised to enumerate maps on surfaces of higher genus 
	\cite{BENDER1986244,Bender1993-maps,Bender1988370}.
	
	An important subclass of maps are \emph{triangulations}. 
	Brown \cite{triang-disk} determined the number of triangulations of a disc and Tutte enumerated planar 
	triangulations \cite{tutte1962-planar-triangulations}. Triangulations on other surfaces have 
	been considered as well: Gao enumerated $2$-connected triangulations on 
	the projective plane \cite{Gao1991-2-connected-projective} as well as $2$-connected and connected triangulations 
	on surfaces of arbitrary genus \cite{Gao1992-2-connected-surface,Gao1993-pattern}.
	
	Frieze \cite{seminaronrg} was the first to ask about properties of random planar \emph{graphs}.
	McDiarmid, Steger, and Welsh \cite{mcdiarmid2005} showed the existence of an exponential growth 
	constant for the number of labelled planar graphs on $n$ vertices. This growth constant and the asymptotic 
	number were determined by \Gimenez and Noy \cite{gimenez2009}, while the corresponding results for the 
	higher genus case were derived by Chapuy et al.\ \cite{Chapuy2011-enumeration-graphs-on-surfaces}.
	Since then various other classes of planar graphs were 
	counted \cite{bender,MR2387555,bodirsky2007-cubic-graphs,rgraphs,Kang2012,ks,osthus}.

	An interesting subclass of planar graphs are \emph{cubic} planar graphs, which have been enumerated by
	Bodirsky et al.\ \cite{bodirsky2007-cubic-graphs}. Cubic planar graphs occur as substructures of sparse 
	planar graphs and have thus been one of the essential ingredients in the study of sparse random 
	planar graphs \cite{Kang2012}. For surfaces of higher genus, the number of embeddable 
	cubic graphs has not been studied, and is the main subject of this paper.

\subsection{Main results}
 
The main contributions of this paper are fourfold.
We determine the asymptotic number of cubic multigraphs embeddable on $\Sg$, the orientable 
surface of genus $g$. Using the same method we also determine the asymptotic number of weighted cubic multigraphs 
and cubic simple graphs embeddable on $\Sg$. Finally we prove that almost all (multi)graphs from either
of the three classes have exactly one non-planar component.

The first main result is about the exact asymptotic expression of the number of cubic multigraphs embeddable on $\Sg$.
\begin{thm}\label{main1}
	The number $m_g(n)$ of vertex-labelled cubic multigraphs embeddable on $\Sg$ with $2n$ vertices is given by
	\begin{align*}
		m_g(n)&=\br{1+O\br{n^{-1/4}}}d_gn^{5g/2-7/2}\gamma_1^{2n}(2n)!\, ,
	\end{align*} where $\gamma_1$ 
	is an algebraic constant independent of the genug $g$ and $d_g$ is a constant 
	depending only on $g$. The first digits of $\gamma_1$ are $3.973$.
\end{thm}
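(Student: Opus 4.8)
The plan is to adapt to surfaces the route used for planar cubic graphs in \cite{bodirsky2007-cubic-graphs} and for general graphs on surfaces in \cite{gimenez2009,Chapuy2011-enumeration-graphs-on-surfaces}: descend through the connectivity hierarchy from all embeddable cubic multigraphs to the $3$-connected ones, identify $3$-connected cubic graphs on $\Sg$ with simple triangulations of $\Sg$, enumerate the triangulations by genus via the generating-function surgery for maps on surfaces, and transfer the resulting singular expansions back up.

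I would work with exponential generating functions $G_g,C_g,B_g,T_g$ (the variable marking the $2n$ vertices, so $m_g(n)=(2n)!\,[x^{2n}]G_g(x)$) for all, connected, $2$-connected, and $3$-connected cubic multigraphs embeddable on $\Sg$; each is the sum over $h\le g$ of the corresponding series $\tilde G_h,\tilde C_h,\tilde B_h,\tilde T_h$ of objects of embeddability genus \emph{exactly} $h$, the genus being carried as a formal grading. (Automorphisms caused by parallel edges and loops are handled through the weighted-count variant, as in \cite{bodirsky2007-cubic-graphs}.) Three composition schemes link these classes. Since embeddability genus is additive over connected components, $\sum_h\tilde G_h u^h=\exp\bigl(\sum_h\tilde C_h u^h\bigr)$; extracting $[u^g]$ and reorganising shows $G_g=e^{\tilde C_0}\tilde C_g$ up to terms of strictly smaller singular order --- which is where the ``one non-planar component'' phenomenon enters. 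A connected cubic multigraph decomposes along its bridges and cut vertices into $2$-connected pieces; by the cubic constraint this is the edge-rooted ``network'' construction of \cite{bodirsky2007-cubic-graphs}, expressing $\tilde C_g$ through the $\tilde B_h$ ($h\le g$) and a planar network series. A $2$-connected cubic multigraph decomposes along its Tutte (SPQR) tree into $3$-connected cubic pieces and the two degenerate pieces (the triple edge and cycles) substituted at edges, expressing $\tilde B_g$ through the $\tilde T_h$ ($h\le g$).

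The heart of the argument is the enumeration of the $3$-connected series. Since a $3$-connected graph has essentially a unique embedding of each genus (as for graphs on surfaces, where $3$-connectivity bounds the number of embeddings), $3$-connected cubic graphs of embeddability genus $g$ correspond, up to a controllable over-count, to $3$-connected cubic maps on $\Sg$ and, dually, to simple triangulations of $\Sg$; I would count the latter by the map-surgery technique of Gao and Bender--Canfield (cf.\ \cite{Gao1992-2-connected-surface}, with \cite{tutte1962-planar-triangulations} as the planar base case), expressing the genus-$g$ triangulation series rationally through the planar series, its derivatives, and lower-genus series, and reading off a singular expansion $T_g(x)\sim\tau_g\,(1-x/\rho_0)^{3/2-5g/2}$ with $\tau_g\neq0$ at the same dominant singularity $\rho_0$ as in Tutte's planar case; after un-rooting, the $3$-connected series has singular exponent $5(1-g)/2$. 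This $\rho_0$ fixes $\gamma_1$. Transferring up the hierarchy: each higher-genus series is a rational function of the planar one and of lower-genus series, so it creates no singularity closer to the origin; and since the planar composition schemes are critical (the outer planar series attains its own singular value exactly when the inner series reaches $\rho_0$, as for graphs on surfaces in \cite{Chapuy2011-enumeration-graphs-on-surfaces}), the singular exponent $5(1-g)/2$ is preserved through the hierarchy and $C_g$, $G_g$ retain the dominant singularity $\rho_0$ for every $g$ --- so $\gamma_1=\rho_0^{-1/2}$ is genus-independent. Hence $G_g(x)\sim\delta_g(1-x/\rho_0)^{5(1-g)/2}$ for $g\ge2$, with a logarithmic singularity when $g=1$. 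Singularity analysis then gives $[x^{2n}]G_g(x)=(1+O(n^{-1/4}))\,d_g\,n^{5g/2-7/2}\rho_0^{-2n}$; multiplying by $(2n)!$ yields the theorem with $\gamma_1=\rho_0^{-1/2}$, whose value, obtained from the polynomial system defining $\rho_0$, begins $3.973$. The $O(n^{-1/4})$ error collects the next terms of the singular expansion, whose quarter-integer exponents come from the square-root substitutions relating the triangulation parameters to the vertex count.

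\textbf{Main obstacle.} The crux is the genus expansion of the triangulation series: one must control the map-surgery recursion well enough to pin down both the exponent $3/2-5g/2$ and a \emph{nonvanishing} leading constant $\tau_g$, and then verify that each of the three composition schemes is exactly critical, so that $\rho_0$ --- hence $\gamma_1$ --- does not move with the genus and the exponent stays at $5(1-g)/2$. The ``one non-planar component'' statement is proved alongside and is what licenses replacing $G_g$ by $e^{\tilde C_0(\rho_0)}\tilde C_g$.
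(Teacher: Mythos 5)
Your overall route (connectivity decomposition, duality with triangulations, singularity analysis, transfer back up) is the same as the paper's, but as written it has three genuine gaps. First, the claim that a $3$-connected graph ``has essentially a unique embedding of each genus'' is false for $g\ge 1$: $3$-connected graphs on positive-genus surfaces can have many inequivalent embeddings, and the ``controllable over-count'' is exactly the hard point. The paper handles it with the facewidth machinery of Robertson--Vitray: embeddings are unique only when the facewidth is at least $2g+3$, so one must also prove that triangulations of small edgewidth (equivalently maps of small facewidth) are asymptotically negligible, which requires a separate surgery argument. The same machinery is needed for your composition schemes: on $\Sg$ the decomposition of a connected (resp.\ $2$-connected) cubic multigraph into a genus-carrying core with planar pieces substituted at edges is only available when the facewidth is large (so that the non-planar $2$- or $3$-connected component is unique), and even then one only gets two-sided bounds plus a separate estimate for the small-facewidth graphs; the exact SPQR/network substitutions you write down do not hold verbatim. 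Second, the duals of $3$-connected cubic maps on $\Sg$ are \emph{not} simple triangulations once $g\ge1$: they are triangulations without separating loops, separating double edges and separating pairs of loops, but they may contain non-separating loops and double edges. Bridging from this class to Gao's simple triangulations (by zipping double edges and cutting along loops, and showing the difference is of smaller singular order) is a substantial part of the argument, as is strengthening Gao's asymptotics to full singular expansions with error terms fine enough for the transfer theorem; citing Gao as a black box is not sufficient.

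Third, the singularity bookkeeping at the top of the hierarchy is wrong. Because the composition schemes are critical, the dominant singularity \emph{moves} at each level: it is the singularity of the inner planar substitution series (the planar network series at the $2$-connected step, the planar loop-rooted series at the connected step), which happens to be the point where the inner series reaches the outer one's singular value. Consequently $C_g$ and $G_g$ do not ``retain the dominant singularity $\rho_0$'' of the planar triangulation series, and $\gamma_1$ is not $\rho_0^{-1/2}$ for that $\rho_0$: in the vertex variable the $3$-connected level has singularity $27/256$, which would give a growth constant near $3.08$, not $3.973$. What is genus-independent is the singularity of the composed system, an algebraic number obtained by solving the planar network/loop-series equations (an explicit sextic in the paper); without carrying out that computation the numerical claim about $\gamma_1$, and indeed the location of the singularity you feed into the transfer theorem, is unsupported. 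The genus-independence of $\gamma_1$ and the exponent $5g/2-7/2$ do survive, but only after this correction.
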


Our next main result concerns multigraphs weighted with the so-called \emph{compensation factor} introduced by
Janson et al.\ \cite{jansonea}. 
This factor is defined as the number of ways to orient and order all edges of the multigraph divided 
by $2^mm!$, which is equal to the number of such oriented orderings if all edges were distinguishable. For example,
a double edge results in a factor $\frac{1}{2}$ and simple graphs are the only multigraphs with compensation factor one. 

\begin{thm}\label{main2}
	The number $w_g(n)$ of vertex-labelled cubic multigraphs embeddable 
	on $\Sg$ with $2n$ vertices weighted by their compensation factor is given by
	\begin{align*}
		w_g(n)&=\br{1+O\br{n^{-1/4}}}e_g n^{5g/2-7/2}\gamma_2^{2n}(2n)!\,,
	\end{align*}
	 where $\gamma_2=\frac{79^{3/4}}{54^{1/2}}\approx 3.606$ 
	 and $e_g$ is a constant depending only on the genus $g$.
\end{thm}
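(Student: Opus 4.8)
The plan is to follow the same generating-function strategy that proves \cref{main1}, carrying the compensation-factor weight through every level of the decomposition. Let $W_{\le g}(x)$ be the exponential generating function of vertex-labelled cubic multigraphs embeddable on $\Sg$, weighted by their compensation factor and with $x$ marking vertices, so that $w_g(n)$ is the coefficient of $x^{2n}/(2n)!$; and let $C_h(x)$ be the analogous series for \emph{connected} weighted cubic multigraphs of orientable genus exactly $h$. Since the orientable genus is additive over connected components, a cubic multigraph is embeddable on $\Sg$ precisely when the genera of its components sum to at most $g$, and the compensation factor is multiplicative over components; marking the genus of each component by $u$ this yields
\begin{equation*}
	W_{\le g}(x)\;=\;\sum_{h=0}^{g}[u^h]\exp\!\Big(\sum_{h'\ge0}C_{h'}(x)\,u^{h'}\Big)\;=\;\exp\!\big(C_0(x)\big)\cdot\sum_{h=0}^{g}[u^h]\exp\!\Big(\sum_{h'\ge1}C_{h'}(x)\,u^{h'}\Big),
\end{equation*}
so everything reduces to the singular behaviour of the series $C_h$ near their common dominant singularity.

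For the planar part I would run the standard cubic-graph decomposition into $2$-pole networks and $3$-connected components: in the planar case the $3$-connected cubic components correspond by duality to $3$-connected planar triangulations, whose counts are provided by the triangulation enumeration of this paper, while loops, digons and triple edges arise only through the parallel-composition operations on networks, each contributing an explicit rational weight to be inserted into the network equations. Solving the weighted algebraic system and applying singularity analysis, $C_0(x)$ acquires a dominant singularity at $\rho=1/\gamma_2$; its location is pinned down by the dominant singularity of the $3$-connected (triangulation) series and its propagation through the network, $2$-connected, and connected equations, and works out to $\gamma_2 = 79^{3/4}/54^{1/2}$. The first correction in the Puiseux expansion of the system at $\rho$ is of fractional order, and tracking it when applying the transfer theorem is what produces the relative error $O(n^{-1/4})$.

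For positive genus I would invoke the topological recursion exactly as in the proof of \cref{main1} and in the spirit of Chapuy et al.\ \cite{Chapuy2011-enumeration-graphs-on-surfaces}: in an embedded connected cubic multigraph of genus $h$ the handles are carried by the $3$-connected components, and summing over the possible distributions of the genus among them expresses $C_h(x)$ through $C_0(x)$, its derivatives, and the higher-genus $3$-connected (triangulation) series. The conclusion is that every $C_h$ has its dominant singularity at the same $\rho=1/\gamma_2$, with singular exponent $5(1-h)/2$ — a logarithm when $h=1$, decreasing by $5/2$ with each further unit of genus — so that $[x^{2n}]C_h(x)\sim(\text{const})\,(2n)^{5h/2-7/2}\gamma_2^{2n}$. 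Substituting into the displayed identity, the contribution of a single genus-$g$ component, $\exp(C_0(x))\,C_g(x)$, has singular exponent $5(1-g)/2$, while a contribution of $k\ge2$ components of genera $h_1,\dots,h_k\ge1$ with $\sum_i h_i=g$ has exponent $\sum_i 5(1-h_i)/2 = 5(1-g)/2+\tfrac52(k-1)$ and is strictly less singular. Hence $\exp(C_0(x))\,C_g(x)$ is the dominant part of $W_{\le g}(x)$ — the analytic counterpart of the statement that almost all such multigraphs have exactly one non-planar component — and the transfer theorem gives $w_g(n)=\big(1+O(n^{-1/4})\big)\,e_g\,n^{5g/2-7/2}\gamma_2^{2n}(2n)!$.

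The main obstacle is the positive-genus step: establishing the exact singular exponent of $C_h(x)$ demands a precise account of how the genus of an embedded cubic multigraph splits over its $3$-connected components and of the singular expansions of the higher-genus triangulation series, all carried out coherently with the compensation-factor weight. Compared with \cref{main1} the only genuinely new work is this weight bookkeeping at the network and $3$-connected levels; it is elementary but unavoidable, and it is exactly what replaces $\gamma_1$ by $\gamma_2 = 79^{3/4}/54^{1/2}$ while leaving the polynomial exponent $5g/2-7/2$, a purely topological quantity, unchanged.
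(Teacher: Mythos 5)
Your overall pipeline (planar decomposition via networks and $3$-connected cores dual to triangulations, exponential formula over components with the weight multiplicative, dominance of a single non-planar component, transfer theorem) matches the paper, and marking the exact genus of each component is a harmless variant of the paper's upper/lower-bound sandwich in \eqref{eq:all} and \eqref{lowerbd}. The genuine gap is in the positive-genus step, which you yourself flag as ``the main obstacle'' but resolve with a mechanism that does not work as stated. You assert that in a connected cubic multigraph of genus $h$ ``the handles are carried by the $3$-connected components'' and that one can sum over distributions of the genus among them. Orientable genus is additive over connected components and over blocks, but \emph{not} over $3$-connected components, so this is not a valid exact decomposition; and even where all the genus does sit in one $3$-connected piece, that piece has many embeddings on $\Sg$ (Whitney fails for $g\ge1$), so its generating function cannot simply be identified with a triangulation series. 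The paper's proof replaces both claims by the facewidth machinery: by Robertson--Vitray (\Cref{robertsonvitray}, \Cref{coro:rv}) an embedding of facewidth at least $2$ (resp.\ $3$) has a \emph{unique} non-planar $2$-connected (resp.\ $3$-connected) component, all others planar -- so in the dominant regime there is no distribution of genus at all -- and a $3$-connected graph of facewidth at least $2g+3$ has a unique embedding, which is what yields $2\ol{\three}^{\fw\ge 2g+3}_g=\ol{\triangm}^{\ew\ge2g+3}_g$ and hence the link to the triangulation class $\class{\triangm}_g$ of \Cref{prop:M-asymptotic}.

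Because of this, the decomposition identities you want (\Cref{2conngf,conngf}) are only valid for graphs of facewidth $\ge3$, resp.\ $\ge2$, and the proof needs two extra negligibility arguments that your sketch does not contain: triangulations of edgewidth $\le 2g+2$ are asymptotically negligible (the surgery along a short essential cycle in the proof of \Cref{coro:3-conn-graphs}), and cubic multigraphs of facewidth $1$ or $2$ are negligible (\Cref{lem:small-fw}: cut along an essential circle meeting the embedding in one or two edges, then induct on the genus). Without these steps you have neither an upper nor a lower bound relating $C_h$ to the triangulation series, so the claimed singular exponent $5(1-h)/2$ (log for $h=1$) and therefore the exponent $5g/2-7/2$ and the error $O(n^{-1/4})$ are unsupported. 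By contrast, the ``weight bookkeeping'' you single out as the only new work relative to \Cref{main1} is the easy part: in the paper it amounts to the substitutions $z=v^{1/3}/2$, $w=v^{1/6}/2$ of \Cref{def:univariate} and adjusted initial conditions in the network and $Q$ systems, which move the singularity to $\rho_Q=54/79^{3/2}$ and give $\gamma_2=79^{3/4}/54^{1/2}$.
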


\Cref{main2} can be used to derive the asymptotic number and structural properties of graphs embeddable on $\Sg$ \cite{prep}.
\emph{Planar} cubic multigraphs weighted by the compensation factor were counted by 
Kang and \L{}uczak \cite{Kang2012}. The 
discrepancy to their exponential growth constant $\gamma\approx 3.38$ is due to incorrect initial conditions, 
as pointed out by Noy et al.\ \cite{nrr}. While the explicit value of the correct 
exponential growth constant $\gamma$ was not determined in \cite{nrr}, the implicit 
equations given there yield the same exponential growth constant $\gamma_2$ as in \Cref{main2}.

Our methods also allow us to count cubic \emph{simple} graphs (graphs without loops and multiedges) embeddable on $\Sg$.

\begin{thm}\label{main3}
	The number $s_g(n)$ of vertex-labelled cubic simple graphs embeddable on $\Sg$ with $2n$ vertices is given by
	\begin{align*}
		s_g(n)&=\br{1+O\br{n^{-1/4}}}f_g n^{5g/2-7/2}\gamma_3^{2n}(2n)!\,,
	\end{align*}
  where $\gamma_3$ 
	is an algebraic constant independent of the genus $g$ and $f_g$ is a constant 
	depending only on $g$. The first digits of $\gamma_3$ are $3.133$.
\end{thm}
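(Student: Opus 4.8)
The plan is to run the same machinery as in the proofs of \Cref{main1,main2}, separating the two contributions to the asymptotics: the exponential growth rate $\gamma_3$, which depends only on the \emph{planar} part of the problem, and the subexponential factor $n^{5g/2-7/2}$, which is of topological origin and is therefore identical to the one appearing in \Cref{main1,main2}. As there, the backbone is the decomposition of a $2$-connected cubic graph into its $3$-connected components arranged along a tree whose nodes are of series, parallel or $3$-connected type; on generating functions this is encoded by \emph{cubic networks}, cubic graphs carrying two distinguished degree-two poles, with an extra variable $y$ marking the virtual edge between the poles. Two structural facts carry over unchanged. First, the genus is additive along the decomposition tree, so a cubic graph embeddable on $\Sg$ has $3$-connected components of total genus at most $g$; hence all but finitely many of them are planar, and the genus-$g$ generating function is obtained from the planar one by a finite composition scheme distributing the genus over the $3$-connected components. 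Second, a $3$-connected cubic multigraph on at least four vertices is automatically simple, so the class of admissible $3$-connected building blocks is literally the same as in \Cref{main1}: they are counted with their rotation-system embeddings, i.e.\ as $3$-connected cubic maps on $\torus{g'}$, equivalently by duality as triangulations, whose enumeration on each $\torus{g'}$ is supplied by the work of Gao and whose singular behaviour is governed by the topological exponent $5(g'-1)/2$.

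The only genuinely new ingredient is therefore the planar count. To enforce simplicity one must forbid precisely those series- and parallel-compositions in the network grammar that would produce a loop or a multiple edge; translating this restriction into a combinatorial specification yields a modified system of functional equations for the generating function $D_s(x,y)$ of planar simple cubic networks, from which the generating functions of $2$-connected and of connected planar simple cubic graphs are obtained through the same dictionary relating these classes as in the proof of \Cref{main1}. I would solve the network system by the quadratic method, exactly as its analogue was solved there, locate its dominant singularity $\rho_3$ together with the associated square-root-type singular expansion, and set $\gamma_3 := \rho_3^{-1/2}$; this is an algebraic number whose numerical value is approximately $3.133$, recovering the exponential growth constant of planar cubic simple graphs found by Bodirsky et al.\ \cite{bodirsky2007-cubic-graphs}. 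Plugging the network dictionary into the singular transfer theorem then gives the planar statement, and feeding this into the genus-$g$ composition scheme --- in which each non-planar $3$-connected component contributes a factor controlled by the triangulation singular exponent --- upgrades the subexponential factor to $n^{5g/2-7/2}$ and produces the genus-dependent constant $f_g$, the error term $O(n^{-1/4})$ propagating from the singular expansions of the underlying series exactly as in \Cref{main1,main2}.

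The main obstacle is the planar step, and within it two points in particular. First, one has to pin down the simplicity constraint correctly and check that the resulting network system is still amenable to the quadratic method and has a \emph{unique} square-root dominant singularity --- the combinatorial bookkeeping of which small configurations are forbidden being exactly the delicate part of any cubic simple-graph enumeration. Second, one has to re-verify the subcriticality hypothesis of the composition scheme for the new system, namely that the dominant singularity of the composed genus-$g$ generating function is inherited from the planar network function rather than from the triangulation series; since $\rho_3$ differs from the corresponding quantity in \Cref{main1}, this comparison must be redone, although I expect it to hold for the same reasons. Once these are settled, the extraction of $\gamma_3$ and $f_g$ and the transfer of the error term are routine variations on the computations already carried out for \Cref{main1,main2}.
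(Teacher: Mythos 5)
Your proposal is correct in outline and, for the higher-genus part, follows the same route as the paper: $3$-connected cores are related to cubic maps and hence (by duality) to the triangulation classes of \Cref{maps}, which is where the exponent $5g/2-7/2$ comes from, and the passage from $3$-connected to $2$-connected to connected to arbitrary graphs is exactly the machinery of \Cref{2conngf,conngf,lem:small-fw} and \eqref{eq:all}. Where you genuinely diverge is the planar/simplicity step: you would set up a separate grammar of \emph{simple} cubic networks in which the loop- and multiedge-creating compositions are forbidden, in the style of Bodirsky et al.\ \cite{bodirsky2007-cubic-graphs}, and solve it by the quadratic method. The paper instead never introduces a second system: it carries the variables $z$ (double edges) and $w$ (loops) through one multigraph system for $\network$ and $Q$, and extracts the simple case at the end via the substitution $F^s(v)=F(v^{1/4},v^{1/6},0,0)$ of \Cref{def:univariate}, so that only the numerical data of \Cref{planarconn} change ($\rho_3$ being the stated algebraic number, $\gamma_3=\rho_3^{-1/2}\approx 3.133$); both routes give the same constant, and yours buys an independently specified planar system at the cost of redoing the singularity analysis, while the paper's buys uniformity across \Cref{main1,main2,main3}. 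Two cautions where your wording should be tightened to match what is actually needed. First, ``genus additive along the decomposition tree'' of $3$-connected components is not the structural input used (and is not what one should rely on); the correct tool is \Cref{strongemb} for components together with \Cref{coro:rv} (facewidth at least two, resp.\ three, forces a unique non-planar block, resp.\ $3$-connected component) plus the surgery bounds of \Cref{lem:small-fw} showing small facewidth is negligible. Second, for the connected-from-$2$-connected step one cannot stay inside the class of \emph{simple} $2$-connected graphs: the $2$-connected core of a simple connected cubic graph may contain double edges, which is precisely why the paper notes that even in the simple case the weighted series $\two_g(v)$ with $z=\tfrac12 y^2$ must be used in \Cref{thm:connected}; your network-level prohibition handles this correctly in the planar case, but the genus-$g$ composition must keep the multigraph $2$-connected class with that substitution rather than a ``simple $2$-connected'' class. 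With these adjustments your plan carries through and the criticality check you flag is indeed the analogue of the verification $\rho_Q/(1-Q(\rho_Q))^3=\rho_{\two}$ done in the proof of \Cref{thm:connected}.
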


The exponential growth constant $\gamma_3$ coincides with the growth constant calculated for labelled cubic planar graphs by 
Bodirsky et al.\ \cite{bodirsky2007-cubic-graphs}.

The final result describes the structure of cubic multigraphs embeddable on $\Sg$.

\begin{thm}\label{main4}
	Let $g\ge 1$ and let $G$ be a graph chosen uniformly at random from the class of vertex-labelled cubic multigraphs, 
	cubic weighted 
	multigraphs, or cubic simple graphs embeddable on $\Sg$ with $n$ vertices, respectively. Then with 
	probability $1-O(n^{-2})$, $G$  has one 
	component that is embeddable on $\Sg$ but not on $\torus{g-1}$ and all its other components are planar.
\end{thm}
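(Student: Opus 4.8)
The plan is to combine the additivity of the orientable genus over connected components with the singularity analysis already developed for \Cref{main1,main2,main3}. Write $C_h(x)$ for the generating function of \emph{connected} cubic multigraphs of genus exactly $h$, so that $\exp(C_0(x))$ counts all planar cubic multigraphs, and let $\rho=\gamma_1^{-1}$ be their common dominant singularity; the argument for the weighted multigraph and simple graph classes is identical, with the corresponding connected series and $\rho=\gamma_2^{-1}$, resp.\ $\rho=\gamma_3^{-1}$, since the compensation factor is multiplicative over components and a disjoint union of simple graphs is simple. The structural input is the Battle--Harary--Kodama--Youngs additivity theorem: a (multi)graph embeds on $\Sg$ if and only if the genera of its connected components sum to at most $g$. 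Hence the generating function of cubic multigraphs embeddable on $\Sg$ is
\[
  M_g(x)=\sum_{j=0}^{g}[u^j]\exp\!\Big(\sum_{h\ge 0}u^h C_h(x)\Big)
  =\exp(C_0(x))\,\Big(1+\sum_{j=1}^{g}[u^j]\exp\!\Big(\sum_{h\ge 1}u^h C_h(x)\Big)\Big),
\]
whereas the ``good'' multigraphs of \Cref{main4} --- exactly one component of genus $g$ and all others planar --- are counted by $C_g(x)\exp(C_0(x))$.

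The next step is to bound the generating function $M_g(x)-C_g(x)\exp(C_0(x))$ of the ``bad'' multigraphs. Expanding the bracket above and cancelling the single term $C_g(x)$, this equals $\exp(C_0(x))$ times a finite sum of terms, each of which is the constant $1$ or a product $C_{h_1}(x)\cdots C_{h_k}(x)$ with $k\ge 1$, all $h_i\ge 1$, and either $h_1+\dots+h_k\le g-1$ or ($k\ge 2$ and $h_1+\dots+h_k\le g$). Using the transfer theorems together with the inputs (established while proving the first three theorems) that $0<\exp(C_0(\rho))<\infty$ and that the number of connected cubic multigraphs of genus exactly $h$ with $2n$ vertices grows like $n^{5h/2-7/2}\gamma_1^{2n}(2n)!$ up to a constant, one checks that a product term of the above form contributes to the count of cubic multigraphs on $2n$ vertices of order $n^{\tfrac52(h_1+\dots+h_k)-\tfrac52 k-1}\gamma_1^{2n}(2n)!$ up to a polylogarithmic factor (and of order $n^{-7/2}\gamma_1^{2n}(2n)!$ for the term $1$, which is just the planar count); over the admissible ranges this exponent is maximised at $5g/2-6$. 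Since \Cref{main1} gives that $m_g(n)$ is of order exactly $n^{5g/2-7/2}\gamma_1^{2n}(2n)!$, dividing and absorbing the polylogarithmic factors yields
\[
  \Pr[\,G\text{ is bad}\,]=O\!\big(n^{(5g/2-6)-(5g/2-7/2)}\log n\big)=O(n^{-5/2}\log n)=O(n^{-2}),
\]
and the identical computation with the exponents of \Cref{main2}, resp.\ \Cref{main3}, disposes of the weighted and simple cases.

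The main obstacle is not any individual estimate but making sure the inputs are in place: one needs the \emph{per-genus} connected series $C_h$ --- not merely the aggregate classes of \Cref{main1,main2,main3} --- to be singular exactly at the common point $\rho$ with the stated polynomial order, and one needs the strict superadditivity $\tfrac52(h_1+\dots+h_k)-\tfrac52 k-1<\tfrac52 h-\tfrac72$ for $h=h_1+\dots+h_k$ and $k\ge 2$, i.e.\ that splitting the genus among several connected components costs at least $5/2$ in the exponent. This gap is precisely what forces the genus to concentrate on one component, and both inputs should emerge from the decomposition and singularity analysis already carried out for the first three theorems, so that \Cref{main4} reduces to the bookkeeping above.
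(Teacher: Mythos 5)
Your argument is correct and is essentially the paper's: both proofs decompose along connected components, identify the dominant contribution as one component of genus exactly $g$ together with planar components, show that every other distribution of genera loses at least a factor of order $n^{5/2}$ (up to logarithms) in the coefficient asymptotics, and divide by the totals from \Cref{main1,main2,main3}; this is precisely how \eqref{eq:a} and \eqref{eq:b} are combined with \Cref{tt} in the paper, which also remarks that $O(n^{-2})$ is not sharp and could be improved to $O(n^{-5/2+\varepsilon})$. The only genuine difference is bookkeeping. You work with the exact-genus connected series $C_h$ and genus additivity, which gives an exact identity for the class and for the good graphs $C_g\exp(C_0)$; the paper instead uses the series of connected multigraphs \emph{strongly embeddable} on $\torus{g_i}$ (\Cref{strongemb}, \Cref{thm:connected}), whose classes overlap across genera, and therefore sandwiches the full generating function between the upper bound \eqref{eq:all}--\eqref{eq:general:upperbound} and the lower bound \eqref{lowerbd}. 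That choice is exactly why the input you flag as your main obstacle --- the singular behaviour of the per-genus connected series --- never has to be addressed in the paper; if you keep your exact decomposition you must supply it, but it does follow from \Cref{thm:connected} and \Cref{coro:gf-conn}: $C_h$ is coefficient-wise at most the paper's genus-$h$ series, and every connected multigraph counted there whose genus is not exactly $h$ has genus at most $h-1$ and hence is counted by one of the series of smaller genus (\Cref{strongemb}), whose coefficients are smaller by a factor of order $n^{5/2}$, so your claimed asymptotics for $C_h$ hold. Finally, your coefficient-level convolution bounds with polylogarithmic slack are legitimate since all series involved have nonnegative coefficients and the same dominant singularity, whereas the paper multiplies singular expansions inside its congruence framework (\Cref{congruent:meta}) before transferring; either way the exponent gap of $5/2$ survives and yields the probability $1-O(n^{-2})$.
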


\subsection{Proof techniques}

To derive our results we will use topological manipulations of surfaces (described in \Cref{section:surgeries}), constructive decomposition 
of graphs along connectivity, and singularity analysis of generating functions.

More precisely, in order to enumerate cubic multigraphs we use constructive decompositions along connectivity. 
The basic building blocks in the decomposition are $3$-connected cubic graphs which we will
then relate to their corresponding cubic \emph{maps}.
Note that due to Whitney's Theorem \cite{whitney} $3$-connected planar graphs have a
unique embedding on the sphere. Therefore this step causes no problem for the planar case.
For surfaces of positive genus, however, embeddings of $3$-connected graphs are not unique. Following an
idea from \cite{Chapuy2011-enumeration-graphs-on-surfaces}, we circumvent this problem by making use of the
concept of the \emph{facewidth} of a graph and applying results by Robertson and 
Vitray \cite{robertson1990-representativity} which relate $3$-connected graphs and maps. 

Counting $3$-connected cubic maps on $\Sg$ is a challenging task. To circumvent this challenge, we shall use their duals, 
triangulations. In fact, Gao \cite{Gao1991-2-connected-projective, Gao1992-2-connected-surface, Gao1993-pattern} enumerated 
triangulations on 
$\Sg$ with various restrictions on the containment of loops and multiedges. However, it turns out that the duals of 
$3$-connected cubic maps on $\Sg$ have very specific constraints that have not been considered by Gao. 
In this paper we investigate therefore such triangulations by relating them to simple triangulations counted by 
Gao \cite{Gao1992-2-connected-surface} (see \Cref{simplerefined,prop:M-asymptotic}). 
We strengthen Gao's results (see \Cref{simplerefined,looplessrefined}) and derive very precise singular expansions of generating functions.
This enables us to apply singularity analysis 
to the generating functions of these triangulations, as well as to the generating functions of all other classes of 
maps and graphs considered in this paper.

The rest of the paper is organised as follows. In the next section we introduce some basic notions and notations. 
In \Cref{maps} we enumerate the triangulations that occur as duals of $3$-connected cubic maps and 
in \Cref{graphs} we prove the main results (\Cref{main1,main2,main3,main4}) after giving a constructive 
decomposition along connectivity for cubic graphs embeddable on an orientable surface. Our 
strengthening of Gao's results and proofs is provided in the appendix.

	\section{Preliminaries}

A graph $G$ is called \emph{simple} if it does not contain loops or multiedges.
If in a multigraph there are more than two edges connecting the same pair of vertices, we call
each pair of those edges a \emph{double edge}. Therefore, every \emph{multiedge} consisting of $n$ edges between the same two vertices contains $\binom{n}{2}$ double edges. If $e$ is a loop, we denote by the \emph{base} of $e$ the unique vertex
$e$ is incident with. Similarly, we say that $e$ is \emph{based at} its base. An edge that is neither a loop nor part of a double edge 
is a \emph{single edge}. An edge $e$ of a connected multigraph $G$ is called a \emph{bridge} if deleting $e$ disconnects $G$.

A multigraph is called \emph{cubic} if each vertex has degree three. By $\Phi$ we denote the cubic multigraph with two
vertices $u,v$ and three edges between $u$ and $v$ (i.e.\ a triple edge). Given a connected cubic multigraph $G$, let $k$ and $l$ denote
the number of double edges and loops of $G$, respectively. We define the \emph{weight} of $G$ to be
\begin{equation*}
  W(G) =
  \begin{cases}
    \frac16 & \text{if } G = \Phi,\\
    2^{-(k+l)} & \text{otherwise}.
  \end{cases}
\end{equation*}
If $G$ is not connected, we define $W(G)$ as the product of weights of its components. For cubic multigraphs,
this weight coincides with the compensation factor introduced by Janson et al.\ \cite{jansonea}. Throughout this
paper, when we refer to a \emph{weighted cubic multigraph} $G$, the weight in consideration will be $W(G)$.

Throughout this paper let $g\ge0$ be fixed and let $\Sg$ be the orientable surface of genus $g$ with a fixed orientation.
An \emph{embedding} of a multigraph $G$ on $\Sg$ is a drawing of $G$ on $\Sg$ without crossing edges.
An embedding where additionally all faces are homeomorphic to open discs is called a \emph{$2$-cell embedding}.
Multigraphs that have an embedding are called \emph{embeddable} on $\Sg$ and multigraphs that have a $2$-cell embedding are called
\emph{strongly embeddable}. A $2$-cell embedding of a strongly embeddable multigraph is also called \emph{map}.
A \emph{triangulation} is a map where each face is bounded by a triangle. These triangles 
might be degenerate, meaning that three loops with the same base, a double edge and a loop based at one of the end vertices 
of the double edge, or a loop and an edge from the base of the loop to a vertex of degree one are considered to be 
triangles as well. If $S$ is the disjoint union of $\torus{g_1},\ldots,\torus{g_n}$ and for each $i=1,\ldots,n$, $M_i$ is a 
$2$-cell embedding of a graph $G_i$ on $\torus{g_i}$, then the induced function $N:(G_1\cup\cdots\cup G_n)\to S$ is 
called a \emph{map} on $S$. Triangulations on $S$ are defined analogously.
We denote by $V(M)$, $E(M)$, and $F(M)$ the set of all vertices, edges, and faces 
of an embedding $M$, respectively.

We obtain the \emph{dual map} of a map by taking one vertex for each face and connecting two of these vertices if the 
corresponding faces have a common edge on their boundaries. Note that the dual map has multiedges if two faces 
of the original (\emph{primal}) map have more than one edge in common. It is well known that the dual of a map is again a 
map.

For each vertex $v\in V(M)$ of a map $M$, the edges and faces incident to $v$ have a canonical cyclic order 
$e_0,f_0,e_1,f_1,\ldots,e_{d-1},f_{d-1}$ by the way they are arranged around $v$ (in counterclockwise direction). 
Note that faces can appear multiple times and  that a loop based at $v$ 
will appear twice in this sequence. To avoid ambiguities, we make the two ends of the loop distinguishable in 
this sequence (e.g. by using half-edges or by orienting each loop). A triple $(v,e_i,e_{(i+1)\bmod d})$
of a vertex and two consecutive edges in the cyclic sequence is called a \emph{corner} (at $v$). We also say that
$(v,e_i,e_{(i+1)\bmod d})$ is a corner \emph{of the face $f_i$}.
When we enumerate maps, we always work with maps with one distinguished corner, called the \emph{root} of the map. 
If $(v,e_i,e_{i+1})$ is the root corner, we will call $v$ the \emph{root vertex}, $e_i$ the \emph{root edge}, 
and $f_i$ the \emph{root face}.

\subsection{Topological operations: surgeries}\label{section:surgeries}

When dealing with maps on $\Sg$ we will perform operations on the surfaces that are commonly known as \emph{cutting}
and \emph{glueing}. In the course of these operations we will encounter \emph{surfaces with holes}. A
\emph{surface with $k$ holes} is a surface $\Sg$ 
from which the disjoint interiors $D_1,\ldots,D_k$ of $k$ closed discs have been deleted. Each $D_i$ is called 
a \emph{hole}. Let $S$ be the disjoint union of finitely many orientable surfaces, at least one of them
with holes, and suppose that $X$ and $Y$ are homeomorphic subsets of the boundary of $S$. By \emph{glueing $S$
along $X$ and $Y$} we mean the operation of identifying every point $x\in X$ with $f(x)$ for any fixed homeomorphism
$f\colon X\to Y$. The identification of $X$ and $Y$ induces a surjection $\sigma$ from $S$ onto the resulting space
$\tilde S$. We write $\tilde X$ for the subset $\sigma(X)=\sigma(Y)$ of $\tilde S$.

We will glue along subsets in two particular situations: when $X$ and $Y$ are
\begin{enumerate}
\item\label{glue:twoholes} disjoint boundaries of holes of $S$ or
\item\label{glue:zipp} sub-arcs of the boundary of the same hole that meet precisely in their endpoints.
\end{enumerate}
For \ref{glue:zipp}, we will additionally assume that the homeomorphism $f\colon X\to Y$ induces the identity on 
$X\cap Y$.
In either case, the space $\tilde S$ resulting from glueing along $X$ and $Y$ is again the disjoint union of finitely
many orientable surfaces with holes, with the number of components being either the same or one less than that for $S$. The 
subset $\tilde X$ of $\tilde S$ is a circle in Case~\ref{glue:twoholes} and homeomorphic to the closed unit interval in
Case~\ref{glue:zipp}. If $S$ has $k$ holes, then $\tilde S$ has $k-2$ holes in Case~\ref{glue:twoholes} and $k-1$ holes
in Case~\ref{glue:zipp}. A special case of~\ref{glue:twoholes} is when one of the components of $S$ is a disc (i.e.\ a
sphere with one hole) and $Y$ is its boundary. In this case, we say that we \emph{close the hole bounded by $X$ by 
inserting a disc}.

If in addition we are given a map $M$ on $S$, then we will glue along $X$ and $Y$ only if either both are contained in a
face or both are unions of the same number of vertices and the same number of edges of $M$. We also assume the
homeomorphism $f\colon X\to Y$ maps vertices to vertices and edges to edges. Under these conditions, we obtain a
map $\tilde M$ on $\tilde S$. The subset $\tilde X$ of $\tilde S$ is then either a subgraph of $\tilde M$ or
a subset of a face of $\tilde M$. Observe that the surjection $\sigma\colon S\to\tilde S$ induces a bijection
between the sets of corners of $M$ and of corners of $\tilde M$. We will refer to this bijection by saying that
every corner of $M$ \emph{corresponds} to a corner of $\tilde M$.

If $\tilde S$ is obtained from $S$ by glueing along $X$ and $Y,$ we also say that $S$ is obtained from $\tilde S$ by
\emph{cutting along $\tilde X$}. The operation of cutting along a circle or interval is well defined in the sense
that if $\tilde S$ and $\tilde X$ are given, then $S$, $X$, and $Y$ are unique up to homeomorphism. If $S$ has more components than $\tilde S$, we call $\tilde X$
\emph{separating}. Cutting along a separating circle on $\Sg$ and closing the resulting holes by inserting discs will
yield two surfaces $\torus{g_1},\torus{g_2}$ with $g_1+g_2=g$. Cutting along a non-separating circle and closing the
holes by inserting discs reduces the genus by one.

A combination of cutting and glueing surfaces along some subsets of their boundaries is called a \emph{surgery}.
Again, if a map $\tilde M$ results from performing surgeries on a map $M$, then every corner of $\tilde M$
corresponds to a corner of $M$. The following result on embeddability can be obtained by performing simple surgeries.

\begin{prop}\label{strongemb}
  Let $G$ be a multigraph.
\begin{enumerate}
	\item\label{strong:face} If $M$ is an embedding of $G$ on $\Sg$ and $f$ is a face of $M$ that is not a disc, then $f$ contains
		a circle $C$ such that cutting along $C$ results in one or two surfaces such that either two of them 
		contain a vertex of $M$ or one of them contains all vertices and edges of $M$ and has genus less than $g$.
	\item\label{strong:minimal} If $G$ is connected and $g$ is minimal such that $G$ is embeddable on $\Sg$, 
		then every embedding of $G$ on $\Sg$ is a $2$-cell embedding. In particular, $G$ is strongly embeddable 
		on $\Sg$.

	\item\label{strong:emb} $G$ is embeddable on $\Sg$ if and only if each connected component $C_i$ of $G$ is strongly 
		embeddable on a surface $\torus{g_i}$ so that $\sum_i g_i\le g$.
\end{enumerate}

\end{prop}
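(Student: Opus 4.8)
The plan is to prove \ref{strong:face} directly by elementary surface topology and then to deduce \ref{strong:minimal} and \ref{strong:emb} from it. For \ref{strong:face}, I regard the face $f$ as a subsurface of $\Sg$: it is open, connected, and its complement is a finite graph, so $f$ has finite topological type, and since it is not an open disc it contains a simple closed curve $C$ that does not bound a disc in $f$. I fix such a $C$ and cut $\Sg$ along it; as $C$ lies in the interior of $f$ it is disjoint from $G$, so the cut leaves every vertex and edge of $M$ untouched. If $C$ is non-separating, cutting and capping the two resulting holes yields a single surface of genus $g-1$ carrying all of $M$, which is the second alternative. If $C$ is separating we obtain two surfaces, and capping their holes gives $\torus{g_1},\torus{g_2}$ with $g_1+g_2=g$. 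If each contains a vertex of $M$ we are in the first alternative; otherwise one of them, say $\torus{g_2}$, contains no vertex of $M$, hence --- since every edge is connected and disjoint from $C$, and a loop carries its base vertex --- no edge either, so it lies inside $f$ with boundary circle $C$, and since $C$ does not bound a disc in $f$ it is not a disc, forcing $g_2\ge1$; then $\torus{g_1}$ has genus at most $g-1$ and contains every vertex and edge of $M$, which is again the second alternative.

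For \ref{strong:minimal}, let $M$ be an embedding of the connected graph $G$ on $\Sg$ and suppose some face is not a disc. Apply \ref{strong:face}. The alternative in which two of the surfaces obtained by cutting each contain a vertex of $M$ cannot occur, since the cutting curve is disjoint from $G$ and $G$ is connected, hence lies wholly on one side. So the other alternative holds, giving an embedding of $G$ on a surface of genus strictly less than $g$ and contradicting minimality. Thus every face of $M$ is a disc, i.e.\ $M$ is a $2$-cell embedding, and since $G$ is embeddable on $\Sg$ it is strongly embeddable on $\Sg$.

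For \ref{strong:emb}, the backward direction is a construction: given embeddings of the components $C_i$ on $\torus{g_i}$, remove from a face of each a small open disc avoiding $C_i$ and glue the resulting boundary circles pairwise as in Case~\ref{glue:twoholes} of \Cref{section:surgeries} (the iterated connected sum); this embeds $\bigsqcup_i C_i$ on $\torus{\sum_i g_i}$, and attaching $g-\sum_i g_i$ trivial handles produces an embedding on $\Sg$. For the forward direction I induct on $(g,k)$ in lexicographic order, where $k$ is the number of components and $g_i$ is the minimum genus of $C_i$, so that by \ref{strong:minimal} each $C_i$ is strongly embeddable on $\torus{g_i}$; it remains to show $\sum_i g_i\le g$. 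The cases $k=1$ and $g=0$ are immediate. If $k\ge2$ and $g\ge1$, then in a given embedding $M$ of $G$ some face is incident with two distinct components of $G$ --- otherwise the components, partitioned by the equivalence relation generated by sharing an incident face, would disconnect $\Sg$ --- and such a face is not a disc, since its boundary walk would otherwise lie in a single component. Applying \ref{strong:face} to it: in the first alternative $G$ embeds on a surface of genus $g'<g$ and the induction hypothesis applies; in the second alternative $G$ splits as $G_1\sqcup G_2$ with $G_j$ embedded on $\torus{g_j'}$ and $g_1'+g_2'=g$, each $G_j$ being a nonempty union of components of $G$, so that it has some number $k_j<k$ of components and $(g_j',k_j)$ is lexicographically smaller than $(g,k)$; applying the induction hypothesis to each $G_j$ and summing yields $\sum_i g_i\le g_1'+g_2'=g$.

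The main obstacle is \ref{strong:face}: one must be careful about the precise topological meaning of a face that is not a disc (a finite-type, possibly non-compact surface) and about the bookkeeping of genera and of which component of a cut surface the graph lands in. Everything else reduces to routine use of the surgeries from \Cref{section:surgeries} together with the two inductions.
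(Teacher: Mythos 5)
Your proof is correct, and at the top level it follows the same plan as the paper's: everything is funneled through part (i), with (ii) an immediate consequence and (iii) obtained by repeated cutting. The differences are in how the circle in (i) is produced and in how much of (iii) is made explicit. The paper takes $\Gamma$, the curves running parallel to the boundary of $f$ at a small distance $\varepsilon$ inside $f$, and splits into cases according to the number of components of $\Gamma$; you instead invoke the fact that a finite-type face which is not a disc contains a curve that is essential in $f$, and then distinguish whether that curve separates $\Sg$ or not. Your route is a little more abstract but avoids the paper's somewhat delicate discussion of the offset curves, at the price of having to argue (as you do) that the vertex-free side of a separating curve lies inside $f$ and therefore cannot be a disc. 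For (iii) the paper contents itself with ``recursively cutting along circles provided by (i)''; you supply the bookkeeping that this remark hides --- the lexicographic induction on (genus, number of components), the observation that some face must be incident with two components and hence is not a disc, and the converse direction via iterated connected sums, which the paper does not even mention. Two cosmetic points: in your proof of (iii) you swapped the names of the two alternatives of (i) (your ``first alternative'' is the paper's second and vice versa), though both cases are handled correctly; and for three or more components the connected-sum construction should be phrased iteratively, since one may need to remove more than one disc from a given surface --- clearly what you intend.
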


\begin{proof}
	For \ref{strong:face}, let $\Gamma$ be the union of all curves in $f$ that run along the boundary at a small constant
	distance $\varepsilon>0$. Suppose first that $\Gamma$ has only one component. Then this component is a circle
	$C$. Cutting along $C$ results in one surface $S_1$ that contains all vertices and edges of $M$ and one
	component $S_2$ that contains all points of $f$ with distance greater than $\varepsilon$ to the boundary. Note that $S_2$ cannot be a sphere with one hole as otherwise $f$
	would have been a disc. Thus, the genus of $S_1$ is smaller than $g$ and $C$ is the desired circle.
	
	Now suppose that $\Gamma$ has more than one component. Each component is a circle and cutting along all these
	circles results in one surface $S$ that contains all points of $f$ with distance greater than $\varepsilon$ to the boundary and surfaces $S_1,\dotsc,S_k$ which all contain
	vertices from $M$. If $k\ge2$, then there is a circle $C$ in $f$ that separates the vertices in at least two
	of these components. Otherwise, as $S$ has at least two holes, let $C$ be one of the components of $\Gamma$;
	this is a non-contractible circle on $\Sg$ that is not separating. Thus, cutting along $C$ reduces the genus
	of the surface.
	
	Claim \ref{strong:minimal} follows immediately from \ref{strong:face}. Finally, \ref{strong:emb} follows 
	by recursively cutting along circles provided by \ref{strong:face}.
\end{proof}

\subsection{Generating functions and singularity analysis}\label{section:singularity}

We will use generating functions to enumerate the various classes of maps, graphs and multigraphs we consider.
Unless stated otherwise the formal variables $x$ and $y$ will always mark vertices and edges, respectively. 
Generating functions for classes of \emph{maps} will be \emph{ordinary} unless stated otherwise. 
Generating functions for \emph{multigraphs} will be \emph{exponential} in $x$, because we consider \emph{vertex-labelled} multigraphs.  
If $\class{A}$ is a class of maps, we write $\class{A}(m)$ of $\class{A}$ for the subclass containing all maps
with exactly $m$ edges. The generating function $\sum_m |\class{A}(m)|y^m$ will be denoted by $A(y)$.
If $\class{B}$ is a class of multigraphs, we write $\class{B}(n)$ for the subclass of $\class{B}$ containing all multigraphs
with exactly $n$ vertices. The generating function $\sum_n \frac{|\class{B}(n)|}{n!}x^n$ will be denoted by $B(x)$.
For an ordinary generating function $F(z)=\sum_n f_nz^n$ we use the notation $[z^n]F(z):=f_n$.
For an exponential generating function $G(z)=\sum\frac{g_n}{n!}z^n$ we write $[z^n]G(z):=\frac{g_n}{n!}$.

If two generating functions $F(z), G(z)$ satisfy $0 \leq [z^n]F(z) \leq [z^n] G(z)$ for all $n$, we 
say that $F$ is \emph{coefficient-wise smaller} than $G$, denoted by $F \preceq G$. 
The singularities of $F(z)$ with the smallest modulus are called \emph{dominant singularities} of $F(z)$.
Because every generating function we consider in this paper always has non-negative coefficients $[z^n]F(z)$, there is a dominant singularity
located on the positive real axis by Pringsheim's Theorem \cite[pp. 214]{titchmarsh1939theory}. We denote this dominant singularity by $\rho_F$.
If an arbitrary function $F$ has a unique singularity with smallest modulus and this singularity lies on the
positive real axis, then we also denote it by $\rho_F$.
The function $F:\C\to\C$ converges on the open disc of radius $\rho_F$ and thus corresponds to a holomorphic function
on this disc. In many cases, this function can be holomorphically extended to a larger domain. Given $\rho,R\in\R$
with $0<\rho<R$ and $\theta\in(0,\pi/2)$,
\begin{equation*}
  \Delta(\rho,R,\theta) := \{ z\in\C \mid |z| < R \,\land\, |\arg(z-\rho)| > \theta \}
\end{equation*}
is called a \emph{$\Delta$-domain}. Here, $\arg(z)$ denotes the \emph{argument} of a complex number: $\arg(0) := 0$
and $\arg(re^{it}) := t$ for $r>0$ and $t\in(-\pi/2,\pi/2]$. We say that $F$ is \emph{$\Delta$-analytic}
if it is holomorphically extendable to some $\Delta$-domain $\Delta(\rho_F,R,\theta)$.

A function $F$ is \emph{subdominant} to a function $G$ if either $\rho_F>\rho_G$ or $\rho_F=\rho_G$ and 
$\lim_{z\to \rho_G}\frac{F(z)}{G(z)}=0$. In the latter case, if both $F$ and $G$ are $\Delta$-analytic,
then the above limit is required to be zero for $z$ from some fixed $\Delta$-domain to which both $F$ and
$G$ are holomorphically extendable. If $F$ is subdominant to $G$, we also write $F(z)=o(G(z))$. Analogously we write $F(z)=O(G(z))$
if either $\rho_F>\rho_G$ or $\rho_F=\rho_G$ and $\limsup_{z\to \rho_G}\frac{|F(z)|}{|G(z)|}<\infty$.

Given a function $F(z)$ with a dominant singularity $\rho_F$, we say that a function $G(z)=c\br{1-\rho_F^{-1}z}^{-\alpha}$ with
$\alpha\in\R\setminus\Z_{\le 0},c\in\R\setminus\{0\}$ or $G(z)=c\log\br{1-\rho_F^{-1}z}$ is the \emph{dominant term} of
$F$ if there is a decomposition
\begin{equation*}
  F(z) = P(z) + G(z) + o(G(z)),
\end{equation*}
where $P(z)$ is a polynomial. The dominant term, if it exists, is uniquely defined and $\Delta$-analytic. If 
$G(z)=c(1-\rho_F^{-1}z)^{-\alpha}$, the exponent
$-\alpha$ is called the \emph{dominant exponent} of $F$. If $G(z)=c\log\br{1-\rho_F^{-1}z}$,
we say that $F$ has dominant exponent $0$.

The number of edges in cubic multigraphs and triangulations is always a multiple of three. In terms of generating
functions, this is reflected by the existence of three different dominant singularities, all of which only differ
by a third root of unity. The corresponding dominant terms will also be the same up to a third root of unity.
Analogously, the number of vertices in cubic multigraphs is always even, resulting in two dominant singularities
$\rho_F$ and $-\rho_F$. Again, the dominant terms only differ by a factor of $-1$. In either case, the terms
for the coefficients coming from the different dominant singularities will also only differ by the corresponding root of unity.
Therefore, we will state our results only for the singularity $\rho_F$. With a slight abuse of notation, we will also refer
to $\rho_F$ as \emph{the} dominant singularity.

Singularity analysis allows us to derive an asymptotic expression for the coefficients of a generating function  $F(z)$  
with help of the dominant singularity and the dominant term of $F(z)$. We state the well-known `transfer theorem' for the
specific cases we will need.

\begin{thm}[\cite{flajolet1990-transfer}]\label{tt}
	Let $F(z)$ be a $\Delta$-analytic generating function.
  \begin{enumerate}
  \item\label{tt:polynomial}
    If
    \begin{equation*}
      F(z) = P(z) + c\br{1-\rho_F^{-1}z}^{-\alpha} + O\br{\br{1-\rho_F^{-1}z}^{1/4-\alpha}}
    \end{equation*}
    with a polynomial $P(z)$ and constants $c\not=0$, $\alpha\in\R\setminus\Z_{\le 0}$, then
    \[
      [z^n]F(z)=\br{1+O\br{n^{-1/4}}}\frac{c}{\Gamma(\alpha)} n^{\alpha-1}\rho_F^{-n}.
    \]
    Here, $\Gamma(\alpha):=\int_0^\infty z^{\alpha-1}e^{-z}\dd z$ is the gamma function.
  \item\label{tt:log}
    If
    \begin{equation*}
      F(z) = P(z) + c\cdot\log\br{1-\rho_F^{-1}z} + O\br{\br{1-\rho_F^{-1}z}^{1/4}},
    \end{equation*}
    then
    \[
      [z^n]F(z)=\br{1+O\br{n^{-1/4}}}(-c) n^{-1}\rho_F^{-n}.
    \]
  \end{enumerate}
\end{thm}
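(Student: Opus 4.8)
The plan is to prove both parts by contour integration, following the classical singularity-analysis argument of Flajolet and Odlyzko; the two statements differ only in which elementary singular function is transferred to its coefficients. First I would reduce to the normalised case $\rho_F=1$: replacing $F(z)$ by $F(\rho_F z)$ multiplies the $n$-th coefficient by $\rho_F^{n}$ (equivalently $[z^n]F(z)=\rho_F^{-n}[z^n]F(\rho_F z)$) and moves the dominant singularity to $1$. The polynomial $P(z)$ contributes nothing to $[z^n]$ once $n>\deg P$, so it may be discarded. Cauchy's formula gives
\[
  [z^n]F(z)=\frac{1}{2\pi i}\oint_{\mathcal C}\frac{F(z)}{z^{n+1}}\dd z,
\]
and $\Delta$-analyticity lets me deform $\mathcal C$ into a Hankel-type contour inside the $\Delta$-domain: a small circular arc of radius $1/n$ centred at $z=1$, two line segments leaving its endpoints towards the circle $|z|=R'$ for some fixed $1<R'<R$, and the outer arc $|z|=R'$ closing the path.

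Second, I would transfer the pure singular term. In part~\ref{tt:polynomial} one has the closed form
\[
  [z^n]\br{1-z}^{-\alpha}=\binom{n+\alpha-1}{n}=\frac{\Gamma(n+\alpha)}{\Gamma(\alpha)\,\Gamma(n+1)},
\]
and Stirling's formula — or, equivalently, collapsing the Hankel contour onto Hankel's loop integral $\frac{1}{\Gamma(\alpha)}=\frac{1}{2\pi i}\int_{\mathcal H}e^{t}t^{-\alpha}\dd t$ after the substitution $z=1+t/n$ — yields $[z^n](1-z)^{-\alpha}=\br{1+O(n^{-1})}\,n^{\alpha-1}/\Gamma(\alpha)$. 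In part~\ref{tt:log} the identity is simply $[z^n]\log(1-z)=-\tfrac1n$ for $n\ge1$. Multiplying by $c$ gives the claimed main terms.

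Third — and this is the technical heart — I would estimate the coefficients of the error term $R(z)$, which by hypothesis satisfies $|R(z)|\le K\,|1-z|^{1/4-\alpha}$ (resp.\ $K\,|1-z|^{1/4}$) on the part of the $\Delta$-domain near $z=1$ and is bounded on the (slightly shrunk) remainder. On the inner arc $|1-z|\asymp 1/n$ and $|z^{-n-1}|=O(1)$, so its length $O(1/n)$ times the integrand $O(n^{\alpha-1/4})$ contributes $O(n^{\alpha-1-1/4})$; on the two segments the substitution $z=1+t/n$ turns the integral into $\tfrac1n\int(\cdots)\dd t$ with an absolutely convergent majorant, again of order $n^{\alpha-1}n^{-1/4}$; and on the outer arc $|z|=R'>1$ the quantity $\sup|R|\cdot(R')^{-n}$ is exponentially small. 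In the logarithmic case the same three estimates give $O(n^{-1-1/4})$. Dividing the absolute error $O(n^{\alpha-1}n^{-1/4})$ (resp.\ $O(n^{-1}n^{-1/4})$) by the main term of order $n^{\alpha-1}$ (resp.\ $n^{-1}$) produces the stated relative error $O(n^{-1/4})$, completing the proof.

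The main obstacle is the third step. To use the $O$-hypothesis one needs the bound $|R(z)|\le K|1-z|^{1/4-\alpha}$ to hold on a full region of the $\Delta$-domain abutting $z=1$, not merely along the real axis, and the contour estimates then require care in the regime $\alpha>1/4$, where $R(z)$ itself blows up at the singularity: one must check that the radius-$1/n$ arc is small enough that this blow-up is still dominated and that the line segments stay within the region where the power bound is valid. The case distinction according to whether $1/4-\alpha$ is a negative non-integer, zero, or a positive non-integer (so that $(1-z)^{1/4-\alpha}$ is genuinely singular, analytic, or vanishing at $z=1$) should be noted, but all three are handled uniformly by the same contour.
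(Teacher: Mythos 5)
Your argument is correct and is exactly the classical Flajolet--Odlyzko singularity-analysis proof (Hankel-type contour inside the $\Delta$-domain, exact coefficients of $(1-z)^{-\alpha}$ and $\log(1-z)$ for the main term, big-$O$ transfer of the remainder, with the relative error $O(n^{-1/4})$ coming from the exponent gap of $1/4$). The paper itself offers no proof of this statement but simply cites \cite{flajolet1990-transfer}, so your proposal coincides with the argument the paper relies on; your closing remarks about needing the $O$-bound on a full $\Delta$-domain are already built into the paper's definition of $O(\cdot)$ for $\Delta$-analytic functions.
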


If we are counting rooted maps or multigraphs, the roots will be counted in the 
generating function unless stated otherwise. We will often mark vertices or edges of multigraphs or maps. 
This corresponds to applying the differential operator $z\frac{d}{dz}$ to the generating functions (with $z=x$ if vertices 
are marked and $z=y$ if edges are marked). To simplify notation 
we write $\opa{z}$ for $z\frac{d}{dz}$ and $\opa{z}^n$ for recursively applying $n$ times the operator $z\frac{d}{dz}$,
which corresponds to marking $n$ vertices or edges, \emph{allowing multiple marks on vertices or edges}. 
Thus we will have to compare dominant terms of functions before and after differentiation and (for a slightly different
reason) of functions before and after integration.

\begin{lem}\label{integrating}
	Let $F(z)$ be a $\Delta$-analytic generating function with the dominant term $F_d(z)$.
  Suppose there exists $\beta\in\R$ with
  \begin{equation*}
    F(z) = P(z) + F_d(z) +
      O\br{\br{1-\rho_F^{-1}z}^{-\beta}},
  \end{equation*}
  where $P(z)$ is a polynomial and $\br{1-\rho_F^{-1}z}^{-\beta} = o(F_d(z))$. Then
  \begin{enumerate}
  \item\label{integrating:diff}
    we have
    \begin{equation*}
      F'(z) = P'(z) + F'_d(z) +
      O\br{\br{1-\rho_F^{-1}z}^{-\beta-1}}.
    \end{equation*}
  \item\label{integrating:int}
    If in addition $F_d(z)=c\br{1-\rho_F^{-1}z}^{-\alpha}$, then for any primitive $G(z)$ of $F(z)$ there exists a
    primitive $Q(z)$ of $P(z)$ such that
    \begin{equation*}
      G(z) = Q(z) + G_d(z) + O(R(z))
    \end{equation*}
    with
    \begin{equation*}
      G_d(z) =
      \begin{cases}
        \frac{c\rho_F}{\alpha-1}\br{1-\rho_F^{-1}z}^{-\alpha+1} & \text{if }\alpha\not=1,\\
        -c\rho_F\log\br{1-\rho_F^{-1}z} & \text{if }\alpha=1
      \end{cases}
    \end{equation*}
    and
    \begin{equation*}
      R(z) =
      \begin{cases}
        \br{1-\rho_F^{-1}z}^{-\beta+1} & \text{if }\beta\not=1,\\
        \log\br{1-\rho_F^{-1}z} & \text{if }\beta=1.
      \end{cases}
    \end{equation*}
  \end{enumerate}
\end{lem}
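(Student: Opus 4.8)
The plan is to prove both parts by reducing to the behaviour of the error term under differentiation and antidifferentiation, since the stated dominant term is explicit and behaves well under both operations. For part~\ref{integrating:diff}, write $F(z) = P(z) + F_d(z) + E(z)$ where $E(z) = O\br{\br{1-\rho_F^{-1}z}^{-\beta}}$ is $\Delta$-analytic (as the difference of $\Delta$-analytic functions). Since $P$ is a polynomial, $P'$ is a polynomial, and $F_d$ is a power of $\br{1-\rho_F^{-1}z}$ (or a logarithm), its derivative is again of the expected form with exponent lowered by one, i.e.\ $F_d'(z)$ is (a constant times) $\br{1-\rho_F^{-1}z}^{-\alpha-1}$ when $F_d(z) = c\br{1-\rho_F^{-1}z}^{-\alpha}$, or $-c\rho_F^{-1}\br{1-\rho_F^{-1}z}^{-1}$ when $F_d$ is logarithmic. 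It remains to show $E'(z) = O\br{\br{1-\rho_F^{-1}z}^{-\beta-1}}$; this is the standard fact that differentiation of a $\Delta$-analytic function with a bound of the form $O\br{\br{1-\rho_F^{-1}z}^{-\beta}}$ on a $\Delta$-domain loses one in the exponent, obtained via a Cauchy-integral estimate on a circle of radius proportional to $\abs{1-\rho_F^{-1}z}$ that stays inside a slightly smaller $\Delta$-domain (this is precisely the `differentiation is an $O$-transfer' lemma from Flajolet--Sedgewick, compatible with the transfer theorem \Cref{tt} already quoted). I would cite this standard estimate rather than reprove it.

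For part~\ref{integrating:int}, assume additionally $F_d(z) = c\br{1-\rho_F^{-1}z}^{-\alpha}$ and fix a primitive $G(z)$ of $F(z)$. First I would fix a reference point $z_0$ in the common domain of analyticity on the positive real axis with $z_0 < \rho_F$ and write $G(z) = G(z_0) + \int_{z_0}^z F(t)\dd t$; any other primitive differs by a constant, which can be absorbed into $Q$. Antidifferentiating termwise: $\int P$ gives a polynomial $Q_0$; $\int_{z_0}^z F_d(t)\dd t$ gives exactly $G_d(z)$ up to an additive constant, with the case split $\alpha\neq 1$ versus $\alpha=1$ accounting for whether the antiderivative is a power or a logarithm (note the factor $\rho_F$ and the sign come from $\frac{d}{dz}\br{1-\rho_F^{-1}z}^{-\alpha+1} = \frac{\alpha-1}{\rho_F}\br{1-\rho_F^{-1}z}^{-\alpha}$ and $\frac{d}{dz}\log\br{1-\rho_F^{-1}z} = -\frac{1}{\rho_F}\br{1-\rho_F^{-1}z}^{-1}$); all additive constants are collected into $Q(z) := Q_0(z) + (\text{const})$, a primitive of $P$. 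Finally, for the error term I need $\int_{z_0}^z E(t)\dd t = O(R(z))$. Here I would integrate along a path from $z_0$ to $z$ that first runs along the real axis to a point near $\rho_F$ and then follows an arc inside the $\Delta$-domain; on such a path $\abs{1-\rho_F^{-1}t}$ is comparable to its value at the endpoint closest to $\rho_F$, so $\int \abs{t-z_0}^{-\beta}$-type bounds integrate to $O\br{\br{1-\rho_F^{-1}z}^{-\beta+1}}$ when $\beta\neq 1$ and to $O\br{\log\br{1-\rho_F^{-1}z}}$ when $\beta=1$ — this is the exact analogue for integration of the transfer lemma, again citable from Flajolet--Sedgewick. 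The hypothesis $\br{1-\rho_F^{-1}z}^{-\beta} = o(F_d(z))$, i.e.\ $-\beta < -\alpha$, guarantees $-\beta+1 < -\alpha+1$ (and that the $\alpha=1$ subcase forces $\beta<1$, so $R$ is genuinely subdominant to $G_d$), so $O(R(z))$ really is an error term relative to $G_d$, making $G_d$ the dominant term of $G$.

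The main obstacle, and the only genuinely non-bookkeeping point, is controlling the integrated (resp.\ differentiated) error term: one must be careful that the integration path stays within a $\Delta$-domain on which the $O$-bound for $E$ is valid, and that the estimate $\int_{z_0}^{z} \abs{1-\rho_F^{-1}t}^{-\beta}\,\abs{\dd t} = O\br{\abs{1-\rho_F^{-1}z}^{-\beta+1}}$ (and its $\beta=1$ logarithmic variant) holds uniformly as $z$ ranges over a possibly smaller $\Delta$-domain — this requires choosing the path adapted to $z$ (real segment plus arc of radius $\sim\abs{1-\rho_F^{-1}z}$) exactly as in the proof of the transfer theorem. Everything else — the polynomial parts, the explicit antiderivative of the dominant term, the case splits at $\alpha=1$ and $\beta=1$, and the absorption of constants into $Q$ — is routine calculus once these two transfer-type estimates are in hand.
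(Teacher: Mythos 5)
Part~\ref{integrating:diff} of your proposal is fine and is essentially the paper's argument (a Cauchy estimate on a disc of radius proportional to $|\rho_F-z|$ inside a slightly enlarged $\Delta$-domain). The gap is in part~\ref{integrating:int}, in the treatment of the error term. Your claimed estimate $\int_{z_0}^{z}\left|1-\rho_F^{-1}t\right|^{-\beta}\,|\mathrm{d}t| = O\br{\left|1-\rho_F^{-1}z\right|^{-\beta+1}}$ with a \emph{fixed} basepoint $z_0$ is only true for $\beta\ge 1$. For $\beta<1$ the left-hand side converges to a (generically nonzero) constant as $z\to\rho_F$, while $R(z)=\br{1-\rho_F^{-1}z}^{-\beta+1}\to 0$, so the bound fails. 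This is not a peripheral case: the hypothesis only demands $\br{1-\rho_F^{-1}z}^{-\beta}=o(F_d(z))$, so $\beta$ can be (and in this paper typically is) negative — e.g.\ the planar generating functions have $\alpha=-5/2$, $\beta=-3$ — and $\beta<1$ is forced whenever $\alpha=1$, as you yourself note. So the very instances for which the lemma is later used are exactly the ones your path estimate does not cover.

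The missing idea is the choice of the \emph{right} primitive of the error term $E$: for $\beta<1$ you must take the primitive that vanishes at $\rho_F$, i.e.\ $T_0(z)=\int_{\rho_F}^{z}E(t)\,\mathrm{d}t$ (the integral converges since $\beta<1$, but its existence and the uniform bound $T_0(z)=O(R(z))$ on a $\Delta$-domain need an argument), and then absorb the leftover constant $\int_{z_0}^{\rho_F}E$ into the primitive $Q$ of $P$ — which is legitimate precisely because a primitive of a polynomial may carry an arbitrary constant term. This is what the paper does: it introduces the primitives $T_\varepsilon$ with $T_\varepsilon(\rho_F-\varepsilon)=0$, proves a bound of the form $|T_\varepsilon(z)|\le C|R(z)|+C'\varepsilon^{-\beta+1}$ along a segment-plus-arc path, handles $\beta>1$ and $\beta=1$ directly, and for $\beta<1$ shows the $T_{1/n}$ converge uniformly to a primitive $T_0$ with $T_0=O(R(z))$, finally fixing $Q$ by $Q(0)=G(0)-G_d(0)-T_0(0)$. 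If you amend your argument by splitting $\int_{z_0}^{z}E = \int_{z_0}^{\rho_F}E+\int_{\rho_F}^{z}E$ in the convergent case (or by citing a statement of the integration transfer that explicitly makes this basepoint choice, as in Flajolet--Sedgewick's theorem on integration of singular expansions), the rest of your outline — explicit antiderivative of $F_d$, case splits at $\alpha=1$ and $\beta=1$, absorption of constants into $Q$ — goes through as you describe.
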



\begin{proof}
  Let $F(z)=P(z)+F_d(z)+S(z)$, where $S(z)=O\br{\br{1-\rho_F^{-1}z}^{-\beta}}$. In order to
  prove~\ref{integrating:diff}, we need to show that $S'(z)=O\br{\br{1-\rho_F^{-1}z}^{-\beta-1}}$.
  Let $\Delta(\rho_F,R,\theta)$ be a $\Delta$-domain to which $F$ is holomorphically extendable and on
  which
  \begin{equation*}
    \limsup_{z\to\rho_F}\frac{|S(z)|}{\left\vert\br{1-\rho_F^{-1}z}^{-\beta}\right\vert} < \infty
  \end{equation*}
  holds. Fix a positive radius $r<R-\rho_F$ and denote the intersection of $\Delta(\rho_F,R,\theta)$
  with the disc of radius $r$ around $\rho_F$ by $U(r,\theta)$. Let $c_r$ be such that
  $|S(z)| \le c_r\left\vert\br{1-\rho_F^{-1}z}^{-\beta}\right\vert$ for
  every $z\in U(r,\theta)$. For some fixed $\theta' \in (\theta,\pi/2)$, consider
  $z\in U(r/2,\theta')$. There is a constant $a\in(0,1)$ such that for every such $z$, the
  closed disc of radius $a|\rho_F-z|$ around $z$ is contained in $U(r,\theta)$. If $M$ denotes
  the maximal value of $|S(z)|$ on the boundary of this disc, then Cauchy's integral formula yields
  \begin{equation*}
    |S'(z)| \le \frac{M}{a|\rho_F-z|} \le \frac{c_r\rho_F^{-1}}{a\min((1-a)^{\beta},(1+a)^{\beta})}\left\vert\br{1-\rho_F^{-1}z}^{-\beta-1}\right\vert
  \end{equation*}
  and thus $S'(z)=O\br{\br{1-\rho_F^{-1}z}^{-\beta-1}}$ on the domain $\Delta(\rho_F,R,\theta')$.

  In order to prove~\ref{integrating:int}, we need to show that $S(z)$ has a primitive $T(z)$
  with $T(z) = O(R(z))$. Let $r$, $c_r$, and $U(r,\theta)$ be as before. For $\varepsilon\in(0,r)$, let $T_{\varepsilon}(z)$
  be the primitive of $S(z)$ with $T_{\varepsilon}(\rho_F-\varepsilon)=0$.
	Then for every $z\in U(r,\theta)$ we have
  \begin{equation*}
    T_{\varepsilon}(z)=\int_{\gamma}S(z)\dd z
  \end{equation*}
  for any curve $\gamma$ from $\rho_F-\varepsilon$ to $z$ in $U(r,\theta)$.
  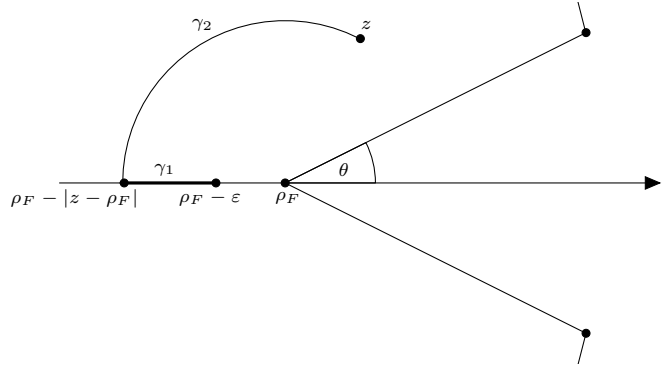
\begin{figure}
\begin{tikzpicture}[line cap=round,line join=round,>=triangle 45,x=2.0cm,y=2.0cm]
\begin{scriptsize}
\draw[->,color=black] (4.5,0.) -- (8.5,0.);
\draw[color=black] (4.6,-0.1) node {$\rho_F-|z-\rho_F|$};
\clip(4.5,-1.2) rectangle (8.5,1.2);
\draw [shift={(6.,0.)},color=black] (0,0) -- (0:0.6) arc (0:26.56:0.6) -- cycle; 
\draw [shift={(6.,0.)}] plot[domain=1.1:3.141592,variable=\t]({1.*1.08*cos(\t r)+0.*1.08*sin(\t r)},{0.*1.08*cos(\t r)+1.*1.08*sin(\t r)});
\draw [shift={(0.,0.)}] plot[domain=0.123:6.1588,variable=\t]({1.*8.062*cos(\t r)},{1.*8.062*sin(\t r)});
\draw (6.,0.)-- (8.,1.);
\draw (6.,0.)-- (8.,-1.);
\draw [line width=1.2pt] (5.54,0.)-- (4.91,0.);

\draw [fill=black] (6.,0.) circle (1.5pt);
\draw[color=black] (6.02,-0.1) node {$\rho_F$};
\draw [fill=black] (5.54,0.) circle (1.5pt);
\draw[color=black] (5.5,-0.1) node {$\rho_F-\varepsilon$};
\draw [fill=black] (6.5,0.96) circle (1.5pt);
\draw[color=black] (6.54,1.05) node {$z$};
\draw[color=black] (5.45,1.05) node {$\gamma_2$};
\draw [fill=black] (4.93,0.) circle (1.5pt);
\draw [fill=black] (0.,0.) circle (1.5pt);
\draw [fill=black] (8.,1.) circle (1.5pt);
\draw [fill=black] (8.,-1.) circle (1.5pt);
\draw[color=black] (5.2,0) node[anchor=south] {$\gamma_1$};
\draw[color=black] (6,0) + (13.28:0.4) node {$\theta$};
\end{scriptsize}   
\end{tikzpicture}
   \caption{The integration curve $\gamma_1+\gamma_2$. The direction of $\gamma_1$ depends on whether $|z-\rho_F|$ is larger or smaller than $\varepsilon$.}\label{pic:intpath}
  \end{figure}

  We choose $\gamma = \gamma_1 + \gamma_2$, where $\gamma_1$ is a line segment
  from $\rho_F-\varepsilon$ to $\rho_F-|\rho_F-z|$ and $\gamma_2$ is a circular arc
  in $U(r,\theta)$ around $\rho_F$ from $\rho_F-|\rho_F-z|$ to $z$ (see \Cref{pic:intpath}). If $M_2$ denotes
  the maximum of $|S(z)|$ on the trace of $\gamma_2$, then
 	\begin{equation*}
		\left|\int_{\gamma_2}S(z)\dd z\right| \le \pi |\rho_F-z| M_2
    \le \pi c_r\rho_F\left|1-\rho_F^{-1}z\right|^{-\beta+1}.
 	\end{equation*}
 	For the integral along $\gamma_1$, we have to distinguish between $\beta\not=1$
  and $\beta = 1$. For $\beta\not=1$, we have
 	\begin{align*}
		\left|\int_{\gamma_1}S(z)\dd z\right|
    &= \left|\int_0^1(\varepsilon-|\rho_F-z|)S\big(\rho_F-\varepsilon+t(\varepsilon-|\rho_F-z|)\big)\dd t\right|\\
    &\le \left|\big|\varepsilon-|\rho_F-z|\big|\int_0^1\left|S\big(\rho_F-\varepsilon+t(\varepsilon-|\rho_F-z|)\big)\right|\dd t\right|\\
		&\le \left|c_r\big|\varepsilon-|\rho_F-z|\big|\int_0^1\br{1-\frac{\rho_F-\varepsilon+t(\varepsilon-|\rho_F-z|)}{\rho_F}}^{-\beta}\dd t\right|\\
    &= \left|c_r\big|\varepsilon-|\rho_F-z|\big|\int_0^1\br{\frac{\varepsilon}{\rho_F}+t\frac{|\rho_F-z|-\varepsilon}{\rho_F}}^{-\beta}\dd t\right|\\
		&= \frac{c_r\rho_F}{|\beta-1|}\left|\left|1-\rho_F^{-1}z\right|^{-\beta+1}-\left(\frac{\varepsilon}{\rho_F}\right)^{-\beta+1}\right|\\
    &\le \frac{c_r\rho_F}{|\beta-1|}\left|1-\rho_F^{-1}z\right|^{-\beta+1} + \frac{c_r\rho_F^{\beta}}{|\beta-1|}\varepsilon^{-\beta+1}.
 	\end{align*}
  For $\beta=-1$ we analogously obtain
  \begin{equation*}
    \left|\int_{\gamma_1}S(z)\dd z\right| \le c_r\rho_F\left|\log\br{1-\rho_F^{-1}z}\right| + c_r\rho_F\left|\log\br{\rho_F^{-1}\varepsilon}\right|.
  \end{equation*}
  By combing the two integrals, we obtain for $\beta\not=1$ and any fixed $\varepsilon$
  \begin{equation}\label{eq:integral}
    |T_{\varepsilon}(z)| \le \br{\pi c_r\rho_F+\frac{c_r\rho_F}{|\beta-1|}}|R(z)| + \frac{c_r\rho_F^{\beta}}{|\beta-1|}\varepsilon^{-\beta+1}.
  \end{equation}
  For $\beta>1$, this implies that $T_{\varepsilon}(z) = O(R(z))$, since $\frac{c_r\rho_F^{\beta}}{|\beta-1|}\varepsilon^{-\beta+1}$ is a
  constant, but $|R(z)|$ tends to infinity as $z$ tends to $\rho_F$. For $\beta=1$, the integral along $\gamma_2$ is
  bounded by a constant and we again obtain $T_{\varepsilon}(z) = O(R(z))$.

  Finally, suppose that $\beta<1$. Then~\eqref{eq:integral} implies that, given $\delta>0$,
  we can choose $\varepsilon_0>0$ and $z\in U(r,\theta)$ close enough to $\rho_F$ so that
  \begin{equation*}
    |T_{\varepsilon}(z)| < \frac{\delta}{2}
  \end{equation*}
  for every $\varepsilon$ with $0<\varepsilon<\varepsilon_0$ and thus
  \begin{equation}\label{eq:convergent}
    |T_{\varepsilon}(z)-T_{\varepsilon'}(z)| < \delta
  \end{equation}
  for every $\varepsilon,\varepsilon'\in(0,\varepsilon_0)$. As both $T_{\varepsilon}$ and $T_{\varepsilon'}$
  are primitives of $S$, they only differ by a constant and thus~\eqref{eq:convergent} holds for every
  $z\in U(r,\theta)$. Therefore the sequence $(T_{1/n})_{n\in\N}$ converges uniformly to a function $T_0$
  which is also a primitive of $S$. Letting $\varepsilon$ tend to zero in~\eqref{eq:integral}, we see
  that
  \begin{equation*}
    |T_0(z)| \le \br{\pi c_r\rho_F+\frac{c_r\rho_F}{|\beta-1|}}|R(z)|
  \end{equation*}
  for every $z\in U(r,\theta)$ and thus $T_0(z) = O(R(z))$.

  We have thus found a primitive $T_0$ of $S$ that satisfies $T_0(z) = O(R(z))$. We conclude the proof
  of~\ref{integrating:int} by choosing $Q$ as the unique primitive of $P$ with $Q(0) = G(0) - G_d(0) - T_0(0)$.
\end{proof}

\Cref{tt} and \Cref{integrating} are very helpful when the generating functions in question are
$\Delta$-analytic. However, for many of our generating functions we will not be able to guarantee
$\Delta$-analyticity. In order to utilise the results of this section also for generating functions that
are not necessarily $\Delta$-analytic, we introduce the following concept and notation. Given a generating
function $F(z)$ and $\Delta$-analytic functions $G(z),H(z)$, we say that $F(z)$ is \emph{congruent} to
$G(z)+O(H(z))$ and write
\begin{equation*}
  F(z) \cong G(z) + O(H(z))
\end{equation*}
if there exist $\Delta$-analytic functions $F^+(z),F^-(z)$ and polynomials $P^+(z),P^-(z)$ such that
\begin{itemize}
\item $F^- \preceq F \preceq F^+$;
\item $F^+(z) = P^+(z) + G(z) + O(H(z))$; and
\item $F^-(z) = P^-(z) + G(z) + O(H(z))$.
\end{itemize}
With this, we are able to apply the transfer theorem even if $F$ itself is not $\Delta$-analytic.
The following lemma is an immediate consequence of \Cref{tt} and the fact that $F^- \preceq F \preceq F^+$.

\begin{lem}\label{congruent:tt}
  If $F(z) \cong G(z) + O(H(z))$ and $G(z) + O(H(z))$ is of one of the types described
  in~\Cref{tt}\ref{tt:polynomial} or~\ref{tt:log}, then
  \begin{equation*}
    [x^n]F(z) = \br{1+O\br{n^{-1/4}}}[z^n]F^+(z) = \br{1+O\br{n^{-1/4}}}[z^n]F^-(z).
  \end{equation*}
\end{lem}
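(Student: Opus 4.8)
The plan is to apply the transfer theorem \Cref{tt} separately to the two auxiliary $\Delta$-analytic functions supplied by the definition of congruence, and then to sandwich the coefficients of $F$ between them. Unwinding the hypothesis $F(z)\cong G(z)+O(H(z))$, I would first record that there are $\Delta$-analytic functions $F^+(z),F^-(z)$ and polynomials $P^+(z),P^-(z)$ with $F^-\preceq F\preceq F^+$, $F^+(z)=P^+(z)+G(z)+O(H(z))$, and $F^-(z)=P^-(z)+G(z)+O(H(z))$. Since adding a polynomial does not change the shape required in \Cref{tt} (that polynomial is simply absorbed into the polynomial part $P(z)$ appearing there), both $F^+$ and $F^-$ satisfy the hypotheses of \Cref{tt}\ref{tt:polynomial}, or both satisfy those of \ref{tt:log}, according to which of the two types $G(z)+O(H(z))$ has. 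Applying \Cref{tt} to each of them then yields
\begin{equation*}
  [z^n]F^+(z)=\br{1+O\br{n^{-1/4}}}\Lambda_n
  \qquad\text{and}\qquad
  [z^n]F^-(z)=\br{1+O\br{n^{-1/4}}}\Lambda_n,
\end{equation*}
where $\Lambda_n=\frac{c}{\Gamma(\alpha)}n^{\alpha-1}\rho_F^{-n}$ in the first case and $\Lambda_n=(-c)n^{-1}\rho_F^{-n}$ in the second.

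The key observation is that $\Lambda_n$ is literally the \emph{same} sequence for $F^+$ and for $F^-$, because the dominant term $G$ is common to both decompositions; moreover $\Lambda_n$ is eventually strictly positive, since the coefficients $[z^n]F^\pm(z)$ are non-negative by the very definition of $\preceq$ and since $c\neq 0$. Hence for all large $n$ one may divide the two displayed estimates to obtain $[z^n]F^+(z)=\br{1+O(n^{-1/4})}[z^n]F^-(z)$. Combining this with the coefficient-wise inequalities $0\le [z^n]F^-(z)\le [z^n]F(z)\le [z^n]F^+(z)$ furnished by $F^-\preceq F\preceq F^+$, a routine squeeze gives $[z^n]F(z)=\br{1+O(n^{-1/4})}\Lambda_n$, and therefore $[z^n]F(z)$ agrees with both $[z^n]F^+(z)$ and $[z^n]F^-(z)$ up to a factor $1+O(n^{-1/4})$, which is the assertion (with $[z^n]$ in place of the $[x^n]$ appearing in the statement).

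I do not expect a genuine obstacle: the lemma is, as advertised, an immediate consequence of \Cref{tt} and of the sandwich $F^-\preceq F\preceq F^+$. The only step that warrants an explicit sentence is the one just highlighted — namely that the two invocations of \Cref{tt} produce the identical main term $\Lambda_n$, and that this term is eventually nonzero — since this is precisely what allows the additive error terms of \Cref{tt} to be combined into a single multiplicative $1+O(n^{-1/4})$ factor in the squeeze argument.
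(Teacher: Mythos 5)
Your argument is correct and is exactly the elaboration the paper intends: the paper gives no separate proof, merely asserting the lemma is an immediate consequence of \Cref{tt} applied to $F^{+}$ and $F^{-}$ together with the sandwich $F^{-}\preceq F\preceq F^{+}$, which is precisely your squeeze via the common main term $\Lambda_n$. Your added remark that $\Lambda_n$ is the same, eventually positive, sequence for both bounds (and that $[x^n]$ in the statement should read $[z^n]$) is the right point to make explicit.
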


In this paper we will often encounter sums, products, differentials and integrals of generating
functions. The following lemma states that these operations are compatible with the notion of
congruence. We will frequently use this lemma without explicitly mentioning it.

\begin{lem}\label{congruent:meta}
  Let $G_1,G_2,H_1,H_2$ be $\Delta$-analytic functions with only finitely many negative
  coefficients. Let $F_1,F_2$ be generating functions such that
  \begin{align*}
    &F_1(z) \cong G_1(z) + O(H_1(z))&
    &\text{and}&
    &F_2(z) \cong G_2(z) + O(H_2(z))&
  \end{align*}
  and let $F(z)$ be a generating function with the dominant term $F_d(z)$ such that
  \begin{equation*}
    F(z) \cong F_d(z) +
      O\br{\br{1-\rho_F^{-1}z}^{-\beta}},
  \end{equation*}
  where $\br{1-\rho_F^{-1}z}^{-\beta}= o(F_d(z))$. Then the following holds.
  \begin{align*}
    F_1(z) \pm F_2(z) &\cong G_1(z) \pm G_2(z) + O(H_1(z)) + O(H_2(z)),\\
    F_1(z) F_2(z) &\cong G_1(z)G_2(z) + O(G_1(z)H_2(z) + H_1(z)G_2(z) + H_1(z)H_2(z)),\\
    F'(z) &\cong F'_d(z) + O\br{\br{1-\rho_F^{-1}z}^{-\beta-1}}.
  \end{align*}
  Furthermore, if $F_d(z)=c\br{1-\rho_F^{-1}z}^{-\alpha}$, then for any primitive
  $\tilde F(z)$ of $F(z)$ we have
  \begin{equation*}
    \tilde F(z) \cong G_d(z) + O\br{R(z)},
  \end{equation*}
  where $G_d$ and $R$ are as in \Cref{integrating}\ref{integrating:int}.
\end{lem}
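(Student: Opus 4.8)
The plan is to unwind the definition of $\cong$ and, in each of the four assertions, to assemble the pair of $\Delta$-analytic witnesses required on the left-hand side from those supplied by the hypotheses. So I would fix $\Delta$-analytic functions $F_i^-\preceq F_i\preceq F_i^+$ with $F_i^\pm=P_i^\pm+G_i+O(H_i)$ for $i=1,2$, and $F^-\preceq F\preceq F^+$ with $F^\pm=P^\pm+F_d+O\br{\br{1-\rho_F^{-1}z}^{-\beta}}$. The preliminary observation to record is that all the witnesses $F_i^\pm,F^\pm$ again have non-negative coefficients (up to the finitely many exceptions the statement tolerates), because $F_1,F_2,F$ do; and that $\preceq$ is preserved by every coefficient-wise non-negative operation — finite sums, the Cauchy product of non-negative series, the map $F\mapsto F'$ (which acts by $[z^n]F'=(n+1)[z^{n+1}]F$) and the passage to a primitive with a suitably chosen constant of integration (which acts by $[z^n]\tilde F=\frac1n[z^{n-1}]F$ for $n\ge1$). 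With this, each case reduces to picking the obvious candidate witnesses and checking that they have the claimed singular shape.

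For the sum I would use $F_1^\pm+F_2^\pm$: these sandwich $F_1+F_2$, are $\Delta$-analytic on a common $\Delta$-domain (the intersection, which one checks is again a $\Delta$-domain), and equal $\br{P_1^\pm+P_2^\pm}+\br{G_1+G_2}+O(H_1)+O(H_2)$; the difference is handled identically with $F_1^+-F_2^-$ and $F_1^--F_2^+$, reading $\preceq$ with the standard tolerance of finitely many negative coefficients. For the derivative I would differentiate: $F^\pm$ is $\Delta$-analytic with $\rho_{F^\pm}=\rho_F$ and dominant term $F_d$, which is exactly the input \Cref{integrating}\ref{integrating:diff} needs to conclude $\br{F^\pm}'=\br{P^\pm}'+F_d'+O\br{\br{1-\rho_F^{-1}z}^{-\beta-1}}$, and $\br{F^-}'\preceq F'\preceq\br{F^+}'$ then finishes the case. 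For the primitive, with $F_d=c\br{1-\rho_F^{-1}z}^{-\alpha}$, I would take primitives of $F^+$ and of $F^-$ whose constants of integration are chosen so that they sandwich the given primitive $\tilde F$ coefficientwise, and apply \Cref{integrating}\ref{integrating:int}, which produces polynomials $Q^\pm$ (primitives of $P^\pm$) with those primitives equal to $Q^\pm+G_d+O(R)$ — exactly the witnesses for $\tilde F\cong G_d+O(R)$.

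The remaining case, the product, is where I expect the real work to lie. I would use the witnesses $F_1^+F_2^+$ and $F_1^-F_2^-$; non-negativity of all factors yields the sandwich, a product of $\Delta$-analytic functions is $\Delta$-analytic, and expanding $\br{P_1^\pm+G_1+O(H_1)}\br{P_2^\pm+G_2+O(H_2)}$ leaves the polynomial $P_1^\pm P_2^\pm$, the intended main term $G_1G_2$, and several cross terms. The three \emph{pure} cross terms $G_1\cdot O(H_2)$, $O(H_1)\cdot G_2$ and $O(H_1)\cdot O(H_2)$ are by definition already of the form $O\br{G_1H_2+H_1G_2+H_1H_2}$; the delicate point is the cross terms carrying a polynomial factor $P_i^\pm$, which must be absorbed into that same error term. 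Here I would use that every function $G_j,H_j$ occurring in this paper is genuinely singular at the common dominant singularity, hence bounded away from zero along the real axis near that point, so that a bounded (polynomial) factor is negligible against $G_1H_2$, $H_1G_2$ or $H_1H_2$. A last bookkeeping item, trivial by comparison, is that the finitely many negative coefficients permitted in the $G_j$ and $H_j$ affect only low-order coefficients and so can be pushed into the polynomials $P^\pm$ without disturbing any of the singular estimates above.
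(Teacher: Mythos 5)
Your proposal follows the paper's proof essentially verbatim: the same witnesses ($F_1^\pm\pm F_2^\mp$ for sums and differences, $F_1^\pm F_2^\pm$ for the product, derivatives and suitably normalised primitives of $F^\pm$ for the last two claims), the same sandwich arguments, and the same appeal to \Cref{integrating} for the derivative and the primitive; in all these respects it is correct.

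The one place you diverge is the product case, where you try to verify explicitly that $F_1^+F_2^+$ has the claimed shape and correctly flag the cross terms $P_1^\pm G_2$, $G_1P_2^\pm$, $P_1^\pm\cdot O(H_2)$, $O(H_1)\cdot P_2^\pm$ — a point the paper's proof passes over in silence ("and the second congruence follows"). However, your justification for absorbing them is not right as stated: being singular at $\rho_F$ does not imply being bounded away from zero there (e.g.\ error terms of the form $\br{1-\rho_F^{-1}z}^{1/4}$ or $\br{1-\rho_F^{-1}z}^{2}$, which do occur in the paper, vanish at the singularity). What actually saves the applications is that in every product the paper takes, at least one main term $G_i$ diverges at the common singularity (or the products $G_1H_2$, $H_1G_2$ themselves dominate bounded quantities), so that polynomial factors are swallowed by the stated error; this is an extra property of the functions being multiplied, not a consequence of the lemma's hypotheses, and it is worth stating it as such rather than deducing it from "genuinely singular". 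Since the paper's own argument stops at the coefficient-wise sandwich, your attempt is, modulo that one imprecise sentence, at least as complete as the original.
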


\begin{proof}
  The first congruence follows immediately from
  \begin{align*}
    &F_1^- + F_2^- \preceq F_1 + F_2 \preceq F_1^+ + F_2^+&
    &\text{and}&
    &F_1^- - F_2^+ \preceq F_1 - F_2 \preceq F_1^+ - F_2^-.&
  \end{align*}

  For the product $F_1(z) \cdot F_2(z)$, we may assume that $F_1^-$ and $F_2^-$ 
  have nonnegative coefficients, since $G_1$ and $G_2$ have only finitely many negative coefficients.
  Hence
  \begin{equation*}
    F_1^- \cdot F_2^- \preceq F_1 \cdot F_2 \preceq F_1^+ \cdot F_2^+
  \end{equation*}
  and the second congruence follows.

  The last two congruences follow from \Cref{integrating} and the fact that
  \begin{align*}
    &(F^-)' \preceq F' \preceq (F^+)'&
    &\text{and}&
    &\tilde F^- \preceq \tilde F \preceq \tilde F^+,&
  \end{align*}
  where $\tilde F^-,\tilde F^+$ are primitives of $F^-,F^+$, respectively, with
  $\tilde F^-(0) \le \tilde F(0) \le \tilde F^+(0)$.
\end{proof}

\subsection{Maps with large facewidth}

An \emph{essential circle} on $\Sg$ is a circle that is not contractible to a point on $\Sg$. Let $M$ be an embedding 
of a multigraph on $\Sg$. An \emph{essential cycle} of $M$ is a cycle of $M$ that is an essential circle on the surface.
The \emph{facewidth} $\fw(M)$ of $M$ is the minimal number of intersections of $M$ with an essential circle on $\Sg$. 
The \emph{edgewidth} $\ew(M)$ of $M$ is defined as the minimal number of edges of an essential cycle of $M$.
If $g=0$, there are neither essential circles nor essential cycles and we use the convention $\fw(M)=\ew(M)=\infty$.
\Cref{strongemb} implies that if $M$ is connected 
and \emph{not} a $2$-cell embedding, then $\fw(M)=0$. The facewidth $\fw_g(G)$ of a multigraph $G$ that is embeddable on $\Sg$ is defined as the maximal 
facewidth of all its embeddings on $\Sg$. If we count multigraphs with restrictions to their facewidth, we will indicate the restriction 
by a superscript to the corresponding generating function, e.g.\ $G^{\fw_g\ge2}(x)$ for the generating function of all 
multigraphs with facewidth at least two.  If the genus is clear from the context we will omit it and write $\fw(G)$.

Having large facewidth proves to be a very helpful property. It will allow us to derive a constructive decomposition 
along connectivity as well as the existence of a unique embedding for $3$-connected multigraphs. The following lemma was  applied
in a similar way in \cite{Chapuy2011-enumeration-graphs-on-surfaces}.

\begin{lem}\label{robertsonvitray}\cite{robertson1990-representativity}
Let $g>0$ and let $M$ be an embedding of a connected multigraph $G$ on $\Sg$.
\begin{enumerate}
\item\label{rv1} $M$ has facewidth $\fw(M)=k\geq2$ if and only if $M$ 
	has a unique $2$-connected component embedded 
	on $\Sg$ with facewidth $k$ and all other $2$-connected components of $M$ are planar.
\item\label{rv2} If $G$ is $2$-connected, $M$ has facewidth $\fw(M)=k\geq3$ if 
	and only if $M$ has a unique $3$-connected component embedded 
	on $\Sg$ with facewidth $k$ and all other $3$-connected components of $M$ are planar.
\item\label{rv3} Let $M_1$, $M_2$ be embeddings of a $3$-connected multigraph on $\Sg$ and suppose that $\fw(M_1)\ge2g+3$. 
	Then there is a homeomorphism of $\Sg$ that maps $M_1$ to $M_2$.
\end{enumerate}
\end{lem}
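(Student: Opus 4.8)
The plan is to derive all three parts from the representativity theory of Robertson and Vitray \cite{robertson1990-representativity}, rephrasing their statements about \emph{nooses} --- closed curves on $\Sg$ meeting $M$ only in vertices, the facewidth being the minimum number of vertices on an essential noose --- in terms of facewidth, and adding the small amount of bookkeeping that lets one pass from simple graphs to multigraphs. Throughout we use that $\fw(M)\ge2$ forces $M$ to be a $2$-cell embedding (by \Cref{strongemb} and the remark following it), so that the genus of each restricted embedding is well defined.

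For \ref{rv1}, I would first argue that at every cut vertex $v$ of $G$ the rotation of $M$ does not interleave the edges of distinct blocks meeting at $v$: an interleaving yields a noose through $v$ alone that separates two of these blocks and is non-contractible on $\Sg$, hence an essential circle meeting $M$ in a single vertex, contradicting $\fw(M)\ge2$. It follows that $\Sg$ is the connected sum, along the cut vertices, of the surfaces carrying the $2$-cell embeddings $M|_B$ of the blocks $B$, so that $\sum_B g(M|_B)=g$; and if two blocks had positive genus then the noose running along the neck between the corresponding handles would again be essential of length one. Since $g>0$, exactly one block $B_0$ has positive genus, necessarily equal to $g$, and all others are planar; this $B_0$ is unique. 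Finally $\fw(M)=\fw(M|_{B_0})$: an essential noose of $M|_{B_0}$ of minimum length $k$ is also an essential noose of $M$, so $\fw(M)\le k$, while conversely, all the genus lying in $B_0$, any essential noose of $M$ can be homotoped --- without creating new intersections with $M$ --- into the part of $\Sg$ carrying $M|_{B_0}$, producing an essential noose of $M|_{B_0}$ of no larger length. The converse implication follows from the same homotopy argument applied to any embedding obtained by drawing the planar blocks inside face-discs of a facewidth-$k$ embedding of $B_0$ on $\Sg$.

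Part \ref{rv2} I would prove by the identical scheme, with ``block'' and ``cut vertex'' replaced by ``$3$-block'' and ``$2$-cut'' (i.e.\ the nodes and virtual edges of the SPQR-tree of the $2$-connected graph $G$) and the threshold $2$ replaced by $3$: an interleaving of two $3$-blocks around their common $2$-cut $\{u,w\}$ produces an essential noose of length $2$, so $\fw(M)\ge3$ forces the surface to decompose over the SPQR-tree, whence exactly one $3$-block carries the genus $g$, all others are planar, and the facewidths coincide as before. For \ref{rv3} I would simply invoke the uniqueness theorem of Robertson and Vitray: a $3$-connected graph embedded on $\Sg$ with representativity at least $2g+3$ has a unique embedding up to homeomorphism of $\Sg$. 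To apply it to a $3$-connected multigraph one first subdivides edges so as to remove all loops and parallel edges; this operation changes neither the facewidth of an embedding nor the set of embeddings up to homeomorphism, and it turns $G$ into a $3$-connected simple graph, after which one applies the theorem and suppresses the subdivision vertices.

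The step I expect to be the main obstacle is the one opening the proof of \ref{rv1} (and its analogue in \ref{rv2}): showing rigorously that ``no interleaving at cut vertices'' --- which is precisely what $\fw(M)\ge2$ buys us --- really forces additivity of the genus over the block tree, together with the homotopy argument that an essential noose of $M$ can be pushed into $B_0$ without gaining intersections. Both facts are contained, at least implicitly, in Robertson and Vitray's analysis of the interaction between representativity and small separations, so in the final write-up we would either cite the relevant propositions of \cite{robertson1990-representativity} directly or reprove them in a few lines using rotation systems; the remaining points (the passage from simple graphs to multigraphs, and the verification that the glued embedding used in the converse direction is again $2$-cell) are routine.
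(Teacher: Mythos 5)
First, note that the paper does not prove this lemma at all: it is imported verbatim from Robertson and Vitray \cite{robertson1990-representativity} (hence the citation attached to the statement), and the only argument the paper supplies in this vicinity is the short derivation of \Cref{coro:rv} from it. So there is no in-paper proof to compare yours against, and your proposal has to be judged as a self-contained derivation. As such it is not one: the steps you yourself single out as ``the main obstacle'' --- that $\fw(M)\ge2$ forbids interleaving of blocks at a cut vertex, that this forces $\Sg$ to decompose as a connected sum over the block tree (respectively over the Tutte/SPQR decomposition for part \ref{rv2}) with additive genus, and that an essential noose of $M$ can be homotoped into the non-planar block without gaining intersections --- are precisely the substance of the lemma, and your plan is to either cite them from \cite{robertson1990-representativity} or ``reprove them in a few lines''. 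Citing them is legitimate but then your write-up collapses to the same citation the paper already makes; reproving them is genuinely more than a few lines (the interleaving-at-a-cut-vertex claim and the noose-homotopy claim each need a real argument), so as it stands the core of parts \ref{rv1} and \ref{rv2} is asserted rather than proved. Also, in \ref{rv1} the ``if'' direction concerns the given embedding $M$, not a newly assembled embedding obtained by drawing the planar blocks into face-discs, so the last sentence of that paragraph argues the wrong statement unless you add that the two embeddings are related as claimed.

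Second, the one reduction you do spell out, in part \ref{rv3}, is wrong as stated: subdividing loops and parallel edges creates vertices of degree two, so the resulting simple graph is \emph{not} $3$-connected, and the Robertson--Vitray uniqueness theorem cannot be applied to it directly. To make this work you need either the standard extension of the uniqueness theorem to subdivisions of $3$-connected graphs, or a separate argument handling multiedges and loops (note that for the application in this paper the issue is vacuous, since $3$-connected \emph{cubic} graphs are simple, as remarked at the start of \Cref{graphs}). I would recommend either stating the lemma as a quoted result, as the paper does, or, if you insist on a proof, locating and citing the precise propositions of \cite{robertson1990-representativity} for the decomposition and homotopy facts and fixing the subdivision step as above.
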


\Cref{robertsonvitray}(iii) is a generalisation of Whitney's theorem that all $3$-connected planar multigraphs have 
a unique embedding up to orientation on the sphere. Because we will need \Cref{robertsonvitray} for multigraphs rather than 
for embeddings, we will use the following easy corollary.

\begin{coro}\label{coro:rv}
Let $g>0$ and let $G$ be a non-planar connected multigraph strongly embeddable on $\Sg$.
\begin{enumerate}
\item\label{rvc1} If $\fw_g(G)\ge 2$, 
	then $G$ has a unique $2$-connected non-planar component strongly embeddable on $\Sg$ with facewidth $\fw_g(G)$ 
	and all other $2$-connected components are planar.
\item\label{rvc2} If $G$ is $2$-connected and 
	$\fw_g(G)\ge 3$, then $G$ has a unique $3$-connected non-planar component strongly embeddable on 
	$\Sg$ with facewidth $\fw_g(G)$ and all other $3$-connected components are planar.
\item\label{rvc3} If $G$ is $3$-connected and $\fw_g(G)\ge 2g+3$, then $G$ has a unique $2$-cell embedding on $\Sg$ up to orientation.
\end{enumerate}
\end{coro}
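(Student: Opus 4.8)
The plan is to obtain all three parts from a single application of \Cref{robertsonvitray} to a well-chosen embedding of $G$, followed by a translation of its conclusion from the language of maps into that of multigraphs. The observation underlying every part is that the supremum defining $\fw_g(G)$ is attained: the embeddings of $G$ on $\Sg$ fall into finitely many classes up to homeomorphism, facewidth is a homeomorphism invariant, and a connected embedding which is not a $2$-cell embedding has facewidth $0$ (a consequence of \Cref{strongemb} noted earlier). Hence there is an embedding $M$ of $G$ on $\Sg$ with $\fw(M)=\fw_g(G)$, and since under each of our hypotheses this value is positive, $M$ is in fact a $2$-cell embedding --- exactly the situation to which \Cref{robertsonvitray} applies.

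For \ref{rvc1}, write $k:=\fw_g(G)\ge 2$ and apply \Cref{robertsonvitray}\ref{rv1} to $M$: there is a unique $2$-connected component $M'$ of $M$ embedded on $\Sg$ with facewidth $k$, while all other $2$-connected components of $M$ are planar. Let $G'$ be the block of $G$ carrying $M'$. As $M'$ is connected with $\fw(M')=k\ge 2>0$, it is a $2$-cell embedding, so $G'$ is strongly embeddable on $\Sg$ and $\fw_g(G')\ge k$. Every other block of $G$ inherits a planar embedding from $M$ and is therefore planar; since a multigraph is planar if and only if each of its blocks is, and $G$ is non-planar, $G'$ must be non-planar, and it is the unique non-planar block of $G$. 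Finally, $\fw_g(G')=k$, for otherwise $G'$ would have an embedding $M''$ on $\Sg$ with $\fw(M'')>k$, and reattaching planar embeddings of the remaining blocks at the cut vertices --- legitimate by the backward direction of \Cref{robertsonvitray}\ref{rv1} --- would produce an embedding of $G$ on $\Sg$ of facewidth $\fw(M'')>k$, contradicting $k=\fw_g(G)$.

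Part \ref{rvc2} is handled in the same way, now with $G$ $2$-connected and $k:=\fw_g(G)\ge 3$: apply \Cref{robertsonvitray}\ref{rv2} to the $2$-cell embedding $M$ realising $\fw_g(G)$, replace \emph{block} throughout by \emph{$3$-connected component} of the Tutte decomposition, and use instead that a $2$-connected multigraph is planar if and only if each of its $3$-connected components is; the recombination step now reattaches the auxiliary $3$-connected components along their $2$-cuts, again justified by the backward direction of \Cref{robertsonvitray}\ref{rv2}. For \ref{rvc3}, put $k:=\fw_g(G)\ge 2g+3$; as before there is a $2$-cell embedding $M_1$ of $G$ on $\Sg$ with $\fw(M_1)=k\ge 2g+3$. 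Given any $2$-cell embedding $M_2$ of $G$ on $\Sg$, \Cref{robertsonvitray}\ref{rv3} yields a homeomorphism of $\Sg$ mapping $M_1$ to $M_2$; since it either preserves or reverses orientation, $M_2$ agrees with $M_1$ up to orientation, which is the claim.

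The step that actually requires care --- and where I would be explicit in a full write-up --- is the passage between maps and multigraphs: that the decomposition of the $2$-cell embedding $M$ into $2$-connected (resp.\ $3$-connected) components is the one induced by the block (resp.\ Tutte) decomposition of $G$, and that a planar embedding of an auxiliary component can be reattached at a cut vertex (resp.\ a $2$-cut) without altering the facewidth of the whole map. Both of these are already packaged into \Cref{robertsonvitray}\ref{rv1} and \ref{rv2} (in particular their backward implications) together with the embedding-compatible cutting and glueing set up in \Cref{section:surgeries}; once these are invoked, what remains is routine bookkeeping --- which is why the statement is only a corollary.
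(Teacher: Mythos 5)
Your proposal is correct and follows essentially the same route as the paper: apply \Cref{robertsonvitray} to an embedding attaining $\fw_g(G)$, use the non-planarity of $G$ to identify the exceptional block (respectively Tutte component) as the unique non-planar one, and deduce \ref{rvc3} directly from \Cref{robertsonvitray}\ref{rv3}. The additional details you supply (attainment of the maximum defining $\fw_g(G)$, and the recombination argument giving the component facewidth exactly $\fw_g(G)$) only make explicit points the paper's shorter proof leaves implicit.
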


\begin{proof} By \Cref{robertsonvitray}\ref{rv1}, for any fixed embedding of $G$ all but one components are planar. 
As $G$ itself is not planar, that exceptional component has to be non-planar. As the component structure is the same for 
\emph{all} embeddings, the non-planar component $I$ is independent of the embedding. Therefore, $I$ is the unique 
component described in \ref{rvc1}. 
Part \ref{rvc2} is proved analogously and \ref{rvc3} is a direct consequence of \Cref{robertsonvitray}\ref{rv3}.
\end{proof}


	\section{Maps and triangulations}\label{maps}

The goal of this section is to enumerate cubic $3$-connected \emph{maps} on $\Sg$. The dual 
of these maps are triangulations, which are characterised in the following proposition.

\begin{prop} \label{prop:3-conn-dual}
Let $M$ be a $2$-cell embedding of a cubic multigraph on $\Sg$ and let $M^*$ be its dual map. 
Then $M$ is 3-connected if and only if $M^*$ is a triangulation with at least $6$ edges and without 
separating loops, separating double edges, and separating pairs of loops.
\end{prop}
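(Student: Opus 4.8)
The plan is to translate connectivity properties of the primal cubic map $M$ into face/edge structure of the dual triangulation $M^*$, using the standard fact that for $2$-cell embeddings, vertex cuts on one side correspond to face cuts on the other. Since $M$ is cubic, every vertex of $M$ has degree $3$, so every face of $M^*$ is bounded by exactly three edge-sides; thus $M^*$ is a triangulation (allowing the degenerate triangles explicitly permitted in the preliminaries, which arise precisely when $M$ has multiedges or bridges). First I would record the translation dictionary: edges of $M$ correspond to edges of $M^*$; faces of $M$ to vertices of $M^*$; and a set of faces of $M$ whose boundaries together separate $M$ corresponds, on the dual side, to a set of edges whose removal disconnects $M^*$ in a suitable sense. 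In particular, a cut of $M$ by a closed curve passing through $k$ faces and no vertices corresponds to a cycle of length $k$ in $M^*$ (through the $k$ dual vertices) that is separating or essential.

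The core of the argument is the contrapositive in both directions. For the ``only if'' direction, suppose $M^*$ has fewer than $6$ edges, or a separating loop, separating double edge, or separating pair of loops. A separating loop in $M^*$ bounds on each side a region containing part of $M^*$; dualizing, the single edge of $M$ dual to that loop, together with... more precisely, a separating loop at a vertex $v^*$ of $M^*$ means the two face-corners of $M$ at the face $v^*$ are split by a single edge, which forces a $1$-cut or a bridge in $M$ (degree/parity considerations and the fact that $M$ is cubic then give a $\le 2$-cut). Similarly a separating double edge in $M^*$ dualizes to a $2$-set of faces of $M$ whose boundary curve separates $M$ into two nontrivial pieces each containing a vertex — that is a $2$-cut in $M$, contradicting $3$-connectivity; and a separating pair of loops (two loops at possibly different vertices whose union separates) dualizes likewise to a $2$-cut. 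The edge-count bound $\ge 6$ handles the small exceptional cases (e.g.\ $M = \Phi$, or $K_4$-type configurations): a cubic $3$-connected multigraph has at least $4$ vertices, hence $M^*$ has at least $... $ faces and, being a triangulation, at least $6$ edges by Euler's formula / the handshake count $3|F| = 2|E|$; I would phrase this as: $M$ $3$-connected cubic $\Rightarrow |V(M)| \ge 4 \Rightarrow |E(M)| = \tfrac32|V(M)| \ge 6 \Rightarrow |E(M^*)| \ge 6$.

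For the ``if'' direction, suppose $M^*$ is a triangulation with $\ge 6$ edges and none of the three forbidden separating configurations. I want to show $M$ is $3$-connected. If $M$ had a vertex cut $X$ with $|X| \le 2$, then since $M$ is cubic, $X$ separating gives rise — by taking a curve that runs ``around'' the cut through the faces of $M$ incident to the cut vertices — to a closed curve meeting $M$ in at most two faces and in no vertex, hence to a separating closed walk in $M^*$ of length $\le 2$: either a single edge (impossible in a triangulation unless it is a separating loop), a separating loop, a separating pair of parallel edges (a separating double edge), or a separating pair of loops. Each of these is excluded by hypothesis (the length-$1$ non-loop case and length-$0$ cases are ruled out because a triangulation with $\ge 6$ edges is connected with no cut-vertex of that trivial type). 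Hence $M$ has no $\le 2$-cut; since a cubic multigraph with at least one edge that is $2$-connected-with-no-$2$-cut is $3$-connected (for cubic graphs, $3$-edge-connectivity, $3$-connectivity, and ``no $2$-cut'' essentially coincide), $M$ is $3$-connected.

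I expect the main obstacle to be the careful bookkeeping of \emph{degenerate} triangles and of the correspondence between ``small cuts realized by curves'' and ``short separating closed walks'': one must make sure that a $2$-cut in $M$ always yields a genuine separating loop, double edge, or loop-pair in $M^*$ (and not, say, a separating configuration of some other shape), and conversely that each forbidden dual configuration really forces a cut of size $\le 2$ rather than merely a local anomaly. Handling loops (which touch a single vertex but appear twice in the rotation, per the preliminaries) requires the half-edge convention to pin down which corners of $M$ a dual loop separates. Once that dictionary is set up precisely — using \Cref{strongemb} and the corner-correspondence under surgeries to legitimize cutting along the relevant curves — the rest is a finite case check.
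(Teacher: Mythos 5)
Your proposal is correct and takes essentially the same route as the paper's proof: exploit that for cubic maps with at least four vertices (hence at least six edges, giving the edge bound via $3|V(M)|=2|E(M)|$) $3$-connectivity coincides with $3$-edge-connectivity, and then translate edge cuts of size one or two in $M$, via the duality between deleting primal edges and cutting the surface along the dual edges, into separating loops, separating double edges, or separating pairs of loops in $M^*$, and conversely. The places you flag as needing care (degenerate triangles, realizing cuts by curves, the cubic connectivity coincidence) are exactly the points the paper also treats as routine.
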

\begin{proof}
For cubic maps with at least four vertices (and thus at least six edges), 3-connectivity and 3-edge-connectivity coincide. This can be seen by a simple case analysis. We thus 
use 3-edge-connectivity hereafter. Since a vertex in $M$ corresponds to a face in $M^*$, deleting edges in the 
primal $M$ has the same effect as cutting the surface along the dual edges of $M^*$, and a set of edges is a separator in 
$M$ if and only if cutting along the dual edges in $M^*$ separates the surface. Thus a bridge in $M$ corresponds 
to a separating loop in $M^*$. A $2$-edge-separator in $M$ corresponds either to a separating double edge or a pair 
of loops in $M^*$ which together separate the surface.
\end{proof}

\subsection{Relations between triangulations}

In order to enumerate the triangulations described in \Cref{prop:3-conn-dual}, we will relate them to simple 
triangulations that have been studied by Gao \cite{Gao1992-2-connected-surface}. To this end we will use 
topological surgeries that eliminate loops and double edges. The most frequently used surgery is the following.
\begin{definition}\label{def:zipping}
	Let $\{e_1,e_2\}$ be a double edge in a triangulation $T$ on $\Sg$. By \emph{zipping $\{e_1,e_2\}$} we mean the
	operation of cutting the surface along $\{e_1,e_2\}$ and eliminating the holes by identifying the two edges on
	the boundaries of the holes. The two edges that result from this identification are called \emph{zipped edges}. 
	The resulting triangulation $\tilde{T}$ is rooted at the corner $\tilde{c}$ corresponding to the root corner $c$ of 
	$T$. If $\{e_1,e_2\}$ separates $\Sg$, one of the two components of $\tilde{T}$ does not 
	contain $\tilde{c}$; we root this component $C$ such that the zipped edge in $C$ is the root edge. Note that there
	are two choices for the root of $C$.
\end{definition}
Note that zipping a double edge leaves the number of edges unchanged.

We will distinguish three kinds of double edges. 
We call a double edge \emph{planar} if zipping it results in two maps, one of which is planar and has no holes. Note that an alternative
way of defining planar double edges would be to say that by cutting along it we obtain two surfaces, one of which is a disc.
We call a double edge \emph{non-planar separating} if zipping it results in two maps none of which is planar
and we call it \emph{non-separating} if zipping it results in only one map. 

Let $\class{M}_g$ be the class of triangulations on $\Sg$ without separating loops, separating double edges, and 
separating pairs of loops and let $M_g(y)$ be its ordinary generating function. Note that these 
triangulations are either the duals of $3$-connected cubic maps on $\Sg$ by \Cref{prop:3-conn-dual} or a triangulation 
with exactly three edges. To obtain the asymptotic number of triangulations in 
$\class{M}_g$, we need some other types of triangulations. Let $\class{N}_g$ be the class of
triangulations on $\Sg$ without loops and separating double edges. Let $\class{R}_g$ be the class of 
triangulations on $\Sg$ without loops and planar double edges. Let $\class{\triangt}_g$ be the class of triangulations on 
$\Sg$ without loops. Let $\class{S}_g$ be class of simple triangulations on $\Sg$ 
(i.e.\ without loops and double edges). Let $\triangn_g(y)$, $\triangr_g(y)$, $\triangt_g(y)$, and $\triangs_g(y)$ be their generating functions, 
respectively. \Cref{table:multi} summarises what kinds of edges are allowed for these classes of triangulations.

\begin{table}[htbp]
\begin{tabular}{c|c|c|c|c}
\hline
\quad & loops & \multicolumn{3}{c}{double edges} \\
\hline
\quad &\quad  & non-separating & planar & non-planar separating \\
\hline
$\class{\triangm}_g$ & Some cases & Yes & No & No \\
\hline
$\class{\triangn}_g$ & No & Yes & No & No \\
\hline
$\class{\triangr}_g$ & No & Yes & No & Yes \\
\hline
$\class{\triangt}_g$ & No & Yes & Yes & Yes \\
\hline
$\class{\triangs}_g$ & No & No & No & No \\
\hline
\end{tabular}
\vspace{1ex}
\caption{Loops and double edges in different classes of triangulations}\label{table:multi}
\label{table:triang}
\end{table}

Note that $\class\triangs_g\subseteq\class\triangn_g\subseteq\class\triangr_g\subseteq\class\triangt_g$ and $\class\triangn_g\subseteq\class\triangm_g$.

We will relate the triangulations in $\class{\triangm}_g$, $\class{\triangn}_g$, $\class{\triangr}_g$, $\class{\triangt}_g$, and $\class{\triangs}_g$
by performing surgeries. The most frequently used surgery will be to zip a separating double edge. In order to relate the maps before and after
the surgery, we need the following lemma.

\begin{lem} \label{lem:surgery-2}
Let $\class{X}_g(m)$ be the class of all rooted triangulations on $\Sg$ with $m$ edges and a marked 
double edge $\{ e_1, e_2 \}$ and let $\class{Y}_g^{(i)}(m)$ be the class of all rooted triangulations on 
$\Sg$ with $m$ edges, $i$ of which are marked.
\begin{enumerate}

\item\label{sep} Let $\class{X}_g^{s}(m)$ be the subclass of $\class{X}_g(m)$ where $\{e_1,e_2\}$ is a 
	non-planar separating 
	double edge. Then zipping $\{e_1,e_2\}$ yields a $2$-to-$2$ 
	correspondence from $\class{X}_g^s(m)$ to 
	\[\bigcup \br{\class{Y}_{g_1}^{(1)}(m_1)\times \class{Y}_{g_2}^{(0)}(m_2)},\]
	where the union is taken over all $m_1,m_2,g_1,g_2\in\N$ such that $m_1+m_2=m$ and $g_1+g_2=g$.
\item\label{planar} Let $\class{X}_g^{pl}(m)$ be the subclass of $\class{X}_g(m)$ where $\{e_1,e_2\}$ is a 
	planar double edge. 
	Then zipping $\{e_1,e_2\}$ yields a $2$-to-$2$ correspondence from $\class{X}_g^{pl}(m)$ to 
	\[\bigcup \br{\class{Y}_{g}^{(1)}(m_1)\times \class{Y}_{0}^{(0)}(m_2)\cup\class{Y}_{0}^{(1)}(m_1)\times \class{Y}_{g}^{(0)}(m_2)},\]
	where the union is taken over all $m_1, m_2\in\N$ such that $m_1+m_2=m$.
\end{enumerate}
\end{lem}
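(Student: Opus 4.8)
The plan is to prove both parts of \Cref{lem:surgery-2} by carefully analysing the zipping operation of \Cref{def:zipping} together with the rooting conventions, and showing that the inverse operation can be described explicitly as a glueing. First I would set up the inverse map: given a pair of rooted triangulations $(T_1,T_2)$ in one of the product classes on the right-hand side (with $T_1$ carrying the marks), I re-glue them by cutting each along the marked edge and identifying the resulting boundary arcs, using the surgeries of \Cref{section:surgeries} (Case~\ref{glue:twoholes} when the marked edges become boundaries of holes). The key bookkeeping is that zipping a double edge replaces two edges by two ``zipped edges'', so the total edge count is preserved ($m_1+m_2=m$), and the genus is additive in the separating case ($g_1+g_2=g$) since cutting along a separating circle and inserting discs splits $\Sg$ into $\torus{g_1}\sqcup\torus{g_2}$ as recalled after \Cref{strongemb}. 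One checks that re-glueing a zipped double edge indeed produces a triangulation — the faces adjacent to the two zipped edges reassemble into the original two triangular faces flanking $\{e_1,e_2\}$.

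Next I would account for the factor $2$ on each side, which is the heart of the ``$2$-to-$2$'' statement. On the domain side, a ``marked double edge'' $\{e_1,e_2\}$ is an unordered pair, but to perform the cut-and-identify one must pick an ordering, or equivalently choose how the two boundary arcs are matched up; this is the first source of a factor $2$. On the range side, by the rooting convention in \Cref{def:zipping} the component $C$ not containing $\tilde c$ is rooted so that the zipped edge is its root edge, and there are \emph{two choices} for this root (the two corners of the zipped edge, or equivalently its two orientations). So the map $\class X_g^s(m)\times\{1,2\}\to\bigcup(\class Y_{g_1}^{(1)}(m_1)\times\class Y_{g_2}^{(0)}(m_2))$ is a genuine bijection; ``$2$-to-$2$'' means precisely that each fibre has size $2$ on both ends. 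I would make this precise by exhibiting mutually inverse maps between the two $2$-element-fibred sets and checking both composites are the identity.

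For part~\ref{planar}, the same argument runs, but now zipping a planar double edge yields a planar component with no holes (by the definition of planar double edge) and a component of genus $g$ carrying the marks; the subtlety is that the marks could end up on either piece depending on where the root corner $c$ sat relative to $\{e_1,e_2\}$. Concretely, if the root corner of $T$ lies in the planar side, then after zipping the root is in the planar (genus-$0$) component and the marked component has genus $g$ — giving the term $\class Y_0^{(1)}(m_1)\times\class Y_g^{(0)}(m_2)$ — whereas if it lies in the non-planar side we get $\class Y_g^{(1)}(m_1)\times\class Y_0^{(0)}(m_2)$. Taking the union over both cases (and over all edge-splittings $m_1+m_2=m$) gives the stated range; genus is not split here because one side is a disc. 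The $2$-to-$2$ counting is identical to part~\ref{sep}.

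The main obstacle I anticipate is not the topology but the precise rooting combinatorics: one must verify that the root of the marked component $C$ really does have exactly two valid choices (no more, e.g.\ if the zipped edge happened to be a loop, and no fewer), and that these two choices match up bijectively with the two orderings of the unordered marked pair $\{e_1,e_2\}$ on the other side — together with confirming that the root of the component containing $\tilde c$ is \emph{forced}, so it contributes no extra multiplicity. I would handle this by tracking corners through the surjection $\sigma$ of \Cref{section:surgeries} (recall every corner of $\tilde M$ corresponds to a corner of $M$), which makes the root correspondence canonical on the $\tilde c$-side and exhibits exactly the residual $\Z/2$ ambiguity on the other side. A secondary point to dispatch is that zipping a \emph{separating} double edge cannot accidentally produce a component that is a sphere with one hole that then disappears — but this is ruled out because a triangulation has at least three edges, so neither side can be degenerate after re-inserting the disc.
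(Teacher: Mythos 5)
Your proposal is correct and follows essentially the same route as the paper's (very brief) proof: zip the marked double edge, track roots and marks exactly as in \Cref{def:zipping}, and get the factor two on each side from the two root choices for the component not containing the original root corner together with the two ways of re-glueing, the planar case differing only in that the original root may lie on either side. The one blemish is your phrase ``the marked component has genus $g$'' in the planar case, which contradicts your own (correct) term $\class{Y}_0^{(1)}(m_1)\times\class{Y}_g^{(0)}(m_2)$: under the paper's convention the marked zipped edge lies in the component containing the original root, which here is the planar one.
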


\begin{proof}
%
In Case \ref{sep} the surface is disconnected into $\torus{g_1}$ and $\torus{g_2}$ 
with $g_1+g_2=g$. Assume without 
loss of generality that the root face of the original triangulation is on $\torus{g_1}$. Then we mark the identified 
edge on $\torus{g_1}$. For the root on $\torus{g_2}$ we have two choices.
Going back we have two possibilities of glueing the surface back together, hence obtaining the $2$-to-$2$ 
correspondence.

In Case \ref{planar}, the only difference to Case \ref{sep} is that $g_1=0$ and $g_2=g$ and that the root face might 
be in either of the two maps.
%
\end{proof}

We finish this section by a last topological statement about planar double edges.

\begin{definition}\label{def:relation}
	Let $T$ be a triangulation on $\Sg$ (possibly with holes), $g>0$, and let $d=\{e_1,e_2\}$ be a \emph{planar} double edge.
	If $c\neq d$ is a face, edge, or double edge, we write $c<d$ if the triangulation of the disc
	obtained by cutting along $d$ contains $c$.

\end{definition}

\begin{lem} \label{lem:planar-dbl-inclusion}
Let $T$ be a triangulation on $\Sg$, $g>0$, and let $D$ be the set of \emph{planar} double edges in $T$. 
Then the relation $<$ defined in \Cref{def:relation} is a partial order on $D$. 
\end{lem}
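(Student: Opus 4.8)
The plan is to verify the three defining properties of a partial order---reflexivity is excluded by the convention $c \neq d$, so really irreflexivity, antisymmetry, and transitivity---for the relation $<$ on the set $D$ of planar double edges. The key geometric fact I would isolate first is the following: if $d = \{e_1, e_2\}$ is a planar double edge, then cutting along $d$ separates $\Sg$ into a disc $\Delta_d$ (carrying the planar side, with no hole) and a surface $S_d$ of genus $g$ with one hole; here I use $g > 0$ to guarantee that only one side can be a disc, so $\Delta_d$ is unambiguously defined, and $c < d$ means precisely that $c$ lies in the triangulated disc $\Delta_d$. Irreflexivity is built into the definition. For the rest, the main tool is to understand, given two planar double edges $d$ and $d'$, how their associated discs $\Delta_d$ and $\Delta_{d'}$ sit relative to each other.

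The central claim I would prove is a \emph{nesting dichotomy}: for distinct planar double edges $d, d' \in D$, either $\Delta_d \subseteq \Delta_{d'}$, or $\Delta_{d'} \subseteq \Delta_d$, or the interiors of $\Delta_d$ and $\Delta_{d'}$ are disjoint (more precisely, $\Delta_d$ and $\Delta_{d'}$ meet in at most the edges/vertices they share on their boundaries). To see this, consider the edges of $d'$. If both edges of $d'$ lie in $\Delta_d$, then since $\Delta_d$ is a disc and $d'$ is a double edge in $T$, the double edge $d'$ bounds a sub-disc on one side, and the non-disc side $S_{d'}$ of $d'$ must contain the genus (as $g>0$), so $S_{d'} \supseteq S_d$ and hence $\Delta_{d'} \subseteq \Delta_d$ --- wait, I need to be careful about orientation, so I would instead argue: $\Delta_{d'}$ is the unique disc component, and since $d' \subseteq \Delta_d$ with $\Delta_d$ a disc, one of the two sides of $d'$ inside $\Delta_d$ is a disc, and the complementary side contains all the genus, forcing $\Delta_{d'}$ to be the disc side lying inside $\Delta_d$. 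Symmetrically if both edges of $d'$ lie in $S_d \setminus \partial$. If the edges of $d'$ straddle $d$ --- one edge in $\Delta_d$ and one in $S_d$, or an edge equal to an edge of $d$ --- a short analysis (using that $d'$ is a double edge bounding a disc on one side) shows one of the discs still contains the other or they are essentially disjoint; the straddling configuration with genuine disjointness of the disc interiors is the remaining case.

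Granting the nesting dichotomy, antisymmetry and transitivity follow formally. Antisymmetry: $d < d'$ means $d \subseteq \Delta_{d'}$, equivalently (by the above) $\Delta_d \subsetneq \Delta_{d'}$; if also $d' < d$ then $\Delta_{d'} \subsetneq \Delta_d$, a contradiction. Transitivity: if $d < d'$ and $d' < d''$ then $\Delta_d \subsetneq \Delta_{d'} \subsetneq \Delta_{d''}$, so every face, edge, and double edge of $\Delta_d$ lies in $\Delta_{d''}$; in particular $d \subseteq \Delta_d \subseteq \Delta_{d''}$, i.e. $d < d''$. I would also need to observe that $c < d$ for a \emph{double edge} $c$ is equivalent to $c \subseteq \Delta_d$ as a subgraph, which is immediate from the definition since the triangulated disc obtained by cutting along $d$ is exactly $\Delta_d$.

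The main obstacle I anticipate is the careful case analysis in the nesting dichotomy when the two double edges $d$ and $d'$ share an edge or a vertex, or when their edges interleave around the surface --- degenerate triangles (loops, doubled configurations) are explicitly allowed in this paper's notion of triangulation, so I must make sure the argument does not implicitly assume the three ``triangle'' edges around a face are distinct. The key simplification that keeps this manageable is that \emph{planarity} of a double edge $d$ pins down $\Delta_d$ as the unique disc side, and $g > 0$ means a disc can never contain the genus; combined, this rules out the pathological possibility that both sides of some double edge are discs, and reduces every configuration to ``which disc contains which.'' Everything else is bookkeeping about subsurfaces of $\Sg$ meeting along arcs of their boundaries, exactly the kind of cut-and-paste controlled by the surgery framework of \Cref{section:surgeries}.
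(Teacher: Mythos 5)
Your proposal is correct and follows essentially the same route as the paper: the paper's proof also reduces $d_1<d_2$ to inclusion of the corresponding disc triangulations $P_1\subseteq P_2$ (using $g>0$ so that only one side of a planar double edge can be a disc) and then invokes the fact that inclusion is a partial order. Your extra nesting dichotomy and the straddling-case discussion are not needed, since the definition of $<$ already places both edges of the smaller double edge inside the larger disc, which is the only case that matters.
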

\begin{proof}
Let $d_1,d_2\in D$. Let $P_1$ and $P_2$ be the triangulations of the discs obtained by cutting along $d_1$ and $d_2$, 
respectively. 
Then $d_1<d_2$ if and only if $d_1\subseteq P_2$ by definition and this holds if and only if $P_1\subseteq P_2$, since 
$T$ is a triangulation of $\Sg$ and $g > 0$. As the subset relation on triangulations is a partial ordering, 
so is the relation defined in \Cref{def:relation}.
\end{proof}

\subsection{Counting triangulations}

In this section, we determine the dominant terms of the generating functions of various types of triangulations 
using singularity analysis and topological surgeries, e.g.\ zipping double edges (\Cref{def:zipping}).
The starting point for this will be $\triangt_g(y)$ and $\triangs_g(y)$, which were examined by Gao 
\cite{Gao1992-2-connected-surface,Gao1993-pattern} and (in the planar case) by Tutte \cite{tutte1962-planar-triangulations}. 
However, the results obtained by Gao are not strong enough in order to apply the theory of singularity analysis (developed 
in \Cref{section:singularity}). We obtain more refined versions of their results.

\begin{prop}\label{simplerefined}\label{prop:planar-S}
  The dominant singularity of $\triangs_g(y)$ is given by $\rho_\triangs=\frac{3}{2^{8/3}}$.
  The generating function $\triangs_0(y)$ is $\Delta$-analytic and satisfies
  \begin{align}
  \triangs_0(y)&=\frac{1}{8}-\frac{9}{16}\br{1-\rho_\triangs^{-1}y}+\frac{3}{2^{5/2}}\br{1-\rho_\triangs^{-1}y}^{3/2}
	+O\br{\br{1-\rho_\triangs^{-1}y}^{2}}.
  \end{align}
  For $g\ge1$ we have
  \begin{align}
    \triangs_g(y)&\cong c_g \br{1-\rho_\triangs^{-1} y}^{-5g/2+3/2}\br{1+O\br{\br{1-\rho_\triangs^{-1} y}^{1/4}}},
  \end{align}
  where $c_g$ is a constant depending only on $g$.
  Furthermore, the asymptotic number of simple triangulations on $\Sg$ with $m$ edges is given by
  \begin{align*}
    s_g(m)&=\br{1+O\br{m^{-1/4}}}\frac{c_g}{\Gamma(5(g-1)/2)}m^{5g/2-5/2}\rho_\triangs^{-m},
  \end{align*}
  where $c_0 = \frac{3}{2^{5/2}}$.
\end{prop}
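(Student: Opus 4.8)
The plan is to obtain the refined singular expansion of $\triangs_g(y)$ by revisiting Gao's enumeration \cite{Gao1992-2-connected-surface,Gao1993-pattern} and extracting more precise error terms than those stated there, then transferring to coefficient asymptotics via \Cref{tt}. For the planar case $g=0$, I would start from Tutte's exact formula (or the algebraic equation) for simple triangulations of the sphere \cite{tutte1962-planar-triangulations}. The generating function $\triangs_0(y)$ is algebraic, hence $\Delta$-analytic, and has a square-root type singularity at $\rho_\triangs = 3/2^{8/3}$; the claimed expansion up to the $(1-\rho_\triangs^{-1}y)^{3/2}$ term is then just a Puiseux expansion of this algebraic function at its dominant singularity. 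The value $\rho_\triangs = 3/2^{8/3}$ should match Tutte's well-known counting constant for triangulations once the variable $y$ marking edges is translated into the more customary variable marking vertices or faces (recall a simple triangulation with $n+2$ vertices has $3n$ edges by Euler's formula). The constants $1/8$, $-9/16$, $3/2^{5/2}$ are then pinned down by direct series computation; I would verify the first few coefficients against small cases as a sanity check.

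For $g \ge 1$, the key input is Gao's result \cite{Gao1992-2-connected-surface} that $\triangs_g(y)$ has a singularity of type $(1-\rho_\triangs^{-1}y)^{-5g/2+3/2}$ at the same $\rho_\triangs$, with leading constant $c_g$. The exponent $-5g/2 + 3/2 = -(5(g-1)/2 + 1) + \ldots$ is the standard ``$5g/2$'' correction familiar from the topological-recursion / map enumeration literature, and it is non-integral for every $g \ge 1$, so the dominant term is genuinely singular and \Cref{tt}\ref{tt:polynomial} applies with $\alpha = 5g/2 - 3/2$. What Gao does not supply — and what \Cref{section:singularity} requires — is the uniform $O((1-\rho_\triangs^{-1}y)^{1/4})$-relative error, i.e.\ that the next correction after the dominant term is no larger than $(1-\rho_\triangs^{-1}y)^{1/4}$ times it; I would get this by examining Gao's functional equations (the structure relations coming from the surgery that reduces genus) and tracking the orders of all subleading contributions, showing each is bounded by a quarter-power improvement on the main term. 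Since $\triangs_g$ for $g \ge 1$ need not be known to be $\Delta$-analytic, the conclusion is stated in the congruence form $\triangs_g(y) \cong \ldots$; concretely one sandwiches $\triangs_g$ between two $\Delta$-analytic series with the same dominant term, using \Cref{congruent:meta}. Once the singular expansion is in hand, \Cref{congruent:tt} gives
\[
  s_g(m) = \br{1+O\br{m^{-1/4}}}\frac{c_g}{\Gamma(5g/2-3/2)}\,m^{5g/2-5/2}\,\rho_\triangs^{-m},
\]
and since $5g/2 - 3/2 = 5(g-1)/2 + 1$ I need the identity $\Gamma(5(g-1)/2+1) = \tfrac{5(g-1)}{2}\,\Gamma(5(g-1)/2)$ only for the cosmetic rewriting $\frac{c_g}{\Gamma(5(g-1)/2)}$ appearing in the statement (absorbing the factor $5(g-1)/2$ into a redefinition of $c_g$, which is allowed since $c_g$ is only ``a constant depending on $g$''); for $g=0$ the formula is read directly from the $g=0$ expansion with $c_0 = 3/2^{5/2}$ and $\alpha = 3/2$, giving $\Gamma(3/2) = \sqrt{\pi}/2$ and hence the stated $m^{-5/2}$ growth.

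The main obstacle will be the second step: upgrading Gao's asymptotic statements into singular expansions with controlled error terms of the precise shape demanded here. Gao's papers give the leading asymptotics of $s_g(m)$ but, as the authors note, ``the results obtained by Gao are not strong enough,'' so this is not a citation but genuine work — essentially re-deriving Gao's recursion for the generating functions $\triangs_g$, verifying it has the Ansatz $\triangs_g(y) = c_g(1-\rho_\triangs^{-1}y)^{-5g/2+3/2} + (\text{lower order})$ with the ``lower order'' provably $O$ of a $1/4$-power smaller term, uniformly on a $\Delta$-domain (or on the sandwich $\triangs^\pm$). This is presumably carried out in the appendix the authors refer to; the rest of the argument — Puiseux expansion in the planar case, transfer theorem, bookkeeping of Gamma-factors — is routine.
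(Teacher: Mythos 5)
Your plan for $g\ge1$ defers precisely the step that \emph{is} the proof. Saying you would ``examine Gao's functional equations \ldots and track the orders of all subleading contributions'' (and noting this is ``presumably carried out in the appendix'') leaves the entire substance unexecuted, and the missing work is not a routine bookkeeping pass over Gao's statements. What has to be done, and what the paper's appendix actually does, is: (i) re-derive the root-edge-deletion recursion not for $\triangs_g(y)$ alone but for simple \emph{quasi} triangulations $\quasis_g(y,u,z_I)$ with a root face of arbitrary valency \emph{and} an arbitrary finite set $I$ of marked faces whose valencies are tracked by extra variables $z_i$ --- the genus-reducing surgeries (cutting along a non-separating circle through the root face, zipping a non-separating double edge) create new distinguished faces that must be remembered to invert the surgery, so the recursion only closes on this larger multivariate family (\Cref{caseb,casec}); (ii) control the overcounting caused by loops and double edges via explicit coefficient-wise error bounds (\Cref{caseb,casec,cased}), which is where the congruence/sandwich mechanism is really needed; (iii) evaluate at the kernel root $u=f(y)$ where $Q(y,f(y))=0$ (quadratic method), which is also where the quarter-power error scale comes from, namely $Q^{(1)}(y,f(y))=\bar c\,(1-\rho_\triangs^{-1}y)^{1/4}+O((1-\rho_\triangs^{-1}y)^{3/4})$ in \Cref{inductionbase}; and (iv) run the lexicographic induction on $(g,|I|,n)$ with the exponent bookkeeping of \Cref{finalind}, whose special case $I=\emptyset$, $n=0$ is the proposition. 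None of these devices (marked-face variables, kernel evaluation, derivative estimates $Q^{(n)}(y,f(y))$, the induction) appears in your outline, so the assertion that every subleading contribution is a quarter-power smaller than the main term remains unproved.

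On the parts you do carry out: the planar case is fine in spirit (the paper likewise obtains it from an algebraic equation, via \Cref{prop:baseplanar} and the quadratic method, rather than citing Tutte directly), and your use of the congruence form together with \Cref{tt} is the right framework. Two small corrections: in the paper's normalization $(1-\rho_\triangs^{-1}y)^{-\alpha}$, the planar singular term $\frac{3}{2^{5/2}}(1-\rho_\triangs^{-1}y)^{3/2}$ corresponds to $\alpha=-3/2$, so the transfer constant is $c_0/\Gamma(-3/2)$, not $\Gamma(3/2)$ (the $m^{-5/2}$ rate is unaffected); and your observation that the stated $\Gamma(5(g-1)/2)$ versus the $\Gamma(5g/2-3/2)$ produced by \Cref{tt} is absorbed into a renormalization of the constant is correct and harmless.
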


\begin{prop}\label{looplessrefined}\label{prop:planar-T}
  The dominant singularity of $\triangt_g(y)$ is given by $\rho_{\triangt}=\frac{2^{1/3}}{3}$.
  The generating function $\triangt_0(y)$ is $\Delta$-analytic and satisfies
  \begin{align}
    \triangt_0(y)=\frac{1}{8}-\frac{9}{8}\br{1-\rho_{\triangt}^{-1} y}+3\br{1-\rho_{\triangt}^{-1} y}^{3/2}+O\br{\br{1-\rho_{\triangt}^{-1} y}^2}.
  \end{align}
  For $g\ge1$ we have
  \begin{align}
    \triangt_g(y)&\cong \hat{c}_g \br{1-\rho_{\triangt}^{-1} y}^{-5g/2+3/2}\br{1+O\br{\br{1-\rho_{\triangt}^{-1} y}^{1/4}}},
  \end{align}
  where $\hat{c}_g$ is a constant depending only on $g$.
  Furthermore, the asymptotic number of loop-less triangulations on $\Sg$ with $m$ edges is given by
  \begin{align*}
    \hat{s}_g(m)&=\br{1+O\br{m^{-1/4}}}\frac{\hat{c}_g}{\Gamma(5g/2-5/2))}m^{5g/2-5/2}\rho_{\triangt}^{-m},
  \end{align*}
  where $\hat{c}_0 = 3$.
\end{prop}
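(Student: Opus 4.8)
The plan is to establish \Cref{looplessrefined} as a strengthening of Gao's enumeration of loopless triangulations on $\Sg$ \cite{Gao1992-2-connected-surface,Gao1993-pattern}: the dominant singularity and the exponents are essentially Gao's, and the new content is the precise singular expansion, which is what makes these generating functions amenable to the machinery of \Cref{section:singularity}. I would treat the case $g=0$ and the case $g\ge1$ separately, running each argument in parallel with its analogue for \Cref{simplerefined}.

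For $g=0$ I would start from the fact that the ordinary generating function $\triangt_0(y)$ of rooted loopless planar triangulations is algebraic, satisfying a polynomial equation obtained from Tutte's root-face (or root-edge) decomposition of near-triangulations of the disc by the quadratic method \cite{tutte1962-planar-triangulations}. Algebraicity immediately yields $\Delta$-analyticity with only finitely many (algebraic) singularities, so it remains to (i) single out the dominant one and verify that it equals $\rho_{\triangt}=\frac{2^{1/3}}{3}$, and (ii) compute the Puiseux expansion of $\triangt_0$ there. The singularity turns out to be of exponent-$3/2$ type, as is typical for triangulations: no $\br{1-\rho_{\triangt}^{-1}y}^{1/2}$ term occurs, the first half-integer power present is $\br{1-\rho_{\triangt}^{-1}y}^{3/2}$, and reading off the expansion up to order $\br{1-\rho_{\triangt}^{-1}y}^{2}$ produces the constants $\frac18$, $-\frac98$, and $\hat{c}_0=3$. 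This step is finite and purely computational.

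For $g\ge1$ the input is Gao's genus reduction: after pointing a corner and analysing near-triangulations carrying a distinguished face, the genus-$g$ generating function of loopless triangulations is expressed through a fixed finite sequence of algebraic operations --- sums, products, differentiations, and integrations in $y$ --- applied to $\triangt_0(y)$ and to the analogous generating functions of genus $g'<g$. Running this reduction and inducting on $g$, one checks that the dominant singularity stays $\rho_{\triangt}$ at every genus and that each unit of genus lowers the dominant exponent by $\frac52$; since the planar dominant term has exponent $3/2$, the genus-$g$ dominant exponent is $3/2-\frac52 g=-\frac52 g+\frac32$, and $\hat{c}_g$ is determined by the recursion. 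The delicate part is the error term: Gao records only leading-order coefficient asymptotics, so I would re-run the asymptotic analysis keeping track of remainders at every step, applying \Cref{integrating}\ref{integrating:int} whenever an integration in $y$ occurs and \Cref{congruent:meta} for the sums and products, and --- since the intermediate triangulation generating functions are not known to be $\Delta$-analytic --- working throughout with the congruence relation $\cong$, i.e.\ sandwiching each series between $\Delta$-analytic functions that carry the same expansion. A short bookkeeping check that these operations preserve a relative error of $O\br{\br{1-\rho_{\triangt}^{-1}y}^{1/4}}$ then gives the stated congruence.

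Finally, the coefficient asymptotics for $\hat{s}_g(m)$ follow from the transfer theorem applied at the dominant singularity $\rho_{\triangt}$: \Cref{tt}\ref{tt:polynomial} for $g=0$ with $\alpha=-\frac32$, and its congruence version \Cref{congruent:tt} for $g\ge1$ with $\alpha=\frac52 g-\frac32$. I expect the main obstacle to be exactly this upgrade of Gao's leading-order analysis to a singular expansion with a genuine $O\br{\br{1-\rho_{\triangt}^{-1}y}^{1/4}}$ error term; the subtle point is that the integrations in the genus reduction weaken error control and must be handled carefully through \Cref{integrating}.
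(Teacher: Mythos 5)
Your planar step and the final transfer step match the paper: $\triangt_0(y)$ is handled exactly as you describe, via a root-edge/quadratic-method equation (the paper's \eqref{basedouble}, the loopless analogue of \eqref{baseplanar}), and the coefficient asymptotics come from \Cref{tt} and \Cref{congruent:tt}. The gap is in the $g\ge1$ step. You treat ``Gao's genus reduction'' as a black box expressing $\triangt_g(y)$ by finitely many sums, products, differentiations and integrations \emph{in $y$} applied to the univariate functions $\triangt_{g'}(y)$, $g'<g$, and propose to push error terms through with \Cref{integrating} and \Cref{congruent:meta}. No such univariate reduction exists, and this is precisely why the paper redoes the whole argument in the appendix rather than re-running Gao with bookkeeping. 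The actual recursion \eqref{recursion-general} lives at the level of quasi triangulations $\quasis_g(y,u,z_I)$ carrying a root-face-valency variable $u$ and arbitrarily many marked faces $z_I$ (needed to record where the surface was cut), and the singular behaviour of $\triangt_g$ is extracted by the kernel method: one evaluates at the kernel root $u=z_I=f(y)$, where $Q(y,f(y))=0$ and $\partial_u Q$ vanishes like $(1-\rho_{\triangt}^{-1}y)^{1/4}$, and this quarter-power — not any integration in $y$ — is what produces the exponents $-5g/2+3/2$ and the $O\br{\br{1-\rho_{\triangt}^{-1}y}^{1/4}}$ relative error. To close the induction one must control \emph{all} mixed derivatives $\partial_u^n\partial_{z_I}^{\alpha}$ of $\quasis_g$ at the moving point $u=z_I=f(y)$, with positive leading constants (\Cref{finalind}, proved by lexicographic induction on $(g,|I|,n)$ starting from \Cref{inductionbase}); none of this is supplied by Gao in citable form, and it is not a consequence of the two error-propagation lemmas you invoke, which presuppose expansions at a fixed singularity of univariate functions.

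A second, smaller omission: the reverse surgeries overcount, because re-inserting the root edge can create loops or double edges, and the resulting correction terms ($\error_B,\error_C,\error_D$ in \eqref{recursion:decompositions}, bounded in \Cref{caseb,casec,cased}) must themselves be shown to be of lower order; for the loopless class $\class{\triangt}_g$ these corrections differ from the simple case (the double-edge corrections disappear, the loop corrections change, already visible in \eqref{basedouble} versus \eqref{baseplanar}), so the class-specific case analysis cannot be skipped either. In short, your skeleton — strengthen Gao, quadratic method at genus $0$, transfer at the end — is the paper's, but the heart of the $g\ge1$ proof (the multivariate marked-face recursion, the kernel evaluation at $u=f(y)$, and the induction over genus, number of marked faces, and derivative order) is missing from your plan and would not follow from error-tracking through univariate operations.
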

For the proofs of \Cref{simplerefined,looplessrefined} see \Cref{proof:gao}. The exact values of $c_g$ and $\hat{c}_g$ are 
not necessary for our purpose; the interested reader can find them in \cite{Gao1993-pattern}. The only fact about the 
coefficients we will use in this paper is the relation
\begin{equation}\label{eq:relation}
 \hat{c}_g=2^{5(1-g)/2}c_g.
\end{equation}

Our first step towards obtaining an asymptotic formula for $M_g(y)$ is to relate $\triangr_g(y)$ and 
$\triangt_g(y)$ by recursively zipping planar double edges. The resulting 
relation will show that the dominant singularities of $\triangr_g(y)$ and $\triangs_g(y)$ are the same. 

\begin{prop} \label{prop:R-asymptotic}
The generating functions $\triangr_g(y)$ and $\triangs_g(y)$ have the same dominant singularity $\rho_S$, 
$\triangr_0(y)=\triangs_0(y)$, and 
\[\triangr_g(y)-\triangs_g(y) \cong O\br{\br{1-\rho_S^{-1}y}^{-5g/2+7/4}},\quad g\ge1.\]
\end{prop}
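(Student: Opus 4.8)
The plan is to build a bijective-style decomposition of a triangulation in $\class{R}_g$ by successively zipping its planar double edges until none remain, landing in $\class{S}_g$ (recall $\class\triangs_g\subseteq\class\triangr_g\subseteq\class\triangt_g$ and that $\class{R}_g$ allows only non-separating and non-planar separating double edges). First I would observe that \Cref{lem:planar-dbl-inclusion} equips the set $D$ of planar double edges with a partial order, so one may always pick a $<$-minimal planar double edge $d$; by minimality the disc cut off by $d$ contains no further planar double edge, hence zipping $d$ (\Cref{def:zipping}) removes a planar double edge without creating new ones inside the separated disc. By \Cref{lem:surgery-2}\ref{planar}, zipping a planar double edge is a $2$-to-$2$ correspondence from the class of triangulations with a marked planar double edge onto $\bigcup\br{\class{Y}_g^{(1)}(m_1)\times\class{Y}_0^{(0)}(m_2)\cup\class{Y}_0^{(1)}(m_1)\times\class{Y}_g^{(0)}(m_2)}$; since zipping preserves the number of edges, each step trades a triangulation in $\class{R}_g$ for a pair consisting of a smaller (in edge count) triangulation of the same genus together with a rooted \emph{planar} triangulation, and the planar factor — being obtained by cutting along a minimal planar double edge — may be taken to lie in $\class\triangt_0$ or even, after recursing, in $\class\triangs_0$; but note $\triangs_0=\triangr_0=\triangt_0$ by the planar parts of \Cref{prop:planar-S,prop:planar-T} together with \Cref{prop:R-asymptotic}'s own assertion $\triangr_0=\triangs_0$, which I would prove first as the base case $g=0$ (where there are no separating objects to worry about, so $\class\triangr_0=\class\triangt_0$ trivially, and $\triangt_0=\triangs_0$ needs the separate observation that a planar triangulation with a double edge is not in $\class{R}_0$; in fact for $g=0$ every double edge is planar, so $\class\triangr_0=\class\triangs_0$ directly).

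Next I would translate this surgery into a functional relation between generating functions. Marking a planar double edge in a triangulation of $\class{R}_g$ corresponds, on the generating-function side, to differentiating (via the operator counting marked edges) or, more conveniently, to introducing an auxiliary variable; the $2$-to-$2$ correspondence then yields an identity of the rough shape
\begin{equation*}
  \opa{y}\br{\triangr_g(y)-\triangs_g(y)} \;\asymp\; \triangr_g(y)\cdot \br{\text{planar factor}}(y),
\end{equation*}
where the planar factor is a $\Delta$-analytic function with dominant singularity $\rho_\triangt>\rho_\triangs$ (note $\rho_\triangt=2^{1/3}/3 > 3/2^{8/3}=\rho_\triangs$), and more precisely one whose contribution near $y=\rho_\triangs$ is a constant plus lower-order terms, because the planar double-edge generating function is evaluated well inside its own disc of convergence. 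Making this precise requires care: I would set up the recursion so that at each stage we peel off exactly one minimal planar double edge, express $R_g(y)$ as $S_g(y)$ plus a sum over the number of peeled double edges, and recognise the geometric-type series. The key analytic input is that each peeled planar piece contributes a \emph{bounded} factor near $\rho_\triangs$, so summing the correction does not move the singularity and only inflates the error by a bounded multiplicative constant — this is exactly where \Cref{congruent:meta} (compatibility of $\cong$ with sums and products) and \Cref{integrating} (to handle the $\opa{y}$, i.e.\ to integrate back) are invoked, and where the congruence machinery around $\cong$ is needed since $\triangr_g$ and $\triangs_g$ are not a priori $\Delta$-analytic.

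From the relation one reads off that $\triangr_g$ and $\triangs_g$ share the dominant singularity $\rho_\triangs=\rho_S$, and that $\triangr_g(y)-\triangs_g(y)$ is governed by the product of $\triangr_g(y)$ (whose dominant term, by induction using this very proposition plus \Cref{prop:planar-S}, is $c_g(1-\rho_S^{-1}y)^{-5g/2+3/2}$) with a planar factor of \emph{strictly smaller} singular growth — quantitatively costing one power of $(1-\rho_S^{-1}y)^{+1/4}$ or better after the relevant integration, which is precisely what produces the claimed exponent $-5g/2+3/2-(-1/4) = -5g/2+7/4$ in $\triangr_g(y)-\triangs_g(y)\cong O\br{\br{1-\rho_S^{-1}y}^{-5g/2+7/4}}$. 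The main obstacle, I expect, is bookkeeping the recursion cleanly: one must verify that iterating ``zip a minimal planar double edge'' genuinely terminates in $\class\triangs_g$, that no step introduces a \emph{separating} loop, a \emph{separating} double edge, or a \emph{separating pair of loops} that would take us outside the allowed classes (zipping a non-separating edge near a minimal planar one cannot create such — but this needs a short topological check), and that the marked/rooted data from \Cref{lem:surgery-2} assemble into a bona fide generating-function identity rather than merely a combinatorial bijection on unlabelled objects. Establishing that the accumulated planar factors sum to something $\Delta$-analytic with the right error order — rather than just each term being small — is the real analytic heart, and it is there that one must be most careful with the $\preceq$/$\cong$ sandwiching.
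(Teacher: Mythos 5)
Your surgery starts in the wrong class, so the decomposition you describe cannot exist: by definition a triangulation in $\class{\triangr}_g$ has \emph{no} planar double edges to zip, and the double edges that separate $\class{\triangr}_g$ from $\class{\triangs}_g$ are precisely the non-separating and the non-planar separating ones, which your procedure never touches (zipping those changes the genus or splits the surface into two non-planar pieces, so it does not land in $\class{\triangs}_g$). The zipping-of-planar-double-edges idea is the right tool, but it must be applied to $\class{\triangt}_g$, where planar double edges do occur: zipping the \emph{maximal} planar double edges of $T\in\class{\triangt}_g$ produces a core in $\class{\triangr}_g$ whose edges are substituted by planar loopless triangulations, yielding the exact identity \eqref{eq:rtot}, namely $\triangr_g\br{y(1+\triangt_0(y))}=\triangt_g(y)\,\frac{1+\triangt_0(y)}{1+\triangt_0(y)+\opa{y}\triangt_0(y)}$, rather than any relation of the shape $\opa{y}\br{\triangr_g-\triangs_g}\asymp\triangr_g\cdot(\text{planar factor})$, which you never actually derive. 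A further factual slip: $\rho_{\triangt}=2^{1/3}/3\approx0.42$ is \emph{smaller} than $\rho_{\triangs}=3/2^{8/3}\approx0.47$, not larger, so the planar factor is not ``evaluated well inside its own disc of convergence'' near $\rho_S$; the composition $y\mapsto y(1+\triangt_0(y))$ is critical, and the location $\rho_S$ of the singularity of $\triangr_g$ comes out of the equation $\rho_{\triangt}(1+\triangt_0(\rho_{\triangt}))=\tfrac98\rho_{\triangt}=\rho_{\triangs}$, not from naive domination.

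The deeper gap is that the proposition is not a ``rare defects'' statement but a statement that $\triangr_g$ and $\triangs_g$ have the \emph{same} dominant term $c_g\br{1-\rho_S^{-1}y}^{-5g/2+3/2}$, so that their difference gains a factor $\br{1-\rho_S^{-1}y}^{1/4}$. You assume the dominant term of $\triangr_g$ ``by induction using this very proposition'', which is circular: nothing in your argument identifies the leading coefficient of $\triangr_g$, let alone shows it equals the coefficient $c_g$ of $\triangs_g$ from \Cref{simplerefined}. In the paper this coefficient matching is exactly where the work lies: insert the singular expansion of $\triangt_g$ from \Cref{looplessrefined} into \eqref{eq:rtot}, carry out the change of variable $\hat y=y(1+\triangt_0(y))$ using $1-\rho_{\triangt}^{-1}y=\tfrac12\br{1-\rho_S^{-1}\hat y}\br{1+O\br{\br{1-\rho_S^{-1}\hat y}^{1/2}}}$, and then invoke the constant identity $\hat c_g=2^{5(1-g)/2}c_g$ of \eqref{eq:relation} to conclude that the leading coefficient of $\triangr_g$ is precisely $c_g$. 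Without \eqref{eq:rtot} and \eqref{eq:relation} (or substitutes for them), the bound $\triangr_g(y)-\triangs_g(y)\cong O\br{\br{1-\rho_S^{-1}y}^{-5g/2+7/4}}$ is unsupported; the machinery of \Cref{congruent:meta} and \Cref{integrating} only transports such estimates between congruent functions, it does not produce the cancellation of dominant terms. Your base case $g=0$ (all double edges planar, hence $\class{\triangr}_0=\class{\triangs}_0$) is correct, but it is the only part of the argument that survives.
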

\begin{proof}
For $g=0$, all double edges are planar and hence $\class{R}_0=\class{S}_0$. We may thus assume that $g>0$.
For a triangulation $T\in\class{\triangt}_g$, let $D_{\max}$ be the set of \emph{maximal} planar double edges 
in the partial order defined in \Cref{def:relation}. For any two double edges $d_1,d_2\in D_{\max}$, their planar sides meet
at most at the end vertices of $d_1,d_2$, since otherwise either $d_1>d_2$, $d_2>d_1$, or there would be a double
edge $d$ consisting of one edge from $d_1$ and one edge from $d_2$ such that $d>d_1$ and $d>d_2$.

We can thus zip all double edges in $D_{\max}$ in an arbitrary order. 
We obtain a triangulation $R$ on $\Sg$ without planar double edges (thus in $\class{\triangr}_g$) and $|D_{\max}|$ planar 
triangulations without loops but possibly with double edges (thus in $\class{\triangt}_0$).
To obtain an equation between $\class{\triangt}_g$ and the triangulations resulting from the surgery,
we have to distinguish two cases: the original root face is in one of the planar triangulations or not. 

By the arguments above there can be at most one element in $D_{\max}$ that contains the root face in its planar part. 
By \Cref{def:zipping}, for this double edge $d_0$, the corresponding planar triangulation is rooted at the corner corresponding to the root of $T$ 
and the zipped double edge is marked. All other planar triangulations are rooted at their zipped edge 
and have no additional marked edges. $R$ is rooted at the zipped edge $d_0$ and all other zipped edges are marked.
On the other hand, if none of the double edges in $D_{\max}$ contains the root face in its planar part, $R$ is rooted at the root 
of $T$ and all zipped edges are marked.

In either case we end up with planar triangulations and a triangulation $R\in\class{\triangr}_g$ where any 
subset of the edges is marked. By \Cref{lem:surgery-2}\ref{sep} this construction yields a bijection (or more precisely 
a $k$-to-$k$ correspondence, with $k$ depending on the number of zipped edges) between these collections of triangulations
and $\class{\triangt}_g$.

As any number of edges in $R$ can be marked, we can directly replace any set of edges by planar 
triangulations (more precisely, cut $R$ along an edge and the planar triangulation along its root 
edge and glue them along their boundaries).
The root can either be replaced in the same way as a non-root edge (if the root remains in $R$), 
or by a planar triangulation with one marked edge, counted by $\opa{y}\triangt_0(y)$. Thus we obtain
\[\triangt_g(y)=\left(1 + \triangt_0(y) + \opa{y}\triangt_0(y) \right)\frac{\triangr_g(y(1 + \triangt_0(y)))}
	{1+\triangt_0(y)}\,.\]
Here, $\triangr_g(y(1 + \triangt_0(y)))$ corresponds to the substitution of edges, the denominator $1+\triangt_0(y)$ compensates for 
not allowing the root to be substituted in the same way, and the factor 
$\left(1 + \triangt_0(y) + \opa{y}\triangt_0(y) \right)$ corresponds to the substitution of the root.
This formula can be rearranged as
\begin{equation}\label{eq:rtot}\triangr_g(y(1 + \triangt_0(y)))=\triangt_g(y)\frac{1+\triangt_0(y)}
	{1 + \triangt_0(y) + \opa{y}\triangt_0(y)}\,. 
\end{equation}

By \Cref{looplessrefined}, the dominant singularity of $\triangt_g(y)$ occurs at $\rho_{\triangt}$ for all $g$. 
Again by \Cref{prop:planar-T}, $\triangt_0(y)$ and $\opa{y}\triangt_0(y)$ are finite and nonnegative at 
the singularity. In particular, the fraction in~\eqref{eq:rtot} is $\Delta$-analytic and of the form
\[\frac{1+\triangt_0(y)}{1 + \triangt_0(y) + \opa{y}\triangt_0(y)} = \frac12 + O\br{\br{1-\rho_{\triangt}^{-1}y}^{1/2}}.\]
Summing up we obtain
\begin{equation}\label{eq:rasy}
  \triangr_g(y(1 + \triangt_0(y))) \cong \frac12\hat{c}_g \br{1-\rho_{\triangt}^{-1} y}^{-5g/2+3/2}\br{1+O\br{\br{1-\rho_{\triangt}^{-1} y}^{1/4}}}.
\end{equation}
Therefore, the dominant singularity of $\triangr_g(y)$ satisfies 
\[\rho_{\triangr}\geq \rho_{\triangt} (1 + \triangt_0(\rho_{\triangt})) = (9/8) \rho_{\triangt}=\rho_{\triangs}.\]
On the other hand, $\class{\triangs}_g\subseteq\class{\triangr}_g$, therefore $\rho_\triangr\leq \rho_{\triangs}$
and thus $\rho_\triangr= \rho_{\triangs}$.

It remains to show that
\[\triangr_g(y)\cong c_g\br{1-\rho_S^{-1}y}^{-5g/2+3/2}\br{1+O\br{\br{1-\rho_S^{-1}y}^{1/4}}}.\]
To this end we shall replace factors $\br{1-\rho_{\triangt}^{-1}y}$ in~\eqref{eq:rasy} by $\br{1-\rho_{\triangs}^{-1}y(1+\triangt_0(y))}$
and then replace each occurrence of $y(1+\triangt_0(y))$ by $\hat y$. We have
\begin{align*}
  1-\rho_{\triangs}^{-1}y(1+\triangt_0(y)) &= 1-\rho_{\triangs}^{-1}y\br{\frac98-\frac98\br{1-\rho_{\triangt}^{-1}y} + O\br{\br{1-\rho_{\triangt}^{-1}y}^{3/2}}}\\
  &= 1 - \rho_{\triangt}^{-1}y + \rho_{\triangt}^{-1}y\br{1-\rho_{\triangt}^{-1}y}\br{1+O\br{\br{1-\rho_{\triangt}^{-1}y}^{1/2}}}\\
  &= 2\br{1-\rho_{\triangt}^{-1}y}\br{1+O\br{\br{1-\rho_{\triangt}^{-1}y}^{1/2}}},
\end{align*}
as $\rho_{\triangt}^{-1}y\br{1-\rho_{\triangt}^{-1}y} = \br{1 - \br{1-\rho_{\triangt}^{-1}y}}\br{1-\rho_{\triangt}^{-1}y}$.
Since $a = b\br{1+O\br{b^{1/2}}}$ implies $b = a\br{1+O\br{a^{1/2}}}$ when $a,b\to 0$, we have thus shown
\begin{equation*}
  1-\rho_{\triangt}^{-1}y = \frac12\br{1-\rho_{\triangs}^{-1}y(1+\triangt_0(y))}\br{1+O\br{\br{1-\rho_{\triangs}^{-1}y(1+\triangt_0(y))}^{1/2}}}.
\end{equation*}
As all functions in this equation are $\Delta$-analytic, inserting it into~\eqref{eq:rasy} yields
\begin{align*}
  \triangr_g(\hat y) &\stackrel{\phantom{\eqref{eq:relation}}}{\cong} 2^{5(g-1)/2}\hat{c}_g \br{1-\rho_{\triangs}^{-1}\hat y}^{-5g/2+3/2}\br{1+O\br{\br{1-\rho_{\triangs}^{-1}\hat y}^{1/4}}}\\
  &\stackrel{\eqref{eq:relation}}{=} c_g \br{1-\rho_{\triangs}^{-1}\hat y}^{-5g/2+3/2}\br{1+O\br{\br{1-\rho_{\triangs}^{-1}\hat y}^{1/4}}}
\end{align*}
with $\hat y = y(1+\triangt_0(y))$. By \Cref{prop:planar-S}, this finishes the proof.
\end{proof}

\Cref{prop:R-asymptotic} yields following structural statement.

\begin{coro} \label{coro:R-asymptotic}
For $g\ge1$ the probability that a triangulation $R$ chosen uniformly at random from $\class{\triangr}_g(m)$
is simple is $1-O(m^{-1/4})$.
\end{coro}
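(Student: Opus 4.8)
The plan is to read off the corollary directly from the coefficient asymptotics implied by \Cref{prop:R-asymptotic}. The probability that a uniformly random element of $\class{\triangr}_g(m)$ is simple equals $[y^m]\triangs_g(y)/[y^m]\triangr_g(y)$, so it suffices to show this ratio is $1-O(m^{-1/4})$, i.e.\ that $[y^m]\br{\triangr_g(y)-\triangs_g(y)}$ is negligible compared to $[y^m]\triangr_g(y)$.

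First I would apply \Cref{congruent:tt} (together with \Cref{tt}\ref{tt:polynomial}) to the congruence $\triangr_g(y)\cong c_g\br{1-\rho_S^{-1}y}^{-5g/2+3/2}\br{1+O\br{\br{1-\rho_S^{-1}y}^{1/4}}}$ established in the proof of \Cref{prop:R-asymptotic} (or equivalently use that $\triangr_g$ and $\triangs_g$ have the same dominant term, by \Cref{prop:planar-S}). This gives $[y^m]\triangr_g(y)=\br{1+O\br{m^{-1/4}}}\frac{c_g}{\Gamma(5g/2-3/2)}m^{5g/2-5/2}\rho_S^{-m}$, matching the order of $s_g(m)$ in \Cref{prop:planar-S}. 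Since $\class{\triangs}_g\subseteq\class{\triangr}_g$, all coefficients of $\triangr_g-\triangs_g$ are nonnegative, and the bound $\triangr_g(y)-\triangs_g(y)\cong O\br{\br{1-\rho_S^{-1}y}^{-5g/2+7/4}}$ from \Cref{prop:R-asymptotic}, being of smaller dominant exponent ($-5g/2+7/4<-5g/2+3/2+1/4$ is not the point; rather $7/4-5g/2$ versus $3/2-5g/2$: the difference has exponent larger by $1/4$, hence the coefficients are $O(m^{5g/2-11/4}\rho_S^{-m})$ up to the transfer theorem for the $O$-term), yields
\begin{equation*}
  [y^m]\br{\triangr_g(y)-\triangs_g(y)} = O\br{m^{5g/2-11/4}\rho_S^{-m}} = O\br{m^{-1/4}}\,[y^m]\triangr_g(y).
\end{equation*}
Dividing, the probability in question is $1-[y^m]\br{\triangr_g-\triangs_g}/[y^m]\triangr_g = 1-O(m^{-1/4})$.

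The only mildly delicate point is legitimising the coefficient extraction from the $O$-term $O\br{\br{1-\rho_S^{-1}y}^{-5g/2+7/4}}$: one must check this error exponent fits the hypotheses of the transfer theorem (\Cref{tt}\ref{tt:polynomial}) relative to the dominant term — concretely that $-5g/2+7/4 \le 1/4 - (5g/2-3/2) = 7/4-5g/2$, which holds with equality, so it is exactly at the boundary and the transfer applies. One also has to account for the three dominant singularities differing by cube roots of unity (as discussed after \Cref{tt}), but by the convention stated there the same asymptotics hold at each and the estimate is unaffected. I expect no real obstacle here; the corollary is essentially a bookkeeping consequence of \Cref{prop:R-asymptotic} and the transfer theorem, and the main ``work'' — establishing the subdominance of $\triangr_g-\triangs_g$ — has already been done.
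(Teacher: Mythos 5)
Your proposal is correct and follows essentially the same route as the paper: it combines the monotonicity $\triangs_g \preceq \triangr_g$ with \Cref{prop:R-asymptotic} and the transfer theorem (\Cref{tt}, via \Cref{congruent:tt}), merely phrasing the conclusion through the coefficients of the difference $\triangr_g-\triangs_g$ rather than comparing the two coefficient asymptotics directly.
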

\begin{proof}
We have $\triangs_g(y) \preceq \triangr_g(y)$ since $\class{\triangs}_g\subseteq\class{\triangr}_g$. By 
\Cref{prop:R-asymptotic}, both $\triangs_g(y)$ and $\triangr_g(y)$ are of the form $\br{1-\rho_S^{-1}y}^{-5g/2+3/2}(1+O(\br{1-\rho_S^{-1}y}^{1/4}))$, and the result follows by \Cref{tt}.
\end{proof}

As $\class{\triangs}_g\subseteq\class{\triangn}_g\subseteq\class{\triangr}_g$ we immediately obtain analogous results for 
$\class{\triangn}_g$.

\begin{prop} \label{prop:N-asymptotic}
  The generating functions $\triangn_g(y)$ and $\triangs_g(y)$ have the same dominant singularity $\rho_S$,
  $\triangn_0(y)=\triangs_0(y)$, and 
  \begin{equation*}
    \triangn_g(y)-\triangs_g(y)\cong O\br{\br{1-\rho_S^{-1}y}^{-5g/2+7/4}},\quad g\ge1.   
  \end{equation*}
\end{prop}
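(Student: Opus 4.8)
The plan is to obtain \Cref{prop:N-asymptotic} directly from \Cref{prop:R-asymptotic} by a squeezing argument based on the class inclusions $\class{\triangs}_g\subseteq\class{\triangn}_g\subseteq\class{\triangr}_g$ recorded just above the statement. First I would dispose of the case $g=0$ separately. The two edges of any double edge in a triangulation of the sphere form a simple closed curve, which by the Jordan curve theorem separates $\torus{0}$ (in fact into two discs), so every double edge of a planar triangulation is separating. Hence $\class{\triangn}_0$, the class of planar triangulations without loops and without separating double edges, contains no triangulation having a double edge at all, and since it also contains no loop it coincides with $\class{\triangs}_0$; in particular $\triangn_0(y)=\triangs_0(y)$.

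For $g\ge1$ the inclusions $\class{\triangs}_g\subseteq\class{\triangn}_g\subseteq\class{\triangr}_g$ translate into the coefficient-wise bounds $\triangs_g(y)\preceq\triangn_g(y)\preceq\triangr_g(y)$, and subtracting $\triangs_g(y)$ gives $0\preceq\triangn_g(y)-\triangs_g(y)\preceq\triangr_g(y)-\triangs_g(y)$. By \Cref{prop:R-asymptotic}, $\triangr_g(y)-\triangs_g(y)$ is congruent to $O\br{\br{1-\rho_S^{-1}y}^{-5g/2+7/4}}$, so it admits a $\Delta$-analytic majorant of the form (polynomial)${}+O\br{\br{1-\rho_S^{-1}y}^{-5g/2+7/4}}$; that same function then majorises the coefficient-wise smaller series $\triangn_g(y)-\triangs_g(y)$, while the zero function is a $\Delta$-analytic minorant of $\triangn_g(y)-\triangs_g(y)$ (its coefficients being nonnegative) and trivially equals $0+O\br{\br{1-\rho_S^{-1}y}^{-5g/2+7/4}}$. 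By the definition of congruence this is exactly the claim $\triangn_g(y)-\triangs_g(y)\cong O\br{\br{1-\rho_S^{-1}y}^{-5g/2+7/4}}$; alternatively one simply invokes \Cref{congruent:meta}, which was designed for this kind of step.

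For the assertion that $\triangn_g(y)$ and $\triangs_g(y)$ share the dominant singularity $\rho_S$, note that $\triangs_g\preceq\triangn_g\preceq\triangr_g$ forces $\rho_{\triangr}\le\rho_{\triangn}\le\rho_{\triangs}$, and $\rho_{\triangr}=\rho_{\triangs}=\rho_S$ by \Cref{prop:R-asymptotic}; alternatively this is visible a posteriori from the congruence just established, since the singular expansion of $\triangs_g$ in \Cref{prop:planar-S} has main term $c_g\br{1-\rho_S^{-1}y}^{-5g/2+3/2}$ and, as $7/4>3/2$, the error $O\br{\br{1-\rho_S^{-1}y}^{-5g/2+7/4}}$ is subdominant to it, so $\triangn_g$ inherits both the dominant term and the dominant singularity of $\triangs_g$. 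I do not expect any genuine obstacle here; the only point requiring a moment's care is checking that the congruence machinery really does allow the $O$-bound to be transported from $\triangr_g-\triangs_g$ to the coefficient-wise smaller nonnegative series $\triangn_g-\triangs_g$ — that is, that the $\Delta$-analytic majorant supplied by \Cref{prop:R-asymptotic} may be recycled and $0$ taken as the minorant — which is immediate from the definition and is exactly what \Cref{congruent:meta} packages.
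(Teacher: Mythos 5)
Your proposal is correct and follows exactly the paper's route: the paper derives \Cref{prop:N-asymptotic} as an immediate consequence of \Cref{prop:R-asymptotic} via the inclusions $\class{\triangs}_g\subseteq\class{\triangn}_g\subseteq\class{\triangr}_g$, which is precisely your squeezing argument (with the planar case handled by the observation that all planar double edges are separating, so $\class{\triangn}_0=\class{\triangs}_0$). You merely spell out the congruence bookkeeping (zero minorant, recycled majorant) that the paper leaves implicit, and that bookkeeping is sound.
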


\begin{coro} \label{coro:N-asymptotic}
For $g\ge1$ the probability that a triangulation $N$ chosen uniformly at random from $\class{\triangn}_g(m)$
is simple is $1-O(m^{-1/4})$.
\end{coro}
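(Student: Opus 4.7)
The plan is to follow exactly the same strategy as in the proof of \Cref{coro:R-asymptotic}, with $\class{\triangr}_g$ replaced by $\class{\triangn}_g$ throughout. The key ingredients are in place: \Cref{prop:N-asymptotic} gives that $\triangn_g(y)$ and $\triangs_g(y)$ share the dominant singularity $\rho_S$, and that their difference $\triangn_g(y) - \triangs_g(y)$ is congruent to $O\br{\br{1-\rho_S^{-1}y}^{-5g/2+7/4}}$, which is subdominant to the common dominant term $c_g\br{1-\rho_S^{-1}y}^{-5g/2+3/2}$ of $\triangs_g(y)$ supplied by \Cref{simplerefined}.

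First I would record the coefficient-wise inequality $\triangs_g(y) \preceq \triangn_g(y)$, which holds because $\class{\triangs}_g\subseteq\class{\triangn}_g$. Then I would invoke \Cref{congruent:tt} applied once to $\triangs_g$ and once to $\triangn_g$: since both generating functions are congruent to $c_g\br{1-\rho_S^{-1}y}^{-5g/2+3/2}$ plus an error of order $O\br{\br{1-\rho_S^{-1}y}^{-5g/2+7/4}} = O\br{\br{1-\rho_S^{-1}y}^{1/4-(5g/2-3/2)}}$, the transfer theorem (\Cref{tt}\ref{tt:polynomial}, applicable with $\alpha = 5g/2-3/2$ thanks to $g\ge 1$) gives
\begin{equation*}
[y^m]\triangs_g(y) = \br{1+O\br{m^{-1/4}}}\frac{c_g}{\Gamma(5g/2-3/2)}m^{5g/2-5/2}\rho_S^{-m}
\end{equation*}
and the identical asymptotic expression for $[y^m]\triangn_g(y)$.

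Finally I would take the ratio: the probability that a uniformly random $N\in\class{\triangn}_g(m)$ is simple equals $[y^m]\triangs_g(y) / [y^m]\triangn_g(y)$, and the preceding asymptotic equalities immediately yield that this ratio equals $1 - O(m^{-1/4})$.

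There is no real obstacle here; the entire argument is a routine transcription of the proof of \Cref{coro:R-asymptotic}. The only point requiring any care is to verify that the error exponent $-5g/2+7/4$ is strictly less than the dominant exponent $-5g/2+3/2$ (the difference being $1/4$, exactly the gap required by the error form in \Cref{tt}\ref{tt:polynomial}), which is immediate.
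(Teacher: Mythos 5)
Your proposal is correct and is essentially the paper's own argument: the paper proves \Cref{coro:N-asymptotic} exactly as it proves \Cref{coro:R-asymptotic}, namely via $\triangs_g \preceq \triangn_g$, the common dominant term supplied by \Cref{prop:N-asymptotic} and \Cref{simplerefined}, and the transfer theorem (\Cref{tt}, via \Cref{congruent:tt}) applied to both series before taking the ratio of coefficients. The only cosmetic difference is your explicit constant $\Gamma(5g/2-3/2)$, which is immaterial since it cancels in the ratio.
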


In order to analyse the generating function $\triangm_g(y)$, we use a similar strategy as in \Cref{prop:R-asymptotic}.
The only difference between $\class{\triangm}_g$ and $\class{\triangn}_g$ is that for triangulations in
$\class{\triangm}_g$ specific types of loops are allowed. We will use surgeries to eliminate all loops from 
triangulations in $\class{\triangm}_g$. These surgeries will relate $\triangm_g(y)$ to a combination of $\triangn_g(y)$, 
$\triangr_g(y)$ and $\triangs_g(y)$. We will use this relation to prove that $\triangm_g(y)$ and $\triangs_g(y)$ have the 
same dominant term.

\begin{prop} \label{prop:M-asymptotic}
 The generating functions $\triangm_g(y)$ and $\triangs_g(y)$ have the same dominant singularity $\rho_S$,
   $\triangm_0(y)=\triangs_0(y)$, and 
  \[\triangm_g(y)-\triangs_g(y)\cong O\br{\br{1-\rho_S^{-1}y}^{-5g/2+7/4}},\quad g\ge1.\]
\end{prop}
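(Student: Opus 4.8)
The plan is to follow the same pattern as \Cref{prop:R-asymptotic} and \Cref{prop:N-asymptotic}: remove the loops of a triangulation in $\class{\triangm}_g$ by topological surgery, derive from this a functional equation relating $\triangm_g$ to the generating functions $\triangn_{g'},\triangr_{g'},\triangs_{g'}$ at genera $g'\le g$, and then read off the dominant term by singularity analysis. The case $g=0$ is immediate: on the sphere every loop bounds a disc and is therefore separating, and likewise every double edge is planar, hence separating; so $\class{\triangm}_0$ contains neither loops nor double edges, whence $\triangm_0=\triangn_0=\triangs_0$ by \Cref{prop:N-asymptotic}. For $g\ge1$, since $\class{\triangn}_g\subseteq\class{\triangm}_g$ we have $\triangn_g\preceq\triangm_g$, so the lower approximant in the asserted congruence can be taken to be the one supplied by \Cref{prop:N-asymptotic}; the content of the proof is to produce a matching upper approximant, i.e.\ to bound the generating function of those triangulations in $\class{\triangm}_g$ that carry at least one loop.

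First I would pin down the local picture. If $e$ is a loop of $T\in\class{\triangm}_g$ based at $v$, then $e$ is a non-separating circle on $\Sg$ (no separating loop), and no other loop of $T$ is homologous to it (no separating pair of loops). One of the two triangular faces incident to $e$ is bounded by $e$ together with two further edges, forming the companion structure: generically a double edge from $v$ to some $w\ne v$, in the degenerate cases a companion structure of loops at $v$. In all cases the companion is homologous to $e$ and hence also non-separating. The surgery then removes $e$ by cutting $\Sg$ along the non-separating companion curve (dropping the genus by one) and re-identifying the boundary as in \Cref{def:zipping}, absorbing $e$ and the incident triangle into the created structure; the two edges that replace the companion are recorded by marks, and the root is re-attached exactly as in the proof of \Cref{prop:R-asymptotic} (either it survives in the large piece, or it is carried by a marked edge). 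Iterating over all loops — in an order made canonical by an analogue of \Cref{lem:planar-dbl-inclusion} and bookkept by \Cref{lem:surgery-2} — yields a $k$-to-$k$ correspondence between the loop-carrying triangulations of $\class{\triangm}_g$ and marked triangulations of genus $g-j$ (where $j\ge1$ is the number of loops removed) lying, according to whether the repaired companions turn out non-separating, non-planar separating, or disappear, in $\class{\triangn}_{g-j}$, $\class{\triangr}_{g-j}$, or $\class{\triangs}_{g-j}$, each carrying a bounded number of marked edges (two per removed loop). This produces an identity for $\triangm_g(y)$ of the shape of \eqref{eq:rtot}, with principal part $\triangn_g(y)$ and a correction assembled from substituted copies, with boundedly many marks, of $\triangn_{g'},\triangr_{g'},\triangs_{g'}$ with $g'\le g-1$.

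To finish, I would run singularity analysis on this identity. By \Cref{prop:N-asymptotic} and \Cref{prop:R-asymptotic}, each of $\triangn_{g'},\triangr_{g'},\triangs_{g'}$ with $g'\le g-1$ is congruent to a constant multiple of $\br{1-\rho_S^{-1}y}^{-5g'/2+3/2}$ up to a relative $O\br{\br{1-\rho_S^{-1}y}^{1/4}}$; by \Cref{integrating}\ref{integrating:diff} and \Cref{congruent:meta} each mark lowers the exponent by one, so the extremal contribution — one loop removed, genus $g-1$, two marks — is $\cong O\br{\br{1-\rho_S^{-1}y}^{-5(g-1)/2+3/2-2}}=O\br{\br{1-\rho_S^{-1}y}^{-5g/2+2}}$, and every other contribution (more loops removed, hence smaller genus) is even less singular, so the whole correction is absorbed by the error term $O\br{\br{1-\rho_S^{-1}y}^{-5g/2+7/4}}$ inherited from \Cref{prop:N-asymptotic}. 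Exactly as in the lines after \eqref{eq:rasy}, any substitution performed on $y$ is $\Delta$-analytic and does not move the dominant singularity off $\rho_S$, so \Cref{congruent:meta} and \Cref{congruent:tt} yield $\triangm_g(y)\cong c_g\br{1-\rho_S^{-1}y}^{-5g/2+3/2}\br{1+O\br{\br{1-\rho_S^{-1}y}^{1/4}}}$, which together with \Cref{prop:N-asymptotic} is the assertion.

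I expect the surgery in the second step to be the main obstacle. One must choose the companion of each loop canonically (there are two incident triangular faces, and both choices must yield a coherent inverse), handle uniformly the degenerate ``triangles'' that can occur or that the surgeries transiently create — three loops at one vertex, a loop with a pendant edge, a double edge with an incident loop, and the monogon and bigon faces produced along the way — deal with several pairwise non-homologous loops simultaneously, and fix the rooting conventions so that the loops may be removed in any order without over- or under-counting, i.e.\ so that the correspondence is genuinely $k$-to-$k$.
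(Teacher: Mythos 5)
Your overall plan coincides with the paper's at the level of strategy (split $\class{\triangm}_g$ into its loopless part, which is \Cref{prop:N-asymptotic}, plus a loop-carrying part to be bounded from above; your final exponent count ``one loop, genus $g-1$, two marks $\Rightarrow -5g/2+2$, and more loops only help'' is exactly the bound the paper ends up with), and the $g=0$ discussion is fine. The genuine gap is the surgery itself, which is where all the work lies. Zipping the companion double edge of a loop $e$ (\Cref{def:zipping}) does \emph{not} ``absorb'' $e$: zipping never deletes edges; it cuts along the two companion edges and re-identifies each hole boundary into a single edge. The outcome is a triangulation of genus $g-1$ that still contains the loop $e$, whose incident triangle has become a degenerate face bounded by $e$ and the zipped edge traversed twice. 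So the image of your operation does not lie in $\class{\triangn}_{g-1}\cup\class{\triangr}_{g-1}\cup\class{\triangs}_{g-1}$, and the asserted correspondence ``loop-carrying triangulations $\leftrightarrow$ loopless triangulations of smaller genus with two marks per removed loop'' is not established; deleting $e$ afterwards merges two faces into a quadrilateral and destroys the triangulation property, so the repair is not routine. Moreover, when the incident triangle consists of $e$ and two further loops (or a pendant edge traversed twice) there is no companion double edge and the operation is undefined; and once several loops are treated recursively, later companions can become separating or coincide, so the claim that $j$ removed loops give genus exactly $g-j$ with exactly $2j$ marks is unsupported. These multi-loop, disconnection and degeneracy issues are precisely what you defer to ``the main obstacle'', but they are the content of the proof, not a technicality.

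For comparison, the paper avoids all of this by cutting along the \emph{loops themselves}, keeping the resulting holes bounded by loops, and first proving a finiteness claim for the parameters (each non-root component has at least three holes, $k\le 2(g-1)$, $l\le 3(g-1)$, $\sum g_i=g-(l-k+1)$); only then does it zip double edges inside each component, reducing to cores in $\class{\triangr}_{g_i}$ together with sequences of simple planar triangulations and the small components $\Psi$, $\Lambda$, and it deliberately settles for crude upper bounds with bounded overcounting factors (such as $h_i2^{h_i^2}$) via \Cref{lem:surgery-2} and \Cref{lem:planar-dbl-inclusion}, since only a $\preceq$-estimate on the loop-carrying part is needed. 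Your insistence on an exact $k$-to-$k$ correspondence is both unnecessary and much harder than what the statement requires; as written, the proposal is a plausible plan with the correct target exponent, but the decisive step --- a well-defined loop-removing surgery together with a controlled count of its preimages --- is missing, and the specific operation you describe does not produce the objects you claim it does.
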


As for \Cref{prop:R-asymptotic,prop:N-asymptotic} we obtain the following corollary.

\begin{coro} \label{coro:M-asymptotic}
For $g\ge1$ the probability that a triangulation $M$ chosen uniformly at random from $\class{\triangm}_g(m)$
is simple is $1-O(m^{-1/4})$.
\end{coro}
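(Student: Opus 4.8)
The plan is to follow the same strategy as in the proof of \Cref{prop:R-asymptotic}, namely to set up a functional relation between $\triangm_g(y)$ and the already-understood functions $\triangn_g(y)$, $\triangr_g(y)$, $\triangs_g(y)$ (all of which, by \Cref{prop:R-asymptotic,prop:N-asymptotic}, have dominant singularity $\rho_S$ and dominant term $c_g(1-\rho_S^{-1}y)^{-5g/2+3/2}$), by performing surgeries that eliminate the loops allowed in $\class{\triangm}_g$ but not in $\class{\triangn}_g$. First I would determine precisely which loops a triangulation in $\class{\triangm}_g$ can contain: since degenerate triangles are permitted, a loop based at a vertex $v$ either bounds (together with the two edges it meets) one or more degenerate faces, or it cuts off a sub-triangulation on one side. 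Because separating loops and separating pairs of loops are forbidden, any loop in $M\in\class{\triangm}_g$ must be non-separating, or must cut off a small fixed configuration (a loop together with a single further edge, or nested loops forming degenerate triangles). I expect the local analysis to show there are only finitely many "types'' of loop configurations, each contributing a bounded local gadget, and that cutting along such a loop reduces the genus (for a non-separating loop) exactly as zipping a separating double edge did in \Cref{lem:surgery-2}.

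Next I would carry out the surgery. For a non-separating loop $\ell$ based at $v$, cutting $\Sg$ along $\ell$ and capping the two resulting boundary circles with discs yields a surface of genus $g-1$ with two marked vertices (the two copies of $v$) and two marked corners, on which the image of $M$ is still a triangulation without loops and without separating double edges (one must check that zipping a loop does not create a separating double edge — this is where the definition of $\class{\triangm}_g$, barring separating loops and separating pairs of loops, is used). Choosing a \emph{maximal} loop (one not contained in the region cut off by another loop, analogous to $D_{\max}$ in \Cref{prop:R-asymptotic}) and processing all such loops simultaneously, I would obtain an equation expressing $\triangm_g(y)$ in terms of $\triangn_{g-1}(y)$ (or rather a derivative/marked version of it, to account for the marked vertices and corners), plus the contributions of the finitely many bounded degenerate-loop gadgets, plus a term corresponding to the loop-free part, which lies in $\class{\triangn}_g$. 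Schematically this gives something like
\begin{equation*}
  \triangm_g(y) \cong \triangn_g(y) + \sum_{h<g} A_h(y)\,\triangn_{h}^{[\text{marked}]}(y) + (\text{bounded gadget terms}),
\end{equation*}
where each $A_h(y)$ is $\Delta$-analytic and finite at $\rho_S$.

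The final step is singularity analysis. By \Cref{prop:N-asymptotic} every $\triangn_h(y)$ with $h\le g$ has dominant term $c_h(1-\rho_S^{-1}y)^{-5h/2+3/2}$; differentiating or marking (via \Cref{integrating}\ref{integrating:diff} and \Cref{congruent:meta}) raises the exponent by at most $1$ per mark, so each contribution from genus $h<g$ has dominant exponent at most $-5(g-1)/2+3/2+O(1) = -5g/2+4+O(1)$, which is strictly smaller than $-5g/2+3/2$ for the leading term once $g\ge 1$; more carefully one checks the worst such term is $O((1-\rho_S^{-1}y)^{-5g/2+7/4})$, matching the claimed error. The $\triangn_g(y)$ term carries the full dominant term $c_g(1-\rho_S^{-1}y)^{-5g/2+3/2}$, and by \Cref{prop:N-asymptotic} it already agrees with $\triangs_g(y)$ up to $O((1-\rho_S^{-1}y)^{-5g/2+7/4})$. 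Combining, $\triangm_g(y)-\triangs_g(y)\cong O((1-\rho_S^{-1}y)^{-5g/2+7/4})$, and for $g=0$ all double edges are planar and no loops are allowed, so $\class{\triangm}_0=\class{\triangs}_0$ directly. The main obstacle I anticipate is the bookkeeping in the local classification of admissible loops and their degenerate-triangle configurations — making sure the list is complete, that each corresponding surgery is reversible with a controlled (bounded) multiplicity, and that no forbidden configuration (separating double edge or separating pair of loops) is created or destroyed in the process; the generating-function manipulation afterwards is then routine given \Cref{prop:N-asymptotic} and \Cref{congruent:meta}.
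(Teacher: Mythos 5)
Your high-level plan is the same as the paper's: split $\class{\triangm}_g=\class{\triangn}_g\uplus\class{L}_g$, eliminate loops by cutting surgeries, bound the loop-containing part by generating functions of smaller genus, and finish with $\triangs_g\preceq\triangm_g$ and the transfer theorem (that last, routine step you leave implicit, and it is indeed how \Cref{coro:R-asymptotic} is obtained). However, two of your structural claims are wrong, and the step you explicitly defer is the actual content of the proof. First, every loop in $\class{\triangm}_g$ is non-separating by definition, so there is no case of a loop ``cutting off a small fixed configuration,'' and there is no finite list of local loop gadgets: the loop structure is global. More importantly, cutting along one loop does \emph{not} leave a loop-free triangulation free of separating double edges: the other loops survive, and the cut can create planar double edges and separating pairs of loops (the paper says so explicitly at the start of the proof of \Cref{prop:M-asymptotic}). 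One must therefore cut \emph{recursively} until every remaining loop bounds a hole; the result can be several components, each with $h_i\ge3$ holes, and the finiteness statement needed is the global bound $k\le 2(g-1)$, $l\le 3(g-1)$ of \Cref{claim:finiteg}, not a classification of gadgets. The holes are then removed by further zipping of the double edges that arise, which yields only upper bounds (marked edges, factors $h_i2^{h_i^2}$, multinomial coefficients) in terms of $\triangr_{g_i}$ and sequences of simple planar triangulations, as in \Cref{claim:nextroot,claim:awayroot,claim:connecting}; your clean relation $\triangm_g\cong\triangn_g+\sum_h A_h\,\triangn_h^{[\mathrm{marked}]}+(\text{gadgets})$ does not hold.

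Second, the exponent count you wave at is precisely where such a proof can fail. Cutting a non-separating loop gains $5/2$ in the singular exponent from the genus drop, but each of the two holes it creates costs about $1$ (a mark/derivative), so the net gain is only $1/2$ per loop: the paper's computation gives $L_g(y)\cong O\br{\br{1-\rho_S^{-1}y}^{-5g/2+l/2+3/2}}$ with $l\ge1$, and only this $1/2$ margin beats the $1/4$ needed for the error term $O(m^{-1/4})$ in the probability. An argument allowing an uncontrolled ``$O(1)$ marks'' per genus reduction proves nothing — if the marks cost $3$ or more per cut, the loop term would \emph{dominate} rather than be negligible. So the per-hole bookkeeping of \Cref{claim:ab} (built on the upper bounds above) is not routine tidying to be checked later; it is the proof, and your sketch does not contain it.
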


\begin{proof}[Proof of \Cref{prop:M-asymptotic}]
A triangulation in $\class{\triangm}_g$ either has no loops or at least one loop. Let $\class{L}_g$ be the subclass 
of $\class{\triangm}_g$ of triangulations with at least one loop. Then we have
\begin{equation*}
	\class{\triangm}_g=\class{\triangn}_g\uplus \class{L}_g,
\end{equation*}
as $\class{\triangn}_g$ is the subclass of $\class{\triangm}_g$ of triangulations with no loops (see \Cref{table:triang}). 
Since $\triangm_g(y)-\triangs_g(y)=\triangm_g(y)-\triangn_g(y)+(\triangn_g(y)-\triangs_g(y))$, \Cref{prop:N-asymptotic} 
implies that it suffices to prove that the 
generating function of ${\class{L}}_g$ satisfies \[L_g(y) \cong O\br{\br{1-\rho_S^{-1}y}^{-5g/2+7/4}}.\]

Let $M\in\class{L}_g$. We wish to eliminate the loops in $M$ by cutting along them and closing the resulting holes 
(in some manner). However, the triangulations resulting from cutting along loops (and closing the holes) do not have 
to be in $\class{\triangm}_g$, as planar double edges and separating pairs of loops can arise.
We therefore cut the surface \emph{recursively} along loops (in any fixed order) until every remaining loop bounds a hole.
Whenever we cut the surface, we root the new surface(s) in the following way. Before cutting along a loop $L$, consider
the root corner $(v,e,e')$ of the component that contains $L$. After cutting along $L$, we define the corner corresponding to
$(v,e,e')$ to be the root corner of its component. If $L$ separates the surface, then there is a unique component $S$ that
does not have a root yet. Let $L_S$ be the copy of $L$ on $S$. The edges incident with the base $u$ of $L_S$ are arranged
around $u$ in a sequence $L_S=e_1,e_2,\dotsc,e_{i-1},e_i=L_S$ in counterclockwise direction. We let $(u,e_1,e_2)$ be the
root corner of $S$.

Let $\overline{M}^{(1)},\ldots,\overline{M}^{(k)}$ be the triangulations of the components of the resulting map 
$\ol{M}$. Without loss of generality let $\ol{M}^{(1)}$ be the component containing (the corner corresponding to) the original root.
By construction, every hole is bounded by a loop, and vice versa every loop is the boundary
of a hole. Let $l$ be the number of loops we cut to obtain $\ol{M}$. As every cut leaves two holes, we have 
$2l$ holes; we denote them by $H_1,\ldots,H_{2l}$ so that for every $j$, $H_{2j-1}$ and $H_{2j}$
originate from cutting the $j$-th loop in our construction.
Let $g_i$ be the genus of the surface that $\overline{M}^{(i)}$ is embedded on. We denote the set of indices $j$ for
which $H_j$ is a hole in $\overline{M}^{(i)}$ by $J_i$ and set $h_i:=|J_i|$.
Let $\overline{\class{M}}_{g_i,J_i}$ be the class of all triangulations of the surface of genus $g_i$ with $h_i$ holes 
$H_j$, $j\in J_i$, that can occur by the previous surgery as a component $\overline{M}^{(i)}$.

In order to obtain an upper bound for $L_{g}(y)$ in terms of 
the generating functions $\overline{M}_{g_i,J_i}(y)$, we will have to incorporate the sum over all possible numbers of 
loops, components, and genera of those components. Our first step is to show that there are only finitely many 
possibilities for these numbers.

\begin{claim}\label{claim:finiteg}
	The tuple $\mathbf{g}:=(l,k;g_1,\ldots,g_k;h_1,\ldots,h_k)$ can attain only finitely many values.
\end{claim}

If $k=1$, then none of the loops was separating and hence cutting along each of the loops decreased the genus of the
surface by one, yielding $l\leq g$ and $g_1=g-l$. Now suppose $k\ge 2$. For each $\ol{M}^{(i)}$ we claim that 
$h_i\ge3$. Indeed, if $\ol{M}^{(i)}$ had only one hole, the loop bounding this hole would have been separating in $M$,
a contradiction to the fact that triangulations in $\class{M}_g$ do not have separating loops. If $\ol{M}^{(i)}$ had
precisely two holes, they either come from cutting the same loop or from cutting two different loops. In the former case,
this would imply $k=1$ as $M$ was connected. In the latter case, cutting along these two loops would disconnect
$\ol{M}^{(i)}$ from the other parts of $M$, violating the definition of $\class{\triangm}_g$. Therefore we have
$h_i\ge3$ for all $i$ and thus $l = (h_1 + \dots + h_k)/2 \geq 3k/2$.
 
Next we show that $k$ and $l$ are bounded from above. When we cut along loops, in each step we either
decrease the genus of the surface by one or we cut it into two parts. Since we ended up with $k$ components, among
$l$ loops that were cut there are exactly $k-1$ loops that created a new component when we cut along them. Thus for $l-k+1$ of the
loops cutting along them decreases the total genus and we have $g \geq l-k+1 \geq k/2+1$, implying
\begin{equation}\label{eq:k}
 k \leq 2(g-1)
\end{equation}
and, by inserting \eqref{eq:k} into $g\geq l-k+1$,
\begin{equation}\label{eq:l}
 l \leq 3(g-1).
\end{equation}
As $k$ and $l$ are bounded and
\begin{equation}\label{eq:g}
  g_1+\dots+g_k = g-(l-k+1)
\end{equation}
is smaller than $g$, there are only finitely many possible values for
$\mathbf{g}$. This finishes the proof of \Cref{claim:finiteg}.

\medskip

Our aim is to obtain upper bounds for $|\overline{\class{M}}_{g_i,J_i}|$ for any fixed $i$.
To do that we shall describe the triangulations that can occur in  
$\overline{\class{M}}_{g_i,J_i}$ in terms of triangulations in $\class\triangs_g$ and 
$\class\triangr_g$.
First we describe two special triangulations that will occur during the proof. Let $\Psi$ be the triangulation 
consisting of one vertex and three loops each bounding a hole. The other special triangulation 
consists of two vertices connected by an edge and a loop based bounding a hole at one of the vertices. We denote this 
triangulation by $\Lambda$ (see \Cref{fig:smallcomponents}). Note that $\Psi$ can occur as a component $\overline{M}^{(i)}$, while $\Lambda$ will
only appear after some surgeries.

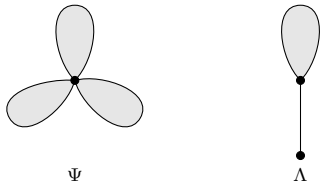
\begin{figure}[htbp]
  \centering
\begin{tikzpicture}[line cap=round,line join=round,>=triangle 45,x=1.0cm,y=1.0cm]
\clip(0.,0.) rectangle (4.5,3.);
\draw (4,0.5)-- (4,1.5);
\draw [fill=gray] (4,1.5) to[out=45,in=0] (4,2.5)to[out=180,in=135] (4,1.5);
\draw [fill=gray] (1,1.5) to[out=45,in=0] (1,2.5)to[out=180,in=135] (1,1.5);
\draw [fill=gray] (1,1.5) to[out=175,in=120] (0.13,1)to[out=300,in=255] (1,1.5);
\draw [fill=gray] (1,1.5) to[out=285,in=240] (1.87,1)to[out=60,in=15] (1,1.5);

\begin{scriptsize}
\draw [fill=black] (4,1.5) circle (1.5pt);
\draw [fill=black] (4,0.5) circle (1.5pt);
\draw [fill=black] (1,1.5) circle (1.5pt);
\draw[color=black] (4,0.25) node {$\Lambda$};
\draw[color=black] (1,0.25) node {$\Psi$};
\end{scriptsize}
\end{tikzpicture}
  \caption{The components of type $\Psi$ and $\Lambda$.}
  \label{fig:smallcomponents}
\end{figure}

Consider a component $\overline{M}^{(i)}$. It has genus $g_i$ and $h_i$ holes $H_j$, $j\in J_i$. 
Recursively for $j\in J_i$, starting at the smallest index, we look at the face $f_j$ incident
with the loop $L_j$ bounding the hole $H_j$. In each step we will perform surgeries for which all maps remain
triangulations. Thus, $f_j$ is bounded by a triangle $t_j$. One of its edges is $L_j$; the other edges are
either two loops, a double edge, or a single edge from the base of $L_j$ to a vertex of degree one. In the
second case, we additionally have to distinguish whether $t_j$ is the root face of its component. We thus have the following
four possibilities.
\begin{enumerate}
\item\label{triangle:Delta}
  $f_j$ is the unique face of a component $\Psi$;
\item\label{triangle:root}
  $f_j$ is the root face of its component and $t_j$ consists of $L_j$ and a double edge;
\item\label{triangle:nonroot}
  $f_j$ is not the root face and $t_j$ consists of $L_j$ and a double edge; or
\item\label{triangle:Lambda}
  $f_j$ is the unique face of a component $\Lambda$.
\end{enumerate}
In Cases~\ref{triangle:Delta}, \ref{triangle:root}, and~\ref{triangle:Lambda}, we do not perform any surgery
and proceed with the next index in $J_i$. In Case~\ref{triangle:nonroot}, we perform the following surgery.

Let $d_j$ be the double edge in $t_j$. We zip $d_j$, obtaining one copy $\Lambda_j$ of $\Lambda$ and a component $S$; denote the
zipped edge in $S$ by $e_j$. Now $S$ might contain planar double edges, but all such double edges are larger than $e_j$
in the sense of \Cref{def:relation}. Recursively pick a smallest planar double edge that does not contain the root face
of $S$ on the same side as $e_j$ and zip it. If such a planar double edge encloses an edge that carries a mark from an
earlier step of the construction, we let the corresponding edge in the planar part inherit the mark.
This iterative zipping results in a sequence $\sigma_j$ of simple planar 
triangulations, each marked at the zipped edge at which the previous triangulation was cut off. These components cannot 
contain (planar) double edges, as such double edges would enclose $e_j$ and thus would contradict the minimality 
of the double edges that have been zipped. Furthermore, the last zipped double edge in the remaining part of $S$ is marked.
If \ref{triangle:root} occurred for an index $j_0$, we first perform the above surgeries for all other $j\in J_i$. After 
these surgeries the face $f_{j_0}$ satisfies \ref{triangle:Lambda} if $g_i=0$ and \ref{triangle:root} otherwise. In the latter case we zip the double edge in $t_{j_0}$ to obtain a copy $\Lambda_{j_0}$ of $\Lambda$.

After performing these surgeries for all loops, every remaining planar double edge has the root face on its planar side; in particular,
the other side of the double edge is not planar. Thus, the remaining planar double edges are partially ordered by
\Cref{lem:planar-dbl-inclusion}. We proceed the same way as before, recursively zipping smallest double edges, thus
obtaining another sequence of simple planar triangulations.

We split $\ol{\class{M}}_{g_i,J_i}$ into the following sub-classes.

\renewcommand{\theenumi}{(\Alph{enumi})}
\begin{enumerate}
  \item Let $\class{A}_{g_i,J_i}$ be the subclass of $\ol{\class{M}}_{g_i,J_i}\setminus\{\Psi\}$ where the root face is
    incident with a loop.
  \item Let $\class{B}_{g_i,J_i}$ be the subclass of $\ol{\class{M}}_{g_i,J_i}$ where the root face is not incident with
    a loop.
\end{enumerate}
\renewcommand{\theenumi}{(\roman{enumi})}
Note that $\ol{M}^{(i)}$ cannot be $\Lambda$, as otherwise the corresponding loop would have been separating. Thus we can write
\[\ol{\class{M}}_{g_i,J_i}=\class{A}_{g_i,J_i}\uplus\class{B}_{g_i,J_i}\uplus\{\Psi\},\]
where $\Psi$ is only present if $g_i=0$ and $h_i=3$.

\begin{claim}\label{claim:nextroot}
 Let $A_{g_i,J_i}(y)$ be the generating function of $\class{A}_{g_i,J_i}$ \emph{where loops are not counted}. Then
 \begin{align*}
  A_{0,J_i}(y)&\preceq h_i 2^{h_i^2}\opa{y}^{h_i-2}\br{\frac{(3y)^{h_i}}{\br{1-\opa{y}\br{\triangs_0(y)}}^{h_i-1}}},
  &\\
  A_{g_i,J_i}(y)&\preceq h_i 2^{h_i^2}\opa{y}^{h_i-1}\br{\br{\frac{3y}{1-\opa{y}\br{\triangs_0(y)}}}^{h_i}\triangr_{g_i}(y)},
  &\text{for }g_i\ge 1.
\end{align*}
\end{claim}

Suppose $g_i=0$.
By the construction above we have one sequence $\sigma_j$ of marked simple planar triangulations for every loop that is not incident 
with the root face. At the start of each of those sequences there is one component $\Lambda_j$. Such a component is counted by $3y$,
since it has one single edge (and we do not count loops) and there are three possibilities for the root. Therefore we obtain a factor of 
\[\frac{(3y)^{h_i-1}}{\br{1-\opa{y}\br{\triangs_0(y)}}^{h_i-1}}.\]
In addition, we have a copy of $\Lambda$ containing the original root face. For each of the $h_i-1$ sequences,
there is one marked edge in one of the other components, thus there are $h_i-1$ additional marked edges. As
at least one of the marks (the last one) is on the copy of $\Lambda$ that contains the root face, we obtain 
\[\opa{y}^{h_i-2}\br{\frac{(3y)^{h_i-1}}{\br{1-\opa{y}\br{\triangs_0(y)}}^{h_i-1}}\opa{y}\br{3y}}.\]
This is an upper bound as not all markings are allowed, e.g. placing the marking of a sequence on the sequence itself. Going in the 
other direction we reattach the triangulations in the sequences $\sigma_j$ and the components $\Lambda_j$. We have
to bound the number of different maps in $\class{A}_{g_i,J_i}$ we can obtain by this construction. 

Marks and sequences are sorted, hence it is uniquely defined which mark belongs to which sequence. We have $h_i$ copies
$\Lambda_j$, $j\in J_i$, of $\Lambda$; deciding for which $j_0$ the component $\Lambda_{j_0}$ contains the original root face gives
us a factor of $h_i$. Let $j$ be the largest index in $J_i\setminus\{j_0\}$. Then the triangulations in the sequence $\sigma_j$ are
recursively attached to $\Lambda_{j_0}$. By \Cref{lem:surgery-2}, each such step is a 2-to-2 correspondence. If for a different
sequence $\sigma_{j'}$, its mark is placed in one of the triangulations of $\sigma_j$, then there are at most two possibilities
which edge carries this mark after reattaching all triangulations in $\sigma_j$. As this can happen at most once for each pair of 
sequences, the number of choices is bounded by $2^{h_i^2}$. Together with the trivial observation $\opa{y}\br{3y} = 3y$ we have
proved the first statement of \Cref{claim:nextroot}.

For $g_i\ge1$ we proceed analogously. Again, we have components $\Lambda_j$, $j\in J_i$, and for each $j\in J_i\setminus\{j_0\}$ 
we have a sequence of marked simple planar triangulations, one component $\Lambda_j$, and a marked edge where the sequence 
was attached to the rest of the map. In contrast to the case $g_i=0$, we also have a sequence $\sigma$ that originated from 
zipping planar double edges
between the root and the non-planar component $N$ in the last step of our construction. In the opposite direction, this
sequence is attached to $N$ at the root edge of $N$, thus there is no additional mark. The triangulation $N$ is of genus $g_i$,
has no holes, and has no planar double edges; thus $N\in\triangr_{g_i}$. In total, we have an upper bound of
\[\opa{y}^{h_i-1}\br{\br{\frac{3y}{1-\opa{y}\br{\triangs_0(y)}}}^{h_i}\triangr_{g_i}(y)}.\]
The factor $h_i 2^{h_i^2}$ results from the same considerations as in the planar case.

\medskip

\begin{claim}\label{claim:awayroot}
 Let $B_{g_i,J_i}(y)$ be the generating function of $\class{B}_{g_i,J_i}$ where loops are not counted. Then
 \begin{align*}
  B_{0,J_i}(y)&\preceq \br{h_i-1} 2^{h_i^2}\opa{y}^{h_i-2}\br{\br{\frac{3y}{1-\opa{y}\br{\triangs_0(y)}}}^{h_i}\br{\opa{y}^2\triangr_0(y)}},
  &\\
  B_{g_i,J_i}(y)&\preceq 2^{(h_i+1)^2}\opa{y}^{h_i}\br{\br{\frac{3y}{1-\opa{y}\br{\triangs_0(y)}}}^{h_i}\frac{\triangr_{g_i}(y)}{1-\opa{y}\br{\triangs_0(y)}}},
  &\text{for }g_i\ge 1.
\end{align*}
\end{claim}

Proving these formulas works analogously to the previous claim. For the planar case the difference is that we now 
have one sequence for every hole and the remaining triangulation $P$ is a planar triangulation without double edges (thus 
in $\class{\triangr}_0$). We have to mark $h_i$ edges, one for each sequence. We claim that at least two of these marks are
on $P$. Indeed, the mark from the last sequence $\sigma_j$ is on $P$. If this mark was the only mark on $P$, then the double
edge we zipped to obtain the last triangulation in $\sigma_j$ would have been separating in $M$, a contradiction. Thus we
have at least two marks on $P$. One of these two marks is the mark for the last sequence $\sigma_j$, 
for the other mark there are $h_i-1$ choices. The factor $2^{h_i^2}$ results from the same reasoning as before.

For the case $g_i\ge1$ we again obtain one sequence for each loop and one additional sequence for the part between the root 
and the non-planar component $N$. As this additional sequence is attached at the root edge of $N$, we do not 
need an additional mark. We thus have $h_i$ marks in total. As we have $h_i+1$ instead of $h_i$ sequences, the factor
$2^{h_i^2}$ changes to a factor $2^{(h_i+1)^2}$.

\medskip

\begin{claim}\label{claim:connecting}
 The generating function $L_g(y)$ of $\class{L}_{g}$ is bounded from above as follows.
 \begin{align}\label{eq:lg}
  L_g(y)\preceq \sum_{\mathbf{g}}\binom{2l}{h_1,h_2,\ldots,h_k}y^l(A_{g_1,J_1}(y)+B_{g_1,J_1}(y)+1)\prod_{i=2}^k(A_{g_i,J_i}(y)+1),
 \end{align}
 where the sum is over all possible values of $\mathbf{g}$.

\end{claim}

After cutting along the $l$ loops, the result is a set of triangulations with holes. The sum realises each possible
distribution of genera and numbers of holes to these components. The multinomial coefficient represents the choice of
how the indices of the holes are distributed among the components, while the factor $y^l$ counts the loops that were cut.
The component $\ol{M}^{(1)}$ containing the root might be in $\class{A}_{g_1,J_1}$, in $\class{B}_{g_1,J_1}$, or a copy of $\Psi$
(which is counted as $1$, since loops have been taken care of separately). All other components are rooted along a loop
around one of the holes. Therefore no component $\ol{M}^{(i)}$ with $i>1$ can be in $\class{B}_{g_1,J_1}$. This finishes
the proof of the claim.

\medskip

With these claims we are able to show that $L_g(y)$ is indeed of the claimed order.

\begin{claim}\label{claim:ab}
 For any $g_i\ge0$ and $h_i$ we have
\begin{align*}
 A_{g_i,J_i}(y)&\cong O\br{\br{1-\rho_S^{-1}y}^{-5g_i/2-h_i+5/2}},\\
 B_{g_i,J_i}(y)&\cong O\br{\br{1-\rho_S^{-1}y}^{-5g_i/2-h_i+3/2}},\\
 L_g(y)&\cong O\br{\br{1-\rho_S^{-1}y}^{-5g/2+2}},
 \end{align*}
 and thus in particular
 \begin{equation*}
 L_g(y) \cong O\br{\br{1-\rho_S^{-1}y}^{-5g/2+7/4}}.
 \end{equation*}
\end{claim}

To determine an upper bound for $A_{g_i,J_i}(y)$ and $B_{g_i,J_i}(y)$ from 
\Cref{claim:nextroot,claim:awayroot} we repeatedly use the fact that the function
$\frac{1}{1-\opa{y}\br{\triangs_0(y)}}$ is $\Delta$-analytic and of the form $c_0+O\br{\br{1-\rho_S^{-1}y}^{1/2}}$,
where $c_0$ is a constant. We obtain

\begin{align*}
 A_{0,J_i}(y)&\preceq c_1 \opa{y}^{h_i-2}\br{\br{c_0+O\br{\br{1-\rho_S^{-1}y}^{1/2}}}^{h_i-1}}\cong 
 O\br{\br{1-\rho_S^{-1}y}^{5/2-h_i}}.
\end{align*}
For $g_i>0$ we have
\begin{align*}
 A_{g_i,J_i}(y)&\preceq c_g \opa{y}^{h_i-1}\br{\br{\frac{3y}{1-\opa{y}\br{\triangs_0(y)}}}^{h_i}\triangr_{g_i}(y)}\\
	&\cong c_g \opa{y}^{h_i-1}\br{\br{c_0+O\br{\br{1-\rho_S^{-1}y}^{1/2}}}^{h_i}O\br{\br{1-\rho_S^{-1}y}^{5g_i/2+3/2}}}\\
	&=O\br{\br{1-\rho_S^{-1}y}^{-5g_i/2-h_i+5/2}}.
\end{align*}
Analogously we obtain 
\begin{align*}
 B_{0,J_i}&\cong O\br{\br{1-\rho_S^{-1}y}^{3/2-h_i}}\text{ and}\\
 B_{g_i,J_i}&\cong O\br{\br{1-\rho_S^{-1}y}^{-5g_i/2-h_i+3/2}}.
\end{align*}
Therefore we conclude the claim for $A_{g_i,J_i}(y)$ and $B_{g_i,J_i}(y)$.
By \Cref{claim:finiteg} the sum in \eqref{eq:lg} has only finitely many terms. Therefore $L_g(y)$ is congruent to 
the dominant term among the summands. For the choice of $k,l,g_1,\dotsc,g_k,h_1,\dotsc,h_k$ for which the summand is
the dominant term, we have by~\eqref{eq:g}
\begin{align*}
 L_g(y)&\cong O\br{\br{1-\rho_S^{-1}y}^{-5g_i/2-h_1+3/2}}\prod_{i=2}^{k}O\br{\br{1-\rho_S^{-1}y}^{-5g_i/2-h_i+5/2}}\\
	&=O\br{\br{1-\rho_S^{-1}y}^{-5\sum g_i/2-\sum h_i+5k/2-1}}\\
	&=O\br{\br{1-\rho_S^{-1}y}^{-5(g-l+k-1)/2-2l+5k/2-1}}\\&=O\br{\br{1-\rho_S^{-1}y}^{-5g/2+l/2+3/2}}.
\end{align*}
As we cut along at least one loop, the claim and thus the proposition follows.
\end{proof}

From \Cref{prop:M-asymptotic} and \Cref{congruent:meta} we immediately obtain the following.
\begin{coro}\label{M-asymptotic}
   The dominant singularity of $\triangm_g$ is given by $\rho_\triangm=\rho_\triangs=\frac{3}{2^{8/3}}$.
  The generating function $\triangm_0(y)$ is $\Delta$-analytic and satisfies
  \begin{align}
  \triangm_0(y)&=\frac{1}{8}-\frac{9}{16}\br{1-\rho_\triangs^{-1}y}+\frac{3}{2^{5/2}}\br{1-\rho_\triangs^{-1}y}^{3/2}
  +O\br{\br{1-\rho_\triangs^{-1}y}^{2}}.
  \end{align}
  For $g\ge1$ we have
  \begin{align}
    \triangm_g(y)&\cong c_g \br{1-\rho_\triangs^{-1} y}^{-5g/2+3/2}\br{1+O\br{\br{1-\rho_\triangs^{-1} y}^{1/4}}},
  \end{align}
  where $c_g$ is a constant depending only on $g$.
\end{coro}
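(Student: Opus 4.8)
The plan is to read the statement off directly from \Cref{prop:M-asymptotic} together with the refined expansion of $\triangs_g(y)$ from \Cref{prop:planar-S}, using only the additivity of congruences in \Cref{congruent:meta}; no new surgery or singularity analysis is required.

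First I would handle $g=0$. Here \Cref{prop:M-asymptotic} asserts that $\triangm_0(y)=\triangs_0(y)$, so the claimed expansion of $\triangm_0(y)$ — its $\Delta$-analyticity, the value $\rho_\triangm=\rho_\triangs=\frac{3}{2^{8/3}}$ of the dominant singularity, and the explicit terms up to $O\br{\br{1-\rho_\triangs^{-1}y}^{2}}$ — is exactly the expansion of $\triangs_0(y)$ recorded in \Cref{prop:planar-S}, and there is nothing more to do.

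For $g\ge1$ I would split $\triangm_g(y)=\triangs_g(y)+\br{\triangm_g(y)-\triangs_g(y)}$ and add the two parts. By \Cref{prop:planar-S},
\[
  \triangs_g(y)\cong c_g\br{1-\rho_\triangs^{-1}y}^{-5g/2+3/2}+O\br{\br{1-\rho_\triangs^{-1}y}^{-5g/2+7/4}},
\]
since $c_g\br{1-\rho_\triangs^{-1}y}^{-5g/2+3/2}\cdot O\br{\br{1-\rho_\triangs^{-1}y}^{1/4}}=O\br{\br{1-\rho_\triangs^{-1}y}^{-5g/2+7/4}}$; by \Cref{prop:M-asymptotic}, $\rho_\triangm=\rho_\triangs$ and $\triangm_g(y)-\triangs_g(y)\cong O\br{\br{1-\rho_\triangs^{-1}y}^{-5g/2+7/4}}$. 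The main and error terms appearing here are constant multiples of negative powers of $\br{1-\rho_\triangs^{-1}y}$, hence $\Delta$-analytic with only finitely many negative coefficients, so the sum rule of \Cref{congruent:meta} applies and gives
\[
  \triangm_g(y)\cong c_g\br{1-\rho_\triangs^{-1}y}^{-5g/2+3/2}+O\br{\br{1-\rho_\triangs^{-1}y}^{-5g/2+7/4}}.
\]
Since $\br{1-\rho_\triangs^{-1}y}^{-5g/2+7/4}=\br{1-\rho_\triangs^{-1}y}^{1/4}\br{1-\rho_\triangs^{-1}y}^{-5g/2+3/2}$, this is precisely the asserted $c_g\br{1-\rho_\triangs^{-1}y}^{-5g/2+3/2}\br{1+O\br{\br{1-\rho_\triangs^{-1}y}^{1/4}}}$; note in particular that the leading constant $c_g$ is inherited unchanged from $\triangs_g$, because the discrepancy $\triangm_g-\triangs_g$ contributes only to the error.

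I do not expect a genuine obstacle, since all the real content already sits in \Cref{prop:M-asymptotic}. The only point needing a little care is the exponent bookkeeping: one has to check that the error exponent $-5g/2+7/4$ coming from \Cref{prop:M-asymptotic} is exactly a $1/4$ power below the dominant exponent $-5g/2+3/2$ of $\triangs_g$, so that $\triangm_g-\triangs_g$ is absorbed into the relative error $O\br{\br{1-\rho_\triangs^{-1}y}^{1/4}}$, and (for the dominant-term terminology of \Cref{section:singularity} to apply) that the exponent corresponds to $\alpha=(5g-3)/2\notin\Z_{\le0}$, which holds for every $g\ge1$.
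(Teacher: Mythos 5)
Your proposal is correct and is exactly the paper's route: the corollary is obtained immediately from \Cref{prop:M-asymptotic} (which gives $\triangm_0=\triangs_0$, $\rho_\triangm=\rho_\triangs$, and the $O\br{\br{1-\rho_\triangs^{-1}y}^{-5g/2+7/4}}$ bound on the difference) combined with the expansion of $\triangs_g$ in \Cref{prop:planar-S} via the sum rule of \Cref{congruent:meta}. Your exponent bookkeeping, including rewriting the absolute error as the relative factor $\br{1+O\br{\br{1-\rho_\triangs^{-1}y}^{1/4}}}$, matches what the paper leaves implicit.
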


\section{Cubic graphs}\label{graphs}

Unless stated otherwise, graphs are unrooted. Recall that in our generating functions $x$ marks vertices and 
$y$ marks edges. Additionally we will distinguish whether edges are single edges, double edges or loops as they are treated 
differently when obtaining relations between graph classes. We will use the variable $z$ to mark double edges and the
variable $w$ to mark loops. It is easy to see that $3$-connected cubic graphs are simple and that $2$-connected cubic
multigraphs do not contain loops. The generating functions for these classes will only feature the variables of edges
that can occur.

In order to derive asymptotic results we shall deal with univariate generating functions $F(v)$. As cubic (multi)graphs
always have $2n$ vertices and $3n$ edges for some $n\in\N$, we would like the coefficient $(2n)![v^n]F(v)$ to be the number
of graphs (or multigraphs or weighted multigraphs) in the corresponding class with $2n$ vertices and $3n$ edges. Such a
univariate generating function can be obtained by the following substitution.

\begin{definition}\label{def:univariate}
  Let $\class{F}$ be a class of connected cubic multigraphs without triple edges and let
  \begin{equation*}
    F(x,y,z,w) = \sum_{n,m,k,l\ge 0}\frac{f_{n,m,k,l}}{n!}x^ny^mz^kw^l
  \end{equation*}
  be its exponential generating function. 
  We define functions $F(v)$, $F^u(v)$, and $F^{s}(v)$ as follows.
  \begin{align*}
    F(v) &:= F\br{v^{1/4},v^{1/6},\frac{v^{1/3}}{2},\frac{v^{1/6}}{2}},
    &&\\
    F^u(v) &:= F(v^{1/4},v^{1/6},v^{1/3},v^{1/6}),
    &\text{and}&\\
    F^s(v) &:= F(v^{1/4},v^{1/6},0,0).
    &&
  \end{align*}
  If the generating function of $\class{F}$ does not involve $z$ or $w$, we define $F(v)$, $F^u(v)$, and $F^s(v)$
  analogously, only using the substitutions of those variables that occur.
\end{definition}

We claim that $(2n)![v^n]F(v)$ is the number of weighted multigraphs in $\class{F}(2n)$, i.e.\ the sum of $W(G)$ for all
$G\in\class{F}$ with $2n$ vertices (and thus with $3n$ edges). Indeed, if $G\in\class{F}(2n)$ has $k$ double edges,
$l$ loops, and $m$ single edges, then there are $2k+l+m = 3n$ edges in total and the substitution transforms the monomial
$x^{2n}y^mz^kw^l$ into $2^{-(k+l)}v^{n/2+m/6+k/3+l/6} = W(G)v^n$. Similarly, $(2n)![v^n]F^u(v)$ is the number of 
(unweighted) multigraphs in $\class{F}(2n)$. Finally, $(2n)![v^n]F^s(v)$ is the number of simple graphs in $\class{F}
(2n)$, since replacing $z$ and $w$ by $0$ ensures that no graphs with double edges or loops are counted in $F^s(v)$.

When we relate various classes of graphs, we will often mark edges, which corresponds to applying the 
operator $\opa{y}=y\deriv{y}$ to the 
generating function. As we apply singularity analysis to the univariate generating function defined above, we shall 
express applications of $\opa{y}$ by applications of $\opa{v}$. This is formalised by the following lemma.
\begin{lem}\label{substitution}
 Let $F(x,y)$ be the generating function of a class $\class{F}$ of connected cubic multigraphs. Then 
 \[3\opa{v}(F(y))=\left.\opa{y}(F(x,y))\right|_{x=v^{1/4},y=v^{1/6}}.\]
\end{lem}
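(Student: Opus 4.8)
The plan is to verify the identity term by term on the monomials of $F$, the only essential input being that every multigraph in $\class{F}$ is cubic. Write $F(x,y)=\sum_{a,b\ge 0}c_{a,b}\,x^a y^b$; whether this series is ordinary or exponential is immaterial. A cubic multigraph with $a$ vertices has total degree $3a$, while the handshake identity gives total degree $2b$ for a multigraph with $b$ edges (a loop contributing $2$ to the degree of its base). Hence a multigraph with $a$ vertices and $b$ edges can lie in $\class{F}$ only if $3a=2b$, so $c_{a,b}=0$ unless $a=\tfrac{2}{3}b$. Consequently, under the substitution $x=v^{1/4},\,y=v^{1/6}$ defining the univariate reduction $F(v)$ (\Cref{def:univariate}), each surviving monomial $x^a y^b$ becomes $v^{a/4+b/6}=v^{b/6+b/6}=v^{b/3}$.

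First I would evaluate the right-hand side on such a monomial: $\opa{y}\br{x^a y^b}=b\,x^a y^b$, and after substituting $x=v^{1/4},\,y=v^{1/6}$ this equals $b\,v^{b/3}$. Then I would evaluate the left-hand side: substituting first turns $x^a y^b$ into $v^{b/3}$, and $3\,\opa{v}\br{v^{b/3}}=3\cdot\tfrac{b}{3}\,v^{b/3}=b\,v^{b/3}$. The two contributions coincide, and since $\opa{v}$, $\opa{y}$ and the substitution are all linear, summing over $a,b$ yields the claimed identity. Equivalently, one can use the chain rule: $\opa{v}\br{F(v)}=\tfrac14\,\opa{x}F+\tfrac16\,\opa{y}F$ evaluated at $x=v^{1/4},\,y=v^{1/6}$, and on the monomials occurring in $F$ one has $3\,\opa{x}F=2\,\opa{y}F$, so $\opa{v}\br{F(v)}=\tfrac13\,\opa{y}F$ at that point.

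There is no genuine obstacle here; the only point not to overlook is to invoke the cubicity constraint $3a=2b$ \emph{before} simplifying the exponent $a/4+b/6$ to $b/3$. It is precisely this coupling between the exponents of $x$ and $y$, valid because all graphs in $\class{F}$ are cubic, that produces the factor $3$ on the left-hand side; without it the identity would be false.
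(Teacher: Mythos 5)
Your proof is correct and rests on exactly the same input as the paper's: the cubicity relation $3a=2b$ (equivalently $3\opa{x}F=2\opa{y}F$) combined with how the substitution $x=v^{1/4},y=v^{1/6}$ scales exponents; your closing chain-rule remark is essentially the paper's proof verbatim, and the monomial-by-monomial check is just its coefficient-level form. No gaps.
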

\begin{proof}
 Since graphs in $\class{F}$ are cubic, we have $3\opa{x}(F(x,y))=2\opa{y}(F(x,y))$ and thus 
 \begin{align*}
	3\opa{v}(F(v))&=3v\derivf{F(v)}{v}=\left.\left(3v\derivf{F(x,y)}{x}\derivf{x}{v}+3v\derivf{F(x,y)}{y}\derivf{y}{v}\right)\right|_{x=v^{1/4},y=v^{1/6}}\\
	&=\left.\left(\frac34v^{1/4}\derivf{F(x,y)}{x}+\frac12v^{1/6}\derivf{F(x,y)}{y}\right)\right|_{x=v^{1/4},y=v^{1/6}}\\&=\left.\opa{y}(F(x,y))\right|_{x=v^{1/4},y=v^{1/6}}.\qedhere
 \end{align*}
\end{proof}

\subsection{From maps to graphs}

Let $\class{\three}_g$ be the class of $3$-connected cubic vertex-labelled graphs \emph{strongly embeddable}
on $\Sg$ and let $\three_g(x,y)$ be its generating function. 
In this section we provide some necessary properties of $\three_g(v)$. We will use the auxiliary classes $\ol{\class{\three}}_g$ of $3$-connected cubic \emph{edge-labelled} graphs 
strongly embeddable on $\Sg$, and $\ol{\class{\triangm}}_g$ of \emph{edge-labelled, unrooted} triangulations where 
the triangulations are in $\class{\triangm}_g$.

\begin{prop}\label{coro:3-conn-graphs}

The dominant singularity of $\vertex{\three}_g(v)$ is $\rho_{\three}=\rho_{\triangs}^3=\frac{27}{256}$ and we have the following congruences.
\begin{align*}
  \vertex{\three}_0(v)&\cong c_0\left( 1 - \rho_\three^{-1} v\right)^{5/2}+O\br{\br{1 - \rho_\three^{-1} v}^3},
  & &\\
  \vertex{\three}_1(v)&\cong c_1\log\left( 1 - \rho_\three^{-1}v\right)+O\br{\br{1 - \rho_\three^{-1} v}^{1/4}},
  &\text{and}&\\
  \vertex{\three}_g(v)&\cong c_g\left( 1 - \rho_\three^{-1} v \right)^{-5g/2+5/2}+O\br{\br{1 - \rho_\three^{-1}, v}^{-5g/2+11/4}},
  &\text{for }g\ge 2,&
\end{align*}
where the $c_g$ are constants.
\end{prop}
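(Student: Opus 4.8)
The plan is to transport the refined expansion of $\triangm_g(y)$ from \Cref{M-asymptotic} to $\three_g(v)$ along three correspondences: duality between cubic maps and triangulations, the corner-rooting dictionary relating rooted maps to edge-labelled maps, and the essentially unique embeddability of $3$-connected maps of large facewidth. First I would invoke \Cref{prop:3-conn-dual}: taking duals is an edge-preserving bijection between rooted cubic $3$-connected maps on $\Sg$ and rooted triangulations in $\class{\triangm}_g$ with at least six edges. For $g=0$ every triangulation in $\class{\triangm}_0$ already has at least six edges, while for $g\ge1$ there are only finitely many triangulations in $\class{\triangm}_g$ with fewer edges (the extremal one being the dual of $\Phi$ on the torus); these contribute only a polynomial and are therefore subdominant. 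Hence the ordinary generating function counting rooted cubic $3$-connected maps on $\Sg$ by edges equals $\triangm_g(y)$ up to a polynomial.

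Next I would pass to unrooted edge-labelled objects. A cubic map, respectively a triangulation, with $m$ edges has exactly $2m$ corners, and a nontrivial map automorphism fixes no corner; this yields the exact identity $\triangm_g(y)=2\opa{y}\ol{\triangm}_g(y)$, where $\ol{\triangm}_g(y)$ is the generating function of the class $\ol{\class{\triangm}}_g$ of edge-labelled unrooted triangulations in $\class{\triangm}_g$, exponential in the number of edges. Since $\ol{\triangm}_g$ is a primitive of $\tfrac{1}{2y}\triangm_g(y)$ and $t^{-1}$ is analytic and nonzero at $\rho_\triangs$, \Cref{integrating} and \Cref{congruent:meta} raise the exponents $-5g/2+3/2$ of \Cref{M-asymptotic} to $-5g/2+5/2$: explicitly $\ol{\triangm}_0$ gains a $\br{1-\rho_\triangs^{-1}y}^{5/2}$ term, $\ol{\triangm}_1$ a $\log\br{1-\rho_\triangs^{-1}y}$ term, and $\ol{\triangm}_g$ for $g\ge2$ a $\br{1-\rho_\triangs^{-1}y}^{-5g/2+5/2}$ term, with relative error $O\br{\br{1-\rho_\triangs^{-1}y}^{1/4}}$. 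Dualizing the edge-labelled objects and using that facewidth is invariant under duality, the edge-labelled triangulations in $\class{\triangm}_g$ with facewidth at least $2g+3$ are in edge-preserving bijection with the edge-labelled $3$-connected cubic maps on $\Sg$ of facewidth at least $2g+3$ (no polynomial correction is needed here, since facewidth at least $2g+3$ already forces at least six edges). By \Cref{coro:rv}\ref{rvc3} for $g\ge1$, and by Whitney's theorem \cite{whitney} for $g=0$, a $3$-connected cubic graph of facewidth at least $2g+3$ has exactly two rotation systems giving a $2$-cell embedding of it on $\Sg$, namely a fixed one and its mirror image, which are distinct because the graph has vertices of degree three; hence $\ol{\triangm}_g^{\fw\ge2g+3}(y)=2\,\ol{\three}_g^{\fw_g\ge2g+3}(y)$, where $\ol{\three}_g$ is the generating function of $\ol{\class{\three}}_g$, exponential in the number of edges. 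Finally, a simple cubic graph with $2N$ vertices has $3N$ edges, and its numbers of vertex- and of edge-labellings are in ratio $(2N)!/(3N)!$ (the automorphism group of a simple $3$-connected cubic graph acts faithfully on both vertices and edges); this gives, with $\three_g(v)$ as in \Cref{def:univariate}, the identity $\three_g(v)=\ol{\three}_g(v^{1/3})$. Chaining these relations, substituting $y=v^{1/3}$, and using $1-\rho_\triangs^{-1}v^{1/3}=\tfrac{1}{3}\br{1-\rho_\three^{-1}v}\br{1+O\br{1-\rho_\three^{-1}v}}$ with $\rho_\three=\rho_\triangs^3=\tfrac{27}{256}$, one obtains the asserted expansions, the analytic prefactors being absorbed into the constants $c_g$ and the error terms composing to $O\br{\br{1-\rho_\three^{-1}v}^{-5g/2+11/4}}$ for $g\ge2$, $O\br{\br{1-\rho_\three^{-1}v}^{1/4}}$ for $g=1$, and $O\br{\br{1-\rho_\three^{-1}v}^{3}}$ for $g=0$.

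The step I expect to be the main obstacle is proving that the small-facewidth contributions discarded above, namely $\ol{\triangm}_g^{\fw\le2g+2}(y)$ and $\ol{\three}_g^{\fw_g\le2g+2}(y)$, are subdominant. The second of these is dominated, coefficient-wise up to the factor $\tfrac{1}{2}$, by the first via duality: each $3$-connected cubic graph with $\fw_g\le2g+2$ has all of its (at least two) $2$-cell embeddings of facewidth at most $2g+2$, with duals in $\class{\triangm}_g$. So it suffices to bound $\ol{\triangm}_g^{\fw\le2g+2}$. Following the idea of \cite{Chapuy2011-enumeration-graphs-on-surfaces}, I would take a triangulation in $\class{\triangm}_g$ of facewidth $k\le2g+2$, fix an essential circle meeting it in $k$ points, push it off the vertices so that it crosses exactly $k$ edges, and cut along it; this either lowers the genus or disconnects the surface, and in either case introduces at most $2(2g+2)$ boundary edges. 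Closing the resulting holes with discs and triangulating them yields closed triangulations of strictly smaller total genus, whose generating functions have strictly larger dominant exponents by \Cref{M-asymptotic} and the results of \Cref{maps} (or of Gao \cite{Gao1993-pattern}); since the interface carries only a bounded amount of combinatorial data, there are only finitely many reconstruction patterns, so $\ol{\triangm}_g^{\fw\le2g+2}(y)\cong O\br{\br{1-\rho_\triangs^{-1}y}^{-5g/2+3/2+\varepsilon}}$ for some $\varepsilon>0$, which stays subdominant after integration and after the substitution $y=v^{1/3}$. Tracking roots and marks through these surgeries, exactly as in the proof of \Cref{prop:M-asymptotic}, is the technically heaviest point; once it is in place, \Cref{congruent:meta} and \Cref{tt} assemble the three correspondences into the claimed congruences.
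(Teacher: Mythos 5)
Your overall architecture is the paper's: dualise to $\class{\triangm}_g$ via \Cref{prop:3-conn-dual}, pass to edge-labelled objects through $\triangm_g(y)=2\opa{y}\ol{\triangm}_g(y)$, integrate (\Cref{integrating}, \Cref{congruent:meta}) to raise the exponent by one, sandwich $2\ol{\three}_g$ between the large-facewidth count (exactly two embeddings, \Cref{coro:rv}\ref{rvc3}) and all of $\ol{\triangm}_g$ (at least two embeddings), convert edge- to vertex-labelling, and substitute $y=v^{1/3}$. (Two harmless slips: the single triangle with three edges does lie in $\class{\triangm}_0$, so even for $g=0$ a polynomial correction is needed; and you invoke duality-invariance of facewidth, which the paper replaces by the proved identity $\fw(M)=\ew(M^*)$ for cubic $M$ — equivalent in effect, but the paper's version is what its small-width bound is phrased in.)

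The genuine gap is exactly where you flag it: the subdominance of the small-width triangulations, and your sketch of that step would not go through as written. First, the perturbation claim fails: an essential circle attaining facewidth $k$ may pass through vertices, and pushing it off a vertex forces it to cross edges incident with that vertex — up to about half the degree of the vertex, which is unbounded in a triangulation — so you cannot guarantee a circle crossing exactly $k$ (or even boundedly many, in terms of $g$ alone) edges. Second, "only finitely many reconstruction patterns" is not the right currency: to invert the surgery you must record \emph{where} on the lower-genus triangulation the interface sat, which means marking edges or corners, i.e. applying $\opa{y}$; each mark lowers the dominant exponent by $1$, whereas each unit of genus reduction only gains $5/2$, so the entire estimate hinges on keeping the number of marks to at most two per genus drop — precisely the bookkeeping you defer. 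The paper's proof of \Cref{claim:largeew} avoids both issues: it first discards $\class{\triangm}_g\setminus\class{\triangs}_g$ using \Cref{prop:M-asymptotic}, then (via $\fw(M)=\ew(M^*)$) bounds simple triangulations with a marked non-contractible \emph{cycle} of length $i\le 2g+2$; cutting along that graph cycle, capping the two holes with discs and coning each disc by a new vertex keeps the object a simple triangulation, and reconstruction needs only one marked corner at each new vertex, yielding
\[ y^{3i}\triangs^{C=i}_g(y) \preceq 4\opa{y}^2\br{\triangs_{g-1}(y)} + \sum_{\substack{g_1+g_2=g\\ g_1,g_2\ge1}} 2\opa{y}\br{\triangs_{g_1}(y)}\triangs_{g_2}(y) \]
and hence the required $O\br{\br{1-\rho_{\triangs}^{-1}y}^{-5g/2+7/4}}$ by \Cref{simplerefined}. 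If you replace your transversal-circle surgery by this cycle surgery (or carry out the Chapuy et al.\ encoding with an explicit count of marks), the rest of your argument assembles into the stated congruences exactly as in the paper.
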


Applying \Cref{tt}, we immediately obtain the coefficients of $\three_g(v)$.

\begin{coro}
The coefficients of $\vertex{\three}_g(v)$ satisfy
\[ [v^n]\vertex{\three}_g(v) =\br{1+O\br{n^{-1/4}}} \ol{c}_g n^{5(g-1)/2-1} \rho_{\three}^{-n}, \]
where $\ol{c}_g$ is a constant.
\end{coro}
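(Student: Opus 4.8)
The plan is to obtain the coefficient asymptotics by feeding each singular expansion of \Cref{coro:3-conn-graphs} into the transfer theorem \Cref{tt}. Since that proposition only gives a congruence $\three_g(v)\cong G_g(v)+O(H_g(v))$ rather than an honest identity, the appropriate vehicle is \Cref{congruent:tt}: by definition of $\cong$ there are $\Delta$-analytic functions $F^{+},F^{-}$ squeezing $\three_g(v)$ coefficient-wise that equal $P^{\pm}(v)+G_g(v)+O(H_g(v))$ for polynomials $P^{\pm}$, so it suffices to apply \Cref{tt} to $F^{+}$ and $F^{-}$ and read off the common leading term. Moreover the series $\three_g(v)$ involves only integral powers of $v$ after the substitution of \Cref{def:univariate}, so the phenomenon of several conjugate dominant singularities discussed in \Cref{section:singularity} does not arise here, and by the $\Delta$-analyticity of $F^{\pm}$ the point $\rho_{\three}=\tfrac{27}{256}$ is the unique singularity of least modulus.

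First I would check that each expansion matches one of the two templates of \Cref{tt} with a remainder of the right order. For $g=0$, the dominant term $c_0\br{1-\rho_{\three}^{-1}v}^{5/2}$ is the power case with $-\alpha=\tfrac52$, and the remainder $O\br{\br{1-\rho_{\three}^{-1}v}^{3}}$ is indeed $O\br{\br{1-\rho_{\three}^{-1}v}^{1/4-\alpha}}$ since $3\ge\tfrac{11}{4}$. For $g=1$, the dominant term is logarithmic and the remainder $O\br{\br{1-\rho_{\three}^{-1}v}^{1/4}}$ is exactly of the order demanded by \Cref{tt}\ref{tt:log}. For $g\ge2$, the dominant term $c_g\br{1-\rho_{\three}^{-1}v}^{-5g/2+5/2}$ is the power case with $\alpha=\tfrac52(g-1)>0$, and the remainder $O\br{\br{1-\rho_{\three}^{-1}v}^{-5g/2+11/4}}$ is precisely $O\br{\br{1-\rho_{\three}^{-1}v}^{1/4-\alpha}}$. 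In each power case $c_g\ne0$ and $\alpha\notin\Z_{\le0}$ (likewise $-\tfrac52\notin\Z$ for $g=0$), so \Cref{tt}\ref{tt:polynomial} is applicable.

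It then remains to assemble the three cases. In the power case \Cref{tt}\ref{tt:polynomial}, together with \Cref{congruent:tt}, gives
\[
  [v^n]\three_g(v)=\br{1+O\br{n^{-1/4}}}\frac{c_g}{\Gamma(\alpha_g)}\,n^{\alpha_g-1}\rho_{\three}^{-n},
\]
with $\alpha_0=-\tfrac52$, yielding the exponent $-\tfrac72=\tfrac52(0-1)-1$, and $\alpha_g=\tfrac52(g-1)$, yielding the exponent $\tfrac52(g-1)-1$ for $g\ge2$; the logarithmic case gives $[v^n]\three_1(v)=\br{1+O\br{n^{-1/4}}}(-c_1)\,n^{-1}\rho_{\three}^{-n}$, matching $\tfrac52(1-1)-1=-1$. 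Thus in all three cases the exponent of $n$ is $5(g-1)/2-1$, and the corollary follows with $\ol c_0=c_0/\Gamma(-\tfrac52)$, $\ol c_1=-c_1$, and $\ol c_g=c_g/\Gamma(\tfrac52(g-1))$ for $g\ge2$. I expect no genuine obstacle: the argument is bookkeeping, and the only points needing care are that the remainder exponents in \Cref{coro:3-conn-graphs} were tailored precisely to meet the hypotheses of \Cref{tt}, and that the three separate cases collapse to the single exponent $5(g-1)/2-1$ once one observes that $\Gamma$ is finite and non-zero at $-\tfrac52$ and that the $g=1$ logarithm produces exactly the $n^{-1}$ term.
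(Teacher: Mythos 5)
Your proposal is correct and follows exactly the paper's route: the paper derives this corollary by directly applying the transfer theorem (\Cref{tt}, via \Cref{congruent:tt} for the congruences) to the three singular expansions of \Cref{coro:3-conn-graphs}, which is precisely your case-by-case bookkeeping. The verification of the remainder exponents and the collapse of the three cases to the single exponent $5(g-1)/2-1$ is the same as what the paper leaves implicit.
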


\begin{proof}[Proof of \Cref{coro:3-conn-graphs}]
 First we compare $\class{\triangm}_g$ and $\ol{\class{\triangm}}_g$. For each triangulation $M\in\class{\triangm}_g$ with 
 $m$ edges there are $m!$ possibilities of labelling its edges. Conversely, for a triangulation $\ol{M}\in\ol{\class{\triangm}}_g$
 there are $2m$ possibilities of rooting it. Therefore the exponential generating function 
 $\ol{\triangm}_g(y)$ of $\ol{\class{\triangm}}_g$ satisfies 
 \[ [y^m]\triangm_g(y) = 2m [y^m]\ol{\triangm}_g(y)\]
 and thus 
 \[\triangm_g(y)=2\opa{y}\ol{\triangm}_g(y).\]
 
 Every graph $G\in \ol{\class{\three}}_g$ has at least two (edge-labelled) $2$-cell embeddings. By \Cref{prop:3-conn-dual}, 
 the maps
 obtained this way are precisely the duals of the triangulations in $\ol{\class{\triangm}}_g$. As $y$ denotes the number of 
 edges in $\ol{\triangm}_g(v^{1/3})$ and $v$ denotes a third of the number of edges in $\ol{\three}_g(v)$, we obtain
 \begin{equation*} 
  2\ol{\three}_g(v) \preceq \ol{\triangm}_g(v^{1/3}).
 \end{equation*}
 We claim that for a cubic map $M$ on $\Sg$, its facewidth $\fw(M)$ is exactly the 
edgewidth $\ew(M^*)$ of the triangulation $M^*$ that is the dual of $M$. Indeed, an essential cycle of $M^*$ witnessing the edgewidth of $M^*$
corresponds to an essential circle on $\Sg$ that meets $M$ in $\ew(M^*)$ edges and no vertices, resulting in $\fw(M)\leq\ew(M^*)$. 
On the other hand, any two faces of $M$ that share a vertex also share an edge, as $M$ is cubic. Thus, 
there is an essential circle witnessing the facewidth of $M$ that meets $M$ only in edges. As this circle corresponds to an essential
cycle of $M^*$, we have $\fw(M)\geq\ew(M^*)$ and thus equality.
Since by \Cref{robertsonvitray}(iii) a $3$-connected graph embeddable on $\Sg$ with facewidth at least $2g+3$ has
exactly two embeddings, we have
\begin{equation*} 
2\ol{\three}^{\fw \geq 2g+3}_g(v) = \ol{\triangm}^{\ew \geq 2g+3}_g(v^{1/3}). 
\end{equation*}
 As obviously $\ol{\three}_g^{\fw \geq 2g+3}(v) \preceq \ol{\three}_g(v)$ we obtain the following chain of inequalities.
 \begin{equation}\label{eq:G-to-M}
  \ol{\triangm}^{\ew \geq 2g+3}_g(v^{1/3})=2\ol{\three}^{\fw \geq 2g+3}_g(v)\preceq 2\ol{\three}_g(v)
	\preceq \ol{\triangm}_g(v^{1/3}).
 \end{equation}

 Since there are no double edges in a $3$-connected cubic graph, we know that the two generating functions 
$\ol\three_g(v)$ and $\three_g(v)$ are closely related. To be precise, 
$(2n)![v^n]\three_g(v)$ is the number of vertex-labelled graphs in $\class{\three}_g(2n)$. Since every such graph has $3n$ 
edges, $(3n)!(2n)![v^n]\three_g(v)$ is the number of $3$-connected cubic graphs with $2n$ vertices embeddable on $\Sg$
with both vertices and edges labelled. As this number is equal to $(2n)!(3n)!\ol{\three}_g(v)$ by an analogous argument, we 
have $[v^n]\three_g(v)=[v^n]\ol{\three}_g(v)$.

Therefore we can replace 
$\ol{\three}_g(v)$ by $\three_g(v)$ in \eqref{eq:G-to-M}. We obtain
\begin{align*}
 \three_g(v)\preceq\frac{1}{2}\ol{\triangm}_g(v^{1/3})=\left.\frac{1}{4}\int t^{-1}\triangm_g(t)\dd t
	\right|_{t=v^{1/3}}
\end{align*}
 and by \Cref{integrating} we obtain an upper bound for $\three_g(v)$ as claimed. To finish the proof we will show 
 the following claim.
 \setcounter{claim}{0}
 \begin{claim}\label{claim:largeew}
  The generating functions $\triangm_g(y)^{\ew \geq 2g+3}$ and $\triangm_g(y)$ have the same dominant singularity and
\[\triangm_g(y)-\triangm_g^{\ew \geq 2g+3}(y)\cong O\br{\br{1-\rho_S^{-1}y}^{-5g/2+7/4}}.\]
 \end{claim}
  Before we proof the claim, let us note that \Cref{coro:3-conn-graphs} follows immediately from \Cref{claim:largeew}, \Cref{congruent:meta}, and \Cref{M-asymptotic}.
 
A more general statement than \Cref{claim:largeew} was proven in \cite{representativity} for a variety of map classes. 
Although we believe that the proof in \cite{representativity} generalises to $\class{\triangm}_g$, which was not considered in 
\cite{representativity}, we give a slightly different proof here.
 
 The generating function of $\class{\triangm}_g \setminus \class{\triangs}_g$
 is congruent to $O\br{\br{1-\rho_S^{-1}y}^{-5g/2+7/4}}$ by \Cref{prop:M-asymptotic}. It thus suffices to show that
 \[\triangs_g^{\ew\le2g+2} \cong O\br{\br{1-\rho_S^{-1}y}^{-5g/2+7/4}}.\]
For $i\ge 3$ let $\class\triangs^{C=i}_g$ be the class of triangulations in $\class\triangs_g$ where one non-contractible 
cycle of length $i$ is marked and denote its generating function by $\triangs^{C=i}_g(y)$. Clearly
$\triangs^{\ew=i}_g(y) \preceq \triangs^{C=i}_g(y)$. Let $M\in\class\triangs_g^{C=i}$ and let $C$ be the marked cycle of 
$M$. We cut along $C$ and close the two resulting holes by inserting discs. For each of the two discs,
we then add a new vertex triangulating the disc. If $C$ was separating, we mark one of the corners at the new vertex in the
component that contains the original root face of $M$. For the other component, we choose one of the corners at the new vertex
to be its root. If $C$ was not separating, then we mark one corner at each of the two new vertices. In total we add $3i$ edges to the map.
This surgery results in
\begin{itemize}
\item two triangulations $M^{(1)}, M^{(2)}$, where $M^{(1)}$ contains the original root face of $M$ and a marked corner or
\item one triangulation $M^*$ with two marked corners.
\end{itemize}
All resulting triangulations are in $\class\triangs_{g'}$ for some $g'$, because the surgery does not create loops or double edges.
Thus, in the first case $M^{(1)} \in \class\triangs_{g_1}$ and $M^{(2)} \in \class\triangs_{g_2}$ (disregarding markings) 
with $g_1 + g_2 = g$ and $g_1, g_2 \geq 1$. In the second case $M^* \in \class\triangs_{g-1}$ (disregarding markings). 

Since a corner $(v_0,e,e')$ is uniquely defined once $v_0$ and $e$ are given, marking a corner is equivalent to marking
an edge and choosing one of its end vertices. In terms of generating functions, this corresponds to applying the operator $\opa{y}=y\deriv{y}$ with 
an additional factor of two. Similarly to previous proofs we will mark recursively, which will result in overcounting.
Since we added $3i$ edges to $M$ by our construction, we have to compensate this by a factor of $y^{3i}$.
Therefore we obtain the relation
\[ y^{3i}\triangs^{C=i}_g(y) \preceq 4\opa{y}^2\br{\triangs_{g-1}(y)} + 
\sum_{\substack{g_1 + g_2 = g \\ g_1,g_2 \geq 1}} 2\opa{y}\br{\triangs_{g_1}(y)} \triangs_{g_2}(y).\]

By \Cref{simplerefined}, we know that 
\[4\opa{y}^2\br{\triangs_{g-1}(y)} + \sum_{\substack{g_1 + g_2 = g \\ g_j \geq 1}} 
 2\opa{y}\br{\triangs_{g_1}(y)} \triangs_{g_2}(y)\cong O\br{\br{1-\rho_S^{-1}y}^{-5g/2+7/4}}.
\]
Because 
\[ \triangs_g^{\ew\le2g+2}(y) = \sum_{i=3}^{2g+2}\triangs_g^{\ew=i}(y) \preceq \sum_{i=3}^{2g+2}\triangs^{C=i}_g(y), \]
this completes the proof of the claim.
\end{proof}

\subsection{From 3-connected graphs to connected multigraphs}
In this section we derive relations between different classes of cubic multigraphs in the form of dominance.
In the end we will relate connected cubic multigraphs via $2$-connected cubic multigraphs to $3$-connected cubic 
graphs enumerated in the previous section. 

Let $\class{\three}_g$, $\class{\two}_g$, and 
$\class{\one}_g$ be the classes of $3$-connected, $2$-connected, and connected vertex-labelled cubic multigraphs 
strongly embeddable on $\Sg$ with generating functions $\three_g(x,y)=\sum \frac{d_{n,m}}{n!}x^ny^m$, 
$\two_g(x,y,z)=\sum \frac{b_{n,m,k}}{n!}x^ny^mz^k$ and $\one_g(x,y,z,w)=\sum \frac{c_{n,m,k,l}}{n!}x^ny^mz^kw^l$, 
respectively. In the generating function $\one_g(x,y,z,w)$ we will not account for the graph consisting of two vertices 
connected by three edges. This graph will be accounted for separately in the end.

First we give a relation between a subclass of $\class{\three}_g$ and a subclass of $\class{\two}_g$. 
To do this we need the class $\class{\network}$ of edge-rooted 2-connected  labelled cubic planar multigraphs, 
called \emph{networks}. In the exponential generating function $N(x,y,z)$ of $\class{N}$ we mark the root always 
with $y$ as a single edge and double edges without the root edge with $z$.

\begin{lem}\label{2conngf}
For $g\ge 1$ the generating functions of $\class\three_g$ and $\class\two_g$ satisfy
\begin{align}\label{eq:threetwo}
\vertex{\three}_g^{\fw\ge3}(x,\oy)&-\vertex{\three}_0(x,\oy)\preceq
\vertex{\two}_g^{\fw\ge3}(x,y,z)\preceq \vertex{\three}_g^{\fw\ge3}(x,\oy),
\end{align}
where $\oy=y(1+N(x,y,z))$.
\end{lem}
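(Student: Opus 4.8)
The plan is to establish \Cref{2conngf} by a constructive decomposition of $2$-connected cubic multigraphs into their $3$-connected ``core'' together with planar networks substituted into the edges, mirroring the classical $3$-connected/$2$-connected decomposition for cubic planar graphs (as in Bodirsky et al.\ \cite{bodirsky2007-cubic-graphs}) but now keeping track of the surface and of the facewidth. First I would recall from \Cref{coro:rv}\ref{rvc2} that a non-planar $2$-connected cubic multigraph $G$ strongly embeddable on $\Sg$ with $\fw_g(G)\ge 3$ has a \emph{unique} $3$-connected component $I$ that is non-planar and strongly embeddable on $\Sg$ with $\fw_g(I)=\fw_g(G)\ge 3$, while every other $3$-connected component is planar. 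The $3$-connected components are obtained by the standard Tutte decomposition along $2$-cuts; because $G$ is cubic, each $2$-cut consists of two edges, and contracting the ``virtual'' structure on one side replaces a subgraph hanging off a pair of half-edges by a single edge. Reversing this, $G$ is recovered from $I$ by replacing each edge $e$ of $I$ by a \emph{network}: either $e$ itself (a single edge), or a $2$-connected cubic planar multigraph with two distinguished half-edges in place of $e$. This is exactly what the substitution $\oy = y(1+N(x,y,z))$ encodes: the summand $1$ corresponds to leaving the edge untouched, and $N(x,y,z)$ corresponds to inserting a genuine planar network, with $z$ marking the double edges that appear inside the inserted networks.

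The upper bound $\two_g^{\fw\ge3}(x,y,z)\preceq \three_g^{\fw\ge3}(x,\oy)$ then follows because every $2$-connected cubic multigraph on $\Sg$ with facewidth $\ge3$ arises this way from \emph{some} $3$-connected cubic graph on $\Sg$ with facewidth $\ge3$ (namely its non-planar $3$-connected component $I$, which by \Cref{robertsonvitray}\ref{rv2} inherits the facewidth): substituting networks only into the edges of $I$ produces a $2$-connected graph, the genus does not increase since the networks are planar, and the facewidth is preserved because, by \Cref{robertsonvitray}\ref{rv1}, planar $2$-connected pieces do not affect the facewidth. Here I must be careful that $N$ is \emph{edge-rooted} and counts the root edge with $y$: this is precisely what makes the map ``edge of $I$ $\mapsto$ network'' enumeratively faithful, so that the coefficient extraction in $\three_g^{\fw\ge3}(x,\oy)$ with $\oy=y(1+N)$ correctly weights each resulting multigraph. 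I would also check the cubicity bookkeeping: inserting a network in place of an edge splits that edge's two endpoints' incidences appropriately so the result remains cubic, and the variable $z$ only ever marks double edges that lie strictly inside inserted networks — double edges of $G$ that are not contained in such a network cannot occur since the core $I$ is $3$-connected, hence simple.

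For the lower bound $\three_g^{\fw\ge3}(x,\oy)-\three_0(x,\oy)\preceq \two_g^{\fw\ge3}(x,y,z)$, I would argue that the substitution map $\three_g^{\fw\ge3}\times(\text{networks on edges})\to\two_g^{\fw\ge3}$ is \emph{injective}: from a $2$-connected $G$ one recovers the core and the inserted networks canonically via the Tutte decomposition, so distinct $(I,\text{networks})$ give distinct $G$. The only sources of double-counting or overcounting are configurations where the ``core'' $I$ is itself planar; these are exactly captured by the correction term $\three_0(x,\oy)$, which is subtracted off. Concretely, $\three_g^{\fw\ge3}(x,\oy)$ counts all substitutions of networks into $3$-connected cubic graphs strongly embeddable on $\Sg$ with $\fw\ge3$, including those whose core has genus $0$; every substitution whose core is genuinely non-planar yields a $2$-connected $G$ that is non-planar, embeddable on $\Sg$, and has $\fw_g(G)=\fw_g(I)\ge3$, hence lies in $\class{\two}_g^{\fw\ge3}$. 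Removing the planar-core contributions $\three_0(x,\oy)$ leaves a subfamily of $\two_g^{\fw\ge3}(x,y,z)$, giving the claimed coefficient-wise inequality.

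The main obstacle I expect is \emph{not} the enumerative combinatorics of the substitution — that is the routine cubic $3$-connected/$2$-connected dictionary — but rather verifying the two topological claims underpinning it: (a) that the facewidth is exactly preserved under the operation of inserting planar networks (needed so that the superscript $\fw\ge3$ on the left and right match), and (b) that the non-planar core of a $2$-connected $G$ with $\fw_g(G)\ge3$ is unique and carries the full facewidth. Both are consequences of \Cref{robertsonvitray} and \Cref{coro:rv}, but I would need to check that the hypothesis $\fw_g(G)\ge3$ (rather than $\ge2$) is exactly what \Cref{robertsonvitray}\ref{rv2} requires to get a \emph{$3$-connected} unique non-planar component, and that when we \emph{build} $G$ by substitution the resulting embedding realising the maximum facewidth is the one obtained from the embedding of $I$ realising $\fw_g(I)$ — i.e.\ that substituting planar networks can neither decrease nor (more subtly) increase the maximal facewidth over all embeddings. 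A short lemma isolating ``inserting a planar $2$-connected piece into an edge preserves $\fw_g$'' would make this clean, after which the dominance inequalities in \eqref{eq:threetwo} follow by the standard term-by-term comparison of the (exponential) generating functions.
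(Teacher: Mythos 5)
Your argument is essentially the paper's own proof: you identify the unique non-planar $3$-connected core via \Cref{coro:rv}\ref{rvc2}, encode the reattachment of planar $2$-connected pieces by the substitution $\oy=y(1+N(x,y,z))$, and correct for planar cores by subtracting $\three_0(x,\oy)$, which is exactly the paper's route. The only (harmless) imprecision is your claim that $\three_0(x,\oy)$ \emph{exactly} captures the planar-core contributions: it carries no facewidth restriction and subtracts each planar multigraph once for each of its $3$-connected components, so it over-subtracts --- but over-subtraction only strengthens the lower bound in \eqref{eq:threetwo}, so your argument stands.
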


\begin{proof}
Let $B$ be a multigraph in $\class{\two}_g^{\fw\ge3}$. We show that it is counted at least once on the 
right-hand side and at most once on the left-hand side of \eqref{eq:threetwo}. 

First, suppose that $B$ is not planar. Then \Cref{coro:rv}\ref{rvc2} states that $B$ has a unique $3$-connected component $T$ 
strongly embeddable on $\Sg$ with the same facewidth. $T$ is in $\class{\three}_g^{\fw\ge3}$ and therefore counted 
once in $\three_g^{\fw\ge3}(x,y)$. 
To get $B$ from $T$ we have to attach $2$-connected components along the edges.
That means, either we leave an edge as it is (obtaining a summand of $y$) or we replace it by two edges (obtaining a 
factor of $y^2$) and one multigraph in $\class{\network}$ without its root edge 
(obtaining a factor of $\frac{1}{y}N$). Thus $B$ is counted exactly once on the right-hand side of \eqref{eq:threetwo}.

If $B$ is planar, then it might be counted more than once on the right-hand side. Indeed, in this case the $2$-connected components 
carrying the facewidth might be different for 
different embeddings.
Therefore $\vertex{\three_g}^{\fw\ge3}(x,y+y\vertex{\network}(x,y,z))$ is an upper bound. To get a lower bound we have to 
subtract all multigraphs we over counted. 
This is achieved by subtracting $\three_0(x,y+y\network(x,y,z))$, as only planar multigraphs are over counted and each such multigraph 
is subtracted once for each of its $3$-connected components.
\end{proof}

In the same spirit we can relate connected and $2$-connected multigraphs using the auxiliary class 
$\class{Q}$ of all edge-rooted connected vertex-labelled cubic planar multigraphs where the root edge is a loop. To simplify 
the formulas later on, the root will be marked by $y$ in the generating function $Q(x,y,z,w)$ and only non-root 
loops are marked by $w$.

\begin{lem}\label{conngf}
For $g\ge1$ the generating functions of $\class\one$ and $\class\two$ satisfy the following relation.
\begin{align}\label{eq:conngf}
\two_g^{\fw\ge2}(x,\oy,\oz)-\two_0(x,\oy,\oz)&\preceq\one_g^{\fw\ge2}(x,y,z,w)\preceq \two_g^{\fw\ge2}(x,\oy,\oz),
\end{align}
where $\oy=\frac{y}{1-Q(x,y,z,w)}$ and $\oz=\frac{1}{2}(\frac{y}{1-Q(x,y,z,w)})^2$.
\end{lem}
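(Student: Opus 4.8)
The plan is to mimic the proof of \Cref{2conngf} one level of connectivity lower, replacing the decomposition of a $2$-connected cubic multigraph into its $3$-connected components by the decomposition of a connected cubic multigraph into its $2$-connected components. Fix $C\in\class{\one}_g^{\fw\ge2}$ with $C\neq\Phi$; I will show that $C$ is counted at least once on the right-hand side and at most once on the left-hand side of \eqref{eq:conngf}.

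First suppose that $C$ is non-planar. By \Cref{coro:rv}\ref{rvc1}, $C$ has a unique non-planar $2$-connected component $B_0$, which is strongly embeddable on $\Sg$ with $\fw_g(B_0)=\fw_g(C)\ge 2$, and every other $2$-connected component of $C$ is planar. Every cut vertex of $C$ lying on $B_0$ has exactly two edges in $B_0$, so suppressing the degree-two vertices of $B_0$ yields a $2$-connected cubic multigraph $B\in\class{\two}_g^{\fw\ge 2}$. Conversely, $C$ is recovered from $B$ by re-inflating its edges: along each edge of $B$ one inserts a finite ordered sequence of \emph{pendant gadgets}, where a pendant gadget is obtained from an element of $\class{Q}$ by \emph{opening its root loop} --- cutting the root loop at its base into two half-edges and splicing the edge of $B$ through that base vertex, so that the rest of the $\class{Q}$-element becomes a planar pendant hanging off the new degree-$3$ vertex. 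A (possibly empty) sequence of such gadgets along an edge is counted by $y/(1-Q)=\oy$; in particular the loops of $C$ are exactly the non-root loops occurring inside the inserted $\class{Q}$-elements, which is what the variable $w$ records, and a double edge of $B$ is replaced by two independently inflated strands, the factor $\frac12$ in $\oz=\frac12\oy^2$ accounting for the two ways of assigning the strands to the two parallel edges. As in the proof of \Cref{2conngf}, inserting planar pendants does not decrease the facewidth (push an essential circle off the pendants), so the reconstructed $C$ again has an embedding of facewidth $\fw_g(B)\ge 2$. Since $B_0$, and hence $B$, is uniquely determined by $C$ and the reconstruction data is canonical up to the symmetries already divided out, a non-planar $C$ is counted exactly once in $\two_g^{\fw\ge 2}(x,\oy,\oz)$ and, being non-planar, not at all in $\two_0(x,\oy,\oz)$.

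If $C$ is planar the same decomposition applies, but the $2$-connected component playing the role of the core need no longer be unique: by \Cref{robertsonvitray}\ref{rv1}, any embedding of $C$ of facewidth at least $2$ singles out one of its $2$-connected components, yet different embeddings may single out different ones, and several planar $2$-connected components of $C$ may separately admit embeddings of facewidth at least $2$ on $\Sg$. Hence a planar $C$ is counted in $\two_g^{\fw\ge 2}(x,\oy,\oz)$ once for each of its $2$-connected components lying in $\class{\two}_g^{\fw\ge 2}$; each such component is a planar $2$-connected cubic multigraph, so it also lies in $\class{\two}_0$ and is counted once in $\two_0(x,\oy,\oz)$ by the same reconstruction. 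Therefore for planar $C$ the coefficient of $\two_g^{\fw\ge 2}(x,\oy,\oz)-\two_0(x,\oy,\oz)$ is at most that of $\one_g^{\fw\ge 2}$, while \Cref{robertsonvitray}\ref{rv1} also guarantees that a planar $C\in\class{\one}_g^{\fw\ge 2}$ has at least one $2$-connected component in $\class{\two}_g^{\fw\ge 2}$, so it is still counted on the right-hand side. The two cases together give \eqref{eq:conngf}.

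The step I expect to be the main obstacle is making the gadget-insertion precise and verifying that it translates exactly into the substitution $\oy=y/(1-Q)$, $\oz=\frac12\oy^2$: one must check that opening the root loop is a bijection between $\class{Q}$ and planar pendants spliced into an edge, that the sequences attached to the various edges of $B$ can be reassembled without interference, that the loops of $C$ correspond exactly to the $w$-marked loops inside the gadgets (and the remaining edge types to the markers $y$ and $z$ as in \Cref{def:univariate}), and that the symmetry factor $\frac12$ for double edges is exactly right --- the same kind of bookkeeping as in \Cref{2conngf}, complicated only by the extra edge type. A secondary point requiring care is the planar case: one must confirm that the overcounting of a planar $C$ in $\two_g^{\fw\ge 2}(x,\oy,\oz)$ is corrected \emph{precisely} by $\two_0(x,\oy,\oz)$, which holds because every $2$-connected component of a planar $C$ that realises its facewidth is itself a planar $2$-connected cubic multigraph.
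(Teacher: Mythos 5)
Your proposal follows essentially the same route as the paper's proof: for non-planar $C$ you use \Cref{coro:rv}\ref{rvc1} to isolate the unique $2$-connected core, reconstruct $C$ by substituting a sequence of edges and $\class{Q}$-gadgets along each core edge (yielding $\oy=y/(1-Q)$ and $\oz=\tfrac12\oy^2$ for double edges), and repair the planar overcounting by subtracting $\two_0(x,\oy,\oz)$, which is exactly the paper's argument (the paper just refers back to \Cref{2conngf} for the planar case). The additional details you spell out --- suppressing the degree-two vertices of the block, preservation of facewidth when planar pendants are attached, and the once-per-component count in the planar case --- are points the paper leaves implicit, so your proof is correct and matches the paper's approach.
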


\begin{proof}
Let $C\in\class{\one}_g^{\fw\ge2}$. We shall show it is counted at least once on the 
right-hand side and at most once on the left-hand side of \eqref{eq:conngf}. 

First, suppose $C$ is not planar. Then \Cref{coro:rv}\ref{rvc1} states that $C$ has a unique $2$-connected component $B$ 
strongly embeddable on $\Sg$ with the same facewidth, i.e.\ $B\in\class{\two}_g^{\fw\ge2}$. 
To get $C$ from $B$ we have to replace each edge by a sequence of edges and multigraphs in $\class{Q}$, 
that means we replace one edge by a sequence of alternately edges and multigraphs in $\class{Q}$ without the root, 
starting and ending with an edge. Therefore the replacement is $y\mapsto y\frac{1}{1-Q(x,y,z,w)}$.

This results in a $1$-to-$1$ correspondence between the two generating functions $\vertex{\one_g}^{\fw\ge2}(x,y,z,w)$ and $\vertex{\two_g}^{\fw\ge2}
(x,\oy,\oz)$ for non-planar multigraphs. The replacement for double edges results from replacing a set of two edges 
each as above.

As in \Cref{2conngf}, if $C$ is planar, the above argument does not necessarily result in a bijection. Thus we have to 
subtract all corresponding planar multigraphs again in order to avoid overcounting. Therefore we get the claimed result 
analogously to the previous lemma.
\end{proof}

\Cref{2conngf,conngf} give relations only for multigraphs with large facewidth. Again, almost all multigraphs have large facewidth, 
which we show in the following lemma.

\begin{lem}\label{lem:small-fw}
For $g\ge 1$ the following relations hold.
\begin{align}
	\vertex{\two_g}^{\fw=2}(x,&y,z)\frac12\br{y+\frac{z}{y}}^2\br{\frac1y+\frac yz}^2\preceq \opb{y}
			{z}^2\br{\vertex{\two_{g-1}^{\fw\ge2}}(x,y,z)}\nonumber\\
		&+\sum_{g'=1}^{g-1}\opb{y}{z}\br{\two_{g'}^{\fw\ge2}(x,y,z)}\opb{y}{z}\br{\two_{g-g'}^{\fw\ge2}
			(x,y,z)},\label{eq:2-conn-graphs-fw2}\\
	\vertex{\one_g}^{\fw=1}(x,&y,z,w)\br{xyw}^2\br{\frac1y+\frac yz}\preceq \opa{w}^2\br{\vertex{\one_{g-1}}
			(x,y,z,w)}\nonumber\\
		&+\sum_{g'=1}^{g-1}\opa{w}\br{\vertex{\one_{g'}}(x,y,z,w)}\opa{w}\br{\vertex{\one_{g-g'}}(x,y,z,w)}.
			\label{eq:conn-graphs-fw1}
\end{align}
\end{lem}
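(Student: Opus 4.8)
The plan is to obtain both inequalities from one surgery: cut a facewidth-realising embedding along an essential circle, cap the resulting holes with discs, and patch the severed edges back into cubic multigraphs on surfaces of strictly smaller total genus, recording the patched edges by the marking operators that appear on the right-hand sides. I use repeatedly that a circle realising the facewidth of an embedding $M$ may, after a perturbation and a rerouting that does not increase the number of intersections, be taken disjoint from the vertices and crossing each edge at most once.

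For \eqref{eq:2-conn-graphs-fw2} I would fix $B\in\class{\two}_g$ with $\fw_g(B)=2$, an embedding $M$ of $B$ on $\Sg$ with $\fw(M)=2$, and a realising essential circle $C$; then $C$ crosses exactly two distinct edges $e_1,e_2$. Cutting $\Sg$ along $C$ and capping the two holes yields, by the surgery calculus of \Cref{section:surgeries}, either one map on $\torus{g-1}$ (if $C$ is non-separating) or two maps on $\torus{g_1},\torus{g_2}$ with $g_1+g_2=g$ and $g_1,g_2\ge1$ (if $C$ is separating; $g_1,g_2\ge1$ because an essential separating circle bounds a disc on neither side). Each inserted disc receives exactly two dangling half-edges, one of $e_1$ and one of $e_2$, and I reconnect each such pair into a single new edge. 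Using that $C$ realised the facewidth one checks that the resulting pieces are again $2$-connected of facewidth $\ge2$; in particular the patching never creates a loop, which it would do only if $e_1,e_2$ shared an endpoint straddled by $C$, and then $C$ could be slid through that vertex, contradicting $\fw(M)=2$. Thus $B$ yields either one multigraph on $\torus{g-1}$ counted by $\two_{g-1}^{\fw\ge2}$ with two marked edges, or a pair counted by $\two_{g_1}^{\fw\ge2}$ and $\two_{g_2}^{\fw\ge2}$ with one marked edge each; since marking an edge lying in a single, resp.\ double, edge corresponds to $\opa{y}$, resp.\ $\opa{z}$, i.e.\ to $\opb{y}{z}$, these contributions land in the two summands on the right of \eqref{eq:2-conn-graphs-fw2}. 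The prefactor $\frac12\br{y+\tfrac zy}^2\br{\tfrac1y+\tfrac yz}^2=\frac12\br{2+\tfrac{y^2}z+\tfrac z{y^2}}^2$ is the correction that makes the coefficient-wise inequality hold: the surgery preserves the numbers of vertices and of edges, but at each patched edge it may change whether that edge is a single edge or belongs to a double edge (with a one-step cascade turning the partner of a destroyed double edge into a single edge), and accounting for this contributes a Laurent factor all of whose monomials have total edge-degree zero and which, squared for the two patched edges (the global $\tfrac12$ absorbing a symmetry of the construction), is the displayed prefactor; the finitely-many-to-one multiplicity of the inverse surgery (re-gluing, relabelling the added vertices --- here none --- and assigning the marks) is absorbed by the over-count inherent in $\opb{y}{z}^2$. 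Summing over the two cases and over $g_1+g_2=g$ then gives \eqref{eq:2-conn-graphs-fw2}.

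For \eqref{eq:conn-graphs-fw1} I would run the same scheme with a different patching. Fix $H\in\class{\one}_g$ with $\fw_g(H)=1$, an embedding $M$ with $\fw(M)=1$, and a realising essential circle $C$; after sliding $C$ off a vertex it crosses exactly one edge $e$, which one checks may be taken to be a single edge or part of a double edge. Cutting and capping gives one map on $\torus{g-1}$, or two maps on $\torus{g_1},\torus{g_2}$ with $g_1+g_2=g$ and $g_1,g_2\ge1$; each inserted disc now carries a single dangling half-edge, which cannot be closed into an edge, so I cap it with a new vertex carrying a loop. Each piece thereby gains one new vertex and one new loop and is a connected cubic multigraph strongly embeddable on the smaller surface --- because of the added loop no facewidth or $2$-connectivity statement is available, nor is one needed --- so it is counted by $\one_{g-1}$, resp.\ by $\one_{g_1}$ and $\one_{g_2}$, with the new loop marked by $\opa{w}$; this yields the two summands on the right of \eqref{eq:conn-graphs-fw1}. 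Here the prefactor $\br{xyw}^2\br{\tfrac1y+\tfrac yz}=x^2w^2\br{y+\tfrac{y^3}z}$ records the two new vertices ($x^2$), the two new loops ($w^2$), and the two cases for $e$: a single edge of $H$ is replaced by two single arcs (net factor $y$), while an edge of a double edge of $H$, once split, leaves its former partner and the two new arcs as three single edges (net factor $y^3/z$); the inverse surgery is essentially determined, and its residual multiplicity is absorbed by $\opa{w}^2$.

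The main obstacle in both parts is the labelled combinatorial bookkeeping needed to pin the rational prefactors down exactly --- enumerating precisely how single edges, double edges and loops are created, destroyed or shifted by the cutting and re-patching, checking the few degenerate configurations, and verifying that the multiplicity of the inverse surgery never exceeds the over-count produced by the marking operators. The accompanying topological claims --- that $C$ may be taken transversal and crossing each edge at most once (and, for $\fw=1$, not a loop), and that in the $\fw=2$ case the pieces are genuinely $2$-connected of facewidth $\ge2$ with no loop-creating patching --- are precisely where the hypothesis that $C$ realises the facewidth is used, and they require a short but careful rerouting argument.
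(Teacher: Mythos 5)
Your surgery is the one the paper itself performs: cut a facewidth-realising $2$-cell embedding along the witnessing essential circle, cap the holes, rejoin the two severed ends within each capping disc (for $\fw=2$, which is the same as deleting $e_1=\{v_1,w_1\}$, $e_2=\{v_2,w_2\}$ and adding $e_3=\{v_1,v_2\}$, $e_4=\{w_1,w_2\}$), respectively complete the single dangling end on each side by a pendant vertex carrying a loop (for $\fw=1$); split into the separating and non-separating cases, record the new edges or loops by $\opb{y}{z}$ resp.\ $\opa{w}$, and account for single/double-edge transitions by the Laurent prefactors. All of that, including the expansion of the prefactors and the observation that $g_1,g_2\ge1$ in the separating case, matches the paper's proof.

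The gap is that you have deferred exactly the step that carries the weight. For \eqref{eq:2-conn-graphs-fw2} in the non-separating case one must show that the patched multigraph on $\torus{g-1}$ has no bridge, and this is not a routine perturbation: the paper assumes a bridge $e$, takes a circle $C'$ meeting the new embedding only in $e$, argues that $C'$ must cross both $e_1$ and $e_2$ (using an auxiliary embedding of $B\cup\{e_3,e_4\}$ in which $e_1,e_2,e_3,e_4$ bound a face), and then reroutes along $e_1$, the cutting circle $C$, and $e_2$ to produce a circle $C''$ meeting the original embedding only in $e$, contradicting either $\fw(M)=2$ or the $2$-connectivity of $B$. Your ``one checks that the resulting pieces are again $2$-connected'' (and your closing admission that this needs ``a short but careful rerouting argument'') is precisely this missing argument, and it is the only place where the hypotheses $\fw(M)=2$ and $2$-connectivity are genuinely used. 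In addition, you assert that the pieces have facewidth at least $2$; that assertion is what would justify landing in the classes with the $\fw\ge2$ superscript on the right-hand side of \eqref{eq:2-conn-graphs-fw2}, but you give no argument for it, it does not follow from $2$-connectivity, and even the paper's proof only verifies $2$-connectivity and strong embeddability of the pieces, so it cannot be dismissed as a check. Finally, in the $\fw=1$ case the fact that the crossed edge cannot be a loop is again left to ``one checks''; here the verification is genuinely short (in a cubic map one of the two faces at a loop is bounded by the loop alone, so an essential circle cannot meet the map only in that loop), but it should be spelled out. As it stands the proposal is a correct plan along the paper's lines rather than a proof.
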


\begin{proof}
In order to show \eqref{eq:2-conn-graphs-fw2}, let $B$ be a multigraph in $\class{\two}_g^{\fw=2}$. Consider a 
fixed $2$-cell embedding $M$ of $B$ on $\Sg$ with facewidth two
and let $\{e_1=\{v_1,w_1\},e_2=\{v_2,w_2\}\}$ be two edges such that there exists an 
essential circle $C$ on $\Sg$ meeting $M$ only in $e_1$ and $e_2$. Note that $e_1$, $e_2$ do not share vertices, because 
otherwise the facewidth would have been one. Then we delete $e_1$ and $e_2$, cut the surface along $C$ and 
close both holes with a disc (see \Cref{fig:cut}).
\begin{figure}
\begin{tikzpicture}[line cap=round,line join=round,>=triangle 45,x=1.0cm,y=1.0cm,scale=0.85]
\clip(-0.1,-3.) rectangle (13.5,0.5);
\draw [shift={(2.5,-11.5)},color=black]  plot[domain=1.3:1.9,variable=\t]({1.*9.487*cos(\t r)+0.*9.487*sin(\t r)},{0.*9.487*cos(\t r)+1.*9.487*sin(\t r)});
\draw [shift={(2.5,9.)},color=black]  plot[domain=4.35:4.98,variable=\t]({1.*9.487*cos(\t r)+0.*9.487*sin(\t r)},{0.*9.487*cos(\t r)+1.*9.487*sin(\t r)});
\draw [color=black] (0.,-0.5)-- (1.,-0.75);
\draw [color=black] (0.,-1.)-- (1.,-0.75);
\draw [color=black] (1.,-0.75)-- (4.,-0.75);
\draw [color=black] (4.,-0.75)-- (5.,-0.5);
\draw [color=black] (4.,-0.75)-- (5.,-1.);
\draw [color=black] (0.,-1.5)-- (1.,-1.75);
\draw [color=black] (1.,-1.75)-- (0.,-2.);
\draw [color=black] (1.,-1.75)-- (4.,-1.75);
\draw [color=black] (4.,-1.75)-- (5.,-1.5);
\draw [color=black] (4.,-1.75)-- (5.,-2.);
\draw [color=black] (12.,-0.75)-- (13.,-0.5);
\draw [color=black] (12.,-0.75)-- (13.,-1.);
\draw [color=black] (12.,-1.75)-- (13.,-1.5);
\draw [color=black] (12.,-1.75)-- (13.,-2.);
\draw [color=black] (8.,-0.5)-- (9.,-0.75);
\draw [color=black] (9.,-0.75)-- (8.,-1.);
\draw [color=black] (8.,-1.5)-- (9.,-1.75);
\draw [color=black] (9.,-1.75)-- (8.,-2.);
\draw [shift={(8.825,-1.25)},color=black]  plot[domain=-1.395:1.3928,variable=\t]({1.*0.99*cos(\t r)+0.*0.99*sin(\t r)},{0.*0.99*cos(\t r)+1.*0.99*sin(\t r)});
\draw [shift={(12.17,-1.25)},color=black]  plot[domain=1.738:4.540,variable=\t]({1.*0.99*cos(\t r)+0.*0.99*sin(\t r)},{0.*0.99*cos(\t r)+1.*0.99*sin(\t r)});
\draw [shift={(10,-11.5)},color=black]  plot[domain=1.249:1.358,variable=\t]({1.*9.487*cos(\t r)+0.*9.487*sin(\t r)},{0.*9.487*cos(\t r)+1.*9.487*sin(\t r)});
\draw [shift={(11,-11.5)},color=black]  plot[domain=1.783:1.892,variable=\t]({1.*9.487*cos(\t r)+0.*9.487*sin(\t r)},{0.*9.487*cos(\t r)+1.*9.487*sin(\t r)});
\draw [shift={(11,9.)},color=black]  plot[domain=4.390:4.5,variable=\t]({1.*9.487*cos(\t r)+0.*9.487*sin(\t r)},{0.*9.487*cos(\t r)+1.*9.487*sin(\t r)});
\draw [shift={(10,9.)},color=black]  plot[domain=4.925:5.034,variable=\t]({1.*9.487*cos(\t r)+0.*9.487*sin(\t r)},{0.*9.487*cos(\t r)+1.*9.487*sin(\t r)});
\draw [dash pattern=on 1pt off 1pt,color=black] (9.,-1.75)-- (9.,-0.75);
\draw [dash pattern=on 1pt off 1pt,color=black] (12.,-0.75)-- (12.,-1.75);
\draw [color=red] (2.5,-0.5) arc (90:270:0.263cm and 0.763cm);
\draw [color=red,dash pattern=on2pt off 2pt] (2.5,-2.) arc (270:450:0.263cm and 0.763cm);
\draw [color=red](3,-1.3) node {C};

\draw[->](5.5,-1.25) -- (7.5,-1.25);

\begin{scriptsize}
\draw [fill=black] (1.,-0.75) circle (1.5pt);
\draw[color=black] (1.,-1) node {$v_1$};
\draw [fill=black] (1.,-1.75) circle (1.5pt);
\draw[color=black] (1.,-1.48) node {$v_2$};
\draw [fill=black] (4.,-0.75) circle (1.5pt);
\draw[color=black] (3.8,-1) node {$w_1$};
\draw [fill=black] (4.,-1.75) circle (1.5pt);
\draw[color=black] (3.8,-1.48) node {$w_2$};
\draw [fill=black] (0.,-0.5) circle (1.5pt);
\draw [fill=black] (0.,-1.) circle (1.5pt);
\draw [fill=black] (0.,-1.5) circle (1.5pt);
\draw [fill=black] (0.,-2.) circle (1.5pt);
\draw [fill=black] (5.,-0.5) circle (1.5pt);
\draw [fill=black] (5.,-1.) circle (1.5pt);
\draw [fill=black] (5.,-1.5) circle (1.5pt);
\draw [fill=black] (5.,-2.) circle (1.5pt);
\draw[color=black] (1.67,-1) node {$e_1$};
\draw[color=black] (1.67,-1.58) node {$e_2$};
\draw [fill=black] (9.,-0.75) circle (1.5pt);
\draw[color=black] (9.3,-0.72) node {$v_1$};
\draw [fill=black] (9.,-1.75) circle (1.5pt);
\draw[color=black] (9.3,-1.72) node {$v_2$};
\draw [fill=black] (12.,-0.75) circle (1.5pt);
\draw[color=black] (11.75,-0.72) node {$w_1$};
\draw [fill=black] (12.,-1.75) circle (1.5pt);
\draw[color=black] (11.75,-1.72) node {$w_2$};
\draw [fill=black] (13.,-0.5) circle (1.5pt);
\draw [fill=black] (13.,-1.) circle (1.5pt);
\draw [fill=black] (13.,-1.5) circle (1.5pt);
\draw [fill=black] (13.,-2.) circle (1.5pt);
\draw [fill=black] (8.,-0.5) circle (1.5pt);
\draw [fill=black] (8.,-1.) circle (1.5pt);
\draw [fill=black] (8.,-1.5) circle (1.5pt);
\draw [fill=black] (8.,-2.) circle (1.5pt);
\draw[color=black] (9.25,-1.25) node {$e_3$};
\draw[color=black] (11.8,-1.25) node {$e_4$};
\end{scriptsize}
\end{tikzpicture}
\caption{Surgery along an essential circle.}\label{fig:cut}
\end{figure}
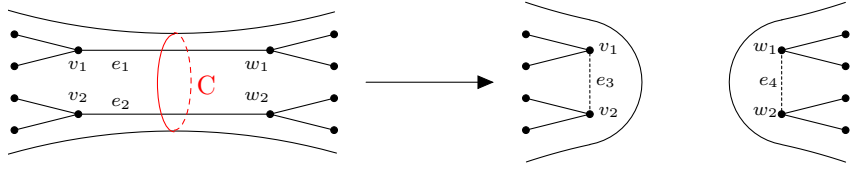
By this surgery we either disconnect the surface or we reduce its genus by one.

\emph{Case 1:} Cutting along $C$ disconnects the surface. As $C$ was an essential loop, both components have genus at least one. 
Therefore we obtain two multigraphs $B_1^*$ and $B_2^*$, strongly embeddable on $\torus{g_1}$ and $\torus{g_2}$, 
respectively, with $g_1,g_2\ge1$ and $g_1+g_2=g$. Without loss of generality we can assume that  
$v_1,v_2\in B_1^*$ and $w_1,w_2\in B_2^*$.
Furthermore $\{e_1,e_2\}$ was a $2$-edge-separator in $B$. Thus, $B_1^*$ and $B_2^*$ are connected as $B$ is $2$-edge-connected.
Let $B_1$ be obtained from $B_1^*$ by adding an edge $e_3=\{v_1,v_2\}$ and marking $e_3$. Note that $B_1$ is also 
strongly embeddable on $\torus{g_1}$. We claim that $B_1$ is $2$-connected. Indeed, any path in $B$ between vertices in 
$B_1$ gives rise to a path in $B_1$ between the same vertices by replacing any sub-path in $B\setminus B_1$ by the edge 
$e_3$. Thus $B_1$ is $2$-connected as $B$ is. So $B_1$ has to be 
$2$-connected as well. Analogously we add the edge $e_4=\{w_1,w_2\}$ to $B_2^*$ to obtain a $2$-connected multigraph $B_2$ 
strongly embeddable on $\torus{g_2}$. 
We also mark $e_4$. Thus we can conclude that in this case a multigraph $B$ can be 
constructed from a $2$-connected multigraph embeddable on $\torus{g'}$ with one marked edge and a $2$-connected 
multigraph embeddable on $\torus{g-g'}$ with one marked edge, resulting in the term
\[\sum_{g'=1}^{g-1}\opb{y}{z}\br{\vertex{\two_{g'}}(x,y,z)}\opb{y}{z}\br{\vertex{\two_{g-g'}}(x,y,z)}.\]
Note that $e_3$ and $e_4$ might be single edges or part of double edges and therefore differentiating with respect
to both results in an upper bound. 
The factor $\br{\frac{1}{y}+\frac{y}{z}}^2$ accounts for the deletion of $e_1$ and $e_2$, each of which might have 
been a single edge or part of a double edge (hence deleting it turns a double edge into a single edge). The factor 
$\br{y+\frac{z}{y}}^2$ represents the insertion of $e_3$ and $e_4$, each of which either adds a single edge or turns a 
single edge into a double edge.
Furthermore we obtain a factor of two for the ways to obtain the original multigraph from $B_1$ and $B_2$.

\emph{Case 2:} Cutting along $C$ does not disconnect the surface. As the embedding after cutting is still a $2$-cell embedding, $B\setminus\{e_1,e_2\}$ is connected.
We can connect $v_1$, $v_2$, $w_1$, $w_2$ in $B\setminus\{e_1,e_2\}$ by two edges (without loss of generality 
$e_3=\{v_1,v_2\}$ 
and $e_4=\{w_1,w_2\}$) so as to obtain a multigraph $B^*$. The graph $\ol{B}=B\cup\{e_3,e_4\}$ has a $2$-cell embedding 
$\ol{M}$ on $\Sg$ such that $e_1\cup e_2\cup e_3\cup e_4$ bounds a face. Indeed, starting from $M$, $e_3$ and $e_4$ 
can be embedded so that they run close to $e_1$, $e_2$, and $C$. Let $M^*$ be the embedding of $B^*$ induced by $\ol{M}$. 
Suppose $B^*$ is not $2$-connected, that is, it has a bridge $e$. Note that $e$ cannot be $e_3$ or 
$e_4$ as $B^*\setminus\{e_3,e_4\}=B\setminus\{e_1,e_2\}$ is connected.

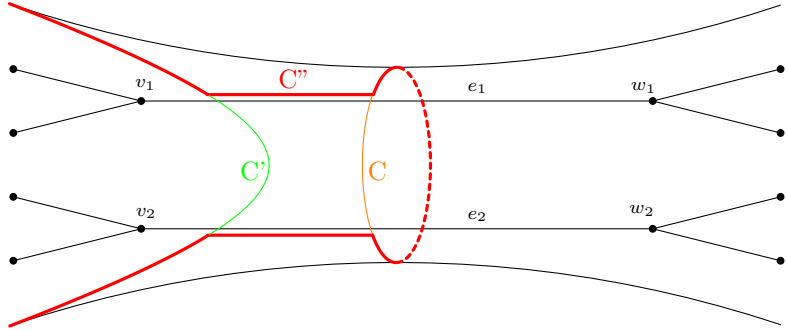
\begin{figure}
\begin{tikzpicture}[line cap=round,line join=round,>=triangle 45,x=2.0cm,y=2.0cm,scale=0.85]
\clip(-0.5,-3.) rectangle (6.5,0.5);
\draw [shift={(3.,-11.5)},color=black]  plot[domain=1.2490457723982544:1.892546881191539,variable=\t]({1.*9.486832980505138*cos(\t r)+0.*9.486832980505138*sin(\t r)},{0.*9.486832980505138*cos(\t r)+1.*9.486832980505138*sin(\t r)});
\draw [shift={(3.,9.)},color=black]  plot[domain=4.3906384259880475:5.034139534781332,variable=\t]({1.*9.486832980505138*cos(\t r)+0.*9.486832980505138*sin(\t r)},{0.*9.486832980505138*cos(\t r)+1.*9.486832980505138*sin(\t r)});
\draw [color=black] (0.,-0.5)-- (1.,-0.75);
\draw [color=black] (0.,-1.)-- (1.,-0.75);
\draw [color=black] (1.,-0.75)-- (5.,-0.75);
\draw [color=black] (5.,-0.75)-- (6.,-0.5);
\draw [color=black] (5.,-0.75)-- (6.,-1.);
\draw [color=black] (0.,-1.5)-- (1.,-1.75);
\draw [color=black] (1.,-1.75)-- (0.,-2.);
\draw [color=black] (1.,-1.75)-- (5.,-1.75);
\draw [color=black] (5.,-1.75)-- (6.,-1.5);
\draw [color=black] (5.,-1.75)-- (6.,-2.);
\draw [color=red, very thick] (1.52,-0.7)-- (2.81,-0.7);
\draw [color=red, very thick] (1.52,-1.8)-- (2.81,-1.8);

\draw[color=orange](2.81,-0.7) arc (134:226:0.5266143949538169cm and 1.5268229493056935cm);
\draw[color=red,very thick](2.9990944015573464,-0.48640228565126853) arc (90:134:0.5266143949538169cm and 1.5268229493056935cm);
\draw[color=red,very thick](2.81,-1.8) arc (226:270:0.5266143949538169cm and 1.5268229493056935cm);
\draw[color=red,very thick,dash pattern=on 2pt off 2pt](2.9990944015573464,-2.013225234956962) arc (270:450:0.5266143949538169cm and 1.5268229493056935cm);
\draw [samples=50,domain=-0.27:0.27,rotate around={179.94666265828377:(5.0469120090352,-1.2551517251060296)},xshift=10.0938240180704cm,yshift=-2.510303450212059cm,color=green] plot ({3.0469063708808277*(1+(\x)^2)/(1-(\x)^2)},{0.9466212588310281*2*(\x)/(1-(\x)^2)});
\draw [samples=50,domain=-0.5:-0.27,rotate around={179.94666265828377:(5.0469120090352,-1.2551517251060296)},xshift=10.0938240180704cm,yshift=-2.510303450212059cm,color=red, very thick] plot ({3.0469063708808277*(1+(\x)^2)/(1-(\x)^2)},{0.9466212588310281*2*(\x)/(1-(\x)^2)});
\draw [samples=50,domain=0.27:0.5,rotate around={179.94666265828377:(5.0469120090352,-1.2551517251060296)},xshift=10.0938240180704cm,yshift=-2.510303450212059cm,color=red, very thick] plot ({3.0469063708808277*(1+(\x)^2)/(1-(\x)^2)},{0.9466212588310281*2*(\x)/(1-(\x)^2)});
\draw [color=orange](2.7,-1.15) node[anchor=north west] {C};
\draw [color=green](1.7,-1.15) node[anchor=north west] {C'};
\draw [color=red](2,-0.43) node[anchor=north west] {C''};
\begin{scriptsize}
\draw [fill=black] (1.,-0.75) circle (1.5pt);
\draw[color=black] (1.0335548589070693,-0.620620382783603) node {$v_1$};
\draw [fill=black] (1.,-1.75) circle (1.5pt);
\draw[color=black] (1.0384686251057282,-1.6279424535086915) node {$v_2$};
\draw [fill=black] (5.,-0.75) circle (1.5pt);
\draw[color=black] (4.9301714544436654,-0.6451892137768979) node {$w_1$};
\draw [fill=black] (5.,-1.75) circle (1.5pt);
\draw[color=black] (4.915430155847688,-1.6082873887140556) node {$w_2$};
\draw [fill=black] (0.,-0.5) circle (1.5pt);
\draw [fill=black] (0.,-1.) circle (1.5pt);
\draw [fill=black] (0.,-1.5) circle (1.5pt);
\draw [fill=black] (0.,-2.) circle (1.5pt);
\draw [fill=black] (6.,-0.5) circle (1.5pt);
\draw [fill=black] (6.,-1.) circle (1.5pt);
\draw [fill=black] (6.,-1.5) circle (1.5pt);
\draw [fill=black] (6.,-2.) circle (1.5pt);
\draw[color=black] (3.6305288734712478,-0.65) node {$e_1$};
\draw[color=black] (3.6305288734712478,-1.65) node {$e_2$};
\end{scriptsize}
\end{tikzpicture}
\caption{Finding an essential circle witnessing small facewidth}\label{fig:newcurve}
\end{figure}

There is a (not necessarily essential) circle $C'$ on $\Sg$ hitting $M^*$ only in $e$. As 
$e$ has not been a bridge in $B$, $C'$ has to meet $e_1$ and $e_2$ as well. If it met neither 
$e_1$ nor $e_2$ it would either contradict $B$ having facewidth two (if $C'$ is essential) or the $2$-connectivity of $B$ 
(if $C'$ is not essential). If it met only one of them, it would 
have to meet one of $e_3$, $e_4$, because $e_1$, $e_2$, $e_3$ and $e_4$ bound a disc in $\ol{M}$. 
This contradicts the fact that $C'$ meets $M^*$ only in $e$. 

We now construct the following circle $C''$. Follow $C'$ from $e$ to $e_1$, but not traversing it. Follow 
$e_1$ until reaching $C$ and then use $C$ to reach $e_2$ without crossing $e_1$ and $e_2$. 
Then return back to $C'$ along $e_2$ and return to $e$ (see \Cref{fig:newcurve}). $C''$ meets $M$ only in $e$. Either $C''$ 
is an essential circuit contradicting the fact that $B$ has facewidth two or it is planar 
contradicting $2$-connectivity.  Again we can conclude that all multigraphs $B$, where the 
surgery does not result in disconnecting the surface can be constructed from a $2$-connected multigraph embeddable on 
$\mathbb{S}_{g-1}$ with two marked edges resulting in the term
\[\opb{y}{z}^2\vertex{\two_{g-1}}(x,y,z).\]
The factor $\frac{1}{2}\br{y+\frac{z}{y}}^2\br{\frac{1}{y}+\frac{y}{z}}^2$ follows as in Case $1$. We thus conclude
\eqref{eq:2-conn-graphs-fw2}.

To prove \eqref{eq:conn-graphs-fw1},
let $G$ be a multigraph in $\class{\one}_g^{\fw=1}$. Fix a $2$-cell embedding $M$ of $G$ on $\Sg$ of facewidth one and let 
$e_1=\{v_1,v_2\}$ be an edge such that there exists an essential circle $C$ on $\Sg$ meeting $G$ only in 
$e_1$. Then we perform the following surgery. We delete $e_1$, cut the surface 
along $C$, close both holes with a disc, and attach an edge, an additional vertex and a loop to both $v_1$ and $v_2$.
Remark that the edge deleted may be a single edge or part of a double edge. Thus we have a factor of 
$(xyw)^2\br{\frac1y+\frac yz}$. The deleted edge cannot be a loop as in cubic maps on orientable surfaces loops are 
always on the boundary of two 
different faces and as such cannot be the only intersection of an embedding of a multigraph with an essential circle.
This can easily be seen as at the base of the loop there is only one other edge. Thus the boundary of the 
face of the loop without this additional edge consists only of traversing the loop once.

By this surgery we either disconnect the surface or we reduce its genus by one.
If we separate the surface, we obtain two connected multigraphs each with one marked loop. These multigraphs are counted by 
$\opa{w}(\one_{g'})$ and by $\opa{w}(\one_{g-g'})$, as the genera of the two parts sum up to $g$ and
the embeddings resulting from the surgery are still $2$-cell embeddings.

If the surface is not separated, the resulting embedding is a $2$-cell embedding and hence the multigraph remains connected. 
Therefore we obtain a multigraph counted by $\opa{w}^2(\one_{g-1})$. The factor in front of the generating function is once again 
obtained by marking two loops. This proves \eqref{eq:conn-graphs-fw1}.
\end{proof}

\Cref{lem:small-fw} will be used to show that the number of multigraphs with small facewidth is negligible. We will thus be able to use 
\Cref{2conngf,conngf} to obtain asymptotic formulas for connected and $2$-connected cubic multigraphs. To this end we will 
need equations for the generating functions of the auxiliary classes $\class N$ and $\class Q$.

\begin{prop}
The generating function $\vertex{\network}(x,y,z)$ of $\class{\network}$ satisfies the system of equations
\begin{align}
\begin{split}
\vertex{\network}(x,y,z)&=\frac{u(1-2u)-x^2y(1+\network(x,y,z))(y^2-2z)}{2},\label{eq:network}\\
x^2y^3(1+\vertex{\network}(x,y,z))^3&=u(1-u)^3
\end{split}
\end{align}
and the generating function $\vertex{Q}(x,y,z,w)$ of $\class{Q}$ satisfies
\begin{align}
\begin{split}
\vertex{Q}&=\frac{x^2y^3}{2}\vertex{A}+\frac{\vertex{Q}^2}{2}+x^2 y^2 w,\label{systemq}\\
\vertex{A}&=\vertex{Q}+\vertex{S}+\vertex{P}+\vertex{H},\\
\vertex{S}&=\frac{\vertex{A}^2}{\vertex{A}+1},\\
\vertex{P}&=\frac{x^2y^3}{2}\vertex{A}^2+x^2y^3\vertex{A}+x^2yz,\\
2\vertex{H}(1+\vertex{A})&=u(1-2u)-u(1-u)^3,\\
x^2y^3\vertex{A}^3&=u(1-u)^3.
\end{split}
\end{align}
\end{prop}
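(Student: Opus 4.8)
Since $\mathcal{N}$ and $\mathcal{Q}$ consist of \emph{planar} multigraphs, the plan is to follow the classical decomposition of cubic planar multigraphs along their connectivity, along the lines of Tutte and of Bodirsky et al.~\cite{bodirsky2007-cubic-graphs} for cubic planar graphs, and then to substitute the known rational parametrisation of the $3$-connected building blocks. By Whitney's Theorem a $3$-connected cubic planar multigraph has a unique embedding, so its enumeration coincides with that of $3$-connected cubic planar \emph{maps}; by \Cref{prop:3-conn-dual} these are exactly the duals of simple planar triangulations (together with the triple edge $\Phi$, whose dual is the triangulation with three edges). Tutte's enumeration of planar triangulations \cite{tutte1962-planar-triangulations} furnishes a parameter $u$ in terms of which both the generating function of these cores and the argument at which it is composed in the decomposition become rational. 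The plan is that the defining relation of $u$ is the equation $x^2y^3(1+N)^3 = u(1-u)^3$ (respectively $x^2y^3A^3 = u(1-u)^3$), and that the polyhedral contribution, once rewritten through the parametrisation, produces the terms $u(1-2u)$ appearing on the right-hand sides.

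For $\mathcal{N}$ I would recall that an edge-rooted $2$-connected cubic planar multigraph is, relative to its root edge, exactly one of: the root edge with trivial remainder, a series composition, a parallel composition, or a polyhedral network obtained by substituting a network or a plain edge into every non-root edge of a $3$-connected cubic planar multigraph, i.e.\ of the dual of a simple planar triangulation. Translating the series and parallel cases into generating functions — with $x$ (exponential) marking vertices, $y$ edges, $z$ double edges not through the root, and a factor $\tfrac12$ for each unordered pair of parallel parts and for each double edge — accounts for the term $x^2 y (1+N)(y^2-2z)$, in which each of the two poles carries two further half-edges that each become a plain edge or a network, and the $-2z$ corrects for the two half-edges at a pole forming a double edge. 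The polyhedral case contributes the composed generating function of simple planar triangulations, which by the parametrisation becomes rational in $u$; collecting the three contributions and simplifying yields \eqref{eq:network}.

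For $\mathcal{Q}$ the decomposition starts at the root loop. If it is based at $v$, then $v$ has a single further incident edge, the unique exit into the remaining connected cubic planar structure; re-rooting at this exit edge produces an object of an auxiliary class $\mathcal{A}$, which itself splits according to whether the new root leads into a loop ($\mathcal{Q}$), a series block ($\mathcal{S}$), a parallel block ($\mathcal{P}$), or a polyhedral block ($\mathcal{H}$) — this is the relation $A=Q+S+P+H$. The remaining equations are the standard translations: $S = A^2/(A+1)$ is the series-composition equation, $P = \tfrac{x^2y^3}{2}A^2 + x^2y^3A + x^2yz$ the parallel composition (two, one, or no further parts between the poles, with $\tfrac12$ for the unordered pair and $x^2yz$ for the bare double edge), and the pair $2H(1+A) = u(1-2u)-u(1-u)^3$, $x^2y^3A^3 = u(1-u)^3$ encodes the polyhedral cores via Tutte's parametrisation as before; finally $Q = \tfrac{x^2y^3}{2}A + \tfrac{Q^2}{2} + x^2y^2w$ records the three ways the exit edge of the root loop continues — into a single $\mathcal{A}$-block, into a junction of two further loop-rooted substructures, or into a terminal loop contributing the monomial $x^2y^2w$.

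I expect the main obstacle to be the combinatorial bookkeeping rather than any single identity: one must verify that each of these decomposition steps is a genuine bijection, with no over- or under-counting in the parallel and polyhedral cases, correct treatment of the case where the root edge lies inside a double edge, and (in the favourable direction) the fact that a loop in a cubic planar embedding always borders two distinct faces and so introduces no extra case; and one must track the markings $y,z,w$ and the symmetry weights $\tfrac12$ consistently through the series and parallel operators and through the substitution into the triangulation parameter $u$. Once the decomposition and the markings are pinned down, deriving \eqref{eq:network} and \eqref{systemq} is a routine translation together with Tutte's parametrisation.
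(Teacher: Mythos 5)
Your proposal takes essentially the same route as the paper: the standard Tutte/Bodirsky--Kang--{\L}uczak decomposition of edge-rooted cubic planar multigraphs along connectivity (root a loop, bridge, series, parallel, or polyhedral), with the $3$-connected cores identified via Whitney and duality with simple planar triangulations and handled through Tutte's rational parametrisation in $u$ --- exactly what the paper does by following Section~3 of \cite{Kang2012} (and \cite{nrr}) with initial conditions adjusted for loops and double edges. The only difference is organisational: the paper first derives the system \eqref{systemq} for $Q$ and then obtains \eqref{eq:network} by setting $w=0$ and $Q=0$, expressing $N$ through $A$, and eliminating $S$, $P$, $H$, whereas you derive the network equation directly from the series/parallel/polyhedral decomposition; the underlying combinatorics is the same.
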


\begin{proof}
We obtain \eqref{systemq} by following the lines of Section $3$ in \cite{Kang2012} or Section 
$3$ in \cite{nrr}. The only difference is that we account for loops and double edges in the 
initial conditions. 
In order to derive the equations for $Q$, one starts with an edge-rooted connected cubic planar graph and recursively 
decomposes it depending on the placement of the root. Then either
\begin{enumerate}
 \item\label{planar:q} the root is a loop;
 \item\label{planar:d} the root is a bridge;
 \item\label{planar:s} the root is part of a minimal separating edge set of size two;
 \item\label{planar:p} the end vertices of the root separate the graph; or
 \item\label{planar:h} the root is part of a $3$-connected component.
\end{enumerate}
In Case \ref{planar:q} we obtain an equation for $Q$; Case \ref{planar:d} results in an equation that can immediately be 
eliminated from the system; Case \ref{planar:s} results in the equation for $S$; Case \ref{planar:p} in the equation for  
$P$; and Case \ref{planar:h} in the parametric equations for $H$ in terms of $u$. It is shown in \cite{Kang2012} 
that these are indeed exhaustive. For each of these cases there is a decomposition of the graph resulting in the 
corresponding equation in the proposition.

To obtain the equations for $\network(x,y,z)$ we start with \eqref{systemq}. Because $\network(x,y,z)$ enumerates edge-rooted $2$-connected 
planar cubic multigraphs,  setting $w=0$ and $Q(x,y,z,w)=0$ results in a system of equations for $\network(x,y,z)=1+A$. The 
given equations follow by eliminating $S$, $P$, and $H$ from these equations. 
\end{proof}

\subsection{Asymptotics}

The goal of this section is to obtain asymptotics for $\one_g(v)$. The analysis for $\one_g^u(v)$ and $\one_g^s(v)$ are analogous; we will point out the differences when they occur.

We start by determining the growth constant for $\network(v)$. We will shortly see that the analogous results for 
$\network^u(v)$ and $\network^s(v)$ are not necessary for our purpose.

\begin{lem}\label{planarnetwork}
The dominant singularity of $\network(v)$ occurs at $\rho_N=2\left(\frac{6}{17}\right)^3$.
Furthermore, $N(v)$ is $\Delta$-analytic and
\[N(v)=\frac{1}{16}-\frac{51}{400}(1-\rho_N^{-1}v)+\frac{17^{5/2}}{2\cdot 3^{1/2}\cdot 5^5}(1-\rho_N^{-1}v)^{3/2}+O\br{(1-\rho_N^{-1}v)^2}.\]
\end{lem}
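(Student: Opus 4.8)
The statement to prove is \Cref{planarnetwork}: the dominant singularity of $N(v)$ is $\rho_N = 2(6/17)^3$ and $N(v)$ is $\Delta$-analytic with the stated singular expansion up to order $(1-\rho_N^{-1}v)^2$. The natural approach is to work directly from the system of equations for $\network(x,y,z)$ already given in the excerpt, namely
\begin{align*}
  \network &= \frac{u(1-2u) - x^2y(1+\network)(y^2-2z)}{2},\\
  x^2y^3(1+\network)^3 &= u(1-u)^3,
\end{align*}
and perform the substitution of \Cref{def:univariate}: set $x = v^{1/4}$, $y = v^{1/6}$, $z = v^{1/3}/2$. With these substitutions $x^2y^3 = v^{1/2}v^{1/2} = v$ and $y^2 - 2z = v^{1/3} - v^{1/3} = 0$, so the first equation collapses to $\network(v) = \tfrac12 u(1-2u)$ and the second becomes $v(1+\network(v))^3 = u(1-u)^3$. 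The plan is therefore: (i) eliminate $\network$ to get a single algebraic equation relating $u$ and $v$; (ii) locate the dominant singularity of $u$ as a function of $v$ via the standard "square-root singularity of an algebraic function defined by $v = \phi(u)$" analysis (the branch point is where $\phi'(u) = 0$); (iii) transfer this to a singular expansion of $u(v)$, and hence of $\network(v)$, around $\rho_N$; (iv) verify $\Delta$-analyticity.

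\textbf{Carrying it out.} After the substitution, $1 + \network(v) = 1 + \tfrac12 u(1-2u) = 1 + \tfrac12 u - u^2$, so the governing relation is
\begin{equation*}
  v = \frac{u(1-u)^3}{\left(1 + \tfrac12 u - u^2\right)^3} =: \phi(u).
\end{equation*}
This $\phi$ is analytic near $u=0$ with $\phi(0)=0$, $\phi'(0)=1$, so it has an analytic inverse $u(v)$ near $v=0$ with nonnegative power-series coefficients (these count networks). The dominant singularity of $u(v)$ is the smallest positive $v$-value at which this inverse develops a branch point, i.e.\ $v = \phi(u_0)$ where $u_0$ is the smallest positive root of $\phi'(u) = 0$ with $\phi''(u_0)\neq 0$. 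Computing $\phi'(u)$ (a rational function; numerator is a polynomial in $u$) and solving for its smallest positive root should yield $u_0 = 1/3$, whence one checks $\phi(1/3) = \frac{(1/3)(2/3)^3}{(1 + 1/6 - 1/9)^3} = \frac{(1/3)(8/27)}{(19/18)^3}$ — this needs to be reconciled with $\rho_N = 2(6/17)^3$, so the actual value of $u_0$ is whatever makes $1 + \tfrac12 u_0 - u_0^2$ and the rest combine to give $17$ and $6$; I would solve $\phi'(u)=0$ honestly and read off $u_0$, then confirm $\phi(u_0) = 2(6/17)^3$ by direct substitution. Given $\phi'(u_0) = 0$, $\phi''(u_0) \neq 0$, the standard singularity analysis of inverse functions (smooth implicit-function / Puiseux argument, cf.\ the treatment in \cite{gimenez2009,bodirsky2007-cubic-graphs}) gives
\begin{equation*}
  u(v) = u_0 + a_1\left(1 - \rho_N^{-1}v\right)^{1/2} + a_2\left(1 - \rho_N^{-1}v\right) + a_3\left(1 - \rho_N^{-1}v\right)^{3/2} + O\!\left(\left(1-\rho_N^{-1}v\right)^2\right),
\end{equation*}
where $a_1 = -\sqrt{2\rho_N/\phi''(u_0)}$ (sign chosen so coefficients are eventually of the right sign) and $a_2,a_3$ are determined recursively by substituting this ansatz into $v = \phi(u)$ and matching powers of $(1-\rho_N^{-1}v)^{1/2}$. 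Then $\network(v) = \tfrac12 u(v)(1 - 2u(v))$ is a polynomial in $u(v)$, so one just expands: the constant term is $\tfrac12 u_0(1-2u_0)$, which must equal $\tfrac1{16}$; the $(1-\rho_N^{-1}v)^{1/2}$ term has coefficient $\tfrac12(1-4u_0)a_1$, which must vanish — indeed $u_0 = 1/4$ would kill it, but then $\phi'(1/4)=0$ would need checking. In fact the right mechanism is more likely that $1-4u_0 \ne 0$ but there is no half-integer term because the singularity of $N$ as built from $u$ is of the right type; I would track the algebra carefully here, as the stated expansion has \emph{no} $(1-\rho_N^{-1}v)^{1/2}$ and no $(1-\rho_N^{-1}v)^{1/2}\cdot(1-\rho_N^{-1}v)$ terms, only integer powers plus a $3/2$ power, exactly the shape produced when $\network$ depends on $u$ through a function whose derivative vanishes at $u_0$ — so probably one should expand $\network$ directly in $(1-\rho_N^{-1}v)$ using that $\network - \network(\rho_N)$ vanishes to first order in $(u-u_0)^2 \sim (1-\rho_N^{-1}v)$. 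Matching the three displayed coefficients $\tfrac1{16}$, $-\tfrac{51}{400}$, $\tfrac{17^{5/2}}{2\cdot 3^{1/2}\cdot 5^5}$ is then a finite computation.

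\textbf{Main obstacle and loose ends.} The genuine content is entirely in step (ii)–(iii): identifying $u_0$ as the relevant root of $\phi'(u)=0$ and proving it is the \emph{dominant} one, i.e.\ that $u(v)$ has no singularity of smaller modulus. For this I would argue as in \cite{gimenez2009, bodirsky2007-cubic-graphs}: on the real interval $(0, u_0)$ the function $\phi$ is strictly increasing (check $\phi' > 0$ there) and analytic, so $u(v)$ extends analytically along $[0, \rho_N)$; $\Delta$-analyticity then follows from the fact that $\phi'(u_0)=0$ is a simple zero together with the standard argument that $v = \rho_N$ is the unique dominant singularity (using aperiodicity — the coefficients of $N(v)$ are eventually all positive since $\phi$ is not a function of $v^d$ for $d>1$ after this substitution, which can be seen from $\phi'(0)=1$ and the structure of $\phi$). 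Once $\Delta$-analyticity and the branch-point type are established, the Puiseux/Newton-polygon expansion of $u$ around $\rho_N$ is routine, and substituting into $\network(v) = \tfrac12 u(v)(1-2u(v))$ and collecting terms gives the claimed expansion; I expect the numerical coefficients to drop out exactly as stated. The one subtlety to be careful about is the sign of the $(1-\rho_N^{-1}v)^{1/2}$ branch: one picks the branch of $u(v)$ that agrees with the power series at $0$, which forces $a_1 < 0$, and this is what makes the eventual coefficients of $N(v)$ have the correct sign pattern and makes the $3/2$-term in $N(v)$ come out positive.
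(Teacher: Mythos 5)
Your proposal is correct, and it takes a mildly but genuinely different route from the paper. The paper's proof simply eliminates $u$ from \eqref{eq:network} after the substitution of \Cref{def:univariate}, obtains a single degree-six polynomial equation satisfied by $\network(v)$, and invokes the standard treatment of implicitly defined algebraic functions \cite[VII.7.1]{Flajolet2009-analytic-combinatorics} to locate $\rho_N$ and extract the expansion. You instead keep the rational parametrization: since $y^2-2z=0$ under the weighted substitution, $\network(v)=\tfrac12 u(1-2u)$ and $v=\phi(u):=u(1-u)^3/\bigl(1+\tfrac u2-u^2\bigr)^3$, and you analyse the branch point of the inverse of $\phi$. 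Carried out, this is arguably the cleaner computation: the numerator of $\phi'(u)$ factors as $(1-u)^2(1-4u)\bigl(1-u+\tfrac{u^2}{2}\bigr)$, so the relevant critical point is $u_0=\tfrac14$ (not $\tfrac13$), and $\phi(1/4)=\frac{(1/4)(3/4)^3}{(17/16)^3}=2(6/17)^3=\rho_N$. Your hesitation about why no half-integer power appears resolves itself in the nicest way: your first guess was the right mechanism, since $\network=\tfrac u2-u^2=\tfrac1{16}-\bigl(u-\tfrac14\bigr)^2$ exactly, and $\phi'(1/4)=0$ is precisely the equation defining $u_0$, not an extra condition to verify; because $u-u_0\sim -c\,(1-\rho_N^{-1}v)^{1/2}$, the function $\network(v)-\tfrac1{16}=-(u-u_0)^2$ has only integer powers plus a $3/2$ term. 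Pushing the Puiseux expansion of $u(v)$ one order further gives the linear coefficient $2\rho_N/\lvert\phi''(u_0)\rvert=51/400$ and the $3/2$-coefficient $\tfrac{17^{5/2}}{2\cdot 3^{1/2}\cdot 5^5}$, so all stated constants come out of your scheme. For dominance and $\Delta$-analyticity your sketch is at the same level of detail as the paper's one-line appeal to standard methods; if you want it self-contained, rewrite the relation as $u=vG(u)$ with $G(u)=\bigl(\tfrac{1+u/2-u^2}{1-u}\bigr)^3=\bigl(u+\tfrac12+\tfrac{1}{2(1-u)}\bigr)^3$, which has nonnegative coefficients, $G(0)=1$, radius of convergence $1$, and is aperiodic; the characteristic equation $G(\tau)=\tau G'(\tau)$ has the solution $\tau=u_0=1/4$, so the singular inversion theorem yields uniqueness of the dominant singularity, the square-root type of $u(v)$, and $\Delta$-analyticity directly, after which $\network(v)=\tfrac12 u(v)(1-2u(v))$ inherits both.
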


\begin{proof}
By eliminating $u$ from \eqref{eq:network} and substituting $v$ as in \Cref{def:univariate}, we obtain the following
implicit equation for $\network(v)$.
\begin{align*}
0=&16 N^6 v^2 + N^5v (32 + 96 v) + N^4\cdot (16 + 56v + 240 v^2)\\
&+N^3\cdot (24 - 25v + 320 v^2)+N^2\cdot (12 - 91v + 240v^2) \\
&+ N\cdot (2 - 43v + 96 v^2) - v (1 - 16 v).
\end{align*}
Using standard methods (see for example \cite[VII.7.1]{Flajolet2009-analytic-combinatorics}) to deal with implicitly given functions we determine the dominant singularity to be at 
$\rho_N=2(6/17)^3$ and derive the claimed expression for $N$.
\end{proof}

A second step is to analyse the dominant singularity of $Q(x,y,z,w)$. The numerical values of the results are 
different for $Q(v)$, $Q^u(v)$, and $Q^s(v)$, but analysis works in exactly the same way.
\begin{lem}\label{planarconn}
The dominant singularity of $Q(v)$ is $\rho_Q=\frac{54}{79^{3/2}}$, 
 $Q(v)$ is $\Delta$-analytic and 
\[Q(v)=q_0-q_1(1-\rho_Q^{-1}v)+q_2(1-\rho_Q^{-1}v)^{3/2}+O\br{(1-\rho_Q^{-1}v)^2},\]
where $q_0=1-\frac{17}{2\sqrt{79}}$, $q_1=\frac{189}{298\sqrt{79}}$, and 
$q_2=\frac{79\cdot 2^{3/2} \cdot 3^{5/2}}{199^{5/2}}$.

\end{lem}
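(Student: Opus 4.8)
The plan is to carry out, with the corrected initial conditions, the singularity analysis that underlies \cite{Kang2012} and \cite{nrr}: after the substitution of \Cref{def:univariate} the system \eqref{systemq} becomes a polynomial system in $Q$ and a few auxiliary series, which I will reduce to a single algebraic equation and then treat by the standard machinery for algebraic generating functions \cite[Section~VII]{Flajolet2009-analytic-combinatorics}. Concretely, substituting $x=v^{1/4}$, $y=v^{1/6}$, $z=v^{1/3}/2$, $w=v^{1/6}/2$ and using $x^{2}y^{3}=v$, $x^{2}y^{2}w=v/2$ and $x^{2}yz=v/2$, the system \eqref{systemq} turns into
\begin{align*}
  Q &= \tfrac{v}{2}A + \tfrac{Q^{2}}{2} + \tfrac{v}{2}, &
  S &= \tfrac{A^{2}}{A+1},\\
  A &= Q + S + P + H, &
  P &= \tfrac{v}{2}(1+A)^{2},\\
  2H(1+A) &= u(1-2u) - u(1-u)^{3}, &
  vA^{3} &= u(1-u)^{3},
\end{align*}
where I used $\tfrac{v}{2}A^{2}+vA+\tfrac{v}{2}=\tfrac{v}{2}(1+A)^{2}$. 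The first equation is a quadratic in $Q$; selecting the branch with $Q(0)=0$ gives $Q=1-W$ with $W:=\sqrt{1-v(1+A)}$, so that $v(1+A)=1-W^{2}$ and $P=\tfrac12(1+A)(1-W^{2})$.

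Next I would eliminate the auxiliary unknowns. Substituting the expressions for $Q$, $S$, $P$ and $H$ into $A=Q+S+P+H$ gives an equation that is quadratic in $W$, of the shape $\tfrac{1+A}{2}W^{2}+W=R(A,u)$ with $R$ rational in $A,u$. Using the last equation in the form $v=\tfrac{u(1-u)^{3}}{A^{3}}$, hence $W^{2}=1-\tfrac{(1+A)u(1-u)^{3}}{A^{3}}$, I can replace the $W^{2}$ term and solve for the remaining linear occurrence of $W$, obtaining $W=\widetilde W(A,u)$ rational in $A,u$. Imposing back $\widetilde W(A,u)^{2}=1-\tfrac{(1+A)u(1-u)^{3}}{A^{3}}$ and clearing denominators yields a single polynomial equation $E(A,u)=0$; together with the rational parametrisations $v=\tfrac{u(1-u)^{3}}{A^{3}}$ and $Q=1-\widetilde W(A,u)$ this exhibits $Q$ as an algebraic function of $v$, and the relevant branch is the one through $A(0)=u(0)=Q(0)=0$, which is analytic at $v=0$.

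Then I would locate and analyse the dominant singularity. By Pringsheim's theorem $Q(v)$ has a dominant singularity $\rho_{Q}>0$, and since $Q$ is algebraic this must be a branch point of the curve $E(A,u)=0$, i.e.\ a point where the projection $(A,u)\mapsto v=\tfrac{u(1-u)^{3}}{A^{3}}$ restricted to $E=0$ has vanishing differential. Adjoining that condition (vanishing of the corresponding $2\times2$ Jacobian) to $E(A,u)=0$, solving the resulting polynomial system, and checking that the solution reached by continuation from $v=0$ is the one of smallest modulus, gives $\rho_{Q}=\tfrac{54}{79^{3/2}}$ together with the critical values $A_{0}=A(\rho_{Q})$, $u_{0}=u(\rho_{Q})$ and $q_{0}=Q(\rho_{Q})=1-\widetilde W(A_{0},u_{0})=1-\tfrac{17}{2\sqrt{79}}$. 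At this point the singularity is of square-root type, so $Q$ has a local Puiseux expansion in powers of $\br{1-\rho_{Q}^{-1}v}^{1/2}$; as in all the analogous planar computations \cite{Kang2012,nrr,bodirsky2007-cubic-graphs} the coefficient of $\br{1-\rho_{Q}^{-1}v}^{1/2}$ vanishes, leaving
\begin{equation*}
  Q(v)=q_{0}-q_{1}\br{1-\rho_{Q}^{-1}v}+q_{2}\br{1-\rho_{Q}^{-1}v}^{3/2}+O\br{\br{1-\rho_{Q}^{-1}v}^{2}},
\end{equation*}
and expanding $E$ and the two parametrisations to third order around $(A_{0},u_{0})$ yields the stated values of $q_{1}$ and $q_{2}$. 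Finally, $\Delta$-analyticity of $Q(v)$ is automatic, since an algebraic function with a unique dominant singularity is holomorphically continuable to a $\Delta$-domain.

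The main obstacle is computational rather than conceptual: the elimination from six equations down to the single polynomial $E(A,u)=0$, the solution of the branch-point system, and the extraction of the exact radical expressions for $\rho_{Q},q_{0},q_{1},q_{2}$ are heavy symbolic manipulations, best handled with a computer algebra system, and at every stage one must pick the correct branch (the sign in $Q=1-W$, the small root $u$, and the right solution of the Jacobian system). A point that should be verified rather than merely asserted is that the coefficient of the $\br{1-\rho_{Q}^{-1}v}^{1/2}$ term genuinely vanishes, so that the singularity is of type $3/2$; this is the planarity phenomenon inherited from the map enumeration, and can be confirmed directly once $E(A,u)=0$ is available.
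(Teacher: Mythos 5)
Your proposal is correct and follows essentially the same route as the paper: after the substitution of \Cref{def:univariate}, the paper likewise eliminates the auxiliary series (including the parameter $u$) to obtain a single polynomial equation, there in $A$ and $v$ alone, and then applies the standard machinery for implicitly defined algebraic functions \cite[VII.7.1]{Flajolet2009-analytic-combinatorics} to locate $\rho_Q=\frac{54}{79^{3/2}}$ and to read off the Puiseux expansion of $A(v)$, from which the stated expansion of $Q(v)$ follows by back-substitution into the system. Your choice to retain $u$ and work with the branch-point (Jacobian) system rather than eliminating it is only a cosmetic difference, and the issue you rightly flag---checking that the coefficient of $\br{1-\rho_Q^{-1}v}^{1/2}$ vanishes, so that the singularity is of type $3/2$---is precisely what the explicit elimination and expansion confirm.
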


\begin{proof}
The simplest way of dealing with \eqref{systemq} after substituting $v$ as in \Cref{def:univariate} is to eliminate $u$, 
$\vertex{D}$, $\vertex{Q}$, $\vertex{S}$, $\vertex{P}$ and $\vertex{H}$ from these equations to get the following implicit 
equation for $\vertex{A}$.
\begin{align*}
0=&4 - 52 A^2 + 240 A^4 - 448 A^6 + 256 A^8 + 
A^3 (336 + 1848 A^2 - 2400 A^4) v\\& + A^6 (3017 + 1024 A^2) v^2+ 
4096 A^9v^3.
\end{align*}

Using standard methods for implicitly given functions (see for example \cite[VII.7.1]{Flajolet2009-analytic-combinatorics}) 
we get the dominant singularity to be at $\rho_Q=\frac{54}{79^{3/2}}$ and we obtain an expression for $A(v)$ in
terms of powers of $(1-\rho_Q^{-1}v)$. 
By substituting this expression into the system of equations, we get the claimed expression for $\vertex{Q}$. 
\end{proof}

Using \Cref{planarnetwork} we can analyse the function $\two_g(v)$. Since the substitution in \Cref{conngf} satisfies 
$\ol{z}=\frac12\ol{y}^2$, we need the function $B_g(v)$ also in the cases of unweighted multigraphs and simple graphs. Accordingly, the networks used in the substitution in \Cref{2conngf} have to satisfy $z=\frac{1}{2}y^2$.

\begin{lem}\label{thm:2conn}
The dominant singularity of $\vertex{\two_g}^{\fw\ge2}(v)$ 
is $\rho_{\two}=\rho_{\network}=\frac{2^43^3}{17^3}$.
The generating function $\vertex{\two}_0^{\fw\ge2}(v)$ is $\Delta$-analytic and satisfies 
\begin{align*}
\vertex{\two}_0^{\fw\ge2}(v)=& a_0+a_1\left( 1 - \rho_{\two}^{-1} v \right)+a_2\left( 1 - \rho_{\two}^{-1} v \right)^2 \\
	&+c_0\left( 1 - \rho_{\two}^{-1} v\right)^{5/2}+O\br{\br{ 1 - \rho_{\two}^{-1} v }^3},
\end{align*}
where $a_0$, $a_1$, $a_2$ are constants. Furthermore,
\begin{equation*}
\vertex{\two}_1^{\fw\ge2}(v)\cong c_1\log\left( 1 - \rho_{\two}^{-1}v\right)+O\br{\br{ 1 - \rho_{\two}^{-1} v}^{1/4}}
\end{equation*}
and
\begin{equation*}
\vertex{\two}_g^{\fw\ge2}(v)\cong c_g\left( 1 - \rho_{\two}^{-1} v \right)^{-5g/2+5/2}+
	O\br{\br{ 1 - \rho_{\two}^{-1} v}^{-5g/2+11/4}}, \quad g\ge 2,
\end{equation*}
where the $c_g$ are constants.
\end{lem}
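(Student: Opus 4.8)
The plan is to prove the statement by induction on $g$, writing $\two_g^{\fw\ge2}(v)=\two_g^{\fw\ge3}(v)+\two_g^{\fw=2}(v)$ and showing that the first summand carries the claimed dominant term while the second is subdominant and absorbed into the error. We work throughout with the univariate substitution of \Cref{def:univariate} and with $z=\tfrac12y^2$ — this coupling is forced because the substitution in \Cref{conngf} satisfies $\oz=\tfrac12\oy^2$; the unweighted and simple cases are obtained verbatim after replacing $z=\tfrac12y^2$ by $z=y^2$, resp.\ $z=0$ with single-edge surgeries, which only alters some constant prefactors. For the base case $g=0$ we have $\two_0^{\fw\ge2}=\two_0$, since planar embeddings have infinite facewidth, and the standard un-rooting identity relating the edge-rooted class $\class\network$ to $\class{\two}_0$ (analogous to the relation between $\three_g(v)$ and $\triangm_g(y)$ used in \Cref{coro:3-conn-graphs}) exhibits $\two_0(v)$ as a primitive of a constant multiple of $v^{-1}\network(v)$, up to lower-order terms; since $\network(v)$ is $\Delta$-analytic, so is $\two_0(v)$, and \Cref{integrating} applied to the expansion of \Cref{planarnetwork} yields the stated form — note that the $(1-\rho_{\network}^{-1}v)^{3/2}$ singular term of $\network$ becomes a $(1-\rho_{\network}^{-1}v)^{5/2}$ term after integration, which is why no half-integer power below $5/2$ survives.

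For $g\ge1$ I would first note that, under the univariate substitution, replacing each edge of a $3$-connected cubic graph on $2n$ vertices (which has $3n$ edges) by $y(1+\network(x,y,z))$ turns $v^{n/2}$ into $v^{n/2}(1+\network(v))^{3n/2}=\hat v^{n/2}$, where $\hat v:=v(1+\network(v))^3$; thus $\three_g^{\fw\ge3}(x,y(1+\network(x,y,z)))$ becomes $\three_g^{\fw\ge3}(v)$ evaluated at $\hat v$. Hence \Cref{2conngf} gives
\[
  \three_g^{\fw\ge3}(\hat v)-\three_0(\hat v)\ \preceq\ \two_g^{\fw\ge3}(v)\ \preceq\ \three_g^{\fw\ge3}(\hat v).
\]
Since $\network(\rho_{\network})=\tfrac1{16}$ and $\rho_{\network}=2(6/17)^3$ by \Cref{planarnetwork}, one checks $\rho_{\network}\bigl(1+\tfrac1{16}\bigr)^3=2(6/16)^3=\tfrac{27}{256}=\rho_{\three}$, so $\hat v$ increases continuously from $0$ to $\rho_{\three}$ as $v$ runs over $[0,\rho_{\network})$; consequently both bounds above blow up exactly at $v=\rho_{\network}$ and $\rho_{\two}=\rho_{\network}=\tfrac{2^43^3}{17^3}$. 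Plugging the expansion of $\network$ into $\hat v$ yields $1-\rho_{\three}^{-1}\hat v=\tfrac{34}{25}(1-\rho_{\network}^{-1}v)\bigl(1+O((1-\rho_{\network}^{-1}v)^{1/2})\bigr)$. Moreover $\three_g^{\fw\ge3}(v)$ obeys the same congruences as $\three_g(v)$ in \Cref{coro:3-conn-graphs}, being squeezed between $\three_g^{\fw\ge2g+3}(v)$ and $\three_g(v)$, which share that dominant term by the proof of that corollary. Composing these congruences with $\hat v$ — and noting that $\three_0(\hat v)$ is bounded at $v=\rho_{\network}$, hence subdominant, so the two bounds have the same dominant term — gives
\[
  \two_g^{\fw\ge3}(v)\ \cong\
  \begin{cases}
    c_1\log(1-\rho_{\two}^{-1}v)+O\!\bigl((1-\rho_{\two}^{-1}v)^{1/4}\bigr), & g=1,\\
    c_g(1-\rho_{\two}^{-1}v)^{-5g/2+5/2}+O\!\bigl((1-\rho_{\two}^{-1}v)^{-5g/2+11/4}\bigr), & g\ge2,
  \end{cases}
\]
the additive constant $c_1\log\tfrac{34}{25}$ and the intermediate half-integer powers produced by $\hat v$ being absorbed into the polynomial part and the error term, respectively.

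It remains to bound $\two_g^{\fw=2}(v)$ using the first relation of \Cref{lem:small-fw}. After the univariate substitution with $z=\tfrac12y^2$, the prefactor $\tfrac12(y+z/y)^2(1/y+y/z)^2$ collapses to the constant $\tfrac{81}{8}$, and the identity $3\opa{v}F(v)=\left.(\delta_y+2\delta_z)F\right|_{\mathrm{sub}}$ valid for loopless cubic multigraph generating functions (the analogue of \Cref{substitution}), together with $\opb{y}{z}\preceq\delta_y+2\delta_z$, shows that every occurrence of $\opb{y}{z}$ on the right-hand side is dominated coefficient-wise by $3\opa{v}$ after substitution. This gives
\[
  \two_g^{\fw=2}(v)\ \preceq\ C\Bigl(\opa{v}^2\bigl(\two_{g-1}^{\fw\ge2}(v)\bigr)+\sum_{g'=1}^{g-1}\opa{v}\bigl(\two_{g'}^{\fw\ge2}(v)\bigr)\,\opa{v}\bigl(\two_{g-g'}^{\fw\ge2}(v)\bigr)\Bigr)
\]
for an absolute constant $C$. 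By the induction hypothesis, together with \Cref{congruent:meta} for derivatives and products, the right-hand side is congruent to $O\!\bigl((1-\rho_{\two}^{-1}v)^{-5g/2+3}\bigr)$ (up to a polynomial), which for every $g\ge1$ is subdominant to the dominant term of $\two_g^{\fw\ge3}(v)$ and lies inside the claimed error term. Adding $\two_g^{\fw\ge3}(v)$ and $\two_g^{\fw=2}(v)$ closes the induction.

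The main obstacle is the bookkeeping of the first step: verifying that, once the univariate change of variables and the coupling $z=\tfrac12y^2$ are in place, the network substitution is exactly $v\mapsto v(1+\network(v))^3$; locating $\rho_{\two}$ via $\network(\rho_{\network})=\tfrac1{16}$; and carrying the singular expansions of \Cref{coro:3-conn-graphs} — available only as congruences truncated at the stated $O$-terms — through this composition while keeping the error exponents ($-5g/2+11/4$, resp.\ $1/4$) sharp enough to survive the recursion of \Cref{lem:small-fw}. Everything else is a routine application of the transfer and congruence machinery of \Cref{section:singularity}.
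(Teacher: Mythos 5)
Your proof is correct and follows essentially the same route as the paper: for $g=0$ integrate the network generating function of \Cref{planarnetwork}, for $g\ge1$ sandwich $\two_g^{\fw\ge3}(v)$ via \Cref{2conngf} through the critical composition $v\mapsto v(1+\network(v))^3$ with $\rho_{\network}(1+\network(\rho_{\network}))^3=\rho_{\triangs}^3$, and absorb the facewidth-two contribution by induction using \Cref{lem:small-fw} and \Cref{substitution}, exactly as in the paper. The only (immaterial) divergence is your side remark that the unweighted and simple cases use $z=y^2$ resp.\ $z=0$ — the paper instead keeps $z=\tfrac12 y^2$ for all three cases, since the substitution at the connected level forces $\oz=\tfrac12\oy^2$ — but this does not affect the statement actually being proved.
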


\begin{proof}
Suppose first that $g=0$. For this case $N(v)$ is the edge-rooted version of 
$\two_0(v)$. Therefore 
\[\two_0(v) =  \int \frac{1}{y}N(v)\dd y=\frac{1}{6}\int\frac{1}{v}N(v)\dd v.\]
The result follows by \Cref{planarnetwork,integrating}.

Let $g\ge 1$.
By the change of variables as in \Cref{def:univariate} we have 
\[\vertex{\three_g}^{\fw\ge3}(x,y+y\vertex{\network}(x,y,z))=\three_g^{\fw\ge3}\br{v(1+N(v))^3}.\] 
The dominant singularity of $\three_g^{\fw\ge3}\br{v(1+N(v))^3}$ is given either by the singularity $\rho_{\network}$ 
of $\vertex{\network}(v)$ or by a solution of $v(1+\vertex{\network}(v))^3=\rho_{\triangs}^3$, where 
$\rho_{\triangs}^3$ is the dominant singularity of $\three_g(v)$. It can be verified that 
$\rho_{\network}(1+\vertex{\network}(\rho_{\network}))^3=\rho_{\triangs}^3=\rho_\three$ by \Cref{simplerefined} 
and \Cref{planarnetwork}. This is the only 
solution of this equation, as $v(1+\vertex{\network}(v))$ is monotone on the interval $[0,\rho_\network)$. So 
$\rho_{\network}$ is the dominant singularity and the 
composition is critical (in the sense of \cite[pp. 411ff]{Flajolet2009-analytic-combinatorics}). 
By \Cref{coro:3-conn-graphs} and \Cref{planarnetwork}, we have
\begin{align*}
\vertex{\three_1}^{\fw\ge3}(v(1+\network(v))^3)\cong c_1\log(1-\rho_{\network}^{-1} v)+O\br{(1-\rho_{\network}^{-1} v)^{1/4}}
\end{align*}
and, for $g\ge 2$,
\begin{align*}
\vertex{\three_g}^{\fw\ge3}(v(1+\network(v))^3)\cong c_g(1-\rho_{\network}^{-1} v)^{-5g/2+5/2}+O\br{(1-\rho_{\network}^{-1} v)^{-5g/2+11/4}}.
\end{align*}
Similarly, we have
\begin{equation*}
  \vertex{\three_0}(v(1+\network(v))^3) \cong c_0\br{1-\rho_{\network}^{-1} v}^{5/2} + O\br{\br{1-\rho_{\network}^{-1} v}^3}.
\end{equation*}
Now \Cref{2conngf} yields that
\begin{align*}
\two_1^{\fw\ge3}(v)\cong c_1\log(1-\rho_{\network}^{-1} v)+O\br{(1-\rho_{\network}^{-1} v)^{1/4}}
\end{align*}
and
\begin{align*}
\two_g^{\fw\ge3}(v)\cong c_g(1-\rho_{\network}^{-1} v)^{-5g/2+5/2}+O\br{(1-\rho_{\network}^{-1} v)^{-5g/2+11/4}}, \quad g\ge 2.
\end{align*}

It remains to show that
\begin{equation}\label{eq:fw2small}
  \two_g^{\fw=2}(v) \cong O\br{\br{1-\rho_{\network}^{-1} v}^{-5g/2+11/4}}.
\end{equation}
By \Cref{lem:small-fw} and the fact that 
\[\opa{y}\br{F\br{x,y,\frac{y^2}{2}}}\succeq\left.\opa{z}(F(x,y,z))\right|_{z=\frac{y^2}{2}}\]
for every generating function $F(x,y,z)$, we have the relation 
\begin{align}
\begin{split}\label{eq:fw2foruse}
	\vertex{\two_g}^{\fw=2}\br{x,y,\frac{y^2}{2}}\frac12&\br{y+\frac{\frac{y^2}{2}}{y}}^2\br{\frac1y+\frac {y}{\frac{y^2}{2}}}^2\preceq 
		\opa{y}^2\br{\vertex{\two_{g-1}^{\fw\ge2}}\br{x,y,\frac{y^2}{2}}}\\&+\sum_{g'=1}^{g-1}\opa{y}
	\br{\vertex{\two_{g'}^{\fw\ge2}}\br{x,y,\frac{y^2}{2}}}\opa{y}\br{\vertex{\two_{g-g'}^{\fw\ge2}}\br{x,y,\frac{y^2}{2}}}.
\end{split}
\end{align}
%
By \Cref{substitution}, \eqref{eq:fw2foruse} implies that
\begin{align}\label{eq:twoupper}
 \frac{81}{8}\two_g^{\fw=2}(v)\preceq 9\opa{v}^2(\two_{g-1}^{\fw\ge2}(v))+9\sum_{g'=1}^{g-1} \opa{v}(\two_{g'}^{\fw\ge2}(v))\opa{v}(\two_{g-g'}^{\fw\ge2}(v)).
\end{align}
By \Cref{integrating} and the fact that all generating functions on the right-hand side of \eqref{eq:twoupper} are for genus
smaller than $g$, we deduce by induction that 
\begin{align*}
 \opa{v}^2(\two_{g-1}^{\fw\ge2}(v))&\cong O\br{\br{ 1 - \rho_{\two}^{-1} v}^{-5g/2+3}}, &&\\
 \opa{v}(\two_{g'}^{\fw\ge2}(v))&\cong O\br{\br{ 1 - \rho_{\two}^{-1} v}^{-5g'/2+3/2}}, &\text{and}&\\
 \opa{v}(\two_{g-g'}^{\fw\ge2}(v))&\cong O\br{\br{ 1 - \rho_{\two}^{-1} v}^{-5(g-g')/2+3/2}}. &&
\end{align*}
Substituting this into \eqref{eq:twoupper} results in 
\begin{equation*}
\two_g^{\fw=2}(v)\preceq O\br{\br{ 1 - \rho_{\two}^{-1} v}^{-5g/2+3}},
\end{equation*}
which immediately implies \eqref{eq:fw2small} and thus concludes the proof.
\end{proof}

As an immediate corollary of \Cref{thm:2conn} we determine the coefficients of $\vertex{\two}_g^{\fw\ge2}(v)$.
\begin{coro}\label{coro:gf-2conn}
The number of $2$-connected cubic vertex-labelled weighted multigraphs that are strongly embeddable on $\Sg$ with facewidth 
at least two is
\begin{align*}
[v^n]\vertex{\two_g}^{\fw\ge2}(v)&=\br{1+O\br{n^{-1/4}}} c_gn^{5g/2-7/2}\rho_{\two}^{-n}.
\end{align*}
Here $c_g$ is a constant depending only on $g$ and $\rho_{\two}=\rho_{\network}=\frac{2^43^3}{17^3}$.
\end{coro}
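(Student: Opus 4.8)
The plan is to read off the coefficient asymptotics directly from the singular expansions of $\two_g^{\fw\ge2}(v)$ supplied by \Cref{thm:2conn}, feeding them into the transfer theorem \Cref{tt} (and using \Cref{congruent:tt} in the cases where we only have a congruence rather than genuine $\Delta$-analyticity). Since $\rho_{\two}=\frac{2^43^3}{17^3}$ is the dominant singularity of $\two_g^{\fw\ge2}(v)$ for every $g\ge0$, the exponential factor $\rho_{\two}^{-n}$ is immediate; all that remains is to check that the three shapes of the dominant term all produce the subexponential factor $n^{5g/2-7/2}$.

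For $g=0$, \Cref{thm:2conn} tells us that $\two_0^{\fw\ge2}(v)$ is $\Delta$-analytic with dominant term $c_0\br{1-\rho_{\two}^{-1}v}^{5/2}$ and error $O\br{\br{1-\rho_{\two}^{-1}v}^{3}}$, the remaining terms forming the polynomial part. Thus \Cref{tt}\ref{tt:polynomial} applies directly with $\alpha=-5/2$ (note $-5/2\notin\Z_{\le0}$, and $\br{1-\rho_{\two}^{-1}v}^{3}$ is certainly $O\br{\br{1-\rho_{\two}^{-1}v}^{1/4-\alpha}}=O\br{\br{1-\rho_{\two}^{-1}v}^{11/4}}$), giving $[v^n]\two_0^{\fw\ge2}(v)=\br{1+O\br{n^{-1/4}}}\frac{c_0}{\Gamma(-5/2)}n^{-7/2}\rho_{\two}^{-n}$, and $-7/2=5g/2-7/2$ at $g=0$.

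For $g\ge1$ the expansions of \Cref{thm:2conn} are stated as congruences, so I would invoke \Cref{congruent:tt}. When $g=1$, the dominant term is $c_1\log\br{1-\rho_{\two}^{-1}v}$ with error $O\br{\br{1-\rho_{\two}^{-1}v}^{1/4}}$, so \Cref{tt}\ref{tt:log} yields $[v^n]\two_1^{\fw\ge2}(v)=\br{1+O\br{n^{-1/4}}}(-c_1)n^{-1}\rho_{\two}^{-n}$, and $-1=5g/2-7/2$ at $g=1$. When $g\ge2$, the dominant term is $c_g\br{1-\rho_{\two}^{-1}v}^{-5g/2+5/2}$ with error $O\br{\br{1-\rho_{\two}^{-1}v}^{-5g/2+11/4}}$; this is precisely the form required by \Cref{tt}\ref{tt:polynomial} with $\alpha=5(g-1)/2$, since then $1/4-\alpha=-5g/2+11/4$, and it gives $[v^n]\two_g^{\fw\ge2}(v)=\br{1+O\br{n^{-1/4}}}\frac{c_g}{\Gamma(5(g-1)/2)}n^{5(g-1)/2-1}\rho_{\two}^{-n}$ with $5(g-1)/2-1=5g/2-7/2$. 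In each case, renaming the $g$-dependent prefactor ($\frac{c_0}{\Gamma(-5/2)}$, $-c_1$, or $\frac{c_g}{\Gamma(5(g-1)/2)}$) as $c_g$ produces the stated formula.

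There is essentially no obstacle beyond bookkeeping: the only points that need care are invoking \Cref{congruent:tt} rather than \Cref{tt} directly when only a congruence is available, checking that the stated error exponents fall below the $1/4-\alpha$ threshold of \Cref{tt}, and confirming that the three exponents $-7/2$, $-1$, and $5(g-1)/2-1$ all equal $5g/2-7/2$. The multiplicity of dominant singularities caused by the parity of the vertex count is absorbed by the convention of \Cref{section:singularity} of reporting results only for the singularity $\rho_F$ on the positive real axis.
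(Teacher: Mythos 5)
Your proposal is correct and is exactly the paper's argument: the corollary is obtained by feeding the singular expansions of \Cref{thm:2conn} into the transfer theorem (\Cref{tt}, via \Cref{congruent:tt} for $g\ge1$), with the three cases $g=0$, $g=1$, $g\ge2$ all yielding the exponent $5g/2-7/2$. Your bookkeeping of the error exponents against the $1/4-\alpha$ threshold and the renaming of the prefactors into $c_g$ matches what the paper leaves implicit.
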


We can use \Cref{thm:2conn} to determine the dominant term of connected cubic multigraphs embeddable on 
$\Sg$.

\begin{thm}\label{thm:connected}
The dominant singularity of the generating function $\vertex{\one_g}(v)$ of connected cubic 
vertex-labelled weighted multigraphs that are strongly embeddable on $\Sg$ 
is $\rho_{\one}=\rho_{Q}=\frac{54}{79^{3/2}}$.
The generating function $\one_0(v)$ is $\Delta$-analytic and
\begin{align*}
\vertex{\one}_0(v)= & a_0+a_1\left( 1 - \rho_{\one}^{-1} v \right)+a_2\left( 1 - \rho_{\one}^{-1}v \right)^2 \\
	&+c_0\left( 1 - \rho_{\one}^{-1} v\right)^{5/2}+O\br{\left( 1 - \rho_{\one}^{-1} v \right)^3}.
\end{align*}
where $a_0$, $a_1$, $a_2$ are constants. Furthermore,
\begin{equation*}
\vertex{\one}_1(v)\cong c_1\log\left( 1 - \rho_{\one}^{-1} v\right)+O\br{\left( 1 - \rho_{\one}^{-1}v \right)^{1/4}}
\end{equation*}
and
\begin{equation*}
\vertex{\one}_g(v)\cong c_g\left( 1 - \rho_{\one}^{-1} v \right)^{-5g/2+5/2}+
	O\br{\left( 1 - \rho_{\one}^{-1} v \right)^{-5g/2+11/4}}, \quad g\ge 2,
\end{equation*}
where the $c_g$ are constants.
\end{thm}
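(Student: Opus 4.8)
The plan is to prove \Cref{thm:connected} by induction on $g$. For $g=0$ I would recover $\one_0(v)$ from the system \eqref{systemq} by integration, and for $g\ge1$ I would split $\one_g(v)=\one_g^{\fw\ge2}(v)+\one_g^{\fw=1}(v)$ (recall from \Cref{strongemb} that a strongly embeddable connected multigraph has a $2$-cell embedding and hence facewidth at least one on $\Sg$), analyse the first summand via \Cref{conngf} together with \Cref{thm:2conn}, and show via \Cref{lem:small-fw} that the second is subdominant. Throughout, the dominant singularity will be $\rho_{\one}=\rho_{Q}=\frac{54}{79^{3/2}}$; the statements for $\one_g^u(v)$ and $\one_g^s(v)$ follow in the same way from the variants of \Cref{planarconn} for $Q^u$ and $Q^s$.

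\emph{The case $g=0$.} Marking an edge of a cubic multigraph corresponds, after the substitution of \Cref{def:univariate}, to applying $3\opa{v}$; hence the univariate generating function counting edge‑rooted connected cubic planar multigraphs equals $3\opa{v}\br{\one_0(v)}$, and $\one_0(v)$ is recovered from it by integration. By (the proof of) \Cref{planarconn} this rooted generating function is $\Delta$‑analytic with dominant singularity $\rho_Q$ and a dominant singular term of order $3/2$; then \Cref{integrating}\ref{integrating:int} gives that $\one_0(v)$ is $\Delta$‑analytic with a dominant singular term of order $5/2$, and a routine computation with \eqref{systemq} produces the constants $a_0,a_1,a_2$ and the coefficient $c_0$ of $\br{1-\rho_{\one}^{-1}v}^{5/2}$.

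\emph{The case $g\ge1$, facewidth at least two.} Since $2$‑connected cubic multigraphs carry neither loops nor triple edges, and since the substitution in \Cref{conngf} satisfies $\oz=\frac12\oy^2$, the change of variables of \Cref{def:univariate} transforms $\two_g^{\fw\ge2}(x,\oy,\oz)$ into $\two_g^{\fw\ge2}(\hat v)$ with $\hat v=\frac{v}{(1-Q(v))^3}$, so \Cref{conngf} reads
\begin{equation*}
  \two_g^{\fw\ge2}(\hat v)-\two_0(\hat v)\ \preceq\ \one_g^{\fw\ge2}(v)\ \preceq\ \two_g^{\fw\ge2}(\hat v).
\end{equation*}
The function $v\mapsto\hat v$ is strictly increasing on $[0,\rho_Q)$, and by \Cref{planarconn} one has the arithmetic identity $\hat v(\rho_Q)=\rho_Q\br{1-q_0}^{-3}=\frac{54\cdot 2^3}{17^3}=\frac{2^43^3}{17^3}=\rho_{\two}$, so the composition is critical and its dominant singularity is $\rho_Q$. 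Expanding $\hat v$ near $\rho_Q$ with the help of \Cref{planarconn} gives $1-\rho_{\two}^{-1}\hat v=C_0\br{1-\rho_Q^{-1}v}\br{1+O\br{\br{1-\rho_Q^{-1}v}^{1/2}}}$ for a constant $C_0>0$; inserting this into the expansion of $\two_g^{\fw\ge2}$ from \Cref{thm:2conn} and observing that $\two_0(\hat v)\cong O\br{\br{1-\rho_Q^{-1}v}^{5/2}}$ is negligible (so that the sandwich above pins down the dominant term), we obtain $\one_1^{\fw\ge2}(v)\cong c_1\log\br{1-\rho_{\one}^{-1}v}+O\br{\br{1-\rho_{\one}^{-1}v}^{1/4}}$ and, for $g\ge2$, $\one_g^{\fw\ge2}(v)\cong c_g\br{1-\rho_{\one}^{-1}v}^{-5g/2+5/2}+O\br{\br{1-\rho_{\one}^{-1}v}^{-5g/2+11/4}}$ with suitable constants $c_g$, together with the $\Delta$‑analytic expansion of order $5/2$ in the case $g=0$, consistently with the previous paragraph.

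\emph{The case $g\ge1$, facewidth one, and conclusion.} For any cubic‑multigraph generating function $F$ the handshake identity yields $3\opa{x}F=2\opa{y}F+4\opa{z}F+2\opa{w}F$, which as in \Cref{substitution} gives that $3\opa{v}\br{F(v)}$ equals $\br{\opa{y}+2\opa{z}+\opa{w}}F$ evaluated under the substitution of \Cref{def:univariate}; since each of these operators preserves nonnegativity of coefficients, after substitution $\opa{w}^2F\preceq 9\opa{v}^2\br{F(v)}$ and $\opa{w}F\preceq 3\opa{v}\br{F(v)}$. Applying the substitution to \eqref{eq:conn-graphs-fw1}, whose prefactor $(xyw)^2\br{\frac1y+\frac yz}$ becomes $\frac34 v$, we obtain
\begin{equation*}
  \one_g^{\fw=1}(v)\ \preceq\ \frac{12}{v}\br{\opa{v}^2\br{\one_{g-1}(v)}+\sum_{g'=1}^{g-1}\opa{v}\br{\one_{g'}(v)}\,\opa{v}\br{\one_{g-g'}(v)}}.
\end{equation*}
By the induction hypothesis, together with \Cref{integrating}\ref{integrating:diff} and \Cref{congruent:meta}, the right‑hand side is $\cong O\br{\br{1-\rho_{\one}^{-1}v}^{-5g/2+3}}$; since $3>\frac52$ and $-5g/2+3\ge-5g/2+\frac{11}4$, this is strictly subdominant to the main term obtained above. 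Adding it to the expansion of $\one_g^{\fw\ge2}(v)$ yields the claimed expansions of $\one_g(v)$; as both summands have dominant singularity $\rho_Q$, so does $\one_g(v)$, which gives $\rho_{\one}=\rho_Q=\frac{54}{79^{3/2}}$ and completes the induction. The main obstacle is the criticality of the composition $v\mapsto\hat v$ in \Cref{conngf}, i.e.\ the identity $\hat v(\rho_Q)=\rho_{\two}$: once it is in place, \Cref{thm:2conn} transports the entire singular expansion (of exponent $-5g/2+5/2$) from $\two_g^{\fw\ge2}$ to $\one_g^{\fw\ge2}$, and the facewidth‑one contribution is a routine bootstrap; the other slightly delicate point is the explicit computation of the expansion of $\one_0(v)$ from the algebraic system \eqref{systemq}.
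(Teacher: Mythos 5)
Your proposal is correct and follows essentially the same route as the paper: the $g=0$ case via unrooting/integrating the planar system behind \Cref{planarconn}, the $\fw\ge2$ case via the sandwich of \Cref{conngf} together with \Cref{thm:2conn} and the criticality $\rho_Q/(1-Q(\rho_Q))^3=\rho_{\two}$, and the $\fw=1$ case bounded by induction through \Cref{lem:small-fw}. Your only deviation is cosmetic: you derive the bound $\opa{w}^kF\preceq 3^k\opa{v}^k F(v)$ from the handshake identity $3\opa{x}F=2\opa{y}F+4\opa{z}F+2\opa{w}F$ instead of the paper's substitution $z=\tfrac{y^2}{2}$, $w=\tfrac{y}{2}$ combined with \Cref{substitution}, which yields the same estimate $\one_g^{\fw=1}(v)\cong O\br{\br{1-\rho_{\one}^{-1}v}^{-5g/2+3}}$.
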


\begin{proof}

For $g=0$, the class of planar connected cubic multigraphs is given by unrooting the sum of the classes 
used in \Cref{planarconn}, see \cite{bodirsky2007-cubic-graphs} or \cite{Kang2012} for more details.

Suppose $g\ge 1$.
By the substitution as in \Cref{def:univariate} we have 
\[\vertex{\two_g}^{\fw\ge2}\br{x,\frac{y}{1-Q},\frac12\br{\frac{y}{1-Q}}^2}=\vertex{\two_g}^{\fw\ge2}\br{\frac{v}{(1-Q(v))^3}}.\]
The dominant singularity with respect to $v$ is given either by the singularity 
$\rho_Q$ of $\vertex{Q}(v)$ as calculated in \Cref{planarconn} or by a solution of 
$v/(1-Q(v))^3=\rho_{\two}$, where $\rho_\two$ is the dominant singularity of $\two_g$ calculated in 
\Cref{thm:2conn}. It can easily be verified that $\rho_Q/(1-\vertex{Q}(\rho_Q))^3=\rho_{\two}$
by \Cref{planarconn} and \Cref{thm:2conn}. This is the only solution of this equation as $v/(1-\vertex{Q}(v))^3$ 
is monotone (as it is a geometric series of a generating function with nonnegative coefficients). Therefore 
$\rho_{\one}=\rho_Q$ is the dominant singularity and the composition is critical (in the sense of 
\cite[pp. 411ff]{Flajolet2009-analytic-combinatorics}). By \Cref{planarconn,thm:2conn}, we have
\begin{align*}
\vertex{\two_g}^{\fw\ge2}\br{\frac{v}{(1-Q)^3}} \cong c_g\br{1-\rho_{\one}^{-1}v}^{-5g/2+5/2}+O\br{\br{1-\rho_{\one}^{-1}v}^{-5g/2+11/4}}
\end{align*}
for $g\ge 2$ and
\begin{align*}
\vertex{\two_1}^{\fw\ge2}\br{\frac{v}{(1-Q)^3}} \cong c_1\log\br{1-\rho_{\one}^{-1}v}+O\br{\br{1-\rho_{\one}^{-1}v}^{1/4}}.
\end{align*}
Similarly, we have
\begin{equation*}
  \two_0\br{\frac{v}{(1-Q)^3}} \cong c_0\br{1-\rho_{\one}^{-1}v}^{5/2} + O\br{\br{1-\rho_{\one}^{-1}v}^3}.
\end{equation*}
Now \Cref{conngf} yields that
\begin{align*}
\one_1^{\fw\ge2}(v)\cong c_1\log(1-\rho_{\one}^{-1}v)+O\br{(1-\rho_{\one}^{-1}v)^{1/4}}
\end{align*}
and
\begin{align*}
\one_g^{\fw\ge2}(v)\cong c_g(1-\rho_{\one}^{-1}v)^{-5g/2+5/2}+O\br{(1-\rho_{\one}^{-1}v)^{-5g/2+11/4}}, \quad g\ge 2.
\end{align*}

It thus remains to show that
\begin{equation}\label{eq:fw1small}
  \vertex{\one_g}^{\fw=1}\cong O\br{\br{ 1 - \rho_{\one}^{-1} v}^{-5g/2+11/4}}.
\end{equation}
By \Cref{lem:small-fw} 
and the fact that $\opa{y}(F(x,y,\frac{y^2}{2},\frac{y}{2}))\succeq\left.\opa{w}(F(x,y,z,w))\right|_{z=\frac{y^2}{2},w=\frac{y}
{2}}$ for every generating function $F(x,y,z,w)$, we have the relation 
\begin{align}
\begin{split}\label{eq:fw1foruse}
\vertex{\one_g}^{\fw=1}\br{x,y,\frac{y^2}{2},\frac{y}{2}}&\br{xy\frac{y}{2}}^2\br{\frac1y+\frac{y}{\frac{y^2}{2}}}\preceq \opa{y}^2\br{\one_{g-1}\br{x,y,\frac{y^2}{2},\frac{y}{2}}}\\
&+\sum_{g'=1}^{g-1}\opa{y}\br{\one_{g'}\br{x,y,\frac{y^2}{2},\frac{y}{2}}}\opa{y}\br{\one_{g-g'}\br{x,y,\frac{y^2}{2},\frac{y}{2}}}.
\end{split}
\end{align}
By \Cref{substitution}, \eqref{eq:fw1foruse} implies that
\begin{align}\label{eq:oneupper}
 \frac{3}{4}\one_g^{\fw=1}(v)\preceq 9\opa{v}^2(\one_{g-1}(v))+9\sum_{g'=1}^{g-1} \opa{v}(\one_{g'}(v))\opa{v}(\one_{g-g'}(v)).
\end{align}
By the fact that all generating functions on the right-hand side of \eqref{eq:oneupper} are for 
genus smaller than $g$, we can use induction on $g$ like in the proof of \Cref{thm:2conn} to deduce \eqref{eq:fw1small}.
This concludes the proof.
\end{proof}

From \Cref{thm:connected} and \Cref{congruent:tt} we can immediately evaluate the coefficients of $\vertex{\one}_g(v)$.
\begin{coro}\label{coro:gf-conn}
The asymptotic number of connected cubic vertex-labelled multigraphs that are  weighted by their compensation factor 
and are strongly embeddable on $\Sg$ is given by
\begin{align*}
[v^n]\vertex{\one_g}(v)&=\br{1+O\br{n^{-1/4}}} c_gn^{5g/2-7/2}\rho_{\one}^{-n}.
\end{align*}
Here $c_g$ is a constant only depending on $g$ and $\rho_{\one}=\rho_Q=\frac{54}{79\sqrt{79}}$.

\end{coro}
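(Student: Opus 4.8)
The plan is to read off the coefficient asymptotics directly from the singular expansions of $\one_g(v)$ supplied by \Cref{thm:connected}, feeding them into the transfer theorem via \Cref{congruent:tt}. First I would note that in each of the three cases the dominant singularity is $\rho_{\one}=\rho_Q=\frac{54}{79^{3/2}}$, which pins down the exponential factor $\rho_{\one}^{-n}$; moreover, after the substitution of \Cref{def:univariate} the power of $v$ equals the half-number of vertices, and connected cubic multigraphs (other than $\Phi$) on $2n$ vertices strongly embeddable on $\Sg$ exist for all $n\ge 2$, so $v$ carries no parity obstruction and it suffices to work with the single dominant singularity $\rho_{\one}$, exactly as \Cref{thm:connected} is already phrased.

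Next I would dispatch the three cases separately. For $g=0$, \Cref{thm:connected} says $\one_0(v)$ is $\Delta$-analytic with polynomial part $a_0+a_1(1-\rho_{\one}^{-1}v)+a_2(1-\rho_{\one}^{-1}v)^2$, dominant term $c_0(1-\rho_{\one}^{-1}v)^{5/2}$, and remainder $O\br{(1-\rho_{\one}^{-1}v)^3}$; writing $\alpha=-5/2$ one has $1/4-\alpha=11/4<3$, so \Cref{tt}\ref{tt:polynomial} applies and gives
\begin{equation*}
  [v^n]\one_0(v)=\br{1+O\br{n^{-1/4}}}\frac{c_0}{\Gamma(-5/2)}\,n^{-7/2}\rho_{\one}^{-n},
\end{equation*}
which matches the claim since $5g/2-7/2=-7/2$ when $g=0$. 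For $g=1$ the congruence $\one_1(v)\cong c_1\log(1-\rho_{\one}^{-1}v)+O\br{(1-\rho_{\one}^{-1}v)^{1/4}}$ is exactly of the form treated in \Cref{tt}\ref{tt:log}, whence \Cref{congruent:tt} yields
\begin{equation*}
  [v^n]\one_1(v)=\br{1+O\br{n^{-1/4}}}(-c_1)\,n^{-1}\rho_{\one}^{-n},
\end{equation*}
again of the claimed shape because $5/2-7/2=-1$. For $g\ge2$ the congruence is $\one_g(v)\cong c_g(1-\rho_{\one}^{-1}v)^{-5g/2+5/2}+O\br{(1-\rho_{\one}^{-1}v)^{-5g/2+11/4}}$; with $\alpha=5g/2-5/2$ one checks $1/4-\alpha=11/4-5g/2$, so the remainder is precisely of the admissible order and \Cref{congruent:tt} together with \Cref{tt}\ref{tt:polynomial} gives
\begin{equation*}
  [v^n]\one_g(v)=\br{1+O\br{n^{-1/4}}}\frac{c_g}{\Gamma(5g/2-5/2)}\,n^{5g/2-7/2}\rho_{\one}^{-n}.
\end{equation*}

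Finally I would merge the three cases, absorbing the $\Gamma$-factor (or the sign, in the case $g=1$) into a single constant $c_g$ depending only on $g$, which produces the stated formula. I do not expect any genuine obstacle here: all of the analytic difficulty --- locating the dominant singularity, verifying $\Delta$-analyticity or congruence, and extracting the singular expansion --- is already encapsulated in \Cref{thm:connected}, so the only remaining point is the routine verification that in each case the error term in the expansion is covered by the remainder hypothesis of \Cref{tt}, which is precisely what the exponent arithmetic above confirms.
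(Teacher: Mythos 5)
Your proposal is correct and follows exactly the paper's route: the paper derives \Cref{coro:gf-conn} immediately from the singular expansions in \Cref{thm:connected} together with \Cref{congruent:tt} and the transfer theorem, which is precisely the case-by-case exponent check you carry out. The only difference is that you spell out the arithmetic (verifying the error exponents $3$, $1/4$, and $-5g/2+11/4$ fall within the hypotheses of \Cref{tt}) that the paper leaves implicit.
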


\subsection{Proof of \Cref{main2}}
Using the results from the previous section we can now prove \Cref{main2}. Recall that a cubic multigraph embeddable on 
$\Sg$ is given by a set of connected cubic multigraphs embeddable on $\torus{g_i}$ such that $\sum g_i\leq g$ (see 
\Cref{strongemb}). Therefore we have the relation

\begin{align}
\vertex{\general}_g(v)&\preceq\sum_{k=1}^\infty\sum_{\sum g_i\le g}\frac{1}{k!}\prod_{i=1}^k\br{\vertex{\one}_{g_i}(v)+\frac{v}{6}}.\label{eq:all}
\end{align}
The summand $\frac{v}{6}$ accounts for the fact that a component might also be a triple edge, not accounted for in 
$\vertex{\one}_{g_i}(v)$ (this additional summand will differ when proving \Cref{main1,main3}).
We only get an upper bound, because we overcount if a multigraph is strongly embeddable on surfaces of multiple genera. Later
we will also obtain a lower bound with the same asymptotics to complete the proof.

If $g=0$, \eqref{eq:all} simplifies to $\general_0=\exp(\one_0+\frac{v}{6})$, as there is no overcounting in this case. This coincides with 
Theorem 1 of \cite{Kang2012} and therefore we can conclude our statement. 
(Although it is not directly shown there, the same arguments can be used for unweighted planar cubic multigraphs. 
For simple graphs, see \cite{bodirsky2007-cubic-graphs}.)

Now suppose $g\ge 1$. The first step to obtain asymptotics from \eqref{eq:all} is to rearrange the sum in such a way that all planar components are 
singled out. This results in 

\begin{align}\label{eq:general:upperbound}
\vertex{\general}_g(v)&\preceq\sum_{k=0}^g\sum_{\substack{\sum g_i\le g\\g_i\ge1}}\frac{1}{k!}\prod_{i=1}^k\br{\vertex{\one}_{g_i}(v)+\frac{v}{6}}\sum_{j=0}^\infty 
\frac{k!}{(k+j)!}\br{\vertex{\one}_0(v)+\frac{v}{6}}^j.
\end{align}

By the dominant term and the value of $\vertex{\one}_0(v)$ at the singularity $\rho_\one$ from 
\Cref{thm:connected} we observe that the contribution of 
the last sum to the formula is only a constant factor. Thus it remains to calculate the dominant term  
of $\frac{1}{k!}\prod \br{\vertex{\one}_{g_i}(v)+\frac{v}{6}}$. As the first sum consists only of a constant number 
of summands, the dominant term of the right-hand side of \eqref{eq:general:upperbound} will be the (sum of the) 
dominant terms from $\frac{1}{k!}\prod \br{\vertex{\one}_{g_i}(v)+\frac{v}{6}}$ up to the constant obtained from the planar components.
That is, we shall compute the dominant term of 
\begin{align*}
A(v) := \frac{1}{k!}\prod_{i=1}^k\br{\one_{g_i}(v)+\frac{v}{6}},
\end{align*}
where the $g_i$ are positive and sum up to $g'\le g$. 

Suppose $g=1$. Then either $k=g'=0$ or $k=g'=1$. By \Cref{thm:connected}, we have 
\[A(v) = C_1(v)+\frac{v}{6}\cong c_1\log(1-\rho_{\one}^{-1}v)+\frac{v}{6}+O\br{\br{1-\rho_{\one}^{-1}v}^{1/4}}\]
and thus 
\[A(v) \preceq P_1(v)+c_1\log(1-\rho_{\one}^{-1}v)+O\br{\br{1-\rho_{\one}^{-1}v}^{1/4}}\]
with $P_1(v)$ a polynomial and $c_1$ a constant.
Suppose now $g\ge 2$.
Without loss of generality let $g_1,\ldots,g_l=1$ and $g_{l+1},
\ldots,g_k>1$. Then
\begin{align}
\begin{split}
A(v)\cong&\br{1+O\br{\br{1-\rho_{\one}^{-1}v}^{1/4}}}\frac{1}{k!}c_1^l
		\left(\log\left(1-\rho_{\one}^{-1} v\right)\right)^l\label{eq:asys}
	\prod_{i=l+1}^kc_{g_i}\left( 1 - \rho_{\one}^{-1}v \right)^{5(1-g_i)/2}\\
\cong& \br{c+O\br{\br{ 1 - \rho_{\one}^{-1} v}^{1/4}}}\br{\log\left( 1 - \rho_{\one}^{-1} v\right)}^l
\left( 1 - \rho_{\one}^{-1}v \right)^{-5g'/2+5k/2}.
\end{split}
\end{align}
For $k=1$ and $g'=g$ (and hence $l=0$) we thus have
\begin{align}
 \frac{1}{1!}\prod_{g'=1}^1\br{\one_{g_i}+\frac{v}{6}}\cong c_g(1-\rho_{\one}^{-1}v)^{-5g/2+5/2}+O\br{(1-\rho_{\one}^{-1}v)^{-5g/2+11/4}}.\label{eq:a}
\end{align}
For $k\ge2$ or $g'<g$, \eqref{eq:asys} yields 
\begin{align}
 \frac{1}{k!}\prod_{i=1}^k\one_{g_i}(v)\cong O\br{(1-\rho_{\one}^{-1}v)^{-5g/2+5/2+2}}\label{eq:b}
\end{align}
and thus 
\[G_g(v)\preceq c_g(1-\rho_{\one}^{-1}v)^{-5g/2+5/2}+O\br{(1-\rho_{\one}^{-1}v)^{-5g/2+11/4}}.\]

We derive a lower bound for $g\ge 1$ as follows. Let $\tilde{\class{G}}_g$ be the class of graphs in $\class{G}_g$ with one component of genus $g$ and all other components planar. Then
\begin{align*}
 \sum_{j=0}^{\infty}\frac{C_g(v)C_0^j(v)}{(j+1)!}\succeq \tilde{G}_g(v)\succeq \sum_{j=0}^{\infty}\frac{C_g(v)C_0^j(v)}{(j+1)!}-\sum_{j=0}^{\infty}\frac{(j+1)C_0^{j+1}(v)}{(j+1)!}.
\end{align*}
Indeed, if the component of genus $g$ is also planar, then the graph might be counted up to $j+1$ times (once for each 
component) on the left-hand side. Substituting the corresponding summands thus yields a lower bound of $\general_g(v)$. 
\begin{align}
	\vertex{\general}_g(v)\succeq \tilde{\general}_g(v)\succeq \sum_{j=0}^\infty\frac{1}{(j+1)!}\one_g(v)\one_0^j(v)-\sum_{j=1}^\infty\frac{j}{j!}\one_0^j(v).\label{lowerbd}
\end{align}
Applying \Cref{tt} to the upper and lower bounds completes the proof.

\subsection{Proofs of \Cref{main1,main3,main4}}
\Cref{main1,main3} can be proven analogously to \Cref{main2}. We obtain $\rho_1$ as the smallest positive solution of
\[0=-46656 + 139968 u + 9524176 u^2 - 1763856 u^3 + 121716 u^4 + 8748 u^5 + 729 u^6\]
and $\rho_3$ as the smallest positive solution of 
\[0=-46656 - 139968 u + 6043120 u^2 - 1717200 u^3 - 69228 u^4 - 8748 u^5 + 729 u^6.\]

\Cref{main4} for the case of weighted multigraphs follows immediately from~\eqref{eq:a}, \eqref{eq:b}, and \Cref{tt}.
Indeed, \eqref{eq:a} and \Cref{tt} imply that the number of weighted multigraphs in $\class{\general}_g(n)$ that
have a unique non-planar component that is not embeddable on $\torus{g-1}$ is
\[\br{1+O\br{n^{-1/4}}}e_g n^{5g/2-5/2}\gamma_2^{2n}(2n)!\,.\]
On the other hand, \eqref{eq:b} and \Cref{tt} imply that the number of weighted multigraphs in $\class{\general}_g(n)$
that do not have such a component is
\[O\br{n^{5g/2-5/2-2}\gamma_2^{2n}(2n)!}.\]
Thus, \Cref{main4} follows. Observe that the probability $1-O(n^{-2})$ is not sharp. Indeed, the exponent in
\eqref{eq:b} could be improved to $-5g/2+5-\varepsilon$ for every $\varepsilon>0$, which would yield a probability
$1-O(n^{-5/2+\varepsilon})$. The statements of \Cref{main4} for unweighted multigraphs and simple graphs are proved
analogously.

\bibliographystyle{plain}
\bibliography{biblio}

\appendix
\section{Proof of \Cref{simplerefined,looplessrefined}} \label{proof:gao}

We begin with notations.
The \emph{valency} of a face $f$ in a map is the number of corners of $f$.
We call a rooted map $M$ a \emph{quasi triangulation} if all faces except the
root face $f_r$ are bounded by triangles. Let $\class{\quasis}_g$ be the class
of \emph{simple} quasi triangulations and $\quasis_g(y,u)$ its generating function, where
$u$ marks the valency of $f_r$. Given an index set $I$ and an injective function
$h\colon I \to F(M)\setminus\{f_r\}$, we call $M$ an \emph{$I$-quasi triangulation
with respect to $h$} if all faces in $F(M) \setminus \br{h(I)\cup\{f_r\}}$ are
bounded by triangles. If in addition $f_r$ is also bounded by a triangle, we say
that $M$ is an \emph{$I$-triangulation} (with respect to $h$). Let $\class{\triangs}_{g,I}$
and $\class{\quasis}_{g,I}$ be the classes of simple $I$-triangulations and simple $I$-quasi
triangulations, respectively; their generating functions are denoted by $\triangs(y,z_I)$
and $\quasis(y,u,z_I)$, respectively. Here $u$ again marks the valency of the root face
$f_r$ and $z_I=(z_i)_{i\in I}$ is a vector where $z_i$ marks the valency of $h(i)$.

Note that $\class{\triangs}_g=\class{\triangs}_{g,\emptyset}$ and $\class{\quasis}_g=\class{\quasis}_{g,\emptyset}$. 
In the case $I=\emptyset$ we will therefore always use the generating functions $\triangs_g(y)$ and $\quasis_g(y,u)$ without 
mentioning variables $z_i$. To simplify notation the one-vertex map is contained in $\class{\quasis}_0$, although
it is not a quasi triangulation (since it does not have any corners and thus cannot be rooted).
We say that a face $f$ is \emph{marked} if $f\in h(I)$ and that we are \emph{marking a face} $f$ if we add a new index 
$i$ to the set $I$ with $h(i)=f$.

To prove \Cref{simplerefined} we first derive a recursive formula relating $\class{\quasis}_{g,I}$ for different genera 
and different sizes of the set $I$. We will then prove \Cref{simplerefined} by applying this formula inductively.
In order to derive the recursive formula, we will delete the root edge of a given quasi triangulation and then
perform surgeries that either separate the given surface or decrease its genus. One part of the reverse operation then
consists of adding a new edge to a map. Let $S$ be a map and let (not necessarily distinct) corners $c_1=(v_1,e_1^-,e_1^+)$
and $c_2=(v_2,e_2^-,e_2^+)$ of the same face $f$ of $S$ be given. If $T$ is a map with $V(T)=V(S)$ and $E(T) =
E(S)\cup\{e_{\text{new}}\}$, we say that $e_{\text{new}}$ is an \emph{edge from $c_1$ to $c_2$} if
\begin{itemize}
\item $e_{\text{new}}$ is contained in $f$ and its end vertices are $v_1$ and $v_2$;
\item in the cyclic order of edges of $T$ at $v_1$, $e_{\text{new}}$ is the predecessor of $e_1^+$; and
\item in the cyclic order of edges of $T$ at $v_2$, $e_{\text{new}}$ is the successor of $e_2^-$.
\end{itemize}
If $c_1=c_2 =: c$, we also say that $e_{\text{new}}$ is a \emph{loop at $c$}.

Before we derive the recursive formula, we study the base case of \emph{planar} quasi triangulations.

\begin{prop}\label{prop:baseplanar}
 The generating function of planar quasi triangulations satisfies
 \begin{align}
  \quasis_0(y,u)&=1+yu^2\quasis_0^2(y,u)+\frac{y(\quasis_0(y,u)-1)}{u}-y^2u\quasis_0(y,u)-
	\triangs_0(y)(\quasis_0(y,u)-1).\label{baseplanar}
 \end{align}
\end{prop}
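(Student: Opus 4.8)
The plan is to set up a Tutte\nobreakdash-style decomposition of simple planar quasi triangulations by removing the root edge, to read off the two positive terms as the two cases of this decomposition, and to obtain the two negative terms as corrections for the configurations in which the reverse operation (re\nobreakdash-inserting the root edge together with the triangle it bounds) would destroy simplicity. The single\nobreakdash-vertex map, which belongs to $\class{\quasis}_0$ by convention, accounts for the summand $1$. So from now on let $M\in\class{\quasis}_0$ have at least one edge, with root edge $e_r$, root vertex $v_r$ and root face $f_r$, and let $t$ be the face of $M$ on the side of $e_r$ opposite to $f_r$. Since every non\nobreakdash-root face of $M$ is a triangle, either $t=f_r$ or $t$ is a triangle; and because $M$ is planar, $t=f_r$ holds if and only if $e_r$ is a bridge.

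\emph{Bridge case.} Here $M-e_r$ has two components; let $M_1$ be the one containing $v_r$ and $M_2$ the other. Deleting an edge cannot create loops or multiple edges, and each component becomes a simple planar quasi triangulation once rooted canonically (at $v_r$ we take the corner immediately preceding $e_r$ in the rotation, and analogously at the other endpoint of $e_r$). Conversely, any ordered pair $(M_1,M_2)$ of simple planar quasi triangulations reconstructs a unique such $M$ by joining the two root vertices with a new edge. The new edge contributes a factor $y$, and since it adds two sides to the boundary walk of $f_r$, the valency of $f_r$ equals the sum of the two root\nobreakdash-face valencies plus $2$, contributing a factor $u^2$. Hence this case contributes $yu^2\quasis_0(y,u)^2$.

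\emph{Non\nobreakdash-bridge case.} Now $t$ is a triangle and $M-e_r$ is a connected simple planar quasi triangulation whose root face, obtained by merging $f_r$ and $t$, has valency one larger than that of $f_r$; root it canonically as above. The assignment $M\mapsto M':=M-e_r$ is injective, and $M$ is recovered from $M'$ by inserting an edge that cuts off a triangle along the first two edges of the boundary walk of the root face of $M'$ starting at the root corner. Thus this case corresponds bijectively to the simple planar quasi triangulations $M'$ with at least one edge for which this insertion again produces a simple map; counting \emph{all} simple planar quasi triangulations with at least one edge gives $\tfrac{y}{u}(\quasis_0(y,u)-1)$, since the inserted edge contributes $y$ and the root\nobreakdash-face valency drops by one. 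It remains to subtract the two disjoint families of $M'$ for which the insertion fails. The insertion creates a loop precisely when the first two boundary edges of the root face of $M'$ coincide, i.e.\ when a leaf hangs at the root corner of $M'$; deleting this leaf together with its edge is a bijection from these configurations onto all simple planar quasi triangulations, and the two extra edges and the valency shift yield the factor $y^2u$, so they are enumerated by $y^2u\,\quasis_0(y,u)$. The insertion creates a multiple edge precisely when the edge it would add is already present and, together with the two boundary edges at the root corner, bounds a region of $M'$ all of whose inner faces are triangles; this region is, via the standard rooting correspondence, a simple triangulation counted by $\triangs_0(y)$, and peeling it off leaves a simple planar quasi triangulation with at least one edge, so these configurations are enumerated by $\triangs_0(y)(\quasis_0(y,u)-1)$. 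Adding the bridge contribution, the non\nobreakdash-bridge contribution $\tfrac{y}{u}(\quasis_0(y,u)-1)-y^2u\,\quasis_0(y,u)-\triangs_0(y)(\quasis_0(y,u)-1)$, and the summand $1$ yields~\eqref{baseplanar}.

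\emph{Main obstacle.} The delicate part is the non\nobreakdash-bridge case: one must fix the canonical re\nobreakdash-rooting so that $M\mapsto M-e_r$ is genuinely a bijection onto the stated sub\nobreakdash-family, and then identify \emph{exactly} the two defect families, verifying in particular that the peeled region in the multiple\nobreakdash-edge case is precisely a simple triangulation (so that $\triangs_0(y)$, and not some other near\nobreakdash-triangulation series, is the right factor) and that the $y$\nobreakdash- and $u$\nobreakdash-bookkeeping reproduces $y^2u\,\quasis_0(y,u)$ and $\triangs_0(y)(\quasis_0(y,u)-1)$ on the nose, including why the second correction carries $\quasis_0(y,u)-1$ rather than $\quasis_0(y,u)$. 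Degenerate triangles — where $f_r$ and $t$ share more than the edge $e_r$, or where the apex of $t$ coincides with an endpoint of $e_r$ — must be shown to be absorbed into exactly these two correction terms.
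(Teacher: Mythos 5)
Your proposal is correct and follows essentially the same route as the paper: delete the root edge, split into the bridge and non-bridge cases, and correct the non-bridge count $\tfrac{y}{u}(\quasis_0(y,u)-1)$ by subtracting the pendant-vertex (loop) configurations $y^2u\,\quasis_0(y,u)$ and the double-edge configurations $\triangs_0(y)(\quasis_0(y,u)-1)$. Your ``peeling'' of the triangulated region bounded by the pre-existing edge and the two boundary edges at the root corner is precisely the paper's zipping of the created double edge (yielding a simple triangulation and a quasi triangulation with at least one edge), so the two arguments coincide.
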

\begin{proof}
  The first summand in \eqref{baseplanar} corresponds to the one-vertex map.
 Let $S\in\class{\quasis}_0$ be a planar quasi triangulation with at least one edge. 
 We distinguish two cases.
 
  First suppose that the root edge $e_r$ is a bridge; then the only face incident with $e_r$ is
  the root face $f_r$. The union $f_r \cup e_r$ is not a disc and thus contains a
  non-contractible circle $C$. We delete $e_r$, cut along $C$, and close the two resulting
  holes by inserting discs. By this surgery, $S$ is separated into two quasi triangulations $S_1$, $S_2$.
  Let $v_1$ and $v_2$ be the end vertices of $e_r$ in $S_1$ and $S_2$, respectively. One of these two
  vertices is the root vertex of $S$; by renaming $S_1,S_2$ we may assume that $v_1$ is the root vertex of $S$. In the cyclic order
  of the edges of $S$ at $v_1$, let $e_1^-$ and $e_1^+$ be the predecessor and successor of $e_r$,
  respectively. Define $e_2^-$ and $e_2^+$ analogously at $v_2$. We let $(v_1,e_1^-,e_1^+)$ and $(v_2,e_2^-,e_2^+)$
  be the root of $S_1$ and $S_2$, respectively. We thus have $S_1,S_2\in\class\quasis_0$. Furthermore,
  $S_1$ and $S_2$ together have one edge less than $S$ and the sum of the valencies of their root 
 faces is two less than the valency of $f_r$. Thus we obtain $yu^2\quasis_0^2(x,u)$, the second term of the right-hand side of \eqref{baseplanar}.
 
 Now suppose that $e_r$ is not a bridge. Then it lies on the boundary of the root face and of another face;
  this face is bounded by a triangle. In the cyclic order of edges at the root vertex $v_r$, let $e_r^-$ and $e_r^+$
  be the predecessor and successor of $e_r$, respectively. We delete $e_r$ and obtain a quasi
  triangulation $S'$ that we root at $c'_r:=(v_r,e_r^-,e_r^+)$. The valency of the root face of $S'$ is larger by one
  than the valency of $f_r$. This is reflected by $\frac{y}{u}(\quasis_0(y,u)-1)$, the third term of the right-hand side of \eqref{baseplanar}, because $S'$ cannot be 
 the quasi triangulation consisting only of a single vertex. However, with this summand we have overcounted. 
 Indeed, the reverse construction is as follows. Let $f'_r$ be the root face of $S'$. Then the corners of
  $f'_r$ can be ordered by walking along the boundary of $f'_r$ in counterclockwise direction. In this order, starting from
  $c'_r$, let $c' = (v,e,e')$ be the corner after the next; then $S$ is obtained from $S'$
  by inserting an edge from $c_r'$ to $c'$. If $v_r=v$ this results in a loop; if $v_r$ and $v$ are adjacent, we obtain a double edge (see
  \Cref{fig:planarquasi}).

  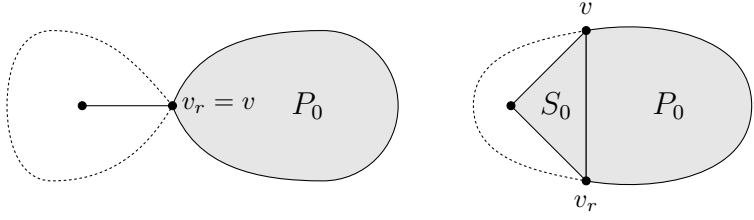
\begin{figure}[htbp]
   \begin{tikzpicture}

\draw [dash pattern=on 1pt off 1pt](3,2) to[out=130,in=0] (1.4,3)to[out=180,in=90] (0.8,2) to[out=270,in=180] (1.4,1) to[out=0,in=240] (3,2);
\draw (1.8,2.)-- (3,2.);
\draw [fill=gray](3,2) to[out=70,in=180] (5,3)to[out=0,in=90] (6,2) to[out=270,in=0] (5,1) to[out=180,in=290] (3,2);
\draw [color=black](4.8,2) node {\Large $P_0$};
\draw (3,2) node[anchor=west] {$v_r=v$};

\draw [fill=gray](8.5,3)-- (8.5,1)-- (7.5,2.)-- (8.5,3);
\draw [color=black](8.1,2) node {\Large $S_0$};
\draw [dash pattern=on 1pt off 1pt](8.5,3) to[out=190,in=90] (7,2) to[out=270,in=170](8.5,1); 
\draw [fill=gray](8.5,3) to[out=10,in=90] (10.7,2) to[out=270,in=350](8.5,1) -- (8.5,3); 
\draw [color=black](9.6,2) node {\Large $P_0$};
\draw [color=black](8.5,3.3) node {$v$};
\draw [color=black](8.5,0.7) node {$v_r$};

\begin{scriptsize}
\draw [fill=black] (1.8,2.) circle (1.5pt);
\draw [fill=black] (3,2.) circle (1.5pt);
\draw [fill=black] (8.5,1) circle (1.5pt);
\draw [fill=black] (8.5,3) circle (1.5pt);
\draw [fill=black] (7.5,2.) circle (1.5pt);
\end{scriptsize}
   \end{tikzpicture}
    \caption{Obtaining a loop or a double edge by inserting an edge.}
    \label{fig:planarquasi}
  \end{figure}

 These cases have to be subtracted again in order to obtain a valid formula. First suppose that $v_r=v$. As 
 we do not have double edges in $S'$, this is only possible if the corner between $c_r'$ and $c'$ is at a vertex of degree one.
 We have to subtract $y^2u \quasis_0(y,u)$, i.e.\ the fourth term of the right-hand side of \eqref{baseplanar}, for this case (we add one vertex and two edges to a quasi triangulation 
 and increase the root face valency by one).
 Now suppose that $v_r$ and $v$ are adjacent, i.e. inserting an edge between them creates a double edge. In this case 
 zipping the double edge separates the quasi triangulation into two quasi triangulations $S_1,S_2$. For one of them, without
  loss of generality for $S_1$, the root face valency is the same as for $S$, while the root face of $S_2$ has valency three.
  Thus, $S_1$ is in $\class{\quasis}_0$ but not the one-vertex map, while $S_2\in\class{\triangs}_0$. Summing up we 
 have to subtract $\triangs_0(y)(\quasis_0(y,u)-1)$, the fifth term of the right-hand side of \eqref{baseplanar}.
 \end{proof}

We can use the quadratic method (see e.g. \cite{quadratic-method}) to obtain the main result for 
planar triangulations from \Cref{prop:baseplanar}. Those were already obtained by Tutte \cite{tutte1962-planar-triangulations} with slightly different 
parameters.
\begin{lem}\label{inductionbase}
 The dominant singularity of $\triangs_0(y)$ is $\rho_\triangs=\frac{3}{2^{8/3}}$, $\triangs_0(y)$ is $\Delta$-analytic 
 and satisfies
 \begin{align}\label{triangulations}
  \triangs_0(y)&=\frac{1}{8}-\frac{9}{16}\br{1-\rho_\triangs^{-1}y}+\frac{3}{2^{5/2}}\br{1-\rho_\triangs^{-1}y}^{3/2}
	+O\br{\br{1-\rho_\triangs^{-1}y}^{2}}.
 \end{align}
 Furthermore, for $u=f(y)$ with
 \begin{equation*}f(y)=\frac{t^{1/3}}{1+t}\quad\text{ and }\quad y=t^{1/3}(1-t),\label{simplesubstitution}\end{equation*}
 the equations
 \begin{align*}
  \quasis_0(y,f(y))&=\frac{5}{4}-\frac{3}{2^{5/2}}\br{1-\rho_\triangs^{-1}y}^{1/2}+O\br{1-\rho_\triangs^{-1}y},\\
  \left.\br{\deriv{u}\quasis_0(y,u)}\right\vert_{u=f(y)}&= \frac{75}{2^{13/3}}-\frac{125\cdot 3^{3/4}}{2^{23/3}}\br{1-\rho_\triangs^{-1}y}^{1/4}
  +O\br{\br{1-\rho_\triangs^{-1}y}^{1/2}}
 \end{align*}
 hold and $\quasis_0(y,f(y))$ is $\Delta$-analytic. Let $n\ge 2$ be an integer. Then
 \begin{align*}
  \left.\br{\derivn{u}{n}\quasis_0(y,u)}\right\vert_{u=f(y)}=c(n)\br{1-\rho_\triangs^{-1}y}^{-n/2+3/4}+
	O\br{\br{1-\rho_\triangs^{-1}y}^{-n/2+1}},
 \end{align*}
 where $c(n)$ is a positive constant depending only on $n$.
\end{lem}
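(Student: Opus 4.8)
The strategy is to solve~\eqref{baseplanar} by the quadratic method and then read off the singular behaviour of all the relevant quantities from the resulting algebraic parametrisation. First I would rewrite~\eqref{baseplanar}, after substituting the known algebraic expression for $\triangs_0(y)$ (which will itself fall out of the same computation as a consistency check, cf.~\cite{tutte1962-planar-triangulations}), as a polynomial equation $P(\quasis_0, y, u) = 0$ of degree two in $\quasis_0$. Completing the square turns this into an identity of the form $R(\quasis_0, y, u)^2 = D(y,u)$, where $D$ is a polynomial in $y$ and $u$ and $R$ is linear in $\quasis_0$. The quadratic method then says that at the value $u = f(y)$ for which $\quasis_0$ has a branch point in $u$, both $D$ and $\partial_u D$ vanish; eliminating $u$ between these two equations yields the algebraic equation satisfied by $y$ at the dominant singularity, and solving it gives $\rho_\triangs = \tfrac{3}{2^{8/3}}$.

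**Carrying it out.** Concretely, I would introduce the rational parametrisation $u = f(y) = \tfrac{t^{1/3}}{1+t}$, $y = t^{1/3}(1-t)$ suggested in the statement, check that under this substitution the discriminant condition is met (so that $t = 0$ corresponds to $y = 0$ and the dominant singularity corresponds to a specific value $t = t_0$, which one computes to be $t_0 = \tfrac12$, giving $y = \rho_\triangs$), and then expand everything as a Puiseux series in $t - t_0$, hence in $(1 - \rho_\triangs^{-1} y)^{1/2}$ since $y - \rho_\triangs$ vanishes to first order in $t - t_0$ but the map $t \mapsto y$ is locally a diffeomorphism while $\quasis_0$ picks up a square-root. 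From the closed form for $\quasis_0(y, f(y))$ and for $\left.\partial_u \quasis_0\right|_{u=f(y)}$ one reads off the claimed expansions (the extra $1/4$-power exponent in the derivative comes from the chain rule: differentiating once in $u$ near a square-root branch point in $u$ costs a factor $(1 - \rho_\triangs^{-1}y)^{-1/2}$ measured in the $y$-variable, shifting $1/2 \mapsto 1/4$ — wait, more carefully, $\partial_u \quasis_0 = \partial_t \quasis_0 / \partial_t u$, and near $t_0$ the denominator $\partial_t u$ is nonzero while $\partial_t \quasis_0 \sim (t-t_0)^{-1/2} \sim (1 - \rho_\triangs^{-1}y)^{-1/4}$; iterating, the $n$-th derivative behaves like $(1 - \rho_\triangs^{-1}y)^{3/4 - n/2}$, which is the asserted exponent). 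The statement that $\quasis_0(y, f(y))$ and $\triangs_0(y)$ are $\Delta$-analytic follows because they are algebraic functions with a single dominant singularity of square-root type, to which the standard transfer machinery (Newton–Puiseux plus the argument of~\cite[VII.7]{Flajolet2009-analytic-combinatorics}) applies.

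**The main obstacle.** The genuinely delicate point is the bookkeeping in the quadratic method: one must correctly identify which branch of the algebraic curve is the combinatorially relevant one (the branch with nonnegative coefficients, selected by the condition $\quasis_0(0,u) = 1$), verify that the parametrisation $(y,u) = (t^{1/3}(1-t), \tfrac{t^{1/3}}{1+t})$ is precisely the one rationalising the discriminant curve rather than an ad hoc guess, and track the constants $\tfrac54$, $-\tfrac{3}{2^{5/2}}$, $\tfrac{75}{2^{13/3}}$, $-\tfrac{125 \cdot 3^{3/4}}{2^{23/3}}$ through the Puiseux expansion without arithmetic slips. The positivity of $c(n)$ for $n \ge 2$ requires a small additional argument: since $\derivn{u}{n}\quasis_0 = \sum_{m} m(m-1)\cdots(m-n+1)[u^m]\quasis_0 \, u^{m-n}$ has nonnegative coefficients in $u$, its value at the positive real point $u = f(y)$ is nonnegative, and the leading Puiseux coefficient is then forced to be nonnegative; it is strictly positive because $\derivn{u}{n}\quasis_0$ is genuinely singular at $y = \rho_\triangs$ (its $(n-1)$-st derivative already is). Everything else is routine computation with algebraic generating functions.
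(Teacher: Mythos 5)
Your first half — completing the square in \eqref{baseplanar}, imposing the double-root condition $R(y,f(y))=\partial_u R(y,f(y))=0$, eliminating $u$ to get the quartic for $\triangs_0$, rationalising via $y=t^{1/3}(1-t)$, $f(y)=t^{1/3}/(1+t)$, and reading off $\rho_\triangs$, the expansion of $\triangs_0$, and $\quasis_0(y,f(y))$, with $\Delta$-analyticity coming from algebraicity — is exactly the paper's route. (Small slip: the critical parameter is $t_0=1/4$, not $1/2$, and the map $t\mapsto y$ is precisely \emph{not} a local diffeomorphism there; its critical point at $t_0$ is what produces $t-t_0\sim(1-\rho_\triangs^{-1}y)^{1/2}$ and hence the half-integer powers.)

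The genuine gap is your treatment of $\derivn{u}{n}\quasis_0(y,u)\vert_{u=f(y)}$, which is the heart of the lemma. The identity $\partial_u\quasis_0=\partial_t\quasis_0/\partial_t u$ is not valid here: along the parametrisation both $y(t)$ and $u(t)$ vary, so $t$-derivatives along the critical curve do not convert into $u$-partials at fixed $y$; moreover $\quasis_0(y(t),u(t))$ is analytic at $t_0$ (its expansion in $(1-\rho_\triangs^{-1}y)^{1/2}$ is a power series in $t-t_0$), so the claim $\partial_t\quasis_0\sim(t-t_0)^{-1/2}$ is false, and iterating this heuristic does not legitimately produce the exponent $3/4-n/2$ — it is read off from the statement rather than derived. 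The paper obtains these quarter-power asymptotics from the algebraic identity $Q(y,u)^2=R(y,u)$ with $Q=2yu^3\quasis_0+q$: since $R$ is a quartic polynomial in $u$, the values $R^{(n)}(y,f(y))$ are explicitly computable (and vanish for $n\ge5$), and the recursion $2nQ^{(1)}Q^{(n-1)}=R^{(n)}-\sum_{k=2}^{n-2}\binom{n}{k}Q^{(k)}Q^{(n-k)}$, started with the sign of $Q^{(1)}(y,f(y))$ fixed by the nonnegativity of the coefficients of $\opa{u}\quasis_0$ and of $f(y)$ (proved by Lagrange inversion), yields $Q^{(n)}(y,f(y))=\ol{c}(n)(1-\rho_\triangs^{-1}y)^{3/4-n/2}+\cdots$ with $\ol{c}(n)>0$, from which the expansions of $\derivn{u}{n}\quasis_0\vert_{u=f(y)}$ and the positivity of $c(n)$ follow by differentiating $Q=2yu^3\quasis_0+q$ and induction. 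Some such mechanism is indispensable; your positivity argument also presupposes that $\derivn{u}{n}\quasis_0\vert_{u=f(y)}$ actually diverges at $\rho_\triangs$, which is part of what must be proved (knowing the $(n-1)$-st derivative is singular does not by itself transfer to the $n$-th, since the $y$-derivative of the composition mixes $\partial_y\partial_u^{n-1}\quasis_0$ with $\partial_u^n\quasis_0$).
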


\begin{proof}
 Multiplying \eqref{baseplanar} by $4yu^4$ and rearranging the terms yields 
 \begin{equation}\label{quadrat}
  \br{2yu^3\quasis_0(y,u)+q(y,u)}^2=q(y,u)^2+4y^2u^3-4yu^4-4yu^4\triangs_0(y),
 \end{equation}
 where $q(x,u)=y-y^2u^2-u-u\triangs_0(y)$. Let
  \begin{align*}
    Q(y,u)&=2yu^3\quasis_0(y,u)+q(y,u)
    &\text{and}&\\
    R(x,u)&=q(y,u)^2+4y^2u^3-4yu^4-4yu^4\triangs_0(y).
    &&
  \end{align*}
 Then \eqref{quadrat} reduces to $Q^2(y,u)=R(y,u)$. 
 To obtain the claimed asymptotic 
 behaviour one chooses $u=f(y)$ in such a way that $Q(y,f(y))=0$. This $u$ is a double zero of $Q^2(y,u)$ and therefore 
 both $R(y,u)$ and $\deriv{u}R(y,u)$ are $0$ at $u=f(y)$, giving
 \begin{align*}
	0&=q(y,u)^2+4y^2u^3-4yu^4-4yu^4\triangs_0(y),\\
	0&=2q(y,u)(1+\triangs_0(y)+2y^2u)+16y u^3+16y u^3\triangs_0(y)-12y^2u^2.
 \end{align*}
By eliminating $f(y)$ from this system we obtain the implicit equation 
\begin{equation*}
 \triangs_0(y)^4 + 3 \triangs_0(y)^3+ \triangs_0(y)^2 (3 + 8 y^3) + \triangs_0(y) (1 - 20 y^3) = (1 - 16 y^3) y^3.
\end{equation*}
By standard methods for implicitly given functions (e.g.\ \cite[VII.7.1]{Flajolet2009-analytic-combinatorics}) we obtain the dominant singularity and the singular 
expansion of $\triangs_0(y)$ as stated in \eqref{triangulations}.

Conversely, by eliminating $\triangs_0(y)$ and substituting $y=t^{1/3}(1-t)$ we obtain $f(y)=\frac{t^{1/3}}{1+t} =
\frac{y}{1-t^2}$ and $\triangs_0(y)=t(1-2t)$. Since $\frac1{1-t^2}$ has only nonnegative coefficients in $t$ and
$t=t(y)$ has only nonnegative coefficients by Lagrange Inversion, $f(y)$ has only nonnegative coefficients as well.
From the implicit equation for $f(y)$ we deduce that
\begin{equation}
  f(y)=\frac{2^{4/3}}{5}-\frac{2^{11/6}}{25}\br{1-\rho_\triangs^{-1}y}^{1/2}+O\br{1-\rho_\triangs^{-1}y}.\label{asyy}
\end{equation}
From $2yf(y)^3\quasis_0(y,f(y))+q(y,f(y)) = Q(y,f(y)) = 0$, \eqref{triangulations}, \eqref{asyy},
and $y = \rho_{\triangs}-\rho_{\triangs}(1-\rho_{\triangs}^{-1}y)$ we derive the claimed expression
 \begin{align*}
  \quasis_0(y,f(y))&=\frac{5}{4}-\frac{3}{2^{5/2}}\br{1-\rho_\triangs^{-1}y}^{1/2}+O\br{1-\rho_\triangs^{-1}y}.
 \end{align*}

Given $n\in\N_0$, let us write $R^{(n)}(y,u) = \derivn{u}{n}R(y,u)$.
By the choice of $f(y)$ we know that $R^{(0)}(y,f(y))=R^{(1)}(y,f(y))=0$. As $R(y,u)$ is a polynomial of degree four in $u$,
we have $R^{(n)}(y,u)=0$ for all $n\ge5$. For $n\in\{2,3,4\}$, we obtain the dominant term of $R^{(n)}(y,f(y))$ by 
first differentiating $R(y,u)$ with respect to $u$ and then substituting $u=f(y)$, \eqref{triangulations}, \eqref{asyy},
and $y = \rho_{\triangs}-\rho_{\triangs}(1-\rho_{\triangs}^{-1}y)$. This yields
\begin{align*}
 R^{(2)}(y,f(y))&=\frac{27}{2^{7/2}}\br{1-\rho_\triangs^{-1}y}^{1/2}+O\br{1-\rho_\triangs^{-1}y},\\
 R^{(3)}(y,f(y))&=-\frac{675}{2^{16/3}}+O\br{\br{1-\rho_\triangs^{-1}y}^{1/2}},\\
 R^{(4)}(y,f(y))&=-\frac{10125}{2^{23/3}}+O\br{\br{1-\rho_\triangs^{-1}y}^{1/2}}.
\end{align*}

We define $Q^{(n)}(y,u)$ and $P_0^{(n)}(y,u)$ analogously to $R^{(n)}(y,u)$. From the facts that $Q(y,f(y))=0$ and 
$\derivn{u}{n}\br{Q^2(y,u)}= R^{(n)}(y,u)$ we deduce that
\begin{align}\label{eq:qr}
  \begin{split}
  2nQ^{(1)}(y,f(y))Q^{(n-1)}(y,f(y)) =& \,R^{(n)}(y,f(y))\\
  &- \sum_{k=2}^{n-2}\binom{n}{k}Q^{(k)}(y,f(y))Q^{(n-k)}(y,f(y))
  \end{split}
\end{align}
for every $n\in\N$. For $n=2$, this implies that
\begin{equation*}
  Q^{(1)}(y,f(y)) = \ol{c}(1-\rho_{\triangs}^{-1}y)^{1/4} + O\br{(1-\rho_{\triangs}^{-1}y)^{3/4}},
\end{equation*}
where $\ol{c} = \pm\frac{3^{3/2}}{2^{11/4}}$. By differentiating $Q(y,u)=2yu^3\quasis_0(y,u)+q(y,u)$ with
respect to $u$, we deduce that
\begin{equation*}
  \quasis_0^{(1)}(y,f(y)) = \frac{75}{2^{13/3}}+\ol{c}\frac{125}{2^{59/12}3^{3/4}}\br{1-\rho_\triangs^{-1}y}^{1/4}
  +O\br{\br{1-\rho_\triangs^{-1}y}^{1/2}}.
\end{equation*}
Since $\quasis_0^{(1)}(y,u)$ is a generating function of a combinatorial class, its coefficients $[y^ku^l]\quasis_0^{(1)}(y,u)$ are nonnegative. As $f(y)$ has only
nonnegative coefficients as well, all coefficients of $\quasis_0^{(1)}(y,u)\vert_{u=f(y)}$ are nonnegative,
implying that $\ol{c} = -\frac{3^{3/2}}{2^{11/4}}$.

For $n=3$, we deduce from \eqref{eq:qr} that
\begin{equation*}
  Q^{(2)}(y,f(y)) = -\frac{675}{6\ol{c}2^{16/3}}(1-\rho_{\triangs}^{-1}y)^{-1/4} + O\br{(1-\rho_{\triangs}^{-1}y)^{1/4}}.
\end{equation*}
For $n\ge 4$, the term $R^{(n)}(y,f(y))$ is constant, while the sum on the right-hand side is nonempty. Since the
sum only involves terms $Q^{(j)}(y,f(y))$ with $2\le j\le n-2$, we deduce by induction that
\begin{equation}
 Q^{(n)}(y,f(y))=\ol{c}(n)\br{1-\rho_\triangs^{-1}y}^{-n/2+3/4}+O\br{\br{1-\rho_\triangs^{-1}y}^{-n/2+5/4}},\label{an}
\end{equation}
where $\ol{c}(n)$ is a constant depending only on $n$ and $\ol{c}(n)>0$ for $n\ge 2$.
 
 The claimed expressions of $\quasis_0^{(n)}(y,f(y))$ are now obtained by differentiating 
 \[Q(y,u)=2y^2u^3\quasis_0(y,u)+q(y,u)\] 
 $n$ times and by \eqref{triangulations}, \eqref{asyy}, \eqref{an}, and induction.
 
 As all generating functions in this proof are given by a system of algebraic equations, they are $\Delta$-analytic.
\end{proof}

Our next aim is to derive a recursion formula for $\quasis_g(y,u,z_I)$. Using the planar case in \Cref{inductionbase}
as the base case, inductively applying the recursion formula allows us to derive similar statements as \Cref{inductionbase}
for all $g$ and $I$. In order to derive the recursion formula, we will perform different surgeries on the surface
depending on the placement of the root. We distinguish four cases.

\renewcommand{\theenumi}{(\Alph{enumi})}
\begin{enumerate}
 \item\label{rootedge:bridge}
  The root edge $e_r$ is only incident with the root face $f_r$ and is a bridge;
 \item\label{rootedge:nobridge}
  $e_r$ is only incident with $f_r$ and is not a bridge;
 \item\label{rootedge:marked}
  $e_r$ is incident with $f_r$ and one marked face; and
 \item\label{rootedge:unmarked}
  $e_r$ is incident with $f_r$ and one unmarked face.
\end{enumerate}
\renewcommand{\theenumi}{(\roman{enumi})}
The recursion formula is then of the form
\begin{equation}
  \quasis_g(y,u,z_I)=A_g(y,u,z_I)+B_g(y,u,z_I)+C_g(y,u,z_I)+D_g(y,u,z_I),\label{recursion-general}
\end{equation}
where $A_g(y,u,z_I)$, $B_g(y,u,z_I)$, $C_g(y,u,z_I)$, and $D_g(y,u,z_I)$ are the generating functions of the sub-classes 
$\class{A}_{g,I}$, $\class{B}_{g,I}$, $\class{C}_{g,I}$, and $\class{D}_{g,I}$ of 
$\class{\quasis}_{g,I}$ corresponding to the four cases \ref{rootedge:bridge}, \ref{rootedge:nobridge},
\ref{rootedge:marked}, and \ref{rootedge:unmarked}, respectively.
Each of the generating functions can be further decomposed as
\begin{equation}\label{recursion:decompositions}
  \begin{split}
    A_g(y,u,z_I) &= a(y,u)\quasis_g(y,u,z_I)+\main_A(g;y,u,z_I)-\error_A(g;y,u,z_I),\\
    B_g(y,u,z_I) &= b(y,u)\quasis_g(y,u,z_I)+\main_B(g;y,u,z_I)-\error_B(g;y,u,z_I),\\
    C_g(y,u,z_I) &= c(y,u)\quasis_g(y,u,z_I)+\main_C(g;y,u,z_I)-\error_C(g;y,u,z_I),\\
    D_g(y,u,z_I) &= d(y,u)\quasis_g(y,u,z_I)+\main_D(g;y,u,z_I)-\error_D(g;y,u,z_I),
  \end{split}
\end{equation}
where $a(y,u)$, $b(y,u)$, $c(y,u)$, and $d(y,u)$ will be functions only involving the generating functions $P_0$ and $S_0$ of the planar case,
while the other functions will involve terms of the type $\quasis_{g'}(y,u,z_{I'})$ for $g'<g$ or $I'\subsetneq I$. This will enable
us to use \eqref{recursion-general} to recursively determine the dominant terms of $\quasis_g(y,u,z_I)$. In this recursion,
the functions $\main_A$, $\main_B$, $\main_C$, and $\main_D$ will contribute to the dominant term; the functions
$\error_A$, $\error_B$, $\error_C$, and $\error_D$ will turn out to be of smaller order.

We start by determining the functions for Case \ref{rootedge:bridge}. In this case, after deleting the root edge
we can split the map into two maps whose genera add up to $g$.

\begin{lem}\label{casea}
 The functions $a(y,u,z_I)$, $\main_A(g;y,u,z_I)$, and $\error_A(g;y,u,z_I)$ in \eqref{recursion:decompositions}
 are given by
 \begin{align*}
  a(y,u,z_I)&=2yu^2\quasis_0(y,u),\\
  \main_A(g;y,u,z_I)&=yu^2\sum_{t,J}\quasis_t\br{y,u,z_J}\quasis_{g-t}\br{y,u,z_{I\setminus J}},\\
  \error_A(g;y,u,z_I)&=0.
 \end{align*}
 The sum is over $t = 0,\dotsc,g$ and $J\subseteq I$ such that $(t,J)\neq(0,\emptyset)$ and 
 $(t,J)\neq(g,I)$.
\end{lem}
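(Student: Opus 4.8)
The plan is to prove \Cref{casea} by the same surgery that handles bridges in the proof of \Cref{prop:baseplanar}, upgraded to keep track of the genus and of the marked faces. Concretely, I will set up a parameter-preserving bijection between $\class{A}_{g,I}$ and the set of ordered pairs $(S_1,S_2)$ of simple quasi triangulations on surfaces $\torus{g_1},\torus{g_2}$ with $g_1+g_2=g$, equipped with a splitting of the marked faces, and then read off the generating function identity.

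First I would analyse a map $S\in\class{A}_{g,I}$. Here the root edge $e_r$ is incident only with the root face $f_r$ and is a bridge; in particular $e_r$ is not a loop, so on the boundary walk of the disc $f_r$ it appears exactly twice, and, since it is a bridge, with opposite orientations. Hence $\overline{f_r}$ is a cylinder $Z$ whose two boundary circles $W_1,W_2$ are closed walks at the end vertices $v_2,v_1$ of $e_r$ lying in the two components $S_2',S_1'$ of $S-e_r$. Let $C$ be the core circle of $Z$. The key topological point is that $C$ separates $\Sg$: if some face $f\neq f_r$ were incident with edges from both $S_1'$ and $S_2'$, its boundary walk would give a walk from $v_1$ to $v_2$ in $S$ avoiding $e_r$, contradicting that $e_r$ is a bridge; hence $\Sg\setminus\overline{f_r}$ is disconnected, and so is $\Sg\setminus C$. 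Cutting along $C$ and closing the two holes by inserting discs therefore yields surfaces $\torus{g_1},\torus{g_2}$ with $g_1+g_2=g$; after deleting $e_r$ the two pieces carry the connected maps $S_1',S_2'$ together with new faces bounded by $W_2,W_1$. Rooting the piece containing the root vertex of $S$ (an end vertex of $e_r$) as $S_1$, at the corner lying between the former neighbours $e_1^-,e_1^+$ of $e_r$, and rooting the other piece as $S_2$ analogously, one checks exactly as in \Cref{prop:baseplanar} that $S_1,S_2$ are simple quasi triangulations: no loops or multiple edges are created, and every non-root face of $S$ survives as a non-root face of $S_1$ or $S_2$ of the same valency, so triangular faces stay triangular and the marked faces of $S$ split into a subset $J\subseteq I$ carried by $S_1$ and its complement carried by $S_2$.

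Next I would describe the inverse map. Given an ordered pair $(S_1,S_2)$ of simple quasi triangulations on $\torus{g_1},\torus{g_2}$ with marked sets $J$ and $I\setminus J$ and root corners $(v_1,e_1^-,e_1^+)$, $(v_2,e_2^-,e_2^+)$, I would delete an open disc from the interior of each root face and glue the two hole boundaries as in case \ref{glue:twoholes} of \Cref{section:surgeries}; the result is a surface of genus $g_1+g_2=g$ on which the two root faces have merged into a single annular region. Adding the edge $e_r$ from the corner $(v_1,e_1^-,e_1^+)$ to the corner $(v_2,e_2^-,e_2^+)$ cuts that annulus into a disc, so $e_r$ becomes a bridge incident only with the resulting root face; rooting the new map $S$ compatibly with the forward construction puts $S$ in $\class{A}_{g,I}$. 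Since cutting along $C$ and the glueing above are inverse surgeries, and deleting and re‑adding $e_r$ are inverse operations, the two maps are mutually inverse, which gives the bijection (the degenerate case where one of the two pieces is the single vertex, included in $\class{\quasis}_0$ by convention, is handled as in \Cref{prop:baseplanar}).

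Finally I would read off the parameters: $S$ has exactly one more edge than $S_1$ and $S_2$ together, and the valency of $f_r$ equals the sum of the valencies of the two root faces plus $2$ (the two corners at $e_r$), while the valencies of all marked faces are preserved. Hence
\[
  A_g(y,u,z_I)=yu^2\sum_{t=0}^{g}\ \sum_{J\subseteq I}\quasis_t(y,u,z_J)\,\quasis_{g-t}(y,u,z_{I\setminus J}).
\]
Pulling out the two summands with $(t,J)=(0,\emptyset)$ and $(t,J)=(g,I)$, whose combined contribution is $2yu^2\quasis_0(y,u)\,\quasis_g(y,u,z_I)=a(y,u,z_I)\,\quasis_g(y,u,z_I)$ with $a(y,u,z_I)=2yu^2\quasis_0(y,u)$, leaves exactly $\main_A(g;y,u,z_I)$ as stated, and $\error_A(g;y,u,z_I)=0$. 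I expect the main obstacle to be the topological bookkeeping — verifying that $C$ always separates $\Sg$ (so that two pieces of genera summing to $g$ appear, rather than a single piece of genus $g-1$) and that the rooting conventions on $S_1$, $S_2$ and on the reconstructed $S$ can be chosen coherently so that the correspondence is genuinely one‑to‑one; once that is settled the remaining steps are routine counting of edges, corners and marked faces.
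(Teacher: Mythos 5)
Your proposal is correct and follows essentially the same argument as the paper: delete the bridge root edge, cut along a (separating) circle in the root face, close the holes, track roots, marked faces, edge count and root-face valencies, invert by gluing the two root faces and adding an edge between the root corners, and then extract the two summands $(t,J)=(0,\emptyset)$ and $(g,I)$ to get $a=2yu^2\quasis_0(y,u)$ with $\error_A=0$. Your extra justification that the circle is separating (no face other than $f_r$ meets both components of $S-e_r$) is a welcome refinement of a step the paper states without proof, but it is not a different route.
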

\begin{proof}
  Let $S$ be an $I$-quasi triangulation in $\class{A}_{g,I}$, with respect to $h\colon I\to F(S)$, say.
  By \ref{rootedge:bridge}, the union $f_r\cup e_r$ is not a disc and thus contains a non-contractible circle $C$. We
  delete $e_r$, cut along $C$, and close the two resulting holes by inserting discs. Since $e_r$ was a bridge, this
  surgery results in two components $S_1$ and $S_2$. We define the roots of $S_1$ and $S_2$ like in
  \Cref{prop:baseplanar}: let $v_1$ and $v_2$ be the end vertices of $e_r$ in $S_1$ and $S_2$, respectively.
  Without loss of generality we may assume that $v_1$ is the root vertex of $S$. In the
  cyclic order of the edges of $S$ at $v_1$, let $e_1^-$ and $e_1^+$ be the predecessor and successor of $e_r$,
  respectively. Define $e_2^-$ and $e_2^+$ analogously at $v_2$. We let $(v_1,e_1^-,e_1^+)$ and $(v_2,e_2^-,e_2^+)$
  be the root of $S_1$ and $S_2$, respectively. Denote the root faces by $f_1$ and $f_2$, respectively. These are
  the faces of $S_1$ and $S_2$ into which the discs were inserted.

  Since every face in $F(S)\setminus\{f_r\}$ corresponds to a face in $F(S_1)\setminus\{f_1\}$ or in $F(S_2)\setminus\{f_2\}$,
  $h$ induces a function $\tilde h \colon I \to (F(S_1)\cup F(S_2))\setminus\{f_1,f_2\}$. If we write $J =
  \tilde h^{-1}(F(S_1))$, then $S_1$ is a $J$-quasi triangulation on a surface of genus $t\le g$; consequently, $S_2$
  is an $(I\setminus J)$-quasi triangulation on a surface of genus $g-t$. By deleting $e_r$, we decreased the number of corners
  of $f_r$ by two; the surgery then distributed the remaining corners of $f_r$ to $f_1$ and $f_2$. Therefore, the sum of valencies
  of $f_1$ and $f_2$ is smaller by two than the valency of $f_r$. On the other hand, we clearly have $|E(S_1)| + |E(S_2)| = |E(S)| - 1$.
  The reverse operation of the surgery is to delete an open disc from each of $f_1,f_2$, glue the surfaces along the boundaries
  of these discs, and add an edge from the root corner of $S_1$ to the root corner of $S_2$. As this operation is uniquely
  defined, we deduce that
 \[  A_g(y,u,z_I)=yu^2\sum_{t=0}^g\sum_{J\subseteq I}\quasis_t\br{y,u,z_J}\quasis_{g-t}\br{y,u,z_{i\setminus J}}.\]
  Extracting the terms for $(t,J)=(0,\emptyset)$ and $(t,J)=(g,I)$ finishes the proof.
\end{proof}

For Case \ref{rootedge:nobridge}, we will cut the surface along a circle contained in $f_r\cup e_r$ and close
the holes by inserting discs. However, because the surface will not be separated by this surgery, one needs to keep track 
of where to cut and glue (to reverse the surgery). To this end we have to mark faces. Therefore, the index set $I$ will increase.

\begin{lem}\label{caseb}
 The functions $b(y,u,z_I)$, $\main_B(g;y,u,z_I)$, and $\error_B(g;y,u,z_I)$ in \eqref{recursion:decompositions}
 are given by
 \begin{align*}
  b(y,u,z_I)&=0,\\
  \main_B(g;y,u,z_I)&=yu^2\opa{{z_{i_0}}}\br{\quasis_{g-1}(y,u,z_{I\cup\{i_0\}})}\big\vert_{z_{i_0}=u},\\
  \error_B(g;y,u,z_I)&\preceq\br{1+yu^2}\opa{u}\br{\quasis_{g-1}(y,u,z_I)}.
 \end{align*}
\end{lem}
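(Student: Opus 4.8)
The plan is to follow the strategy of the proof of \Cref{casea}, replacing the separating surgery used there by a non-separating one. Let $S\in\class{B}_{g,I}$ (with respect to $h$), with root edge $e_r$ and root face $f_r$. Since $e_r$ is incident only with $f_r$ and is not a bridge, the union $f_r\cup e_r$ is not a disc and hence contains a non-contractible circle $C$; as $e_r$ is not a bridge, $C$ is non-separating on $\Sg$. First I would delete $e_r$, cut along $C$, and close the two resulting holes by inserting discs. Because $C$ is non-separating, this produces a single quasi triangulation $S'$ on a surface of genus $g-1$, together with two new faces $f_1$ and $f_2$, namely the two inserted discs. I root $S'$ at the corner corresponding to the root corner of $S$; this corner lies in one of $f_1,f_2$, say in $f_1$, which thereby becomes the root face of $S'$. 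The remaining new face $f_2$ is recorded by adjoining a fresh index $i_0\notin I$ with $h(i_0)=f_2$, so that $S'\in\class{\quasis}_{g-1,I\cup\{i_0\}}$.

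Next I would do the bookkeeping and set up the reverse operation. The surgery removes exactly one edge, so $|E(S')|=|E(S)|-1$; it also distributes the corners of $f_r$, minus the two corners at the ends of $e_r$, between $f_1$ and $f_2$, so the valency of $f_1$ plus the valency of $f_2$ equals the valency of $f_r$ minus two; all faces in $h(I)$ are unaffected. Conversely, given $S'\in\class{\quasis}_{g-1,I\cup\{i_0\}}$ and a corner $c_2$ of the marked face $f_2=h(i_0)$, one removes an open disc from $f_1$ at its root corner and one from $f_2$ at $c_2$, glues the surface along the two new boundary circles — this attaches a handle and raises the genus to $g$ — and inserts the edge $e_r$ joining the root corner of $f_1$ to $c_2$, rooting the result at the corner corresponding to the root of $S'$. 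The choice of $c_2$ among the valency-many corners of $f_2$ is exactly what the operator $\opa{{z_{i_0}}}$ records, the prefactor $y$ accounts for the reinserted edge $e_r$, the additional $u^2$ for its two corners on the new root face, and the substitution $z_{i_0}=u$ merges the valency of $f_2$ into that of the root face. Since the surgery always strictly lowers the genus, no term proportional to $\quasis_g(y,u,z_I)$ can occur, so $b(y,u,z_I)=0$, and this correspondence yields $\main_B(g;y,u,z_I)=yu^2\,\opa{{z_{i_0}}}\br{\quasis_{g-1}(y,u,z_{I\cup\{i_0\}})}\big\vert_{z_{i_0}=u}$.

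The main obstacle is to control the error term $\error_B$, which corrects the over-counting inherent in the reverse construction: not every pair $(S',c_2)$ reassembles into a member of $\class{B}_{g,I}$, and I expect the whole discrepancy to stem from degenerate configurations — when $e_r$ is a loop, when $f_1$ or $f_2$ has very small valency so that the reinserted disc abuts a vertex of small degree, or when the circle $C$ is pinned against a short boundary walk (the cases where $f_r\cup e_r$ fails to be a genuine cylinder). In each of these, the offending map already arises from a genus-$(g-1)$ quasi triangulation on the \emph{original} index set $I$ carrying one extra marked corner on its root face: the factor $\opa{u}$ in the claimed bound records that extra marked corner, while the factor $1+yu^2$ covers the two ways the degenerate piece can be glued back, either directly or with one further edge contributing $yu^2$ exactly as above. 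The step that will need the most care is making this case analysis exhaustive and checking that nothing outside these families is lost; once that is done one obtains $B_g(y,u,z_I)=\main_B(g;y,u,z_I)-\error_B(g;y,u,z_I)$ together with the bound $\error_B(g;y,u,z_I)\preceq\br{1+yu^2}\opa{u}\br{\quasis_{g-1}(y,u,z_I)}$, which is the assertion of the lemma.
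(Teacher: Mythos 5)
Your treatment of the main term is essentially the paper's argument: delete $e_r$, cut along a non-contractible circle in $f_r\cup e_r$, close the holes, mark the second new face with a fresh index $i_0$, and observe that the reverse operation is recorded by $yu^2\opa{{z_{i_0}}}(\cdot)\vert_{z_{i_0}=u}$; likewise your reason for $b=0$ (the surgery always drops the genus) is correct. The gap is in the error term, which is where the actual work of this lemma lies. The reverse construction always produces a map; what can fail is \emph{simplicity}: if the chosen corner $c_2$ of the marked face is at a vertex adjacent to $v_r$, the reinserted edge $e_{\text{new}}$ becomes part of a double edge, and if $c_2$ is at $v_r$ itself, $e_{\text{new}}$ becomes a loop. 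These two cases are the complete and exact source of the overcounting. Your list of ``degenerate configurations'' (root edge a loop, small face valencies, the circle ``pinned against a short boundary walk'') does not identify them -- in particular the double-edge case never appears by name -- and you explicitly defer the exhaustiveness check, so the statement $\error_B\preceq(1+yu^2)\opa{u}\br{\quasis_{g-1}(y,u,z_I)}$ is asserted rather than proved.

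To close the gap you need the two surgeries that map each offending configuration into $\class{\quasis}_{g-1,I}$ with a distinguished root-face corner. If $e_{\text{new}}$ lies in a double edge, that double edge is non-separating (since $e_{\text{new}}$ bounds only the root face), so zipping it yields an $I$-quasi triangulation on $\torus{g-1}$; the data needed to undo the zipping is a corner of the root face, which is overcounted by $\opa{u}\br{\quasis_{g-1}(y,u,z_I)}$, with no extra factor because zipping preserves the edge count and all valencies -- this is the ``$1$'' in $(1+yu^2)$. If $e_{\text{new}}$ is a loop, one cuts along it, closes the two holes, and deletes both copies of the loop, again landing on $\torus{g-1}$; the reconstruction (attach a loop at the root corner and at an arbitrarily marked root-face corner, delete the two valency-one faces, identify the loops) adds one edge and two root-face corners, giving the summand $yu^2\opa{u}\br{\quasis_{g-1}(y,u,z_I)}$. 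Without this case split and these two explicit surgeries, the specific shape of the bound -- and in particular why the subtracted maps live over the \emph{original} index set $I$ at genus $g-1$ with only a root-face mark -- is not justified.
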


\begin{proof}
  Let $S$ be an $I$-quasi triangulation in $\class{B}_{g,I}$, with respect to $h\colon I\to F(S)$, say.
  We use the analogous surgery as in \Cref{casea}, with the difference that $S$ is not separated by cutting along the
  circle $C$. Therefore we only obtain one map $T$. One of the end vertices of $e_r$ is the root vertex $v_r$ of
  $S$. Let $e_r^-$ and $e_r^+$ be the predecessor and successor of $e_r$
  in the cyclic order of edges of $S$ at $v_r$, respectively. Then we define the root of $T$ to be $(v_r,e_r^-,e_r^+)$.
  Denote the root face of $T$ by $f'_r$; this is one of the two faces into which we inserted discs to close the holes
  during our surgery. Denote the other such face by $f_2$. We mark $f_2$ by adding a new index $i_0$ to the index
  set $I$ and extend the function $h$ to $I\cup\{i_0\}$ by setting $h(i_0) := f_2$. Then $T$ is an
  $(I\cup\{i_0\})$-quasi triangulation on $\torus{g-1}$.

  To reverse the surgery, we delete an open disc from each of $f'_r$, $f_2$, glue the surface along the boundaries
  of these discs, add a new edge $e_{\text{new}}$ from the root corner of $T$ to a corner $c_2$ of $f_2$, and let
  $(v_r,e_{\text{new}},e_r^+)$ be the new root corner.
  We thus have to mark a corner of $f_2$, which corresponds to applying the operator $\opa{{z_{i_0}}}$ to the
  generating function. After glueing, the corners of $f_2$ become corners of the new root face; we thus have to
  remove $i_0$ from the index set and replace $z_{i_0}$ by $u$ in the generating function. Like in the previous
  cases, adding $e_{\text{new}}$ increases the total number of edges by one and the valency of the root face by
  two, as $e_{\text{new}}$ lies only on the boundary of the new root face. This results in the term
  $yu^2\opa{{z_{i_0}}}\br{\quasis_{g-1}(y,u,z_{I\cup\{i_0\}})}\mid_{z_{i_0}=u}$.

  However, by this construction we have overcounted. If the vertex $v$ of the corner $c_2$ is adjacent to
  $v_r$, then $e_{\text{new}}$ will be part of a double edge; if $v=v_r$, $e_{\text{new}}$ will be a loop. We want
  to subtract all resulting maps $\tilde S$ for which $e_{\text{new}}$ is a loop or part of a double edge. Suppose first that
  $e_{\text{new}}$ is part of a double edge. Since $e_{\text{new}}$ lies only on the boundary of the root face of
  $\tilde S$, the double edge is not separating. Thus, zipping it results in an $I$-quasi triangulation $\tilde T$
  on $\torus{g-1}$. One of the two zipped edges is the root edge $e'_r$ of $\tilde T$, denote the other zipped edge
  by $e'$. Both $e'_r$ and $e'$ lie on the boundary of the root face (see \Cref{fig:caseb}). Let $v'_r$ be the
  root vertex of $\tilde T$; then $v'_r$ is one of the two copies of $v_r$. If we denote the other copy by $v'$, then
  $v'$ is an end vertex of $e'$ and thus there is a corner $c'=(v',e'',e')$ of the root face of $\tilde T$. We can
  reconstruct $\tilde S$ from $\tilde T$ in the following way: cut along $e'_r$ and $e'$ and glue the surface along
  the boundaries of the resulting holes in the unique way that identifies $v'_r$ and $v'$. Identifying the corner
  $c'$ is bounded by marking an \emph{arbitrary} corner of the root face of $\tilde T$. This corresponds to applying
  the operator $\opa{u}$ to the generating function $\quasis_{g-1}(y,u,z_I)$. As zipping a double edge does neither change
  the number of edges nor the valencies of faces, $\opa{u}\br{\quasis_{g-1}(y,u,z_I)}$ is an upper bound in this case.

  Suppose now that $e_{\text{new}}$ is a loop and recall that $(v_r,e_{\text{new}},e_r^+)$ is the root corner of
  $\tilde S$. Since $e_{\text{new}}$ lies only on the boundary of the root face of $\tilde S$, there is a unique
  edge $e_2\not=e_r^+$ such that $(v_r,e_{\text{new}},e_2)$ is a corner of the root face. We cut along $e_{\text{new}}$,
  close the two resulting holes by inserting discs, and delete the two copies of $e_{\text{new}}$.
  Again, cutting does not separate the surface. Thus, we obtain a map $\tilde T$ on $\torus{g-1}$
  that does not have loops or double edges. Let $v'_r$ be the copy of $v_r$ in $\tilde T$ that is incident with
  $e_r^+$ and let $v'_2$ be the other copy. Then the root of $\tilde T$ is $(v'_r,e',e_r^+)$ for some edge $e'$.
  Furthermore, the root face of $\tilde T$ has a corner $(v'_2,e'_2,e_2)$. Now $\tilde S$ can be reconstructed from
  $\tilde T$ in the following way (see \Cref{fig:caseb}).
  \begin{enumerate}
  \item Add a loop at each of $(v'_r,e',e_r^+)$ and $(v'_2,e'_2,e_2)$;
  \item delete the resulting two faces of valency one;
  \item identify the two loops.
  \end{enumerate}
 
  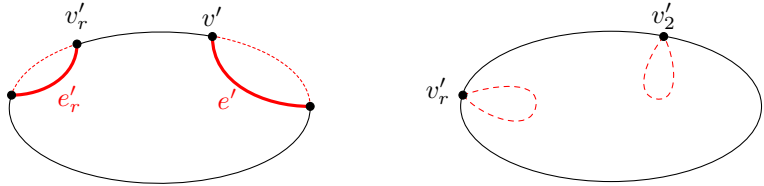
\begin{figure}[htbp]
   \begin{tikzpicture}[line cap=round,line join=round,>=triangle 45,x=1.0cm,y=1.0cm]

\draw [color=black] (3.7,3.43) arc (70:123.5:2cm and 1cm);
\draw [color=black] (1.03,2.65) arc (170:360:2cm and 1cm);

\draw [color=red, dash pattern=on 1pt off 1pt] (1.9,3.33) arc (123.5:170:2cm and 1cm);
\draw [color=red, very thick] (1.9,3.33) to[out=270,in=0] (1.03,2.65);
\draw [color=red, dash pattern=on 1pt off 1pt] (5,2.5) arc (0:70:2cm and 1cm);
\draw [color=red, very thick] (5,2.5) to[out=180,in=270] (3.7,3.43);

\draw (1.9,3.7) node {$v_r'$};
\draw (3.7,3.7) node {$v'$};
\draw [color=red](1.8,2.6) node {$e_r'$};
\draw [color=red](3.9,2.6) node {$e'$};

\draw [color=black] (11,2.5) arc (0:360:2cm and 1cm);
\draw [color=red,dash pattern= on 2pt off 2pt](7.03,2.65) to[out=310,in=260](8,2.5) to[out=80,in=20] (7.03,2.65);
\draw [color=red,dash pattern= on 2pt off 2pt](9.7,3.43) to[out=230,in=170](9.6,2.6) to[out=350,in=300] (9.7,3.43);
\draw (6.7,2.7) node {$v_r'$};
\draw (9.7,3.7) node {$v_2'$};

\begin{scriptsize}
\draw [fill=black] (5,2.5) circle (1.5pt);
\draw [fill=black] (3.7,3.43) circle (1.5pt);
\draw [fill=black] (1.9,3.33) circle (1.5pt);
\draw [fill=black] (1.03,2.65) circle (1.5pt);
\draw [fill=black] (7.03,2.65) circle (1.5pt);
\draw [fill=black] (9.7,3.43) circle (1.5pt);
\end{scriptsize}
   \end{tikzpicture}
    \caption{Deriving an upper bound for $\error_B$.}
    \label{fig:caseb}
  \end{figure}

  In order to identify the corner $(v'_2,e'_2,e_2)$, we mark an arbitrary corner of the root face, which is again
  overcounting. Since we have to add one edge to $\tilde T$ and increase the valency of the root face by two to
  reconstruct $\tilde S$, we have an additional factor of $yu^2$, resulting in the claimed upper bound for
  $\error_B$.
\end{proof}

In Case \ref{rootedge:marked}, the root edge is not a bridge. Therefore, we will not be able to find
a circle $C$ like in the previous two cases. On the other hand, deleting the root edge does not produce
any faces that are not discs. Our construction in this case will thus start without cutting the surface.

\begin{lem}\label{casec}
 The functions $c(y,u,z_I)$, $\main_C(g;y,u,z_I)$, and $\error_C(g;y,u,z_I)$ in \eqref{recursion:decompositions}
 are given by
 \begin{align*}
  c(y,u,z_I)&=0,\\
  \main_C(g;y,u,z_I)&=y\sum_{i\in I}\sum_{T\in\class{\quasis}_g(I\setminus\{i\})}y^{|E(T)|}
	\prod_{j\neq i}z_j^{\beta_j(T)}\sum_{k=1}^{\beta(T)+1}u^kz_i^{\beta(T)+2-k},\\
  \error_C(g;y,u,z_I)\preceq& \,\sum_{i\in I}\bigg(\br{1+yu z_i}\opa{{z_i}}\br{\quasis_{g-1}(y,u,z_I)}\nonumber\\
	&+\br{1+yu z_i}\sum_{t=0}^{g}\sum_{J\subseteq I\setminus\{i\}} \quasis_t(y,u,z_J)\quasis_{g-t}(y,z_i,z_{I\setminus (J\cup\{i\})})\bigg),
 \end{align*}
  where $\beta(T)$ and $\beta_j(T)$ denote the valencies of the root face of $T$ and of the face with index $j$
  in $T$, respectively.
\end{lem}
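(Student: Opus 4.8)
The plan is to mirror the proofs of \Cref{casea,caseb}: take an arbitrary $S\in\class{C}_{g,I}$, delete its root edge, and study the reverse operation of re-inserting an edge, correcting for the maps that fail to be simple. Fix $S\in\class{C}_{g,I}$ with respect to $h\colon I\to F(S)$. By the definition of Case~\ref{rootedge:marked}, $e_r$ lies on the boundary of $f_r$ and of exactly one marked face; since $h$ is injective this face is $h(i)$ for a uniquely determined $i\in I$, so $\class{C}_{g,I}$ splits as a disjoint union over $i\in I$. Deleting $e_r$ merges $f_r$ and $h(i)$ into a single face of valency $\beta(S)+\beta_i(S)-2$, and rooting the resulting map $T$ at the corner that inherits the former root corner of $S$ turns $T$ into an $(I\setminus\{i\})$-quasi triangulation on $\Sg$. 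Conversely, given $T\in\class{\quasis}_g(I\setminus\{i\})$ whose root face has valency $\beta(T)$, one recovers $S$ by inserting an edge from the root corner of $T$ to one of the corners of the root face and declaring one of the two resulting faces to be the new root face $f_r$ and the other to be $h(i)$; there are $\beta(T)+1$ admissible insertions, giving the two new faces valencies $k$ and $\beta(T)+2-k$ for $k=1,\dots,\beta(T)+1$. This reconstruction adds one edge (hence the prefactor $y$) and leaves the valencies $\beta_j(T)$ of the remaining marked faces untouched (hence $\prod_{j\neq i}z_j^{\beta_j(T)}$), so the simple maps obtained are counted by $\main_C(g;y,u,z_I)$. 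Since $T$ always has a strictly smaller index set than $I$, no summand proportional to $\quasis_g(y,u,z_I)$ occurs, so $c(y,u,z_I)=0$.

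The reconstruction above produces a \emph{non-simple} map precisely when the inserted edge $e_r$ is a loop (the chosen corner sits at the root vertex) or is part of a double edge (the chosen corner sits at a neighbour of the root vertex), and these maps must be removed; together they contribute $\error_C$. For the loop case I would cut along the closed curve $e_r$, delete its two copies, and cap the two holes with discs; for the double-edge case I would zip the double edge containing $e_r$ (\Cref{def:zipping}). In either case the surgery either leaves the surface connected while lowering its genus to $g-1$, or separates it into two surfaces whose genera sum to $g$ (as in \Cref{lem:surgery-2}\ref{sep}). In the connected subcase one is left with an $I$-quasi triangulation on $\torus{g-1}$, and reversing the surgery forces the choice of one extra corner on $h(i)$, that is, an application of $\opa{{z_i}}$; the loop variant moreover reinstates one edge and enlarges the valencies of the two faces incident with $e_r$, contributing the monomial $yuz_i$, so the two variants are together dominated by $\br{1+yuz_i}\opa{{z_i}}\br{\quasis_{g-1}(y,u,z_I)}$. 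In the separating subcase the component that does not carry the original root has, as its root face, the descendant of $h(i)$, whose valency is therefore marked by $z_i$; the other marked faces are split as $z_J$ and $z_{I\setminus(J\cup\{i\})}$ between the two components of genera $t$ and $g-t$, so this subcase is dominated by $\br{1+yuz_i}\sum_{t,J}\quasis_t(y,u,z_J)\quasis_{g-t}(y,z_i,z_{I\setminus(J\cup\{i\})})$, the coefficient-wise inequality $\preceq$ absorbing the multiplicities coming from the choice of re-gluing corner and from the $2$-to-$2$ nature of un-zipping. Summing over $i\in I$ yields the stated bound on $\error_C$.

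The main difficulty is the bookkeeping in this last step: one must verify that every non-simple map arising from the reconstruction is counted \emph{at least once} on the right-hand side, so that the subtraction defining $C_g$ through \eqref{recursion:decompositions} is legitimate, while keeping careful track, in each surgery, of whether $e_r$ is a loop or part of a double edge, of whether the cut separates the surface or not, and of the fate of the marked face $h(i)$ and of the index set $I$. Working throughout with coefficient-wise domination $\preceq$ rather than with bijections is what keeps this under control, since only upper bounds on $\error_C$ are needed whereas an exact count is required solely for $\main_C$.
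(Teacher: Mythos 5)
Your proposal follows essentially the same route as the paper's proof: delete the root edge to obtain an $(I\setminus\{i\})$-quasi triangulation $T$, recover $S$ by one of the $\beta(T)+1$ admissible edge insertions (which yields $\main_C$ and shows $c(y,u,z_I)=0$ since only smaller index sets occur), and bound the overcount from loops and double edges via zipping/cutting with the separating versus non-separating case distinction, giving the $\opa{z_i}$-term, the product sum with $u$ replaced by $z_i$ in the rootless component, and the extra factor $yuz_i$ for the loop variant. The only cosmetic difference is that the paper fixes the new root by a convention (two root choices only when the chosen corner is the root corner itself) rather than freely ``declaring'' a root face, but the resulting count and monomials you obtain agree with the paper's.
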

Note that the sum in $\main_C$ is over all $i\in I$ and all $I\setminus\{i\}$-quasi triangulations. As such,
$\main_C$ can be written as
\[
 M_C = y \sum_{i \in I} \frac{u^2z_i P_g(y,u,z_{I\setminus \{ i \} }) - u z_i^2  P_g(y,z_i,z_{I\setminus \{ i \} })}{u-z_i}.
\]
However, similarly to \Cref{inductionbase}, we will replace $u$ and $z_i$ by $f(y)$ in order to derive the desired
asymptotic formulas, which would result in a division by $0$. For that reason we will use $\main_C$ as stated in
\Cref{casec}. We will derive a more convenient formulation in \Cref{maincderiv}.

\begin{proof}[Proof of \Cref{casec}]
  Let $S$ be an $I$-quasi triangulation in $\class{C}_{g,I}$, with respect to $h\colon I\to F(S)$, say.
  We delete the root edge $e_r$, thus obtaining a map $T$ on $\torus{g}$. The root of $T$ is defined as follows. Let
  $e_r^-$ and $e_r^+$ be the predecessor and successor of $e_r$ at $v_r$, respectively; then
  $(v_r,e_r^-,e_r^+)$ is the root of $T$. By \ref{rootedge:marked}, $e_r$ was incident with a marked face $h(i)$.
  The root face of $T$ is $f'_r := f_r \cup e_r \cup h(i)$ and $T$ is an $(I\setminus\{i\})$-quasi triangulation with
  respect to $h\vert_{I\setminus\{i\}}$.

  Let $c$ be a corner of $f'_r$ and let $\tilde S$ be obtained from $T$ by adding an edge
  $e_{\text{new}}$ from $(v_r,e_r^-,e_r^+)$ to $c$ and let the root of $\tilde S$ be
  \begin{itemize}
  \item $(v_r,e_{\text{new}},e_r^+)$ if $c\not=(v_r,e_r^-,e_r^+)$ and
  \item either $(v_r,e_{\text{new}},e_r^+)$ or $(v_r,e_{\text{new}},e_{\text{new}})$ otherwise,
  \end{itemize}
  see \Cref{fig:casec}.
  Adding $e_{\text{new}}$ divides $f'_r$ into two faces. One of these faces is the root face of $\tilde S$;
  we mark the other face with the index $i$ and denote the corresponding function $I\to F(\tilde S)$ by $\tilde h$.
  Clearly, there is a unique choice of $c\not=(v_r,e_r^-,e_r^+)$
  such that $\tilde S = S$. If $c$ is a corner at $v_r$ (in particular if $c=(v_r,e_r^-,e_r^+)$), then
  $e_{\text{new}}$ will be a loop. If $c$ is a corner at a vertex adjacent to $v_r$, then $e_{\text{new}}$
  will be part of a double edge. In either case, $\tilde S$ will not be simple and thus not an $I$-quasi
  triangulation. Although the case $c=(v_r,e_r^-,e_r^+)$ is clearly one of the cases when $\tilde S$ is
  not simple, it is slightly easier to derive the formulas including this case.

 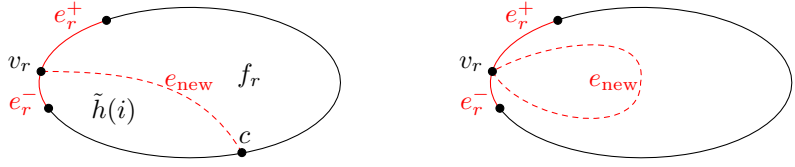
\begin{figure}[htbp]
   \begin{tikzpicture}[line cap=round,line join=round,>=triangle 45,x=1.0cm,y=1.0cm]

\draw [color=black] (1.13,2.16) arc (200:483.5:2cm and 1cm);

\draw [color=red] (1.9,3.33) arc (123.5:200:2cm and 1cm);
 \draw [color=red, dash pattern= on 2pt off 2pt] (3.7,1.57) to[out=120,in=0] (1.03,2.65);
\draw (0.75,2.75) node {$v_r$};
\draw [color=red](0.8,2.25) node {$e_r^-$};
\draw [color=red](1.4,3.4) node {$e_r^+$};
\draw [color=red](3,2.5) node {$e_{\text{new}}$};
\draw (2,2.15) node {$\tilde h(i)$};
\draw (3.8,2.6) node {$f_r$};
\draw (3.7,1.57) node[anchor=260] {$c$};

\draw [color=black] (7.13,2.16) arc (200:483.5:2cm and 1cm);

\draw [color=red] (7.9,3.33) arc (123.5:200:2cm and 1cm);
\draw [color=red,dash pattern= on 2pt off 2pt](7.03,2.65) to[out=300,in=270](9,2.5) to[out=80,in=30] (7.03,2.65);

\draw (6.75,2.75) node {$v_r$};
\draw [color=red](6.8,2.25) node {$e_r^-$};
\draw [color=red](7.4,3.4) node {$e_r^+$};
\draw [color=red](9.1,2.5) node[anchor=east] {$e_{\text{new}}$};

\begin{scriptsize}
\draw [fill=black] (1.13,2.16) circle (1.5pt);
\draw [fill=black] (3.7,1.57) circle (1.5pt);
\draw [fill=black] (1.03,2.65) circle (1.5pt);
\draw [fill=black] (1.9,3.33) circle (1.5pt);
\draw [fill=black] (7.9,3.33) circle (1.5pt);
\draw [fill=black] (7.13,2.16) circle (1.5pt);
\draw [fill=black] (7.03,2.65) circle (1.5pt);
\end{scriptsize}
   \end{tikzpicture}
  \caption{Adding the edge $e_{\text{new}}$ from $(v_r,e_r^-,e_r^+)$ to $c$ to obtain $\tilde S$.
    If $c=(v_r,e_r^-,e_r^+)$, then each of the two faces can either be the root face or $\tilde h(i)$.}
  \label{fig:casec}
\end{figure}

  As $f'_r$ has valency $\beta(T)$, there are $\beta(T)+1$ choices for $\tilde S$. The valency of the root face
  of $\tilde S$ is one if $c=(v_r,e_r^-,e_r^+)$ and the root face is $(v_r,e_{\text{new}},e_{\text{new}})$.
  If $c=(v_r,e_r^-,e_r^+)$ and the root face is $(v_r,e_{\text{new}},e_r^+)$, the valency is $\beta(T)+1$. Depending
  on which corner is chosen as $c$, the valency of the root face can take any value $k$ between one and $\beta(T)+1$;
  the face $\tilde h(i)$ then has valency $\beta(T)+2-k$. The generating function of maps that can occur as $\tilde S$
  from this particular $(I\setminus\{i\})$-quasi triangulation $T$ is thus given by
 \[y^{|E(T)+1|}\prod_{j\in I\setminus\{i\}}z_j^{\beta_j(T)}\sum_{k=1}^{\beta(T)+1}u^kz_i^{\beta(T)+2-k}.\]
 This holds as the number of edges is increased by one and the valencies of all other marked faces do not change. 
 After summing over all possible marked faces and all possible $T$, we obtain $\main_C$.

  As already mentioned, we overcount whenever the chosen corner $c$ is at $v_r$ or at a vertex adjacent
  to $v_r$, making $e_{\text{new}}$ a loop or part of a double edge, respectively. Suppose first that $e_{\text{new}}$
  is part of a double edge. We zip the double edge. If it does not separate the surface, we have an upper bound
  $\opa{{z_i}}(\quasis_{g-1}(y,u,z_I))$ analogous to \Cref{caseb}. Indeed, the only difference to the corresponding
  case in \Cref{caseb} is that we mark a corner of (the face corresponding to) $\tilde h(i)$ instead of a corner of
  the root face, because $e_{\text{new}}$ was incident with both the root face and $\tilde h(i)$. If the double edge
  separates the surface, we obtain two maps $T_1$ on $\torus{t}$ for $0\le t\le g$ and $T_2$ on $\torus{g-t}$. One of
  the two maps, without loss of generality $T_2$, contains (the face corresponding to) $\tilde h(i)$. As $T_2$ is
  rooted at a corner of that face and the root face is never marked, the number of marks decreases by one. Thus, $T_1$ is
  a $J$-quasi triangulation on $\torus{t}$ and $T_2$ is a $(I\setminus(J\cup\{i\}))$-quasi triangulation on $\torus{g-t}$,
  where $J\subseteq I\setminus\{i\}$. Going back, all corners of the root face of $T_2$ become corners of the face with
  index $i$, meaning that we have to replace $u$ by $z_i$ in $\quasis_{g-t}(x,z_i,z_{I\setminus (J\cup\{i\})})$. This gives
  us an upper bound of
  \begin{equation*}
    \sum_{t=0}^{g}\sum_{J\subseteq I\setminus\{i\}} \quasis_t(x,u,J)\quasis_{g-t}(x,z_i,z_{I\setminus (J\cup\{i\})}).
  \end{equation*}

  If $e_{\text{new}}$ is a loop, then we proceed the same way as in \Cref{caseb}: we cut along $e_{\text{new}}$,
  close the two resulting holes by inserting discs, and delete the two copies of $e_{\text{new}}$. Like in \Cref{caseb},
  the reverse construction yields the same bounds as in the case of $e_{\text{new}}$ being part of a double edge; the
  additional factor $yuz_i$ is due to the fact that we add one edge and increase the valencies of the root face and of
  $\tilde h(i)$ by one.
\end{proof}

The construction in Case \ref{rootedge:unmarked} is similar to Case \ref{rootedge:marked}. The fact that the second face incident with
$e_r$ is not marked makes the analysis easier.

\begin{lem}\label{cased}
 The functions $d(y,u,z_I)$, $\main_D(g;y,u,z_I)$, and $\error_D(g;y,u,z_I)$ in \eqref{recursion:decompositions}
 are given by
 \begin{align*}
  d(y,u,z_I)&=yu^{-1}-y^2u-\triangs_0(y),\\
  \main_D(g;y,u,z_I)&=-\triangs_g(y,z_I)\quasis_0(y,u),\\
  \error_D(g;y,u,z_I)\preceq&\,\sum_{t,J}\triangs_t(y,J)\quasis_{g-t}(y,u,z_{I\setminus J})\nonumber\\
    &+3\quasis_{g-1}(y,u,z_{I\cup\{i_0\}})\vert_{z_{i_0}=u}.
 \end{align*}
 The sum is over $t = 0,\dotsc,g$ and $J\subseteq I$ such that $(t,J)\neq(0,\emptyset)$ and 
 $(t,J)\neq(g,I)$.
\end{lem}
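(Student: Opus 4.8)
The plan is to handle Case~\ref{rootedge:unmarked} by the edge-deletion/insertion strategy already used in \Cref{prop:baseplanar} and \Cref{casec}, exploiting that the face $f'$ incident with $e_r$ other than $f_r$ is unmarked and hence, since $S$ is an $I$-quasi triangulation, bounded by a triangle. First I would fix $S\in\class{D}_{g,I}$ with respect to $h\colon I\to F(S)$, delete $e_r$, and root the resulting map $T$ at $(v_r,e_r^-,e_r^+)$, where $e_r^-,e_r^+$ are the predecessor and successor of $e_r$ at the root vertex $v_r$. Deleting an edge creates no loops or multiple edges, so $T$ is simple; $f_r$ and $f'$ merge into the root face of $T$, whose valency is one larger than that of $f_r$; and $T\in\class{\quasis}_{g,I}$ with the same marking function $h$. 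Conversely $S$ is recovered from $T$ by inserting an edge $e_{\mathrm{new}}$ from the root corner of $T$ to the corner of its root face two steps further along the boundary walk, cutting off the new, unmarked triangle $f'$. Since this adds one edge and lowers the root-face valency by one, the count of this reconstruction, before correcting for overcounting, is $yu^{-1}\quasis_g(y,u,z_I)$, which is the $yu^{-1}$ summand of $d(y,u,z_I)$.

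Next I would isolate the overcounting. The reconstruction fails to yield a simple $I$-quasi triangulation exactly when $e_{\mathrm{new}}$ is a loop at $v_r$ or is parallel to a preexisting edge. In the double-edge case I would zip $e_{\mathrm{new}}$ as in the corresponding sub-case of \Cref{casec}: if the zipped double edge separates the surface, one obtains a simple $J$-triangulation on $\torus{t}$ — its root face being the cut-off triangle $f'$, of valency three, so that it is counted by $\triangs_t(y,z_J)$ — together with a simple $(I\setminus J)$-quasi triangulation on $\torus{g-t}$ carrying the old root face, ranging over all $t+(g-t)=g$ and $J\subseteq I$; if it does not separate, a face-marking argument in the spirit of \Cref{caseb,casec} bounds the contribution by $3\,\quasis_{g-1}(y,u,z_{I\cup\{i_0\}})\vert_{z_{i_0}=u}$. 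In the loop case I would cut along $e_{\mathrm{new}}$, close the two resulting holes by inserting discs, and delete the two copies of $e_{\mathrm{new}}$, as in \Cref{caseb}: the generic configuration reduces the genus and is absorbed into the same $3\,\quasis_{g-1}(\cdots)$ bound, while the single degenerate configuration, in which $f'$ is the loop $e_{\mathrm{new}}$ together with an edge to a degree-one vertex, contributes exactly $y^2u\,\quasis_g(y,u,z_I)$. Collecting terms, the separating-double-edge contribution is $\sum_{t,J}\triangs_t(y,z_J)\quasis_{g-t}(y,u,z_{I\setminus J})$ over all $(t,J)$; extracting $(t,J)=(0,\emptyset)$ as $\triangs_0(y)\quasis_g(y,u,z_I)$, i.e.\ the $-\triangs_0(y)$ summand of $d$, extracting $(t,J)=(g,I)$ as $\triangs_g(y,z_I)\quasis_0(y,u)=-\main_D(g;y,u,z_I)$, reading the degenerate loop term as $y^2u\,\quasis_g$, the last summand of $d$, and placing the remaining pieces into $\error_D$ gives the asserted decomposition $D_g=d\,\quasis_g+\main_D-\error_D$ together with the stated $d$, $\main_D$, and $\preceq$-bound on $\error_D$.

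The edge-deletion bijection and the $yu^{-1}$ bookkeeping of valencies are immediate, and the reductions of the non-simple reconstructions to zipping double edges and to cutting loops are close copies of the arguments in \Cref{caseb} and \Cref{casec}, so the expected main obstacle is the case analysis of the overcounting. Concretely one must verify that a loop or a double edge at $e_{\mathrm{new}}$ are the only obstructions to simplicity, enumerate which degenerate triangles — a loop with a pendant edge, and, after the loop-cutting or double-edge-zipping surgeries, possibly two loops with a common base or a double edge with an attached loop — can occur as $f'$, check that each surgery is well defined and produces a map in the claimed class on a surface of the claimed genus, bound all reconstruction multiplicities by the constant $3$, and, most importantly, ensure that no configuration is counted simultaneously in $d\,\quasis_g$ and in $\error_D$. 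This bound-chasing is delicate but requires no idea beyond those already used in \Cref{caseb,casec}.
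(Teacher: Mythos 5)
Your route is the same as the paper's: delete $e_r$, root the resulting $I$-quasi triangulation $T$ at $(v_r,e_r^-,e_r^+)$, record the valency shift by $yu^{-1}\quasis_g(y,u,z_I)$, and correct the overcount coming from $e_{\text{new}}$ being a loop or part of a double edge, with the separating double edges giving the sum $\sum_{t,J}\triangs_t(y,z_J)\quasis_{g-t}(y,u,z_{I\setminus J})$ from which the terms $(t,J)=(0,\emptyset)$ and $(t,J)=(g,I)$ are extracted into $d$ and $\main_D$, and the non-separating ones bounded by $3\quasis_{g-1}(y,u,z_{I\cup\{i_0\}})\vert_{z_{i_0}=u}$. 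The genuine gap is in your loop case. You split it into a ``generic'' configuration in which the loop $e_{\text{new}}$ reduces the genus, which you claim is ``absorbed into the same $3\,\quasis_{g-1}(\cdots)$ bound'', and the degenerate pendant configuration counted exactly by $y^2u\,\quasis_g(y,u,z_I)$. The absorption claim is unsupported: that bound is obtained from reversing the zipping of a non-separating double edge (at most three choices of corner of the valency-three face), and you give no injection of an additional family of loop configurations into objects counted by the same term; as written, the inequality asserted for $\error_D$ is therefore not established, and the exact coefficient $-y^2u$ in $d$ also silently presupposes that the pendant configuration is the \emph{only} way $e_{\text{new}}$ can be a loop.

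The missing observation — which is precisely what the paper uses via the analogy with \Cref{prop:baseplanar} — is that $T$ is simple, and simplicity rules out every loop configuration except the pendant one: if the corner $c$ reached after two steps along the boundary of the root face of $T$ lies again at $v_r$, then the two traversed boundary edges are either distinct, which would produce a double edge of $T$ between $v_r$ and the intermediate vertex, or equal, which forces the intermediate vertex to have degree one. In the latter case $e_{\text{new}}$ bounds the disc formed by the degenerate triangle (loop plus pendant edge), so it is contractible; no genus-reducing loop can occur, the loop overcount is exactly $y^2u\,\quasis_g(y,u,z_I)$, and nothing from the loop case has to be pushed into $\error_D$ at all. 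Once you insert this short argument (and drop the fictitious generic loop case), your proof coincides with the one in the paper.
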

\begin{proof}
%
  Let $S$ be an $I$-quasi triangulation in $\class{D}_{g,I}$. 
  We delete $e_r$ and choose the root of the resulting map $T$ to be $c'_r:=(v_r,e_r^-,e_r^+)$ like in \Cref{casec}.
  As the second face $f$ incident with $e_r$ is not marked and $S$ is an $I$-quasi triangulation, $f$ is
  bounded by a triangle. Thus, $T$ is also an $I$-quasi triangulation and the valency of its root face $f'_r$ is
  larger by one than the valency of $f_r$. For the reverse construction, consider the ordering of the corners
  of $f'_r$ in clockwise direction along its boundary and let $c$ be the corner after the next, starting from
  $c'_r$. We add an edge $e_{\text{new}}$ from $c'_r$ to $c$ and let $(v_r,e_{\text{new}},e_r^+)$ be the root
  of the resulting $I$-quasi triangulation $\tilde S$. If $T$ was obtained from $S$ by deleting $e_r$, then
  $\tilde S=S$. However, if $T$ is an \emph{arbitrary} $I$-quasi triangulation on $\torus{g}$, then $e_{\text{new}}$
  might be a loop or part of a double edge. Thus,
  \begin{equation}\label{eq:nomark:upper}
    yu^{-1}\quasis_g(y,u,z_I)
  \end{equation}
  is only an upper bound for $D_g(y,u,z_I)$. Again, we have to subtract the cases when $\tilde S$ is not simple.

  The case when $e_{\text{new}}$ is a loop yields a term of
  \begin{equation}\label{eq:nomark:loop}
    -y^2u\quasis_g(x,u,z_I)
  \end{equation}
  analogously to \Cref{prop:baseplanar}. When $e_{\text{new}}$ is part of a double edge, we need to distinguish
  whether this double edge separates the surface. If it does separate, we obtain
  \begin{equation}\label{eq:nomark:separate}
    -\sum_{t=0}^{g}\sum_{J\subseteq I}\triangs_t(y,J)\quasis_{g-t}(y,u,z_{I\setminus J})
  \end{equation}
  by zipping the double edge, similar to \Cref{casea}. The only differences are that the number of edges and the valencies of the faces do
  not change and that one of the two components is a $J$-triangulation, since its root face is $f$ and thus has valency three.
  Finally, if the double edge does not separate, then after zipping it we have to mark $f$ with a new index $i_0$ like
  in \Cref{caseb}. However, since the valency of $f$ is three, we only have three possibilities how to reverse the
  construction. As the number of edges and all valencies remain unchanged, we have a summand
  \begin{equation}\label{eq:nomark:nonseparate}
    -3\quasis_{g-1}(y,u,z_{I\cup\{i_0\}})\vert_{z_{i_0}=u}\,.
  \end{equation}
  Note that \eqref{eq:nomark:nonseparate} is overcounting as the reverse construction can lead to additional loops or
  double edges.

  Combining \eqref{eq:nomark:upper}, \eqref{eq:nomark:loop}, and the term from \eqref{eq:nomark:separate} with $(t,J) = (0,\emptyset)$,
  we deduce the claimed expression for $d(y,u,z_I)$. The term from \eqref{eq:nomark:separate} with $(t,J) = (g,I)$ yields
  $\main_D(g;y,u,z_I)$; the remaining terms form the upper bound for $\error_D(g;y,u,z_I)$.
\end{proof}

The only term having a different structure than the others is $\main_C$ which cannot be easily expressed in terms of  
$\quasis_{g'}(y,u,z_{I'})$ and $\triangs_{g'}(y,I')$ with some genus $g'$ and set $I'$. From $\main_B$ and $\error_B$ we observe that we need to calculate derivatives with respect to 
$u$ and $z_{i_0}$ and that we want to set $z_{i_0}=u$ in the end. We will be interested in the dominant term of $\quasis_g(y,u,z_I)$
when we set $u=f(y)$ and $z_i=f(y)$ for all $i\in I$; we will abbreviate this by $u=z_I=f(y)$. Observe that setting $u=z_I=f(y)$
does not have any influence on the functions $\triangs_g(y)$, as they only depend on the variable $y$.

The following proposition enables us to express arbitrary derivatives of $\main_C$ at $u=z_I=f(y)$ in terms of derivatives 
of $\quasis_g$.

\begin{prop}\label{maincderiv}
 Let $|y|<\rho_{\triangs}$, $n\in\N_0$, and $\alpha_i\in\N_0$ for all $i\in I$. Write $|\alpha_I|$ for $\sum \alpha_i$. Then
 \begin{align}\label{eq:mainc}
  &\frac{\partial^{n+|\alpha_I|}}{\partial u^n\prod_{i\in I}\partial z_i^{\alpha_i}} \main_C(g;y,u,z_I)\Bigg\vert_{u=z_I=f(y)}\\
	&=y\br{\sum_{i\in I}\frac{n!\alpha_i!}{(n+\alpha_i+1)!}\frac{\partial^{n+1+|\alpha_I|}}{\partial u^{n+1+\alpha_i}
		\prod_{j\in I\setminus\{i\}}\partial z_j^{\alpha_j}}\br{u^3\quasis_g(y,u,z_{I\setminus\{i\}})}}\Bigg\vert_{u=z_I=f(y)} \,.\nonumber
 \end{align}
\end{prop}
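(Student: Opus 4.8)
The plan is to start from the closed-form expression for $\main_C$ noted just before the proof of \Cref{casec}, namely
\[
 \main_C(g;y,u,z_I) = y\sum_{i\in I}\frac{u^2z_i\,\quasis_g(y,u,z_{I\setminus\{i\}}) - u\,z_i^2\,\quasis_g(y,z_i,z_{I\setminus\{i\}})}{u-z_i}\,,
\]
which is valid as a formal identity whenever $u\neq z_i$. The right-hand side is, term by term, a \emph{divided difference} of the function $\varphi_i(t):=t^2\,\quasis_g(y,t,z_{I\setminus\{i\}})$ evaluated at the two points $u$ and $z_i$ (up to the extra factor $u$ outside, which I will track carefully: writing $\psi_i(t):=t\cdot\quasis_g(y,t,z_{I\setminus\{i\}})$, the $i$-th summand is $yu\cdot\frac{u z_i\psi_i(u)/u - z_i^2\psi_i(z_i)/z_i}{u-z_i}$; more simply, the numerator is $uz_i(\psi_i(u)-\psi_i(z_i))\cdot$—let me instead just treat $\frac{u^2z_i\,\quasis_g(y,u)-uz_i^2\,\quasis_g(y,z_i)}{u-z_i}$ directly as explained below). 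For each $i$ separately I would use the standard integral representation of such a divided difference: for an analytic function $F$,
\[
 \frac{u^2z_i F(u) - u z_i^2 F(z_i)}{u-z_i} = u z_i\cdot\frac{u F(u)-z_i F(z_i)}{u-z_i} = u z_i\int_0^1 \Big(G(z_i + s(u-z_i))\Big)'\bigg|_{\text{appropriately}}\,ds,
\]
where $G(t):=tF(t)$; concretely $\frac{uF(u)-z_iF(z_i)}{u-z_i}=\int_0^1 G'(z_i+s(u-z_i))\,ds$. This representation is regular at $u=z_i$, so it is the correct analytic continuation of $\main_C$ across the diagonal and makes sense at $u=z_I=f(y)$.

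\textbf{First I would} fix $i\in I$ and set $F(t)=\quasis_g(y,t,z_{I\setminus\{i\}})$, $G(t)=tF(t)$, so that the $i$-th summand of $\main_C/y$ equals $H_i(u,z_i):=uz_i\int_0^1 G'(z_i+s(u-z_i))\,ds$. Now apply $\frac{\partial^{n+|\alpha_I|}}{\partial u^n\,\prod_{j}\partial z_j^{\alpha_j}}$ to $H_i$. The derivatives with respect to $z_j$ for $j\neq i$ pass straight through onto $F$ (and hence onto $G$), producing $G^{(\alpha_{I\setminus\{i\}})}$ in the obvious multi-index sense, which I will suppress notationally since those variables are inert for the combinatorial argument and get set to $f(y)$ at the end. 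The genuine work is the mixed $(u,z_i)$-derivative of $uz_i\int_0^1 G'(z_i+s(u-z_i))\,ds$: by Leibniz, $\partial_u^n\partial_{z_i}^{\alpha_i}\big(uz_i\cdot I(u,z_i)\big)$ where $I(u,z_i)=\int_0^1 G'(z_i+s(u-z_i))\,ds$. Differentiating under the integral sign, $\partial_u^a\partial_{z_i}^b I(u,z_i)=\int_0^1 s^a(1-s)^b\,G^{(1+a+b)}(z_i+s(u-z_i))\,ds$. Then I set $u=z_i=f(y)$, at which point every occurrence of $z_i+s(u-z_i)$ collapses to $f(y)$ regardless of $s$, so the integral factors out as a Beta integral: $\int_0^1 s^a(1-s)^b\,ds = \frac{a!\,b!}{(a+b+1)!}$, times the constant $G^{(1+a+b)}(f(y))$.

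\textbf{The bookkeeping step} is then to expand the Leibniz products $\partial_u^n\partial_{z_i}^{\alpha_i}\big(u\cdot z_i\cdot I\big)$, collect all the resulting terms—each of which, after setting $u=z_i=f(y)$, is a rational constant times $\frac{a!\,b!}{(a+b+1)!}\,G^{(1+a+b)}(f(y))$ with $a\le n$, $b\le\alpha_i$, $a+b\ge n+\alpha_i-2$—and verify that the whole sum telescopes to the single clean expression
\[
 \frac{n!\,\alpha_i!}{(n+\alpha_i+1)!}\,\frac{\partial^{n+1+\alpha_i}}{\partial u^{n+1+\alpha_i}}\big(u^3\quasis_g(y,u,z_{I\setminus\{i\}})\big)\Big|_{u=f(y)}
\]
(with the inert $z_j$-derivatives attached). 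The cleanest way to see this is \emph{not} to expand but to recognize it directly: the claimed right-hand side is, up to the Beta constant, the $(n+1+\alpha_i)$-th derivative of $u^3\quasis_g = u^2\cdot G(u)$ at $u=f(y)$; so I would instead prove the pointwise identity
\[
 \partial_u^n\partial_{z_i}^{\alpha_i}\Big(uz_i\tfrac{uF(u)-z_iF(z_i)}{u-z_i}\Big)\Big|_{u=z_i=f} = \frac{n!\,\alpha_i!}{(n+\alpha_i+1)!}\,\frac{d^{\,n+\alpha_i+1}}{dt^{\,n+\alpha_i+1}}\big(t^2 G(t)\big)\Big|_{t=f}
\]
by writing $uz_i\frac{uF(u)-z_iF(z_i)}{u-z_i}=uz_i\int_0^1 G'(z_i+s(u-z_i))\,ds$ and, crucially, noting that $\partial_u^n\partial_{z_i}^{\alpha_i}\big(uz_i\int_0^1 G'(z_i+s(u-z_i))\,ds\big)$ at $u=z_i=f$ can be computed by first substituting $w=z_i+s(u-z_i)$ and treating the outer $uz_i$ via the chain rule: since $u=(w-(1-s)z_i)/s$ as a function of $(w,z_i)$, one rewrites everything and, after the substitution and evaluation, arrives at $\int_0^1 s^a(1-s)^b$-type integrals that reassemble (via $\frac{d^m}{dt^m}(t^2 G(t)) = \sum\binom{m}{k}(t^2)^{(k)}G^{(m-k)}$ and the identity $\sum_{a+b=m-1}\binom{n}{a}\binom{\alpha_i}{b}\frac{a!b!}{(a+b+1)!}\cdot(\text{coeff from }uz_i)= \frac{n!\alpha_i!}{(m)!}\cdot\binom{m}{?}$) into the right-hand side. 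Summing over $i\in I$ and pulling out the overall factor $y$ gives exactly \eqref{eq:mainc}.

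\textbf{The main obstacle} I anticipate is precisely this last combinatorial reassembly: making the Leibniz/Beta-integral bookkeeping rigorous and checking that all the lower-order boundary terms (the ones coming from differentiating the outer factors $u$ and $z_i$ rather than the integral) combine correctly rather than leaving a remainder. The cleanest route around it is to avoid raw Leibniz expansion entirely and argue at the level of formal power series in $(u-f)$ and $(z_i-f)$: expand $uz_i\frac{uF(u)-z_iF(z_i)}{u-z_i}$ as a symmetric-in-a-controlled-way bivariate Taylor series around $(f,f)$ using the divided-difference Taylor formula $\frac{uF(u)-z_iF(z_i)}{u-z_i}=\sum_{k\ge 0}\frac{1}{k+1}\cdot[\text{$k$-th divided difference of }G']$, which at coinciding arguments gives $\sum_k\frac{G^{(k+1)}(f)}{(k+1)!\,k!}\cdot(\text{elementary symmetric stuff})$—and then matching coefficients of $(u-f)^n(z_i-f)^{\alpha_i}$ on both sides. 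The factor $\frac{n!\alpha_i!}{(n+\alpha_i+1)!}$ is then visibly the coefficient extracted from $\frac{1}{(n+\alpha_i+1)!}$ combined with the number of monomials $(u-f)^n(z_i-f)^{\alpha_i}$ inside a symmetric power, which is exactly the Beta value. I would also need the mild analytic remark, already available from \Cref{inductionbase} and induction on $(g,|I|)$, that for $|y|<\rho_\triangs$ all the functions $\quasis_g(y,\cdot,\cdot)$ in sight are analytic near $u=z_i=f(y)$ (so that the Taylor expansions and differentiation under the integral are legitimate), and that $f(y)$ itself is analytic there by \eqref{simplesubstitution}; this is why the hypothesis $|y|<\rho_\triangs$ appears in the statement.
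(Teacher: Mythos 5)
Your reduction of \eqref{eq:mainc} to the per-index identity
\begin{equation*}
  \partial_u^n\partial_{z_i}^{\alpha_i}\Bigl(uz_i\tfrac{uF(u)-z_iF(z_i)}{u-z_i}\Bigr)\Big\vert_{u=z_i=f(y)}
  \;=\;\frac{n!\,\alpha_i!}{(n+\alpha_i+1)!}\,\frac{d^{\,n+\alpha_i+1}}{dt^{\,n+\alpha_i+1}}\bigl(t^3F(t)\bigr)\Big\vert_{t=f(y)},
  \qquad F(t)=\quasis_g(y,t,z_{I\setminus\{i\}}),
\end{equation*}
is the right target, and the divided-difference and Hermite--Genocchi ingredients are individually sound; but the proof has a genuine gap: the step in which the Beta-integral terms coming from Leibniz are supposed to ``telescope'' into the single clean coefficient $\frac{n!\alpha_i!}{(n+\alpha_i+1)!}$ is exactly the content of the proposition, and you never carry it out --- your displayed binomial identity literally ends in a question mark, and you yourself flag the reassembly as the main obstacle. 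The paper's proof is far more direct: for each fixed $T\in\class{\quasis}_g(I\setminus\{i\})$ it compares the coefficient obtained by differentiating the monomial $u^{\beta(T)+3}$ (taken $n+1+\alpha_i$ times in $u$) with the one obtained by differentiating $\sum_{k=1}^{\beta(T)+1}u^kz_i^{\beta(T)+2-k}$ ($n$ times in $u$, $\alpha_i$ times in $z_i$), and the ratio is identified via the Vandermonde-type identity $\sum_k\binom{k}{n}\binom{\beta+2-k}{\alpha_i}=\binom{\beta+3}{n+\alpha_i+1}$; a completed version of your analytic route would in effect only re-derive this one binomial identity through Beta integrals, with much heavier machinery.

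There is also a concrete obstruction to the plan as written: the pointwise identity above is false when $n=0$ or $\alpha_i=0$, both of which the hypotheses permit ($n,\alpha_i\in\N_0$). For $n=\alpha_i=0$ the left-hand side is $f^2\bigl(F(f)+fF'(f)\bigr)$ while the right-hand side is $3f^2F(f)+f^3F'(f)$, a discrepancy of $2f^2F(f)\neq0$ in general. The failure comes precisely from the Leibniz terms in which derivatives hit the outer factor $uz_i$ --- equivalently, from the monomials $k=0$ and $k=\beta(T)+2$, which are absent from $\sum_{k=1}^{\beta(T)+1}u^kz_i^{\beta(T)+2-k}$ but are needed to close the Vandermonde sum. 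Hence the unconditional telescoping you hope for cannot occur; a complete argument must either restrict to $n\ge1$ and $\alpha_i\ge1$ or account explicitly for these boundary contributions, which is exactly the bookkeeping you defer. (The same boundary subtlety is hidden in the summation range of the paper's own computation, so it is not an artefact of your method, but it does mean your proof cannot be completed verbatim as proposed.)
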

\begin{proof}
 The generating function $yu^3\quasis_g(y,u,z_{I\setminus\{i\}})$ is given by
 \begin{equation*}
  yu^3\quasis_g(y,u,z_{I\setminus\{i\}})=y\sum_{T\in\class{\quasis}_g(I\setminus\{i\})} y^{|E(T)|}u^{\beta(T)+3}\prod_{j\in 
	I\setminus\{i\}}z_j^{\beta_j(T)}.
 \end{equation*}
 By comparing this term with the summand in
 \begin{equation*}
    \main_C(g;y,u,z_I)=y\sum_{i\in I}\sum_{T\in\class{\quasis}_g(I\setminus\{i\})}y^{|E(T)|}
	\prod_{j\neq i}z_j^{\beta_j(T)}\sum_{k=1}^{\beta(T)+1}u^kz_i^{\beta(T)+2-k}
 \end{equation*}
  for a fixed index $i\in I$,
 one sees that the difference between them is that the factor $u^{\beta(T)+3}$ is replaced 
 by $\sum_{k=1}^{\beta(T)+1}u^kz_i^{\beta(T)+2-k}$. Taking the derivatives with respect to $u$ and $z_i$ the given number of 
 times and comparing the coefficients yields factors
  \begin{align*}
    \frac{(\beta(T)+3)!}{(\beta(T)+2-n-\alpha_i)!}&u^{\beta(T)+2-n-\alpha_i}
    &\text{and}&\\
    \sum_{k=n}^{\beta(T)+2-\alpha_i}\frac{k!(\beta(T)+2-k)!}{(k-n)!(\beta(T)+2-k-\alpha_i)!}&u^{\beta(T)+2-n-\alpha_i},
    &&
  \end{align*}
  respectively, when $n+\alpha_i+1 \le \beta(T)+3$ and factors $0$ otherwise. The quotient of these two coefficients
  equals $\frac{n!\alpha_i!}{(n+\alpha_i+1)!}$ by a binomial identity. Summing over $i\in I$ finishes the proof.
\end{proof}

The only other term where differentiating is not straight forward is $\main_B$.
By using the chain rule $n$ times we obtain 
 \begin{align}
   \frac{\partial^{n+|\alpha_I|}}{\partial u^n\prod_{i\in I}\partial z_i^{\alpha_i}}&\main_B(g;y,u,z_I)\label{mainb}\\
  =&\frac{\partial^{n+|\alpha_I|}}{\partial u^n\prod_{i\in I}\partial z_i^{\alpha_i}}\br{yu^2\br{z_{i_0}\deriv{{z_{i_0}}}\quasis_{g-1}\br{y,u,z_{I\cup\{i_0\}}}}\bigg\vert_{z_{i_0}=u}}\nonumber\\
	=&y\sum_{k=0}^n\binom{n}{k}\br{\frac{\partial^{n-k+|\alpha_I|}}{\partial u^{n-k}\prod_{i\in I}\partial z_i^{\alpha_i}}\derivn{z_{i_0}}{k+1}\br{u^3\quasis_{g-1}\br{y,u,z_{I\cup\{i_0\}}}}}\bigg\vert_{z_{i_0}=u}.\nonumber
\end{align}

Using \eqref{eq:mainc} and \eqref{mainb} we can now determine the dominant terms of the derivatives
of $S_g$ and $P_g$.

\begin{thm}\label{finalind}
  Let $\alpha_i\in\N_0$, $i\in I$, and $|\alpha_I|:=\sum\alpha_i$.
 If $(g,I)\neq (0,\emptyset)$, then
 \begin{equation}\label{rec-triang}
  \frac{\partial^{|\alpha_I}|}{\prod\partial z_i^{\alpha_i}}\triangs_g(y,z_{I})\bigg\vert_{z_I=f(y)}\cong a_0 + c_g\br{1-\rho_{\triangs}^{-1}y}^{e_1}+O\br{\br{1-\rho_{\triangs}^{-1}y}^{e_1+1/4}},
 \end{equation}
 where $a_0$ and $c_g=c_g(\alpha_i,i\in I)$ are positive constants and
 \[e_1=-\frac{5g}{2}-\frac{5|I|}{4}-\frac{|\alpha_I|}{2}+\frac{3}{2}.\]
  $\left.\derivn{u}{n}P_0(y,u)\right\vert_{u=f(y)}$ is given as in \Cref{inductionbase}.
 If $(g,I,n)\neq (0,\emptyset,0)$, then
 \begin{equation}\label{rec-quasi}
  \frac{\partial^{n+|\alpha_I|}}{\partial u^n\prod\partial z_i^{\alpha_i}} \quasis_g(y,u,z_{I})\bigg\vert_{u=z_I=f(y)}\cong c \br{1-\rho_{\triangs}^{-1}y}^{e_2}+O\br{\br{1-\rho_{\triangs}^{-1}y}^{e_2+1/4}},
 \end{equation}
 where $c=c(g,|I|,n,|\alpha_I|)$ is a positive constant and 
 \[e_2=e_1-\frac{n}{2}-\frac{3}{4}.\]
 \end{thm}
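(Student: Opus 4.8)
The plan is to prove the two assertions simultaneously by induction on the weight $w(g,I):=2g+|I|$, the base case $w=0$ (that is, $(g,I)=(0,\emptyset)$) being \Cref{inductionbase}, which already records $\triangs_0$, $\quasis_0(y,f(y))$, $\left.\partial_u\quasis_0(y,u)\right|_{u=f(y)}$, $\left.\partial_u^n\quasis_0(y,u)\right|_{u=f(y)}$ for $n\ge2$, together with the singular expansions of $f(y)$ and of $Q^{(n)}(y,f(y))$. The key structural point, read off from \Cref{casea,cased} and the definition of $Q$ in \Cref{inductionbase}, is the identity $1-a(y,u)-d(y,u,z_I)=-Q(y,u)/u$; in particular $\left.(1-a-d)\right|_{u=f(y)}=0$, $\left.\partial_u(1-a-d)\right|_{u=f(y)}=-Q^{(1)}(y,f(y))/f(y)\cong c_\ast\br{1-\rho_\triangs^{-1}y}^{1/4}$ with $c_\ast\neq0$, and $\left.\partial_u^k(1-a-d)\right|_{u=f(y)}\cong c_\ast(k)\br{1-\rho_\triangs^{-1}y}^{-k/2+3/4}$ with $c_\ast(k)\neq0$ for $k\ge2$, all by \Cref{inductionbase}. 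Feeding \eqref{recursion:decompositions} into \eqref{recursion-general} and using $b=c=0$ and $\error_A=0$ collapses the recursion to the single functional equation
\begin{equation*}
  -\frac{Q(y,u)}{u}\,\quasis_g(y,u,z_I)=\main_A+\main_B+\main_C-\triangs_g(y,z_I)\,\quasis_0(y,u)-(\error_A+\error_B+\error_C+\error_D),
\end{equation*}
and \Cref{casea,caseb,casec,cased} — with \Cref{maincderiv} for $\main_C$ and \eqref{mainb} for $\main_B$ — show that every term on the right except $\triangs_g(y,z_I)\,\quasis_0(y,u)$ is built only from functions $\quasis_{g'}(\,\cdot\,,z_J)$ and $\triangs_{g'}(\,\cdot\,,z_J)$ of weight $2g'+|J|<w(g,I)$: the index pairs occurring in $\main_A$, $\error_C$, $\error_D$ satisfy $2t+|J|\le w(g,I)-1$, the pairs in $\main_B$ and $\error_B$ involve $\quasis_{g-1}$ with one extra index, and $\main_C$ involves $\quasis_g$ with one index removed.

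For the inductive step fix $(g,I)$ and assume both assertions at all smaller weights. I prove the $\triangs$-assertion \eqref{rec-triang} first. Applying $\prod_{i\in I}\partial_{z_i}^{\alpha_i}$ to the functional equation and then setting $u=z_I=f(y)$, the left-hand side vanishes because $1-a-d$ is independent of $z_I$ and vanishes at $u=f(y)$, so
\begin{equation*}
  \quasis_0(y,f(y))\;\frac{\partial^{|\alpha_I|}}{\prod_{i\in I}\partial z_i^{\alpha_i}}\triangs_g(y,z_I)\Big|_{z_I=f(y)}=\frac{\partial^{|\alpha_I|}}{\prod_{i\in I}\partial z_i^{\alpha_i}}\big(\main_A+\main_B+\main_C-\error_A-\error_B-\error_C-\error_D\big)\Big|_{u=z_I=f(y)}.
\end{equation*}
Expanding the right-hand side by Leibniz's rule, inserting the inductive hypothesis via \Cref{maincderiv} and \eqref{mainb}, and using \Cref{congruent:meta} to manipulate the resulting sums and products of congruences, an exponent count shows that $\main_A$, $\main_B$ and $\main_C$ each contribute a term of order $\br{1-\rho_\triangs^{-1}y}^{e_1}$ with positive leading coefficient (there is no cancellation since all combinatorial generating functions involved, and $f(y)$, have nonnegative coefficients), while each $\error$ term is $O\big(\br{1-\rho_\triangs^{-1}y}^{e_1+1/4}\big)$. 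Dividing by $\quasis_0(y,f(y))\cong\frac54+O\big(\br{1-\rho_\triangs^{-1}y}^{1/2}\big)$, which is $\Delta$-analytic with nonnegative coefficients and nonzero at $\rho_\triangs$, yields \eqref{rec-triang}.

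Once the $z$-derivatives of $\triangs_g(y,z_I)$ at $z_I=f(y)$ are in hand, I prove \eqref{rec-quasi} by an inner induction on $n$: apply $\partial_u^{n+1}\prod_{i\in I}\partial_{z_i}^{\alpha_i}$ to the functional equation and set $u=z_I=f(y)$. By Leibniz and $\left.(1-a-d)\right|_{u=f(y)}=0$ the highest-order term drops out, leaving
\begin{equation*}
  (n+1)\,\partial_u(1-a-d)\big|_f\;\frac{\partial^{n+|\alpha_I|}}{\partial u^n\prod\partial z_i^{\alpha_i}}\quasis_g\Big|_f=\frac{\partial^{n+1+|\alpha_I|}}{\partial u^{n+1}\prod\partial z_i^{\alpha_i}}\,\mathrm{RHS}\,\Big|_f-\sum_{k=2}^{n+1}\binom{n+1}{k}\,\partial_u^k(1-a-d)\big|_f\;\frac{\partial^{n+1-k+|\alpha_I|}}{\partial u^{n+1-k}\prod\partial z_i^{\alpha_i}}\quasis_g\Big|_f,
\end{equation*}
where $\mathrm{RHS}$ is the right-hand side of the functional equation, the correction sum involves only $u$-derivatives of $\quasis_g$ of order $\le n-1$ (known from the inner induction), and the summand $\triangs_g(y,z_I)\,\quasis_0(y,u)$ contributes only the already-known $\triangs_g$-derivatives multiplied by $\left.\partial_u^j\quasis_0(y,u)\right|_{u=f(y)}$ from \Cref{inductionbase}. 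The same bookkeeping as before shows the right-hand side has order $\br{1-\rho_\triangs^{-1}y}^{e_2+1/4}$ with positive leading coefficient; dividing by $\partial_u(1-a-d)|_f\cong c_\ast\br{1-\rho_\triangs^{-1}y}^{1/4}$ gives \eqref{rec-quasi}. The case $(g,I,n)=(0,\emptyset,0)$ is excluded, and the remaining $(0,\emptyset,n)$ with $n\ge1$ are covered by \Cref{inductionbase}.

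The principal obstacle is precisely this exponent bookkeeping: after the substitution $u=z_I=f(y)$ one must verify that $\main_A$, $\main_B$ and $\main_C$ all produce the claimed exponent $e_1$ (respectively $e_2+\tfrac14$ after the $n+1$ differentiations), and with signs that reinforce rather than cancel, while every error term from \Cref{caseb,casec,cased} beats that exponent by the margin $\tfrac14$ — which is exactly what the ``$+\tfrac14$'' in those error bounds and the combinatorial identities in \Cref{maincderiv} and \eqref{mainb} are built to provide. Carrying this out is a long but routine computation in the style of \Cref{inductionbase}, using only the identity $1-a-d=-Q/u$, the singular expansions of $\triangs_0$, $\quasis_0(\,\cdot\,,f)$, $f$ and $Q^{(n)}(\,\cdot\,,f)$, and the closure of congruence under sums, products, differentiation and integration (\Cref{congruent:meta}). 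Specializing \eqref{rec-triang} to $I=\emptyset$, where $e_1=-5g/2+3/2$, then yields \Cref{simplerefined}.
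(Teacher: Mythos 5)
Your proposal follows essentially the same route as the paper's proof: the same functional equation $u(1-a-d)\,\quasis_g=-Q(y,u)\,\quasis_g=u(\main_A+\main_B+\main_C)-u\,\triangs_g\,\quasis_0-u(\error_B+\error_C+\error_D)$, the same two-stage argument (first solving for the $z$-derivatives of $\triangs_g$ at $u=z_I=f(y)$ using $\quasis_0(y,f(y))\neq0$, then an inner induction on $n$ dividing by $Q^{(1)}(y,f(y))\sim\br{1-\rho_\triangs^{-1}y}^{1/4}$), and the same exponent bookkeeping via \Cref{maincderiv}, \eqref{mainb} and the $+\tfrac14$ margins in the error bounds, which the paper carries out as the itemized count for $\main_A$ through $\error_D$. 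The only difference — induction on the weight $2g+|I|$ instead of lexicographic induction on $(g,|I|,n)$ — is immaterial, since every term on the right-hand side other than $\triangs_g\,\quasis_0$ strictly decreases either ordering.
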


\begin{proof}
 We show this by induction on $(g,|I|,n)$ in lexicographic order. \Cref{inductionbase} shows that \eqref{rec-quasi} is
  true for $(g,I)=(0,\emptyset)$ and $n>0$. Note that $|\alpha_I|=0$ for $I=\emptyset$.
 
 Suppose now that \eqref{rec-quasi} is true for all $(g,|I|,n)< (g_0,|I_0|,0)$ and \eqref{rec-triang} is true for all 
 $(g,|I|)<(g_0,|I_0|)$ with $(g,I)\not=(0,\emptyset)$. We first prove that \eqref{rec-triang} then holds for $(g_0,|I_0|)$. By multiplying
  \eqref{recursion-general} by $u$ and applying \Cref{casea,caseb,casec,cased} we obtain
 \begin{equation*}
  u(1-a-d)\quasis_{g_0}(y,u,z_{I_0})=u(\main_A+\main_B+\main_C)-u\triangs_{g_0}(y,z_{I_0})\quasis_0(y,u)-u\error,
 \end{equation*}
 where $\error=\error_B+\error_C+\error_D$.
 The term
  \begin{equation*}
    u(1-a-d)=u-2yu^3\quasis_0(y,u)-u\triangs_0(y)-y+y^2u^2
  \end{equation*}
 is equal to $-Q(y,u)$ in \eqref{quadrat} and thus
 \begin{equation}\label{recursionexplicit}
  -Q(y,u)\quasis_{g_0}(y,u,z_{I_0})=u(\main_A+\main_B+\main_C)-u\triangs_{g_0}(y,z_{I_0})\quasis_0(y,u)-u\error.
 \end{equation}
 Therefore, the left-hand side is zero when replacing $u$ by $f(y)$. As this factor is independent 
 of $z_{I_0}$, this does also hold when differentiating the equation $\alpha_i$ times with respect to $z_i$. Thus we obtain
 \begin{align*}
  u\quasis_0(y,u)\frac{\partial^{|\alpha_{I_0}|}\triangs_{g_0}(y,z_{I_0})}{\prod\partial z_i^{\alpha_i}}\bigg\vert_{u=z_{I_0}=f(y)}&=
	u\frac{\partial^{|\alpha_{I_0}|}(\main_A+\main_B+\main_C-\error)}{\prod\partial z_i^{\alpha_i}}\bigg\vert_{u=z_{I_0}=f(y)}.
 \end{align*}
 By inspecting the formulas for $\main_A$ to $\error_D$ in \Cref{casea,caseb,casec,cased} one sees that all 
 occurring terms are lexicographically smaller
 than $(g_0,|I_0|,0)$ and therefore the induction hypothesis can be used. Inspection of the exponents of $(1-\rho_{\triangs}^{-1}y)$ in all those 
 terms shows the following.
 
 \begin{itemize}
  \item[$\main_A$:] The summands have the form 
	\begin{equation*}
    \frac{\partial^{|\alpha_{I_0}|}}{\prod\partial z_i^{\alpha_i}}\br{\quasis_t\br{y,u,z_J}\quasis_{g_0-t}\br{y,u,z_{I_0\setminus J}}}
  \end{equation*}
  for $(t,J)\not=(0,\emptyset)$ and $(t,J)\not=(g_0,I_0)$. Thus, for $g_0+|I_0|\le 1$, we
  have $\main_A=0$. For all other values of $(g_0,|I_0|)$, each summand is of the form $c_A(1-\rho_{\triangs}^{-1}y)^{m_A}+O\br{(1-\rho_{\triangs}^{-1}y)^{m_A+1/4}}$ with
	\begin{align*}
		m_A&=-\frac{5t}{2}-\frac{5|J|}{4}-\frac{|\alpha_J|}{2}+\frac{3}{4}-\frac{5(g_0-t)}{2}-\frac{5|I_0\setminus J|}{4}-\frac{|\alpha_{I_0\setminus J}|}{2}+\frac{3}{4}=e_1
	\end{align*}
	by induction. Furthermore, all coefficients are positive by induction.

  \item[$\main_B$:] We have $\main_B = yu^2\opa{{z_{i_0}}}\br{\quasis_{{g_0-1}}(y,u,z_{I\cup\{i_0\}})}\mid_{z_{i_0}=u}$. Thus, for $g_0=0$
  we have $\main_B=0$ and otherwise $\main_B = c_B(1-\rho_{\triangs}^{-1}y)^{m_B}+O\br{(1-\rho_{\triangs}^{-1}y)^{m_B+1/4}}$ with
	\begin{align*}
	 m_B&=-\frac{5(g_0-1)}{2}-\frac{5|I_0\cup\{i_0\}|}{4}-\frac{|\alpha_{I_0}|+1}{2}+\frac{3}{4}=e_1
	\end{align*}
  by induction. Again, the coefficient is positive by induction.

  \item[$\main_C$:] We determine the expression for $\main_C$ by \Cref{maincderiv}. For $g_0=0$ and $|I_0|=1$
  we have $\main_C \cong c_{C,1}(1-\rho_{\triangs}^{-1}y)^{1/4}+O\br{(1-\rho_{\triangs}^{-1}y)^{1/2}}$, which is of the
  desired order, since $e_1=1/4$ in this case. For all other $(g_0,I_0)$, induction yields that
  $\main_C = c_{C,2}(1-\rho_{\triangs}^{-1}y)^{m_C}+O\br{(1-\rho_{\triangs}^{-1}y)^{m_C+1/4}}$ with
	\begin{align*}
	 m_C&=-\frac{5g_0}{2}-\frac{5(|I_0|-1)}{4}-\frac{|\alpha_{I_0}|}{2}-\frac{1}{2}+\frac{3}{4}=e_1\,.
	\end{align*}
	Like in the previous cases, the coefficient is positive by induction.

  \item[$\error_B$:] The function $\error_B$ is bounded from above by 
	$\br{yu^2+1}\opa{u}\br{\quasis_{g_0-1}(y,u,z_{I_0})}$. For $g_0=0$, we thus have $\error_B=0$ and
  otherwise $\error_B = O\br{(1-\rho_{\triangs}^{-1}y)^{e_B}}$ with
	\begin{align*}
	 e_B&=-\frac{5(g_0-1)}{2}-\frac{5|I_0|}{4}-\frac{|\alpha_{I_0}|}{2}-\frac{1}{2}+\frac{3}{4}\ge e_1+\frac14\,.
	\end{align*}

  \item[$\error_C$:] The first summand in the expression of $\error_C$ from \Cref{casec} is $0$ if $g_0=0$ and
  otherwise $O((1-\rho_{\triangs}^{-1}y)^{e_{C,1}})$ with
	\begin{align*}
	 e_{C,1}&=-\frac{5(g_0-1)}{2}-\frac{5|I_0|}{4}-\frac{|\alpha_{I_0}|+1}{2}+\frac{3}{4}\ge e_1+\frac14\,.
	\end{align*}
	The second summand is $a_{\error}+O\br{(1-\rho_{\triangs}^{-1}y)^{1/2}}$ if $g_0=0$ and $|I_0|=1$. Suppose $(g_0,|I_0|)\not=(0,1)$.
  Then every term \[\br{1+yu z_i}\quasis_t(y,u,z_J)\quasis_{g-t}(y,z_i,z_{I\setminus (J\cup\{i\})})\] with
  $(t,J)\not=(0,\emptyset)$ and $(t,J)\not=(g_0,I_0\setminus\{i\})$ is 
  $O((1-\rho_{\triangs}^{-1}y)^{e_{C,2}})$ with
	\begin{align*}
		e_{C,2}&=-\frac{5t}{2}-\frac{5|J|}{4}-\frac{|\alpha_J|}{2}+\frac{3}{4}-\frac{5(g_0-t)}{2}-\frac{5(|I_0\setminus J|-1)}{4}-\frac{|\alpha_{I_0\setminus J}|-\alpha_i}{2}+\frac{3}{4}\\
		&\ge e_1+\frac14
	\end{align*}
	by induction. The corresponding terms for $(t,J)=(0,\emptyset)$ and $(t,J)=(g_0,I_0\setminus\{i\})$ are
  $O\br{(1-\rho_{\triangs}^{-1}y)^{e_{C,3}}}$ with
  \begin{align*}
    e_{C,3}&=-\frac{5g_0}{2}-\frac{5(|I_0|-1)}{4}-\frac{|\alpha_{I_0}|-\alpha_i}{2}+\frac{3}{4}\ge e_1+\frac14\,.
  \end{align*}
  In total, we have $\error_C = O\br{(1-\rho_{\triangs}^{-1}y)^{e_1+1/4}}$.

  \item[$\error_D$:] The first summand in the expression of $\error_D$ from \Cref{cased} is $0$ if $g_0+|I_0|\le 1$ and
  otherwise each of its summands is $O\br{(1-\rho_{\triangs}^{-1}y)^{e_{D,1}}}$ with
	\begin{align*}
		e_{D,1}&=-\frac{5t}{2}-\frac{5|J|}{4}-\frac{|\alpha_J|}{2}+\frac{3}{2}-\frac{5(g_0-t)}{2}-\frac{5|I_0\setminus J|}{4}-\frac{|\alpha_{I_0\setminus J}|}{2}+\frac{3}{4}\\
		&\ge e_1+\frac14
	\end{align*}
  by induction. The second term is $0$ for $g_0=0$ and $O\br{(1-\rho_{\triangs}^{-1}y)^{e_{D,2}}}$ otherwise with
	\begin{align*}
	 e_{D,2}&=-\frac{5(g_0-1)}{2}-\frac{5(|I_0|+1)}{4}-\frac{|\alpha_{I_0}|}{2}+\frac{3}{4}\ge e_1+\frac14\,.
	\end{align*}
\end{itemize}
Combining these results, we have proved \eqref{rec-triang}, where $a_0=a_{\main}-a_{\error}$ for $g_0=0$
and $|I_0|=1$. As this constant is the value of the generating function $\triangs_0(y,z_{I_0})\vert_{z_{I_0}=f(y)}$
at its singularity $\rho_{\triangs}$, $a_0$ is positive. For $|\alpha_{I_0}|>0$ or $(g_0,|I_0|)\not=(0,1)$, the exponent
$e_1$ is negative and thus \eqref{rec-triang} is true with the same value for $a_0$. Finally, $c_g$ is positive, since
it is the sum of positive numbers.

To prove \eqref{rec-quasi}, recall that we assume that \eqref{rec-quasi} is true for $(g,|I|,n)<(g_0,|I_0|,0)$ and
we have already shown that \eqref{rec-triang} is true for $(g,|I|)\le(g_0,|I_0|)$. Let $n_0\in\N_0$ and assume that
\eqref{rec-quasi} is also true for $(g_0,|I_0|,n)$ with $n<n_0$. Consider the derivative $\derivn{u}{n+1}$ of
\eqref{recursionexplicit} and set $u=f(y)$; as $Q(y,f(y))=0$, this yields
\begin{equation*}
  -\left.\sum_{k=0}^{n}\binom{n+1}{k}\derivn{u}{k}P_{g_0}(y,u,z_{I_0})\derivn{u}{n+1-k}Q(y,u)\right\vert_{u=z_{I_0}=f(y)}
\end{equation*}
for the left-hand side of \eqref{recursionexplicit}. For the derivatives of $\main_A$, $\main_B$, and $\main_C$,
we obtain
\begin{align*}
  \main_A &= c_A(1-\rho_{\triangs}^{-1}y)^{m_A} + O\br{(1-\rho_{\triangs}^{-1}y)^{m_A+1/4}},\\
  \main_B &= c_B(1-\rho_{\triangs}^{-1}y)^{m_B} + O\br{(1-\rho_{\triangs}^{-1}y)^{m_B+1/4}},\\
  \main_C &= c_C(1-\rho_{\triangs}^{-1}y)^{m_C} + O\br{(1-\rho_{\triangs}^{-1}y)^{m_C+1/4}}
\end{align*}
with positive constants $c_A,c_B,c_C$ and $m_A=m_B=m_C= e_1-\frac{n+1}{2} = e_2+\frac14$. For the derivatives of
$\error_B$, $\error_C$, and $\error_D$ the exponents $e_B,e_{C,1},\dotsc$ in the considerations above become smaller
by $\frac{n+1}{2}$ as well. By \eqref{an} and the induction hypothesis, each term
\begin{equation*}
\left.\derivn{u}{k}P_{g_0}(y,u,z_{I_0})\derivn{u}{n+1-k}Q(y,u)\right\vert_{u=z_{I_0}=f(y)}
\end{equation*}
for $k<n$ is of the form $\ol{c}(k)(1-\rho_{\triangs}^{-1}y)^{e_1-(n+1)/2}+O\br{(1-\rho_{\triangs}^{-1}y)^{e_1-(n+1)/2+1/4}}$
with $\ol{c}(k)>0$. Since $\deriv{u}Q(y,u)\vert_{u=f(y)} = \ol{c}(1-\rho_{\triangs}^{-1}y)^{1/4}+O\br{(1-\rho_{\triangs}^{-1}y)^{1/2}}$
with $\ol{c}<0$, \eqref{rec-quasi} follows.
\end{proof}

By \Cref{inductionbase} and setting $I=\emptyset$ in \eqref{rec-triang} we obtain \Cref{simplerefined}.

An analogous result can be obtained for $I$-quasi triangulations where double edges are allowed. We will denote the 
analogous generating functions with hats. To show \Cref{looplessrefined} we will use the same method as for 
\Cref{simplerefined} and will just point out the differences. The analogous equation to \Cref{baseplanar} is
\begin{equation}
   \hat{\quasis}_0(y,u)=1+yu^2\hat{\quasis}_0^2(y,u)+\frac{y(\hat{\quasis}_0(y,u)-1)}{u}-y^2u\hat{\quasis}_0(y,u)-
	y^2u\triangt_0(y)\hat{\quasis}_0(y,u).\label{basedouble}
\end{equation}
This is due to the fact that double edges are allowed and thus the final case in the proof of \Cref{baseplanar} 
does not have to be considered. Conversely, if the root is a loop, the exceptional set is now larger, because the 
single edge is not the only quasi triangulation with root face valency two any more.

By doing the same calculations as in \Cref{inductionbase} for \eqref{basedouble} we obtain the dominant singularity at 
$\frac{2^{1/3}}{3}$ and
\begin{equation*}
    \triangt_0(y)=\frac{1}{8}-\frac{9}{8}\br{1-\rho_{\triangt}^{-1} y}+3\br{1-\rho_{\triangt}^{-1} y}^{3/2}+O\br{\br{1-\rho_{\triangt}^{-1} y}^2}.
\end{equation*}

In the general case we distinguish the same four cases. Case A (\Cref{casea}) works in the same way. In the other three cases (\Cref{caseb,casec,cased}) the error term concerning double edges does not appear and the term dealing with 
loops will be modified similar to the planar case. Thus we obtain an analogous recursion formula and obtain the same results
as in \Cref{finalind} with $(1-\rho_{\triangt}^{-1} y)$ instead of $(1-\rho_{\triangs}^{-1} y)$ but with the same exponent. Thus \Cref{looplessrefined} follows.

\end{document}